\DeclareFontFamily{U}{wncy}{}
    \DeclareFontShape{U}{wncy}{m}{n}{<->wncyr10}{}
    \DeclareSymbolFont{mcy}{U}{wncy}{m}{n}
    \DeclareMathSymbol{\Sh}{\mathord}{mcy}{"58} 
\newcommand{\be}{\begin{enumerate}}
\newcommand{\ee}{\end{enumerate}}
\newcommand{\R}{\mathbb{R}}
\newcommand{\N}{\mathbb{N}}
\newcommand{\C}{\textnormal{C}}
\newcommand{\W}{\textnormal{W}}
\newcommand{\D}{\textnormal{D}}
\newcommand{\U}{\textnormal{U}}
\newcommand{\E}{\textnormal{E}}
\newcommand{\A}{\textnormal{A}}
\newcommand{\ep}{\varepsilon}
\newcommand{\M}{\textnormal{M}}
\renewcommand{\H}{\textnormal{H}}
\newcommand{\Ll}{\textnormal{L}_{\textnormal{loc}}}
\newcommand{\I}{\textnormal{I}}
\newcommand{\T}{\mathbb{T}}
\newcommand{\X}{\textnormal{X}}
\newcommand{\Id}{\textnormal{Id}}
\newcommand{\V}{\textnormal{V}}
\newcommand{\Y}{\textnormal{Y}}
\renewcommand{\div}{\textnormal{div}}
\newcommand{\ffi}{\varphi}
\newcommand{\dd}{\mathrm{d}}
\newcommand{\Ld}{\textnormal{L}}
\newcommand{\Ldiv}{\Ld_{\textnormal{div}}}
\newcommand{\Hdiv}{\H_{\textnormal{div}}}
\newtheorem{thm}{Theorem}[section]
\newtheorem{coro}[thm]{Corollary}
\newtheorem{propo}[thm]{Proposition}
\newtheorem{lem}[thm]{Lemma}
\newtheorem{rem}{Remark}[section]
\newtheorem{nota}{Notation}[section]
\newtheorem{defi}{Definition}[section]
\newcommand{\eps}{\varepsilon}
\newcommand{\conv}[2]{\operatorname*{\longrightarrow}_{#1 \rightarrow
    #2}}
\numberwithin{equation}{section}
\title{Large time behavior of the Vlasov-Navier-Stokes system on the torus}
\author{Daniel Han-Kwan\footnote{Centre de Math\'ematiques Laurent Schwartz (UMR 7640), Ecole Polytechnique, Institut Polytechnique de Paris, 91128 Palaiseau Cedex, France (\href{mailto:daniel.han-kwan@polytechnique.edu}{daniel.han-kwan@polytechnique.edu})}, Ayman Moussa\footnote{Sorbonne Université, Université Paris-Diderot, CNRS, INRIA, LJLL, F-75005 Paris, France  (\href{mailto:ayman.moussa@sorbonne-universite.fr}{ayman.moussa@sorbonne-universite.fr})} and Iv\'an Moyano\footnote{Universit\'e de Nice Sophia-Antipolis Parc Valrose, Laboratoire J.A. Dieudonn\'e, UMR7351, 06108 Nice Cedex 02, France  (\href{mailto:Ivan.Moyano@unice.fr}{Ivan.Moyano@unice.fr})}}
\begin{document}
\maketitle 

\begin{abstract}
We study the large time behavior of Fujita-Kato type solutions to the Vlasov-Navier-Stokes system set on $\T^3 \times \R^3$. 
Under the assumption that the initial so-called modulated energy is small enough, 
we prove that the distribution function converges to a Dirac mass in velocity, with exponential rate.
The proof is based on the fine structure of the system and on a bootstrap analysis allowing to get global bounds on moments.
\end{abstract}

\tableofcontents

\section{Introduction}
We consider the Vlasov-Navier-Stokes system in $\T^3 \times \R^3$:
\begin{align}
\label{eq:vlasov}\partial_t f + v \cdot \nabla_x  f + \div_v [f(u-v)] &=0,\\
\label{eq:ns}\partial_t u + (u\cdot \nabla)u -  \Delta u + \nabla p &= j_f-\rho_f u, \\
\label{eq:ns2}\div \, u & = 0,
\end{align}
where 
\begin{align*}
\rho_f(t,x) &:= \int_{\R^3} f(t,x,v)\,\dd v, \\
j_f(t,x) &:= \int_{\R^3} vf(t,x,v)\,\dd v.
\end{align*}
This system of nonlinear PDEs describes the transport of particles (described by their density function $f$) within a fluid (described by its velocity $u$ and its pressure $p$). It belongs to the broad family of \emph{fluid-kinetic systems}, which were introduced in the pioneering works of O'Rourke \cite{oro} and Williams \cite{wil}. Among all  possible couplings (we refer to the introduction of \cite{GHM} for other examples), the Vlasov-Navier-Stokes has been intensively studied because of both its physical relevance (see \cite{BGLM} for instance) and the mathematical challenges that it offers. The Vlasov-Navier-Stokes system is fully coupled: both unknowns $f$ and $u$ depend on each other. This is due to the Brinkman force (the source term in the fluid equation) and the drag acceleration (the inertial term in the kinetic equation). We refer to \cite{BGLM} for the physical justification of these, and to \cite{goderi,BDGR1,BDGR2,hill,hillmousu} for the (partial) mathematical derivation of the former. The physical constants are all normalized in \eqref{eq:vlasov} -- \eqref{eq:ns2}. 

\vspace{2mm}

The mathematical analysis of the Vlasov-Navier-Stokes system has been for a long time focused on the existence of (weak or strong) solutions on rather academic domains \cite{bou-des-grand-mou,CK,art:wangyu} like the flat torus that we consider in this paper, or more realistic ones \cite{ham-98,bougramou}. Most of the previous results provide global existence of weak solutions in the following sense:  a Leray solution for the fluid equation and a renormalized one (in the sense of DiPerna-Lions \cite{DPL}) for the kinetic equation (for a more precise definition, see Definition~\ref{def:sol} below). These global weak solutions are all built by an approximation-compactness argument which is based on the kinetic energy dissipation of the system. More regular solutions can also be constructed. In 2D, thanks to the uniqueness result of \cite{HM3}, they coincide with the weak solutions. In 3D, regular solutions are only known to exist locally (see \cite{CK} for instance). This issue is mainly due to the Navier-Stokes part of the system.

Very few articles deal with the long time behavior of this system. At the formal level, one expects a monokinetic behavior in velocity for the distribution function (in other words, concentration to a Dirac mass in velocity), due to the damping of the fluid component and the friction term acting on the kinetic phase. 
This behavior however has never been completely proven for the Vlasov-Navier-Stokes system. The closest attempt is the paper \cite{CK} of Choi and Kwon in which a conditional theorem is provided: the monokinetic behavior is shown to occur under a boundedness assumption that has not been established for any non-trivial global solution up to now. We intend to fill this gap by using the functional introduced by Choi and Kwon in \cite{CK} and proving that this boundedness property (in fact, a stronger one) indeed holds, for appropriate solutions of the Vlasov-Navier-Stokes system, under the assumption that the initial data are (in a sense to be made precise) sufficiently close to equilibrium. 

Concerning the long-time behavior of other fluid-kinetic systems, when a Fokker-Planck dissipation is added in the kinetic equation, the situation is less involved because the equilibria are all Maxwellians, which are non-singular  and (at least locally) attract all solutions. This has been investigated for instance in \cite{gou-he-mou-zha,cardumou}. Without this dissipation term, apart  from \cite{CK}, we can mention the work of Jabin \cite{Jab} in which the Navier-Stokes is replaced by a stationary Stokes equation (and a different coupling term) and \cite{GHM} in which a specific geometry is considered for the Vlasov-Navier-Stokes system, allowing for non-singular stationary solutions. 

\vspace{2mm}

As far as our knowledge goes, the results that we present below constitute the first complete and rigorous proof of asymptotic monokinetic behavior for the Vlasov-Navier-Stokes system.

\subsection{Weak solutions of the Vlasov-Navier-Stokes system}

Let us start with a short review of the notion of weak solutions for the Vlasov-Navier-Stokes system, which will give us the opportunity to introduce some notations.

\begin{defi}\label{defi:func}
The \emph{kinetic energy} of the system \eqref{eq:vlasov} -- \eqref{eq:ns2} is given for $t\geq 0$ by
\begin{align}
\E(t) := \frac12\int_{\T^3}|u(t,x)|^2\,\dd x +\frac12\int_{\T^3\times\R^3} f(t,x,v)|v|^2\,\dd v\,\dd x,
\end{align}
and the \emph{dissipation} is defined as
\begin{align}
\label{eq:dissip}
\D(t) := \int_{\T^3\times\R^3} f(t,x,v)|u(t,x)-v|^2\,\dd v\,\dd x +  \int_{\T^3} |\nabla u(t,x)|^2 \, \dd x .
\end{align}
\end{defi}
The kinetic energy and dissipation stem  from the seminal papers on the Vlasov-Navier-Stokes sytem \cite{ham-98,bou-des-grand-mou}. 
One can check that the identity
$$
\frac{\dd}{\dd t} \E(t) + \D(t) =0,
$$
formally holds, which paves the way for a theory of global weak solutions.

\begin{defi}\label{def:adm} We shall say that $(f_0, u_0)$ is an admissible initial condition if 
\begin{align}
\label{eq:initu}u_0 &\in\Ldiv^2(\T^3)= \{ \U \in \Ld^2(\T^3), \, \div \, \U=0\}, \\
\label{eq:initf}0\leq f_0 &\in\Ld^1 \cap\Ld^\infty(\T^3\times\R^3),\\
\label{eq:initfbis}(x,v)\mapsto f_0(x,v)|v|^2 &\in\Ld^1(\T^3\times\R^3),\\
\int_{\T^3 \times \R^3} f_0 \, \dd v \, \dd x &=1.
\end{align}
\end{defi}
\begin{rem}
The last condition does not play any role for what concerns the properties of existence, uniqueness and long time behavior that we are about to discuss. However, this normalization allows to simplify the formulas.
\end{rem}
We shall also denote 
$\H^1_\div(\T^3)=\H^1(\T^3) \cap \Ldiv^2(\T^3)$.

\begin{defi}\label{def:sol}
Consider an admissible initial data $(u_0,f_0)$ in the sense of Definition \ref{def:adm}. A weak solution of the Vlasov-Navier-Stokes system with initial condition $(u_0,f_0)$ is a pair $(u,f)$ with the regularity 
\begin{align*}
u &\in \Ll^\infty(\R_+;\Ld^2(\T^3))\cap\Ll^2(\R_+;\H^1_\div(\T^3)),\\
f&\in  \Ll^\infty(\R_+;\Ld^1\cap\Ld^\infty(\T^3\times\R^3)),\\
j_f-\rho_f u &\in\Ll^2(\R_+;\H^{-1}(\T^3)),
\end{align*}
with $u$ being a Leray solution of \eqref{eq:ns} -- \eqref{eq:ns2} (initiated by $u_0$) and $f$ a renormalized solution of \eqref{eq:vlasov} (initiated by $f_0$), and such that the following energy estimate holds for almost all $t\geq s\geq 0$ (including $s=0$),
\begin{align}
\label{ineq:nrj}\E(t) + \int_s^t \D(\sigma) \, \dd \sigma \leq \E(s),
\end{align}
where the functionals $\E$ and $\D$ are the energy and dissipation introduced in Definition~\ref{defi:func}.
\end{defi}
The existence of weak solutions $(u,f)$ (in the sense of Definition~\ref{def:sol}) to the Vlasov-Navier-Stokes system has been established in \cite{bou-des-grand-mou} (and even on general domains in \cite{art:wangyu,bougramou}).

\begin{defi}\label{def:decay}
We say that an initial condition satisfies the \emph{pointwise decay assumption} of order $q>0$ if
\begin{align}
\label{f0}
(x,v)\longmapsto (1+|v|^q)f_0(x,v) \in \Ld^\infty(\T^3\times \R^3),
\end{align}
and in that case we denote 
\begin{equation}
N_q(f_0):=\sup_{x\in\T^3,v\in\R^3} (1+|v|^q)f_0(x,v).
\label{eq:Nf0definition}
\end{equation}
\end{defi}

We finally introduce some useful notations for moments in velocity and averages on the torus.
\begin{defi}
\label{def:moments}For all $\alpha\geq 0$ and any measurable non-negative function $\textnormal{f}: \, \T^3 \times\R^3 \to \R_+$, we set
\begin{align}
m_\alpha \textnormal{f}(t,x) &:= \int_{\R^3} \textnormal{f}  |v|^\alpha  \, \dd v, \\
M_\alpha \textnormal{f} (t) &:= \int_{\T^3 \times \R^3} \textnormal{f}  |v|^\alpha  \, \dd v \,\dd x. 
\end{align}
 For any measurable non-negative function $\textnormal{h}: \, \T^3  \to \R^d$ (for any $d\in \N\setminus\{0\}$), we denote its average by
 \begin{equation}
\langle \textnormal{h} \rangle := \int_{\T^3} \textnormal{h} \, \dd x.
\end{equation}

\end{defi}

\subsection{Heuristics for the long time behavior}

In this paper, we focus on the description of the long time behavior of weak solutions to the Vlasov-Navier-Stokes system. To this end, it is enlightning to first have a look at  the linear Vlasov equation with friction, around the trivial equilibrium $(0,0)$. This reads
\begin{equation}
\label{eq:vlasovlin}\partial_t g + v \cdot \nabla_x  g - \div_v [g v] =0.
\end{equation}
Endowed with an initial condition $g_0$ at $t=0$, this equation admits the explicit solution
\begin{equation}
\label{eq:explilin}
g(t,x,v) = e^{3t}g_0(x-(e^t-1)v, e^t v).
\end{equation}

\begin{defi}
For $\U \in \R^3$, we denote by $\delta_{\U}$ the Dirac measure in velocity supported at $\U$, defined by 
\begin{align*}
\forall \varphi\in\mathscr{D}(\R^3),\qquad \langle \delta_{\U}, \varphi \rangle = \varphi(\U).
\end{align*}
\end{defi}

The long time behavior of the solution to~\eqref{eq:vlasovlin}  is explicit, as we observe from~\eqref{eq:explilin} that
$$
g(t,x,v) \rightharpoonup_{t \to +\infty} \left( \int_{\R^3} g_0 (x-v,v) \, \dd v\right) \otimes \delta_{0}.
$$
More generally, given $\U \in \R^3$, for the equation
\begin{equation}
\label{eq:vlasovlin2}\partial_t g + v \cdot \nabla_x  g + \div_v [g (\U-v)] =0,
\end{equation}
the long time behavior of the solution is also explicit and described by
$$
g(t,x,v)  - \left( \int_{\R^3} g_0 (x-v-t \U,v+\U) \, \dd v\right) \otimes \delta_{\U} \rightharpoonup_{t \to +\infty} 0
$$
The mechanism at stake in~\eqref{eq:vlasovlin} and~\eqref{eq:vlasovlin2} is a competition between transport and friction. Friction always wins in the end, causing 
concentration to a Dirac mass in velocity.
In view of this behavior, we may expect a similar concentration phenomenon  in velocity for the full Vlasov-Navier-Stokes system, at least in a regime close to some equilibrium.

It is actually even possible to push the heuristics a little further. Taking for granted that the kinetic phase concentrates in velocity, with the behavior $ f(t,x,v) \sim \rho_f(t,x) \otimes \delta_{ u (t,x)}$ as $t \to +\infty$, we observe in particular that the Brinkman force in the Navier-Stokes equations vanishes as $t \to +\infty$.  Since it is well known that the solution $u(t)$ of the Navier-Stokes without forcing  tends to homogenize to its average in space $\langle u \rangle(t)$, we may expect that $ f(t,x,v) \sim \rho_f(t,x) \otimes \delta_{\langle u\rangle (t)}$ as $t \to +\infty$.
In particular this entails $\langle j_f\rangle(t) \sim \langle \rho_f \rangle(t) \langle u\rangle (t)$ as $t \to +\infty$.
But then, by the conservation laws
$$
\int_{\T^3\times \R^3} f(t) \, \dd v \, \dd x =1, \quad
\langle u+j_f  \rangle (t) = \langle u_0+j_{f_0} \rangle,
$$
(see~\eqref{consL1} and~\eqref{consuj} in Lemma~\ref{conse}), we deduce that
$
\langle u\rangle(t) \sim  \frac{\langle u_0+j_{f_0} \rangle}{2}
$
as $t \to +\infty$. To summarize, it follows from this heuristic argument that one can expect  
$$f(t,x,v) \sim \rho_f(t,x) \otimes \delta_{\frac{\langle u_0+j_{f_0} \rangle}{2}}$$
as $t \to +\infty$,
that corresponds to concentration to the constant velocity $\frac{\langle u_0+j_{f_0} \rangle}{2}$.

\subsection{The modulated energy of Choi and Kwon}

In \cite{CK}, Choi and Kwon introduced a \emph{modulated} version of the energy $\E(t)$ of Definition~\ref{defi:func}:
\begin{defi}\label{def:moden}
We define the \emph{modulated energy} as
\begin{multline}
\mathscr{E}(t) := \frac{1}{2} \int_{\T^3 \times \R^3} f(t,x,v)|v-\langle j_f(t,x) \rangle|^2 \, \dd v\, \dd x \\
+ \frac12 \int_{\T^3} |u(t,x)-\langle u(t) \rangle|^2 \, \dd x +\frac{1}{4} |\langle j_f(t) \rangle-\langle u(t) \rangle|^2.
\end{multline}
\end{defi}
It is proved in \cite{CK} that the identity
$$
\frac{\dd}{\dd t} \mathscr{E}(t) + \D(t) =0,
$$
formally holds. Controlling the modulated energy is interesting in view of the expected long time monokinetic dynamics for the kinetic phase, because of the following statement. 
\begin{lem}
\label{lem:W1}
 With the previous notations, we have that for all $t \geq 0$,
\begin{equation}
\label{W1trick}
\begin{aligned}
\W_1 \left(f(t), \rho_f(t) \otimes \delta_{\frac{\langle u_0 + j_{f_0}  \rangle}{2}}\right) + \left\| u(t) - \frac{\langle u_0 + j_{f_0}  \rangle}{2}\right\|_{\Ld^2(\T^3)}   &\lesssim ({\mathscr{E}}(t))^{1/2},
\end{aligned}
\end{equation}
where $\W_1$ is the Wasserstein(-1) distance.
\end{lem}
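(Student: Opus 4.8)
The plan is to set $\overline{u} := \tfrac12\langle u_0 + j_{f_0}\rangle$ and to rely on the conservation laws $\int_{\T^3\times\R^3} f(t)\,\dd v\,\dd x = 1$ and $\langle u + j_f\rangle(t) = \langle u_0 + j_{f_0}\rangle$ (that is, \eqref{consL1} and \eqref{consuj} in Lemma~\ref{conse}). The second identity is what makes $\overline u$ the natural target: it gives $\overline u = \tfrac12\big(\langle u(t)\rangle + \langle j_f(t)\rangle\big)$ for every $t$, so that $\langle j_f(t)\rangle - \overline u = \tfrac12\big(\langle j_f(t)\rangle - \langle u(t)\rangle\big)$ and $\langle u(t)\rangle - \overline u = -\tfrac12\big(\langle j_f(t)\rangle - \langle u(t)\rangle\big)$. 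In both cases the squared modulus of the right-hand side equals $\tfrac14|\langle j_f(t)\rangle - \langle u(t)\rangle|^2$, which is exactly the third term of $\mathscr{E}(t)$ and hence is $\le \mathscr{E}(t)$. This is the only place where the structure of the system is used; the rest is elementary.

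For the fluid contribution, I would write $u(t) - \overline u = \big(u(t) - \langle u(t)\rangle\big) + \big(\langle u(t)\rangle - \overline u\big)$. The $\Ld^2(\T^3)$-norm of the first summand is controlled (by definition) by the second term of $\mathscr{E}(t)$, while the second summand is constant in $x$ with $|\langle u(t)\rangle - \overline u|^2 = \tfrac14|\langle j_f(t)\rangle - \langle u(t)\rangle|^2 \le \mathscr{E}(t)$ by the observation above; after squaring and adding one gets $\|u(t) - \overline u\|_{\Ld^2(\T^3)}^2 \lesssim \mathscr{E}(t)$.

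For the Wasserstein contribution, I would use the transport plan that keeps the space variable untouched and collapses every velocity onto $\overline u$: push the probability measure $f(t,x,v)\,\dd v\,\dd x$ forward by $(x,v)\mapsto\big((x,v),(x,\overline u)\big)$. Its marginals are $f(t)$ and $\rho_f(t)\otimes\delta_{\overline u}$ (both probability measures, by \eqref{consL1}), so with the product metric $d\big((x,v),(x',v')\big) = d_{\T^3}(x,x') + |v-v'|$ on $\T^3\times\R^3$ the torus part of the cost vanishes and
$$
\W_1\!\left(f(t),\, \rho_f(t)\otimes\delta_{\overline u}\right) \le \int_{\T^3\times\R^3} f(t,x,v)\,|v-\overline u|\,\dd v\,\dd x \le \left(\int_{\T^3\times\R^3} f(t,x,v)\,|v-\overline u|^2\,\dd v\,\dd x\right)^{1/2},
$$
the last inequality being Cauchy--Schwarz combined with $\int f(t) = 1$. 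Splitting $v - \overline u = \big(v - \langle j_f(t)\rangle\big) + \big(\langle j_f(t)\rangle - \overline u\big)$ and using the first term of $\mathscr{E}(t)$ together with the identity for $\langle j_f(t)\rangle - \overline u$ bounds the last integral by a constant times $\mathscr{E}(t)$. Summing the fluid and kinetic estimates yields \eqref{W1trick}.

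I do not expect a genuine analytic obstacle: the only delicate point is the bookkeeping around the conservation law, since without the identity $\overline u = \tfrac12(\langle u\rangle + \langle j_f\rangle)(t)$ the constant-in-$x$ shifts that appear in both decompositions would not be absorbed by $\mathscr{E}(t)$.
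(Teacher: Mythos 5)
Your proof is correct and follows essentially the same route as the paper: both arguments hinge on the identity $\langle j_f(t)\rangle-\tfrac12\langle u_0+j_{f_0}\rangle=\tfrac12\big(\langle j_f(t)\rangle-\langle u(t)\rangle\big)$ coming from the conservation law \eqref{consuj}, plus Cauchy--Schwarz with the normalization \eqref{consL1}. The only (cosmetic) difference is that you bound $\W_1$ by exhibiting an explicit coupling and split $v-\overline u$ inside the integrand, whereas the paper uses the Monge--Kantorovich dual formulation and a triangle inequality in $\W_1$ to move the Dirac mass from $\langle j_f(t)\rangle$ to $\tfrac12\langle u_0+j_{f_0}\rangle$; these are interchangeable.
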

The definition and basic properties of the Wasserstein distance $\W_1$ are given in the Appendix (see Section \ref{sec:Wasserstein}). The proof of the previous lemma is postponed to Section~\ref{subsec:lemw1}.

\section{Main results}
Our main result provides a sharp description of the long time behavior of weak solutions to the Vlasov-Navier-Stokes system.


\begin{thm}
\label{thm}
There exists $\C_\star>0 $ and a nondecreasing onto function $\ffi: \R_+\rightarrow\R_+$ such that the following holds.
Let $(u_0,f_0)$ be an admissible initial condition such that $N_q(f_0)<+\infty$ for some $q>4$, $M_\alpha f_0<+\infty$ for some $\alpha>3$ and $u_0\in\H^{1/2}(\T^3)$. Then, if
$$
\| u_0 \|_{\dot\H^{1/2}(\T^3)}<\frac{1}{\C_\star^2},
$$ 
and if the initial modulated energy $\mathscr{E}(0)$ is small enough, in the sense that
\begin{multline}\label{ineq:Emodsmall3D}
\ffi \left(N_q(f_0)+M_\alpha f_0+ \E(0)+\|u_0\|_{\H^{1/2}(\T^3)}+1  \right) \mathscr{E}(0)\\
 <\min\left(1,\frac{1}{\C_\star^2}-\|u_0\|_{\dot\H^{1/2}(\T^3)}^2\right),
\end{multline}
then for any weak solution $(u,f)$ to the Vlasov-Navier-Stokes system, there exists a profile $\rho^\infty \in \Ld^\infty(\T^3)$ and  $\lambda,\C_\lambda>0$ such that for all $t\geq 0$,
\begin{multline}
\label{eq-expodecay}
  \left\| u(t) - \frac{\langle u_0 + j_{f_0}  \rangle}{2}\right\|_{\Ld^2(\T^3)} +  \W_1\left(f(t),{\rho}^\infty\left(x- t\frac{\langle u_0+j_{f_0}\rangle}{2} \right)\otimes \delta_{\frac{\langle u_0+j_{f_0}\rangle}{2} }\right) \\
\leq \sqrt{\mathscr{E}(0)} \C_\lambda \exp \left(- \lambda t  \right),
\end{multline}
where $\W_1$ is the Wasserstein distance. 
\end{thm}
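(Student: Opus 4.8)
The plan is to use the Galilean invariance of the system to move to a frame in which the modulated energy is literally the physical energy, and then to run a single continuation (bootstrap) argument that simultaneously controls the fluid via Fujita--Kato theory, the moments of $f$ via the Vlasov characteristics, and the (modulated) energy via the energy--dissipation identity together with a damped ODE for the momentum imbalance $\langle j_f\rangle-\langle u\rangle$. Concretely: put $\U:=\tfrac12\langle u_0+j_{f_0}\rangle$ and set $\tilde u(t,x):=u(t,x+t\U)-\U$, $\tilde f(t,x,v):=f(t,x+t\U,v+\U)$. A direct computation shows $(\tilde u,\tilde f)$ is again a weak solution of \eqref{eq:vlasov}--\eqref{eq:ns2}, that each of the three terms defining $\mathscr{E}$ in Definition~\ref{def:moden} is invariant under this transformation, that $N_q(\tilde f_0),M_\alpha\tilde f_0,\E(0)$ and $\|\tilde u_0\|_{\H^{1/2}(\T^3)}$ are bounded by the corresponding quantities for $(u_0,f_0)$ together with powers of $|\U|\le\sqrt{2\E(0)}$, and $\|\tilde u_0\|_{\dot\H^{1/2}(\T^3)}=\|u_0\|_{\dot\H^{1/2}(\T^3)}$. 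Since $\W_1$ and $\|\cdot\|_{\Ld^2(\T^3)}$ are unchanged by the common isometry $(x,v)\mapsto(x+t\U,v+\U)$ of $\T^3\times\R^3$, after enlarging $\ffi$ it suffices to treat the case $\langle u_0+j_{f_0}\rangle=0$. In this frame \eqref{consuj} reads $\langle j_f\rangle(t)=-\langle u\rangle(t)$, and expanding the squares in Definition~\ref{def:moden} using $\int f\,\dd v\,\dd x=1$ one finds $\mathscr{E}(t)=\E(t)=\tfrac12\|u(t)\|_{\Ld^2(\T^3)}^2+\tfrac12 M_2 f(t)$; in particular $\D=-\tfrac{\dd}{\dd t}\mathscr{E}$ and $\int_0^{+\infty}\D<+\infty$. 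By Lemma~\ref{lem:W1} and the triangle inequality for $\W_1$, everything then reduces to proving $\mathscr{E}(t)\le \C_\lambda^2\mathscr{E}(0)e^{-2\lambda t}$ for some $\lambda,\C_\lambda>0$, and that $\rho_f(t)$ converges in $\W_1(\T^3)$, with exponential rate, to some $\rho^\infty\in\Ld^\infty(\T^3)$.

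For the bootstrap, fix constants $C_0,K_1,K_2,K_3>0$ and $\lambda>0$ and consider, on the maximal interval $[0,T^\star)$, the bounds: $\mathscr{E}(t)\le C_0\mathscr{E}(0)e^{-2\lambda t}$; $M_\alpha f(t)\le K_1$; $\|\rho_f(t)\|_{\Ld^\infty(\T^3)}\le K_2$; $\int_0^t\|u\|_{\Ld^\infty(\T^3)}\,\dd\sigma\le K_3$; and a Fujita--Kato bound $\|u\|_{\Ld^\infty([0,t];\dot\H^{1/2})}^2+\|u\|_{\Ld^2([0,t];\dot\H^{3/2})}^2\le 1/\C_\star^2$. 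Each of these should be improvable, using the smallness \eqref{ineq:Emodsmall3D} to make all the thresholds compatible. First, since $j_f-\rho_f u=\int(v-u)f\,\dd v$, Cauchy--Schwarz in $v$ gives $|j_f-\rho_f u|\le\rho_f^{1/2}\big(\int f|v-u|^2\,\dd v\big)^{1/2}$, hence $\|j_f-\rho_f u\|_{\Ld^2(\T^3)}\le\|\rho_f\|_{\Ld^\infty(\T^3)}^{1/2}\D^{1/2}$, whose $\Ld^1(\R_+)$-norm is small and which moreover decays exponentially by the bootstrap; feeding this forcing into the Fujita--Kato theory for \eqref{eq:ns}--\eqref{eq:ns2} with $\|u_0\|_{\dot\H^{1/2}(\T^3)}<1/\C_\star^2$ recovers the Fujita--Kato bound and the bound on $\int_0^t\|u\|_{\Ld^\infty(\T^3)}$ with strictly better constants, together with exponential decay of $\|u(t)\|_{\Ld^\infty(\T^3)}$. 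Second, with $u$ thus controlled the characteristics of \eqref{eq:vlasov} stay uniformly close to those of the linear friction flow \eqref{eq:vlasovlin}; propagating the moment inequality $\tfrac{\dd}{\dd t}(M_\alpha f)^{1/\alpha}\le-(M_\alpha f)^{1/\alpha}+\|u\|_{\Ld^\infty(\T^3)}$ improves the bound on $M_\alpha f$, and the representation $f(t)=e^{3t}f_0\circ\Phi^{-t}$ together with the lower bound $|\det\nabla_v X_0(t,\cdot)|\gtrsim(e^{t}-1)^3$ (valid because $\int_0^{+\infty}\|\nabla u\|_{\Ld^2}\,\dd\sigma<+\infty$) yields, in the spirit of \cite{bou-des-grand-mou}, $\|\rho_f(t)\|_{\Ld^\infty(\T^3)}\lesssim N_q(f_0)$ (and likewise $\|j_f(t)\|_{\Ld^\infty}\lesssim N_q(f_0)$, which is where $q>4$ is used). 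Third, with $\rho_f$ uniformly bounded in $\Ld^\infty$ (hence in $\Ld^{3/2}$), Poincaré and Sobolev inequalities give a coercivity estimate of the form $\D\ge c_0\mathscr{E}-b\,|\langle j_f\rangle-\langle u\rangle|^2$ with $c_0,b>0$ depending only on $K_2$; on the other hand, combining the space-averaged momentum balances of \eqref{eq:ns} and \eqref{eq:vlasov} gives the damped ODE $\tfrac{\dd}{\dd t}(\langle j_f\rangle-\langle u\rangle)=-2(\langle j_f\rangle-\langle u\rangle)+2\langle(\rho_f-1)(u-\langle u\rangle)\rangle$, whose forcing is controlled by $\|\rho_f-1\|_{\Ld^2(\T^3)}\|u-\langle u\rangle\|_{\Ld^2(\T^3)}\lesssim K_2\,\mathscr{E}(t)^{1/2}$ and is thus exponentially small by the bootstrap, so that $|\langle j_f\rangle-\langle u\rangle|^2\lesssim\mathscr{E}(0)e^{-2\lambda t}$; combining this with $\tfrac{\dd}{\dd t}\mathscr{E}=-\D\le-c_0\mathscr{E}+b\,|\langle j_f\rangle-\langle u\rangle|^2$ through a Grönwall (or Lyapunov-functional) argument improves the decay bound on $\mathscr{E}$, upon choosing $\lambda$ small and $\mathscr{E}(0)$ below the threshold \eqref{ineq:Emodsmall3D}. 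Hence $T^\star=+\infty$ and all the bootstrap bounds hold on $\R_+$.

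It then remains to identify the limit profile. The density satisfies the continuity equation $\partial_t\rho_f+\div\,j_f=0$, and $\|j_f(t)\|_{\Ld^1(\T^3)}\le M_1 f(t)\le(M_2 f(t))^{1/2}\le(2\mathscr{E}(t))^{1/2}\lesssim\mathscr{E}(0)^{1/2}e^{-\lambda t}$. Consequently, for $0\le s\le t$, $\W_1(\rho_f(t),\rho_f(s))\le\int_s^t\|j_f(\sigma)\|_{\Ld^1(\T^3)}\,\dd\sigma\lesssim\mathscr{E}(0)^{1/2}e^{-\lambda s}$, so $(\rho_f(t))_{t\ge0}$ is Cauchy in the complete metric space $(\mathscr{P}(\T^3),\W_1)$ and converges to some $\rho^\infty$ with $\W_1(\rho_f(t),\rho^\infty)\lesssim\mathscr{E}(0)^{1/2}e^{-\lambda t}$; the uniform bound $\|\rho_f(t)\|_{\Ld^\infty(\T^3)}\le K_2$ passes to the limit, giving $\rho^\infty\in\Ld^\infty(\T^3)$. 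Since $\W_1(\mu\otimes\delta_0,\nu\otimes\delta_0)=\W_1(\mu,\nu)$, the triangle inequality together with Lemma~\ref{lem:W1} yields $\|u(t)\|_{\Ld^2(\T^3)}+\W_1(f(t),\rho^\infty\otimes\delta_0)\lesssim\mathscr{E}(0)^{1/2}e^{-\lambda t}$ in the reduced frame. Undoing the Galilean shift — which replaces $\rho^\infty$ by $\rho^\infty(\,\cdot-t\U)$, $\delta_0$ by $\delta_\U$ with $\U=\tfrac12\langle u_0+j_{f_0}\rangle$, adds $\U$ to $u$, and leaves both $\W_1$ and $\mathscr{E}(0)$ unchanged — gives exactly \eqref{eq-expodecay}, with $\C_\lambda$ enlarged if necessary so that the bound also holds at small times.

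I expect the main obstacle to be the coupling in the second step: controlling $u$ in time-integrable norms requires the Brinkman force to be controlled, which requires moment and density bounds on $f$; while those bounds require $u$ to be controlled; and the energy decay requires $\rho_f$ bounded, which again requires $u$ controlled and (circularly) benefits from the energy decay. Breaking this loop forces the a priori possibly large quantities $N_q(f_0),M_\alpha f_0,\E(0),\|u_0\|_{\H^{1/2}}$ to enter only through the single nondecreasing function $\ffi$, and requires $\mathscr{E}(0)$ to be small both absolutely and relative to the gap $1/\C_\star^2-\|u_0\|_{\dot\H^{1/2}}^2$ left by the Fujita--Kato threshold, so that the exponentially decaying perturbation never drives $u$ out of the perturbative regime nor lets the moments of $f$ grow; calibrating $\C_\star$, the bootstrap constants, the decay rate $\lambda$ and $\ffi$ consistently is the delicate point. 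A secondary technical nuisance is that $\dot\H^{3/2}(\T^3)$ barely fails to embed into $\Ld^\infty(\T^3)$, so that obtaining $\int_0^{+\infty}\|u\|_{\Ld^\infty(\T^3)}\,\dd\sigma<+\infty$ needs a small extra parabolic gain, which has to be absorbed into the bootstrap.
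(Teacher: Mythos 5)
Your overall architecture is sound and, at the level of Section~3 of the paper, matches it: you reduce \eqref{eq-expodecay} to exponential decay of $\mathscr{E}$ via Lemma~\ref{lem:W1}, you obtain the profile $\rho^\infty$ from the continuity equation and the integrability of $\mathscr{E}^{1/2}$ (this is exactly Jabin's argument in Proposition~\ref{prop-rhobar}), and you correctly identify that everything hinges on a global $\Ld^\infty$ bound for $\rho_f$, which makes the dissipation coercive over the modulated energy (Lemma~\ref{decay+}; your variant with the damped ODE for $\langle j_f\rangle-\langle u\rangle$ is correct but unnecessary, since the friction part of $\D$ already controls $|\langle j_f\rangle-\langle u\rangle|^2$ directly). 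The Galilean reduction to $\langle u_0+j_{f_0}\rangle=0$ is a legitimate simplification that the paper does not exploit for the main theorem: the system, the modulated energy and the $\dot\H^{1/2}$ seminorm are indeed invariant, and in the new frame $\mathscr{E}=\E$.

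The gap is in your second bootstrap step, which is precisely where the paper spends Sections~4 to 7. The straightening of characteristics that yields $\|\rho_f(t)\|_{\Ld^\infty(\T^3)}\lesssim N_q(f_0)$ requires the Jacobian of $v\mapsto\V(0;t,x,v)$ (or of $v\mapsto\X(0;t,x,v)$) to stay comparable to the free one \emph{pointwise in $(x,v)$}; since $\D_v\V(0;t,x,v)-e^{t}\,\I_3=-\int_0^t e^{s}\nabla u(s,\X(s;t,x,v))\,\D_v\X(s;t,x,v)\,\dd s$ and $\X(s;t,x,v)$ sweeps all of $\T^3$ as $(x,v)$ varies, the perturbation is only controlled by $\int_0^t\|\nabla u(s)\|_{\Ld^\infty(\T^3)}\,\dd s$, which moreover must be \emph{small} (the paper requires $\delta e^\delta<1/9$ in Lemma~\ref{charac}). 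Your parenthetical claim that the lower bound on the Jacobian is ``valid because $\int_0^{+\infty}\|\nabla u\|_{\Ld^2}\,\dd\sigma<+\infty$'' is false: an $\Ld^2_x$ bound on $\nabla u$ gives no control along individual characteristic curves. This quantity does not appear among your bootstrap hypotheses, and it cannot be extracted from the Fujita--Kato regularity you do propagate: $u\in\Ld^2_t\dot\H^{3/2}$ only gives $\nabla u\in\Ld^2_t\Ld^3_x$, very far from $\Ld^1_t\Ld^\infty_x$. Closing this is the actual core of the paper: one needs maximal $\Ld^a_t\Ld^r_x$ parabolic regularity for $\D^2u$ with $r>3$ in terms of $(u\cdot\nabla)u$ and the Brinkman force (Section~\ref{sec:higherorder}), then a Gagliardo--Nirenberg interpolation of $\nabla u$ between $\D^2u$ and the exponentially decaying $\|u-\langle u\rangle\|_{\Ld^2}$, iterated twice because the first pass only reaches integrability exponents $r>2$ (Lemma~\ref{lem:almost}), to finally get $\int_1^{t^\star}\|\nabla u\|_{\Ld^\infty}\,\dd s\lesssim\mathscr{E}(0)^\gamma$ and conclude $t^\star=+\infty$. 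Your closing remark about a ``small extra parabolic gain'' concerns only $\|u\|_{\Ld^\infty}$, not $\|\nabla u\|_{\Ld^\infty}$, and substantially underestimates this step; as written, the bootstrap does not close.
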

We refer to the solutions that we consider as \emph{Fujita-Kato} type, as we require small initial $\dot\H^{1/2}$ norm for the fluid velocity.

\begin{rem}
The constant $\C_\star$  is the universal constant given in Proposition~\ref{coro:ns3D}.
\end{rem}
  
 We deduce that when ${\langle u_0 + j_{f_0} \rangle}=0$, the distribution function $f(t)$ weakly converges to a stationary solution, 
whereas when ${\langle u_0 + j_{f_0} \rangle}\neq 0$, the asymptotic behavior is that of a travelling wave.

\begin{rem}
As already said, existence of weak solutions 
follows from \cite{bou-des-grand-mou} (note by the way that both the pointwise decay assumption and  the higher order Sobolev assumption are not relevant for this part).

\end{rem}


%

\begin{rem}
The fact that the asymptotic state for the distribution function is a Dirac mass in velocity, and thus is singular, virtually forbids the use of standard PDE techniques, such as high order Sobolev energy estimates, to prove this result.
\end{rem}

\begin{rem}

This result proves  that for the Vlasov-Navier-Stokes system, the large time behavior on the torus is very different from that on a  domain with partially dissipative boundary conditions (and under adequate geometric control conditions): in \cite{GHM}, it is indeed proved that in the latter case there exist smooth non-trivial equilibria that are locally stable.

\end{rem}

Theorem~\ref{thm} will be a consequence of the following result, bearing on the large time behavior of the modulated energy $\mathscr{E}(t)$. 
\begin{thm}
\label{realthm}
Under the assumptions of Theorem~\ref{thm}, the following holds.  
There exists $\lambda,\C'_\lambda>0$ 
such that for all $t \geq 0$,
 \begin{equation}
 \label{eq:expodecayE}
 \begin{aligned}
\mathscr{E}(t)&\leq \mathscr{E}(0)  \C'_\lambda  e^{- \lambda t}.
\end{aligned}
\end{equation}
Furthermore, we have the global bounds
 \begin{align}
\label{eq:rhobound}
\sup_{t\geq 0} \| \rho_f(t) \|_{\Ld^\infty(\T^3)} < +\infty,
 \end{align}
 and
 \begin{align}\label{ineq:nabu}
   \int_1^{+\infty} \| \nabla u(\tau) \|_{\Ld^\infty(\T^3)} \, \dd\tau \leq \eta(\mathscr{E}(0)),
 \end{align}
where $\eta$ is a continuous nonnegative function such that $\eta(0)=0$.
\end{thm}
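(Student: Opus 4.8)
The plan is to establish the exponential decay of the modulated energy $\mathscr{E}(t)$ through a bootstrap argument, and to extract the auxiliary global bounds \eqref{eq:rhobound}--\eqref{ineq:nabu} along the way, since these estimates are intertwined. The starting point is the (rigorous version of the) modulated energy identity $\frac{\dd}{\dd t}\mathscr{E}(t)+\D(t)=0$ from \cite{CK}, together with a Poincaré-type inequality on the torus. Indeed, once one knows that the density $\rho_f$ is bounded in $\Ld^\infty$ and that $\langle\rho_f\rangle\equiv 1$, one can relate the dissipation $\D(t)$ to the modulated energy itself: the Dirichlet term $\|\nabla u - \nabla\langle u\rangle\|_{\Ld^2}^2 = \|\nabla u\|_{\Ld^2}^2$ controls $\|u-\langle u\rangle\|_{\Ld^2}^2$ by Poincaré, while the friction term $\int f|u-v|^2$ controls, up to lower-order cross terms, $\int f|v-\langle j_f\rangle|^2$ and $|\langle j_f\rangle - \langle u\rangle|^2$. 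This would yield a differential inequality $\frac{\dd}{\dd t}\mathscr{E}(t)\leq -c\,\mathscr{E}(t)$ and hence \eqref{eq:expodecayE} by Grönwall.

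The main difficulty — and the reason a bootstrap is needed — is precisely to obtain \eqref{eq:rhobound}, the uniform $\Ld^\infty$ bound on $\rho_f$, because this bound is what makes the coercivity argument above work, yet it is itself not available a priori for weak solutions. The strategy here is the classical one of propagating moment bounds via the characteristics of the Vlasov equation: if $(X,V)(s;t,x,v)$ denote the characteristics, then $\rho_f(t,x) = \int f_0(\ldots)\ldots$ and controlling $\|\rho_f(t)\|_{\Ld^\infty}$ reduces, via the pointwise decay assumption $N_q(f_0)<\infty$ with $q>4$, to controlling the growth of velocity characteristics, which in turn is governed by $\int_0^t\|u(\tau)\|_{\Ld^\infty}\,\dd\tau$ or $\int\|\nabla u\|_{\Ld^\infty}$. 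The Fujita-Kato smallness $\|u_0\|_{\dot\H^{1/2}}<\C_\star^{-2}$ enters here: it guarantees (via Proposition~\ref{coro:ns3D}, presumably a small-data global regularity statement for the forced Navier-Stokes equation) that $u$ remains in a subcritical regularity class, so that $\nabla u\in\Ld^1_t\Ld^\infty_x$ on $[1,\infty)$ with a quantitative bound. One then sets up a continuation/bootstrap on a maximal interval: assume $\|\rho_f(t)\|_{\Ld^\infty}\leq K$ and $\mathscr{E}(t)\leq \mathscr{E}(0)\C'_\lambda e^{-\lambda t}$ on $[0,T]$; use these to bound the Brinkman force $j_f-\rho_f u$ in an appropriate norm (controlling $j_f$ requires $M_\alpha f$, $\alpha>3$, to be propagated as well); feed this into the Navier-Stokes regularity estimate to bound $\int_0^T\|\nabla u\|_{\Ld^\infty}$; feed that back into the characteristics to improve the bound on $\|\rho_f\|_{\Ld^\infty}$ and close the moment estimates; and finally re-derive the coercivity of $\D$ relative to $\mathscr{E}$ to strictly improve the decay constant. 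The smallness of $\mathscr{E}(0)$ encoded in \eqref{ineq:Emodsmall3D}, with the function $\ffi$ absorbing all the relevant data-dependent constants ($N_q(f_0)$, $M_\alpha f_0$, $\E(0)$, $\|u_0\|_{\H^{1/2}}$), is exactly what is needed to start and close this loop.

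Once the bootstrap closes on $[0,\infty)$, all three conclusions follow: \eqref{eq:expodecayE} from the improved differential inequality, \eqref{eq:rhobound} as the closed-bootstrap bound on the density, and \eqref{ineq:nabu} — with $\eta$ continuous, $\eta(0)=0$ — from the Navier-Stokes estimate on $[1,\infty)$ combined with the exponential decay of the forcing term driven by $\sqrt{\mathscr{E}(t)}$; the function $\eta$ vanishing at $0$ reflects that when $\mathscr{E}(0)=0$ the solution is already at equilibrium. The integral being over $[1,+\infty)$ rather than $[0,+\infty)$ is a mild technical point accommodating the instantaneous smoothing of Navier-Stokes, so that near $t=0$ one only uses $\Ld^2$-in-time bounds on $\nabla u$.

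I expect the principal obstacle to be the circularity in the bootstrap: the decay of $\mathscr{E}$ needs the uniform density bound, the density bound needs $\int\|\nabla u\|_{\Ld^\infty}$ to be small (or at least finite), that needs the forcing $j_f-\rho_f u$ to be small, and smallness of the forcing needs both the density bound and decay of $\mathscr{E}$. Untangling this requires carefully choosing the bootstrap hypotheses with the right quantitative margins — which is where the somewhat opaque composite smallness condition \eqref{ineq:Emodsmall3D} with its universal function $\ffi$ comes from — and verifying that each step strictly improves its hypothesis. A secondary technical burden is making the formal modulated-energy identity and the characteristics-based density estimates rigorous at the level of Fujita-Kato/renormalized weak solutions, which likely proceeds by regularization and passage to the limit, using the structure of the energy inequality \eqref{ineq:nrj} rather than an exact identity.
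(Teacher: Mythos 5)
Your outline reproduces the paper's architecture faithfully: the conditional coercivity $\D(t)\geq\lambda\mathscr{E}(t)$ once $\rho_f$ is bounded in $\Ld^\infty$ (the paper's Lemma~\ref{decay+}), the characteristics/straightening argument reducing the density bound to the smallness of $\int\|\nabla u\|_{\Ld^\infty}\,\dd s$ (Lemmas~\ref{charac} and~\ref{lem:rhoj}), the parabolic regularity for $u$ under the Fujita--Kato condition, and a continuation argument on a maximal interval. However, there is a genuine gap at the single most important quantitative step: you never explain how $\int_1^{t^\star}\|\nabla u(s)\|_{\Ld^\infty}\,\dd s$ becomes \emph{small} (of order a positive power of $\mathscr{E}(0)$) rather than merely finite. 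Your proposal suggests this smallness is inherited from smallness of the Brinkman force, but that cannot work as stated: neither $\E(0)$ nor $\|u_0\|_{\H^{1/2}}$ is assumed small (the latter is only below a fixed universal threshold $1/\C_\star^2$), so the velocity field itself, its free heat evolution, and the maximal-regularity bounds on $\D^2 u$ are all only of size $\lesssim_0 1$. Feeding these directly into a Sobolev embedding gives $\int_1^{t^\star}\|\nabla u\|_{\Ld^\infty}\,\dd s\lesssim_0 1$, which cannot beat the fixed threshold $\delta$ of the straightening lemma, and the bootstrap does not close.

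The missing idea is the Gagliardo--Nirenberg interpolation
\begin{equation*}
\|\nabla u(s)\|_{\Ld^p(\T^3)}\lesssim \|\D^2 u(s)\|_{\Ld^r(\T^3)}^{\alpha}\,\|u(s)-\langle u(s)\rangle\|_{\Ld^2(\T^3)}^{1-\alpha},
\end{equation*}
which splits the gradient between the bounded-but-not-small higher-order norm and the quantity $\|u-\langle u\rangle\|_{\Ld^2}\leq\sqrt{2\,\mathscr{E}(s)}\lesssim \mathscr{E}(0)^{1/2}e^{-\lambda s/2}$, whose smallness and exponential decay are exactly what the modulated energy controls on $[1,t^\star)$. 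This is the paper's Lemma~\ref{lem:decayfinal}, and it must be applied twice (Lemma~\ref{lem:almost}): a first pass with $\alpha<1$ only reaches $\nabla u\in\Ld^a_t\Ld^p_x$ for some $p>6$, which is then used to upgrade the integrability exponent of the convection term $(u\cdot\nabla)u$ from $r_a>2$ to $\widetilde r_a>3$, after which a second pass reaches $p=\infty$ and yields $\int_1^{t^\star}\|\nabla u\|_{\Ld^\infty}\,\dd s\lesssim_0\mathscr{E}(0)^{\gamma}$. A secondary structural remark: in the paper the exponential decay of $\mathscr{E}$ is not a bootstrap hypothesis to be improved, but an automatic consequence (Lemma~\ref{decay+}) of the density bound valid on $[1,t^\star)$ by the very definition of $t^\star$; the only quantity to be strictly improved at $t^\star$ is the pair of open conditions defining strong existence times and the $\delta$-smallness of $\int_1^{t}\|\nabla u\|_{\Ld^\infty}$.
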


\begin{rem}
The constants  $\C_\lambda, \C'_\lambda$ appearing in Theorems~\ref{thm} and \ref{realthm} are uniform with respect to the various (semi-)norms of $u_0$ and $f_0$ that appear in the assumptions.
\end{rem}

It is actually even possible to describe the structure of the final density $\rho^\infty$.

\begin{propo}
\label{propo-infini}
For $\delta$ small enough, under the assumptions of Theorem~\ref{thm}, if furthermore $u\in\mathscr{C}^0(\R_+;\W^{1,\infty}(\T^3))$  and 
\begin{align}\label{ineq:assnab}
\int_0^{+\infty} \|\nabla u(\tau)\|_{\Ld^\infty(\T^3)} \,\dd \tau\leq \delta,
\end{align}
then there exists a vector field
\begin{align*}
  \R_+\times\T^3\times\R^3 &\longrightarrow \R^3\\
  (s,x,v) &\longmapsto {\Y}^s_{\infty,x,v},
\end{align*}
belonging to $\mathscr{C}^0(\R_+;\mathscr{C}^1(\T^3\times\R^3))$ and such that the following holds.
We have
\begin{align}
\label{eq-rhoinfini}
\rho^\infty(x)= \int_{\R^3} f_0 \left(\Y^0_{\infty,x,v},v\right)  \left|\det \mathscr{A} \left(\infty,x,v\right) \right|  \, \dd v,
\end{align}
with 
\begin{align}
\label{eq-infini}
 \mathscr{A}\left(\infty,x,v\right) = \I_3 + \int_0^{+\infty}  e^s \nabla u\left(\tau, \Y^\tau_{\infty,x,v} \right) \D_v \Y^\tau_{\infty,x,v} \, \dd \tau,
\end{align}
and  $s\mapsto \Y^s_{\infty,x,v}$ satisfies
\begin{multline}
\label{eq-Xinfini} 
\Y^s_{\infty,x,v} 
  = x -e^{-s}v  + \frac{\langle u_0+j_{f_0}\rangle}{2} \left( e^{-s}+  s\right) \\
 -\int_0^{+\infty} \Big[\mathbf{1}_{[0,s]}(\tau)e^{\tau-s}+\mathbf{1}_{\tau\geq s}\Big] \left( u(\tau, \Y^\tau_{\infty,x,v}) - \frac{\langle u_0+j_{f_0}\rangle}{2}\right) \, \dd\tau.
\end{multline}
\end{propo}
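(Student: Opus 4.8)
\emph{Plan of proof.}
I would write $u_\infty:=\frac{\langle u_0+j_{f_0}\rangle}{2}$ for the asymptotic velocity, and start from the fact that under the extra hypothesis $u\in\mathscr{C}^0(\R_+;\W^{1,\infty}(\T^3))$ the field $u$ is Lipschitz in $x$ locally uniformly in time, so the Vlasov equation is solved by its characteristics: if $(\X,\V)(s;x_0,v_0)$ denotes the solution of $\dot\X=\V$, $\dot\V=u(s,\X)-\V$ started from $(x_0,v_0)$ at time $0$, then $\rho_f(t,\cdot)=(\X(t;\cdot,\cdot))_\#(f_0\,\dd x_0\,\dd v_0)$, the factor $e^{3t}$ coming from $\div_v(u-v)=-3$ being exactly absorbed in the push-forward. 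I would also record at the outset that $u-u_\infty\in\Ld^1(\R_+;\Ld^\infty(\T^3))$: this follows by interpolating (Gagliardo--Nirenberg on $\T^3$) the decay $\|u(t)-u_\infty\|_{\Ld^2}\lesssim\sqrt{\mathscr{E}(t)}\lesssim e^{-\lambda t/2}$, a consequence of Theorem~\ref{realthm} and the conservation laws, with the standing bound $\int_0^{+\infty}\|\nabla u(\tau)\|_{\Ld^\infty}\,\dd\tau\le\delta$. The scheme is then: (i) build $\Y$ as a fixed point; (ii) identify it with the genuine characteristics re-indexed by their asymptotic position; (iii) pass to the limit in a change of variables for $\rho_f(t,\cdot+tu_\infty)$.

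\emph{Step 1: the limiting flow.}
For $(x,v)\in\T^3\times\R^3$ I would define $\Y^\cdot_{\infty,x,v}$ as the fixed point of the map sending a bounded continuous curve $\mathscr Z:\R_+\to\T^3$ to the right-hand side of \eqref{eq-Xinfini} with $\Y$ replaced by $\mathscr Z$. Since the kernel $\mathbf 1_{[0,s]}(\tau)e^{\tau-s}+\mathbf 1_{\tau\ge s}$ is bounded by $1$, the bound $\int_0^{+\infty}\|\nabla u(\tau)\|_{\Ld^\infty}\,\dd\tau\le\delta$ makes this a $\delta$-contraction for the uniform norm, so for $\delta<1$ it has a unique fixed point, continuous in $s$. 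Differentiating \eqref{eq-Xinfini} in $x$ and in $v$ gives linear integral equations for $\D_x\Y^s$ and $\D_v\Y^s$, and a Gr\"onwall argument (same bound on the kernel) yields $\sup_s\|\D_x\Y^s\|\le(1-\delta)^{-1}$ and $\sup_s\|e^{s}\D_v\Y^s\|\le(1-\delta)^{-1}$; in particular the integrand $e^\tau\nabla u(\tau,\Y^\tau_{\infty,x,v})\D_v\Y^\tau_{\infty,x,v}$ of \eqref{eq-infini} is dominated by $(1-\delta)^{-1}\|\nabla u(\tau)\|_{\Ld^\infty}\in\Ld^1(\R_+)$, so $\mathscr A(\infty,x,v)=\I_3+O(\delta)$ is well defined and invertible. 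Differentiating \eqref{eq-Xinfini} in $s$ shows that $(\Y^s_{\infty,x,v},\dot\Y^s_{\infty,x,v})$ solves the characteristic system with data $(\Y^0_{\infty,x,v},v)$ at $s=0$, and letting $s\to+\infty$ in \eqref{eq-Xinfini} (dominated convergence, using $u-u_\infty\in\Ld^1_t\Ld^\infty_x$) gives $\Y^s_{\infty,x,v}-su_\infty\to x$; by Cauchy--Lipschitz uniqueness this forces $\X(s;\Y^0_{\infty,x,v},v)=\Y^s_{\infty,x,v}$. The claimed $\mathscr{C}^1$-regularity of $\Y$ in $(x,v)$ requires $\nabla u(\tau,\cdot)$ to be continuous in $x$, which holds for $\tau>0$ by parabolic smoothing of the Navier--Stokes flow under the a priori bounds, the interval near $\tau=0$ being treated separately.

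\emph{Step 2: identification of $\rho^\infty$.}
From $\dot\V=u-\V$ one computes $\X(s;x_0,v_0)-su_\infty=x_0+(1-e^{-s})(v_0-u_\infty)+\int_0^s(1-e^{-(s-\tau)})(u(\tau,\X(\tau;x_0,v_0))-u_\infty)\,\dd\tau$, which, by $u-u_\infty\in\Ld^1_t\Ld^\infty_x$, converges as $s\to+\infty$ to some $X_\infty(x_0,v_0)$. For $\delta$ small the map $x_0\mapsto X_\infty(x_0,v)$ has Jacobian $\I_3+O(\delta)$, hence is a $\mathscr{C}^1$-diffeomorphism of $\T^3$, and by Step 1 its inverse is $x\mapsto\Y^0_{\infty,x,v}$. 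Writing $\tilde\rho(t,\cdot):=\rho_f(t,\cdot+tu_\infty)$, estimate \eqref{eq-expodecay} (taking $x$-marginals, which is $1$-Lipschitz for $\W_1$) already gives $\tilde\rho(t)\rightharpoonup\rho^\infty$; on the other hand $\int_{\T^3}\psi\,\dd\tilde\rho(t)=\int_{\T^3\times\R^3}\psi(\X(t;x_0,v_0)-tu_\infty)\,f_0\,\dd x_0\,\dd v_0\to\int\psi(X_\infty(x_0,v_0))\,f_0\,\dd x_0\,\dd v_0$ for $\psi\in\mathscr{C}(\T^3)$ by dominated convergence, so $\rho^\infty=(X_\infty)_\#f_0$. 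Changing variables through the diffeomorphism $(x_0,v_0)\mapsto(X_\infty(x_0,v_0),v_0)$ of $\T^3\times\R^3$ (inverse $(x,v)\mapsto(\Y^0_{\infty,x,v},v)$, Jacobian determinant $|\det\D_x\Y^0_{\infty,x,v}|$) gives $\rho^\infty(x)=\int_{\R^3}f_0(\Y^0_{\infty,x,v},v)\,|\det\D_x\Y^0_{\infty,x,v}|\,\dd v$, the velocity integrability and $\rho^\infty\in\Ld^\infty(\T^3)$ being ensured by $N_q(f_0)<+\infty$ with $q>3$. It remains to recognise $|\det\D_x\Y^0_{\infty,x,v}|=|\det\mathscr A(\infty,x,v)|$: setting $\mathscr W^s:=e^s(\dot\Y^s_{\infty,x,v}-u_\infty)$, the determinant of the block matrix with rows $(\D_x\Y^s,\D_v\Y^s)$ and $(\D_x\mathscr W^s,\D_v\mathscr W^s)$ equals, by the chain rule — the genuine phase flow has Jacobian determinant $e^{-3s}$, the velocity rescaling $(\Y^s,\V^s)\mapsto(\Y^s,e^s(\V^s-u_\infty))$ has determinant $e^{3s}$, and one precomposes with $x\mapsto\Y^0_{\infty,x,v}$ — the constant $\det\D_x\Y^0_{\infty,x,v}$; letting $s\to+\infty$ one has $\D_x\Y^s\to\I_3$, $\D_v\Y^s=O(e^{-s})$, $\D_v\mathscr W^s=\I_3+\int_0^se^\tau\nabla u(\tau,\Y^\tau_{\infty,x,v})\D_v\Y^\tau_{\infty,x,v}\,\dd\tau\to\mathscr A(\infty,x,v)$ and $\|\D_x\mathscr W^s\|\,\|\D_v\Y^s\|\to0$, so that determinant tends to $\det\mathscr A(\infty,x,v)$. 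This yields \eqref{eq-rhoinfini}, while \eqref{eq-Xinfini} and \eqref{eq-infini} are by construction the definitions used above.

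\emph{Main obstacle.}
The hard part is the package of uniform-in-time estimates that legitimise the passage to the limit: the integrability $u-u_\infty\in\Ld^1(\R_+;\Ld^\infty(\T^3))$ (not automatic from the exponential decay of $\mathscr{E}$ alone, and for which the smallness of $\delta$ is needed to close the characteristic bounds), the $\mathscr{C}^1$-regularity — rather than mere Lipschitz regularity — of the asymptotic flow, which forces the use of the parabolic regularisation of $u$ for positive times, and above all the exact bookkeeping in the Liouville-type identity of Step 2, which is precisely where the exponential weights in \eqref{eq-infini} come from and must be kept track of without loss.
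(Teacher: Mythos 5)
Your proposal is correct, and it reaches \eqref{eq-rhoinfini}--\eqref{eq-Xinfini} by a genuinely different route from the paper. The paper constructs $\Y^s_{\infty,z}$ as the $t\to+\infty$ limit of the finite-time straightened characteristics $\Y^s_{t,z}=\X(s;t,x,\Gamma_{t,x}^{-1}(v))$, proving a Cauchy criterion in $\mathscr{C}^0_s\mathscr{C}^1_z$ (Lemma~\ref{lem:Zt}); you instead build $\Y^\cdot_{\infty,x,v}$ directly as the fixed point of the implicit equation \eqref{eq-Xinfini}, which is a $\delta$-contraction thanks to \eqref{ineq:assnab}, and then identify it with a genuine characteristic by Cauchy--Lipschitz. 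The more significant divergence is in the identification of $\rho^\infty$: the paper pulls back, writing $\int\rho_f(t)\psi=\int f_0(\Y^0_{t,z},v)\psi(x)|\det\mathscr{A}(t,z)|\,\dd z$ with $\mathscr{A}(t,z)=e^t\D_v\Gamma_{t,x}^{-1}$, and then — because $f_0$ is only measurable and no uniqueness for the coupled system is available to regularize it — must introduce the auxiliary change of variables $x\mapsto\Lambda_{t,v}(x)=\Y^0_{t,x,v}$ (Lemma~\ref{lem:lam}) to move the $t$-dependence out of the argument of $f_0$ before passing to the limit. Your forward push-forward $\int\psi(\X(t;x_0,v_0)-tu_\infty)f_0\,\dd x_0\dd v_0$ sidesteps this issue entirely, since the $t$-dependence sits only in the continuous test function and dominated convergence applies with $f_0\in\Ld^1$; you then recover the weight $|\det\mathscr{A}(\infty,x,v)|$ from $|\det\D_x\Y^0_{\infty,x,v}|$ by a Liouville identity (constancy in $s$ of the Jacobian of $(x,v)\mapsto(\Y^s,e^s(\dot\Y^s-u_\infty))$, whose limits as $s\to+\infty$ are $\I_3$, $0$, and $\mathscr{A}(\infty)$ in the relevant blocks, the cross term vanishing because $\|\D_v\Y^s\|\|\D_x\mathscr{W}^s\|\lesssim e^{-s}\int_0^s e^\tau\|\nabla u(\tau)\|_{\Ld^\infty}\dd\tau\to0$). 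This identity is consistent with the paper's, where $\det\D_x\Y^0_{t,x,v}=\det\mathscr{A}(t,x,v)$ already holds at finite $t$ by composing the straightening map with the backward flow. Both approaches rest on the same uniform bounds ($|\D_x\Y^s|\lesssim1$, $|e^s\D_v\Y^s|\lesssim1$, and $u-\tfrac{\langle u_0+j_{f_0}\rangle}{2}\in\Ld^1(\R_+;\Ld^\infty(\T^3))$, which you obtain by Gagliardo--Nirenberg interpolation where the paper uses the cruder $\|u-\langle u\rangle\|_{\Ld^\infty}\lesssim\|\nabla u\|_{\Ld^\infty}$ plus the exponential convergence of $\langle u\rangle$); your treatment of the $\mathscr{C}^1$-regularity, appealing to parabolic smoothing for the continuity of $\nabla u(\tau,\cdot)$ at positive times, is at the same level of detail as the paper's.
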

\begin{rem}
The assumption~\eqref{ineq:assnab} on $u$ is restrictive  in the sense that the integral goes down to the time $t=0$. Indeed, the parabolic regularization and the estimates obtained in Section~\ref{sec:higherorder} prove that with the assumptions of Theorem~\ref{thm} alone,  $u\in\mathscr{C}^0([\ep,+\infty);\W^{1,\infty}(\T^3))$ for all $\eps>0$. The assumption~\eqref{ineq:assnab} therefore requires higher regularity for the initial fluid velocity $u_0$.
It is also possible to avoid this extra regularity assumption, replacing $f_0$ by the value of $f$ at time $t=1$, and all  integrals starting from $s=0$ by the same starting from $s=1$. The relevant  assumption  replacing~\eqref{ineq:assnab}, namely
\begin{align}\label{ineq:assnab1}
\int_1^{+\infty} \|\nabla u(\tau)\|_{\Ld^\infty(\T^3)} \,\dd \tau\leq \delta,
\end{align}
is then obtained as a consequence of Theorem~\ref{realthm}, see \eqref{ineq:nabu}, when $\mathscr{E}(0)$ is taken small enough.
  \end{rem}
  
  \begin{rem}\label{rem:annonce}
Proposition~\ref{propo-infini} is proved in Section~\ref{sec-asymp}. For the sake of clarity the proof focuses on the case $\langle u_0+j_{f_0} \rangle =0$. The proof of the general case is  similar and adds in only a few lines of computations, see Remark~\ref{rem:Zt}.
\end{rem}



There are mainly two stabilization mechanisms at stake in the large time dynamics of solutions to the Vlasov-Navier-Stokes system. The first one is due to \emph{friction} in the Vlasov equation, that forces the distribution function to concentrate in velocity. 
The second stabilization mechanism comes from the \emph{dissipation} in the Navier-Stokes equations.
There is  a competition in the Navier-Stokes equations between this dissipation and the possible growth of the non-linearity  and the Brinkman force $F= j_f- \rho_f u$.
Loosely speaking, the smallness assumptions we make allow to tame  the influence of the forcing.  

As already briefly discussed in the introduction, thanks to the fine structure of the system, there happens to be a modulated energy/dissipation identity that follows from the energy identity and the conservation laws of the system, as exhibited by Choi and Kwon \cite{CK}. This identity somehow reflects 
the two stabilization mechanisms we have just discussed.

\subsection{The case of dimension $2$}

For the sake of conciseness and physical relevance, we  focus in this paper on the case of dimension $3$. However, with the same method that we develop, it is possible to study the Vlasov-Navier-Stokes system on $\T^2 \times \R^2$ with weaker regularity assumptions on the initial data. Namely, we can treat admissible data (in all the Definitions, statements or equations discussed in this section, one has to replace $\T^3, \R^3$ by $\T^2, \R^2$ when necessary), without requiring the higher $\dot{\H}^{1/2}$ regularity for $u_0$ like in dimension $3$, see Theorem~\ref{thm} (the fact that more stringent regularity assumptions are required in dimension $3$ is due to the well-known difficulties related to the resolution of the Navier-Stokes equations).
%
For the record, we gather in the following statement what we may obtain in dimension $2$.

\begin{thm}
There exists a nondecreasing onto function $\ffi: \R_+\rightarrow\R_+$ such that the following holds.
Let $(f_0, u_0)$ be an admissible initial condition such that $N_q(f_0)<+\infty$ for some $q>4$. Assume that the initial modulated energy $\mathscr{E}(0)$ is small enough, in the sense that
\begin{align}
\label{ineq:Emodsmall}\ffi \left(N_q(f_0)+ \E(0)+1  \right)\mathscr{E}(0) < 1.
\end{align}
Then the weak solution $(u,f)$ to the Vlasov-Navier-Stokes system satisfies~\eqref{eq-expodecay}.
\end{thm}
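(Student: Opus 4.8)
The plan is to mimic, in the two-dimensional setting, the bootstrap strategy that underlies Theorem~\ref{realthm}, but taking advantage of the fact that the 2D Navier–Stokes equations are globally well-posed and enjoy better a priori estimates, so that the $\dot\H^{1/2}$-smallness of $u_0$ is no longer needed. The target is again to show that the modulated energy $\mathscr{E}(t)$ decays exponentially; once that is done, Lemma~\ref{lem:W1} together with the asymptotic description of the density (the analogue of Proposition~\ref{propo-infini}) upgrades the decay of $\mathscr{E}$ to the full statement~\eqref{eq-expodecay}. So the heart of the matter is the exponential decay of $\mathscr{E}$ under the single smallness condition~\eqref{ineq:Emodsmall}.

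First I would set up the same Grönwall-type machinery as in 3D. The modulated energy/dissipation identity $\frac{\dd}{\dd t}\mathscr{E}(t)+\D(t)=0$ of Choi–Kwon holds verbatim. The obstruction to turning this into exponential decay is that $\D(t)$ does not directly control $\mathscr{E}(t)$: one controls $\|u-\langle u\rangle\|_{\Ld^2}^2$ by $\|\nabla u\|_{\Ld^2}^2$ via Poincaré, and one controls $|\langle j_f\rangle-\langle u\rangle|^2$ by a Cauchy–Schwarz argument against $M_2 f$ and the friction part of $\D$; but the genuinely kinetic term $\int f|v-\langle j_f\rangle|^2$ is only tamed if one has a uniform-in-time bound on a velocity moment $M_\alpha f(t)$ with $\alpha>2$ (or on $\|\rho_f\|_{\Ld^\infty}$, via interpolation), exactly as in the 3D proof. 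This is where the bootstrap enters: one posits a bound on moments on a maximal time interval, uses the resulting control to prove $\mathscr{E}$ decays exponentially, feeds the decay back into the moment estimates (the drag term in the Vlasov equation is the good term, and the fluid velocity $u$, now small in the relevant norms because of the decay of $\mathscr{E}$, is the perturbation), and closes the loop provided $\mathscr{E}(0)$ is small enough, measured through the function $\ffi$. The pointwise decay hypothesis $N_q(f_0)<+\infty$ with $q>4$ is what makes the moment propagation work: it gives, via the characteristics of the Vlasov equation and a Gronwall argument on $m_\alpha f$, uniform bounds on $\|\rho_f(t)\|_{\Ld^\infty}$ through a standard kinetic interpolation $\|\rho_f\|_{\Ld^\infty}\lesssim N_q(f_0)^{3/(q+3)}(m_q f)^{q/(q+3)}$ (with the obvious 2D adaptation).

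The key simplification in 2D is that the Navier–Stokes estimates needed for the bootstrap are essentially free: from the energy inequality $u\in\Ll^\infty(\R_+;\Ld^2)\cap\Ll^2(\R_+;\H^1)$, and in 2D this already yields, by the standard maximal-regularity and Ladyzhenskaya-type arguments, control of $\int_1^{+\infty}\|\nabla u(\tau)\|_{\Ld^\infty}\dd\tau$ in terms of the data without any smallness of $\|u_0\|_{\dot\H^{1/2}}$ — whereas in 3D this is precisely the part of the argument forcing the Fujita–Kato smallness condition. Concretely I would: (i) establish the uniform moment bound $\sup_t M_\alpha f(t)<+\infty$ and $\sup_t\|\rho_f(t)\|_{\Ld^\infty}<+\infty$ by the characteristics/bootstrap argument; (ii) prove the differential inequality $\frac{\dd}{\dd t}\mathscr{E}(t)+\D(t)=0$ together with a coercivity estimate $\D(t)\geq c\,\mathscr{E}(t)$ valid as long as the moment bounds hold, giving $\mathscr{E}(t)\leq \mathscr{E}(0)e^{-ct}$; (iii) transfer exponential decay of $\mathscr{E}$ to exponential decay of $\|u(t)-\tfrac{\langle u_0+j_{f_0}\rangle}{2}\|_{\Ld^2}$ and of the Wasserstein distance by Lemma~\ref{lem:W1}; (iv) identify the limiting density $\rho^\infty$ by the characteristic-flow analysis of Proposition~\ref{propo-infini}, whose hypothesis~\eqref{ineq:assnab1} is now guaranteed by the analogue of~\eqref{ineq:nabu}.

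The main obstacle, as in 3D, is closing the bootstrap: one has to quantify, through the function $\ffi$, how small $\mathscr{E}(0)$ must be so that the decay one extracts on a maximal interval strictly improves the moment bounds one assumed, thereby ruling out finite blow-up time of the bootstrapped quantities. The delicate point is the interplay between the time-integrated $\|\nabla u\|_{\Ld^\infty}$ control (which governs the growth of $\D_v$ of the characteristics and hence of $\det\mathscr{A}$) and the exponential decay of $\mathscr{E}$ (which makes $\|\nabla u\|_{\Ld^\infty}$ itself integrable and small). Everything else — Poincaré inequalities, Cauchy–Schwarz on the cross terms, the conservation laws $\int f=1$ and $\langle u+j_f\rangle=\langle u_0+j_{f_0}\rangle$ — is routine. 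Since the 2D Navier–Stokes theory removes the most technical 3D difficulty, the proof is strictly a simplification of the one for Theorem~\ref{realthm}, which is why the paper only records the statement.
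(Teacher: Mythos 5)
Your proposal is correct and follows essentially the same route as the paper, which itself only records that the three-dimensional bootstrap of Theorem~\ref{realthm} applies \emph{mutatis mutandis} in 2D, with the $\dot\H^{1/2}$ smallness rendered unnecessary by the global well-posedness and regularity theory of the 2D Navier--Stokes equations. Your steps (uniform $\Ld^\infty$ bound on $\rho_f$ via the straightened characteristics, coercivity $\D(t)\gtrsim\mathscr{E}(t)$ under that bound, exponential decay fed back through Gagliardo--Nirenberg interpolation to close the smallness of $\int\|\nabla u\|_{\Ld^\infty}$, then Lemma~\ref{lem:W1} and the profile construction) match the paper's intended argument.
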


Note that the statement is also strengthened compared to dimension $3$ since there is uniqueness of the weak solution of the Vlasov-Navier-Stokes system: it has been indeed established in \cite{HM3} that in dimension $d=2$, under the pointwise decay assumption of order $q>4$ of Definition~\ref{def:decay} (and in fact an even less stringent condition is sufficient), uniqueness holds for weak solutions of the Vlasov-Navier-Stokes system. 

The proof developed in dimension $3$ applies \emph{mutatis mutandis}, with the following significant simplifications:
\begin{itemize}
\item the $\dot{\H}^{1/2}$ regularity for the fluid velocity is not required in order to get higher order energy estimates for positive times, and therefore in particular we do not need to propagate  $\dot{\H}^{1/2}$ estimates for all times;
\item we can rely on various estimates already proved in \cite{HM3};
\item several indices in the Sobolev embeddings are more favorable in dimension $2$.
\end{itemize}
Let us finally mention that Proposition~\ref{propo-infini} holds as well in dimension $2$. 

 \subsection{Outline of the proof and organisation of the paper}

To conclude this section, let us  provide a (non-technical) outline of the proof of Theorem~\ref{realthm}. This also gives the opportunity to describe how this paper is organized.

The purpose of the first Section~\ref{sec:formal} is to explain how Theorem~\ref{thm} can be deduced from Theorem~\ref{realthm}, and more strikingly how the proof of the latter boils down to \emph{one} single uniform estimate on the local density of the kinetic phase. In Section~\ref{subsec:conslaw} we gather conservation laws for the Vlasov-Navier-Stokes system. Section~\ref{subsec:lemw1} emphasizes the role of the modulated energy : we prove therein Lemma~\ref{lem:W1} which explains how the decay of this functional leads to  concentration in velocity for the particles. Sections~\ref{subsec:dissmod} and Sections~\ref{subsec:condlong} detail the following key observation of \cite{CK} : up to a control of the $\Ld^\infty(\R_+;\Ld^\infty(\T^3))$ norm of the local density $\rho_f = \int_{\R^3} f \, \dd v$, the modulated energy is essentially controlled by its dissipation, yielding exponential decay. We also explain, following an argument of Jabin \cite{Jab}, how one can recover the existence of the asymptotic profile $\rho^\infty$ appearing in \eqref{eq-expodecay}, once the exponential decay is established.

As a consequence of Section~\ref{sec:formal}, the proof of Theorem~\ref{realthm} (and therefore Theorem~\ref{thm}) relies only on obtaining the following global bound for $\rho_f$ :
\begin{equation}
\label{eq:globalrho-intro}
\sup_{t\geq 0} \| \rho_f(t) \|_{\Ld^\infty(\T^3)} <+ \infty,
\end{equation}
and in fact our bootstrap strategy actually will prove the same estimate for $j_f=\int_{\R^3} fv\,\dd v$.

In Section~\ref{sec:changesofvariables}, we present the main tools we used to obtain such bounds on moments. They are based on the method of characteristics, which allows, considering the characteristics curves $(\X, \V)$ solving the system
\begin{equation}
\label{eq:carac-intro}
\begin{aligned}
\dot{\X}(s;t,x,v) &= \V(s;t,x,v),\\
\dot{\V}(s;t,x,v) &= u(s,\X(s;t,x,v))-\V(s;t,x,v),
\end{aligned}
\end{equation}
with $(\X(t;t,x,v),\V(t;t,x,v))=(x,v)$, to write solutions to the Vlasov equation as
\begin{equation}
f(t,x,v) = e^{3t} f_0(\X(0;t,x,v), \V(0;t,x,v)).
\end{equation}
We deduce that
\begin{align}
\label{eq:rhof-intro}
\rho_f(t,x) = e^{3t} \int_{\R^3} f_0(t,\X(0;t,x,v),\V(0;t,x,v))\,\dd v.
\end{align}
In order to study~\eqref{eq:rhof-intro}, we rely on a change of variables in velocity, referred to as the  \emph{straightening} change of variables, namely $v \mapsto \V(0;t,x,v)$. It is not obvious that this map is a diffeomorphism. In, Section~\ref{sec:changesofvariables}, we provide a sufficient condition to ensure this: there exists a constant $\delta>0$ such that, if
\begin{align}\label{ineq:nabla5-intro}
\int_0^t \|\nabla u(s)\|_{\Ld^\infty(\T^3)}\,\dd s < \delta,
\end{align}
then indeed the straightening change of variable is admissible. Under this smallness condition, the outcome is the estimate
\begin{align*}
\| \rho_f\|_{\Ld^\infty(0,t; \Ld^\infty(\T^3))} \lesssim   N_q(f_0).
\end{align*} 
Similar bounds for $j_f = \int_{\R^3} v f \, \dd v$ can be obtained as well.

This change of variables is inspired by that used by Bardos and Degond \cite{BD} for the study of global small solutions to the Vlasov-Poisson system on $\R^3 \times \R^3$.

\bigskip

As a consequence of Section~\ref{sec:changesofvariables}, the remaining task is now to prove that for small enough initial modulated energy, the estimate \eqref{ineq:nabla5-intro} holds for all $t\geq 1$ (small times are handled by local estimates). Sections~\ref{sec:LinfH1} to~\ref{sec:E(0)small} are dedicated to this task. To this end, we set up a bootstrap argument. Loosely speaking, we consider
\begin{align}
\label{def:tstar-intro}t^\star := \sup\left\{t\geq 1, \, \int_1^t \|\nabla u(s)\|_{\Ld^\infty(\T^3)}\,\dd s < \delta\right\},
\end{align}
 and the aim is to show that $t^\star =+\infty$.
The general strategy is as follows. Assuming $t^\star<+\infty$, we work on the interval of time $[1,t^\star]$.
We shall obtain regularity estimates for $u$ using higher order energy estimates for the Navier-Stokes equations and maximal parabolic estimates for the Stokes equations. Such bounds are not relevant in terms of decay in time but
on $[1,t^\star]$ we have thanks to the straightening change of variables 
\begin{align*}
\sup_{t\in[1,t^\star]} \| \rho_f(t) \|_{\Ld^\infty(\T^3)} \lesssim 1 .
\end{align*}
Therefore, by  Choi-Kwon's key observation, $\mathscr{E}(t)$ decays exponentially fast on $[1,t^\star)$.
 The idea is then to interpolate the higher regularity estimate with the pointwise $\Ld^2(\T^3)$ bound bearing on $u-\langle u \rangle$ which is provided by the exponential decay of the modulated energy. More precisely,
we use the Gagliardo-Nirenberg-Sobolev interpolation inequalities to obtain
\begin{equation}
\label{eq:GNSintro}
 \|u(s)-\langle u(s)\rangle\|_{\Ld^\infty(\T^3)} \lesssim \|\D^2 u(s)\|_{\Ld^2(\T^3)}^{\alpha}\|u(s)-\langle u(s)\rangle\|_{\Ld^2(\T^3)}^{1-\alpha},
\end{equation}
for $\alpha\in (0,1)$; we argue similarly for the control of $\nabla u$.

To apply  the previous bootstrap strategy, we need enough regularity and integrability on the solutions of the Vlasov-Navier-Stokes system. We prove in Section~\ref{sec:LinfH1} that any weak solution of the system instantaneously satisfies adequate estimates, which includes:
\begin{itemize}

\item a short time control of $\rho_f$ and $j_f$ in $\Ld^\infty(\T^3)$, using local bounds; 

\item $\Ld^\infty_t \H^1_x \cap \Ld^2_t \H^{2}_x$ estimates for $u$, on time intervals \emph{away} from zero, that is to say for $t\geq 1$
\begin{align}\label{ineq:estvnsgood-intro}
\|\nabla u(t)\|_{\Ld^2(\T^3)}^2 +\int_{1}^t \|\Delta u(s)\|_{\Ld^2(\T^3)}^2\,\dd s \lesssim 1+\sup_{s\in[1,t]} \| \rho_f(s)\|_{\Ld^\infty(\T^3)}.
\end{align}

\end{itemize}
We introduce the convenient notion of \emph{strong existence times}
in order to be able to propagate regularity.

%
%
%
%
%
In Section~\ref{sec:higherorder}, we start to implement the  interpolation strategy, relying this time on higher order maximal parabolic estimates for the Stokes equation. The outcome is a control of $\D^2 u$ in $\Ll^p(\R_+;\Ld^q(\T^3))$ by $(u\cdot\nabla)u$ and $j_f-\rho_f u$ in the same space. 

Then Section~\ref{sec:E(0)small} is dedicated to the proof of the global bound~\eqref{eq:globalrho-intro} : we explain therein how the previous control of $\D^2 u$ can be iteratively used to produce an estimate of the form
\begin{align*}
\int_1^{t^\star}   \|\nabla u(s)\|_{\Ld^\infty(\T^3)} \lesssim \mathscr{E}(0)^{\gamma}.
\end{align*}
Consequently, if $\mathscr{E}(0)$ is small enough, then we must have $t^\star= +\infty$, which concludes the proof of Theorem~\ref{realthm}.

\bigskip

Finally Section~\ref{sec-asymp} is devoted to the proof of Proposition~\ref{propo-infini} which provides a sharper description of the asymptotic behavior.
The analysis comes down to the study of the limit as $t\to \infty$ of characteristics (more precisely of renormalized versions of them). For the sake of clarity, the proof is written in the particular case $\langle u_0+j_{f_0}\rangle =0$ to lighten the computations (see Remark~\ref{rem:Zt}).
 
%

\bigskip

To conclude the paper, Section~\ref{sec:appendix} is an Appendix where we provide some  reminders (in particular, we shortly review some well-known basic facts about the Wasserstein distance)
and justify $\H^1$ energy estimates for the Navier-Stokes equations with source.

\section{Conservation laws, energy dissipation identities and consequences}
\label{sec:formal}


\subsection{Conservation laws}
\label{subsec:conslaw}

We discuss here some conservations laws for the Vlasov-Navier-Stokes system. 
We start by describing some basic ones in a first lemma: the first two ones come from the structure of the Vlasov equation alone, while the third one is a consequence of the fine structure of the complete system.
\begin{lem}
\label{conse}
Any weak solution (in the sense of Definition~\eqref{def:sol}) satisfies the following conservations laws. 
For almost all $t\geq 0$,
\begin{align}
\label{conssign} &f(t) \geq 0, \quad \textnormal{for almost all  } (x,v) \in \T^3 \times \R^3, \\
\label{consL1} &\int_{\T^3\times \R^3} f(t) \, \dd v \, \dd x = \int_{\T^3\times \R^3} f_0 \, \dd v \, \dd x =1, \\
\label{consuj} &\langle u+j_f  \rangle (t) = \langle u_0+j_{f_0} \rangle.
\end{align}
\end{lem}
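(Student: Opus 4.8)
The plan is to establish the three identities in order, exploiting the transport structure of \eqref{eq:vlasov} for \eqref{conssign}--\eqref{consL1} and coupling it with \eqref{eq:ns} for \eqref{consuj}. In all three cases I would first perform the computation formally and then justify it at the level of weak/renormalized solutions by inserting a smooth velocity cut-off $\chi_R(v)$ and letting $R\to\infty$, the cut-off errors being controlled by the finiteness of $M_2 f(t)$ guaranteed by the energy inequality \eqref{ineq:nrj}.

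For \eqref{conssign}, I would use that $\div_v[f(u-v)]=(u-v)\cdot\nabla_v f-3f$, so that the Vlasov equation reads $\partial_t f+v\cdot\nabla_x f+(u-v)\cdot\nabla_v f=3f$, and test its renormalized formulation against a convex $\beta\in\mathscr{C}^1(\R)$ with $\beta(0)=0$ and $\beta\equiv 0$ on $\R_+$ (for instance $\beta(s)=\min(0,s)^2$), multiplied by $\chi_R$. Integrating in $(x,v)$: the $x$-transport term vanishes by periodicity, the $v$-transport term yields $3\int\beta(f)$ up to cut-off errors that disappear as $R\to\infty$ since $M_2 f(t)<\infty$, and the right-hand side gives $3\int f\beta'(f)$; since $f\beta'(f)-\beta(f)=\beta(f)$ for this $\beta$, one obtains $\tfrac{\dd}{\dd t}\int\beta(f)\,\dd v\,\dd x=3\int\beta(f)\,\dd v\,\dd x$, and as $\beta(f_0)=0$ a Gronwall argument forces $\int\beta(f(t))=0$, i.e. $f(t)\geq 0$ a.e. For \eqref{consL1}, since a weak solution satisfies $f\in\Ll^\infty(\R_+;\Ld^\infty(\T^3\times\R^3))$, on each finite time interval $f$ is bounded by a constant $C$, so the renormalization $\beta(s)=s$ is admissible on the effective range (approximate by $\beta_R$ equal to the identity on $[0,C+1]$); the same integration by parts, now with $f\beta'(f)-\beta(f)=0$, gives $\tfrac{\dd}{\dd t}\int f\,\dd v\,\dd x=0$ in the limit $R\to\infty$, hence $\int_{\T^3\times\R^3}f(t)=\int_{\T^3\times\R^3}f_0=1$.

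For \eqref{consuj} I would combine two momentum balances. Multiplying \eqref{eq:vlasov} by $v_i\chi_R(v)$, integrating, and using $\int v_i\,\div_v[f(u-v)]\,\dd v=(j_f)_i-\rho_f u_i$, one gets after passing $R\to\infty$ (here $M_2 f(t)<\infty$ both makes $\langle j_f\rangle$ meaningful and kills the cut-off errors) the identity $\tfrac{\dd}{\dd t}\langle j_f\rangle=\langle\rho_f u\rangle-\langle j_f\rangle$. On the other hand, testing the Leray formulation of \eqref{eq:ns}--\eqref{eq:ns2} against the constant, divergence-free fields $e_i$ — for which $\int(u\cdot\nabla)u=\int\div(u\otimes u)=0$, $\int\Delta u=0$ and $\int\nabla p=0$ by periodicity — yields $\tfrac{\dd}{\dd t}\langle u\rangle=\langle j_f\rangle-\langle\rho_f u\rangle$. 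Adding the two identities, the right-hand sides cancel, so $\langle u+j_f\rangle$ is constant in time and equal to $\langle u_0+j_{f_0}\rangle$.

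There is no real conceptual obstacle here; the only genuine work is making the above manipulations rigorous in the class of weak solutions, namely justifying the renormalized formulation against the chosen nonlinearities and passing to the limit in the velocity cut-offs in the Vlasov computations — which rests entirely on the $\Ll^\infty_t$ control of $M_2 f$ coming from \eqref{ineq:nrj} — and arranging that the differentiated-in-time identities hold for almost every $t\geq 0$. These points are by now standard, so I would only sketch them and refer to \cite{bou-des-grand-mou} for the complete arguments.
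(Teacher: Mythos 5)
Your proposal is correct, and for the only substantive item, \eqref{consuj}, the formal computation is exactly the paper's: the two momentum balances $\tfrac{\dd}{\dd t}\langle j_f\rangle=\langle\rho_f u-j_f\rangle$ and $\tfrac{\dd}{\dd t}\langle u\rangle=\langle j_f-\rho_f u\rangle$ whose right-hand sides cancel. The differences are in scope and in the rigorous justification. The paper simply cites \cite{bou-des-grand-mou} for \eqref{conssign} and \eqref{consL1}, whereas you reprove them by renormalizing against $\beta(s)=\min(0,s)^2$ and (a truncation of) $\beta(s)=s$; your algebra $f\beta'(f)-\beta(f)=\beta(f)$ and the resulting Gronwall argument are fine, and this is a legitimate self-contained alternative. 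For \eqref{consuj}, the paper tests the fluid equation against $\varphi=1$ exactly as you do, but for the kinetic side it invokes the DiPerna--Lions stability machinery (Remark~\ref{rem:dp}): it regularizes $u$ and $f_0$, derives the exact integrated identity for the approximate solutions $f_n$, and passes to the limit using the strong convergence $j_{f_n}\to j_f$ in $\Ld^\infty_t\Ld^1$ and $\rho_{f_n}u_n\to\rho_f u$ in $\Ld^1_t\Ld^1_x$ obtained by interpolating with the uniform second-moment bound. You instead test the weak formulation directly against $v_i\chi_R(v)$ and remove the cut-off. Both routes work; the approximation route buys you an exact identity at the regularized level (no cut-off error terms to estimate), while your route is more elementary but requires you to be slightly more careful than you state: the cut-off error in the drag term contains not only $\tfrac1R\int_{|v|\sim R}f|v|^2$ (killed by $M_2f<\infty$) but also a contribution of the form $\int_{|v|\sim R}f|u|\,\dd v\,\dd x$, whose vanishing uses $\int_{\T^3}\rho_f|u|\,\dd x<\infty$ (available for a.e.\ $t$ from $\rho_f\in\Ld^{5/3}$ by interpolation of $M_2f$ with $\|f\|_{\Ld^\infty}$, and $u\in\H^1$), not the second moment alone. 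This is a minor technical point, not a gap.
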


\begin{proof}Considering the results of~\cite{bou-des-grand-mou}, the only item to prove is~\eqref{consuj}.
Let us assume that both $u$ and $f$ are smooth functions.
 Integrating the Vlasov equation against $v$, the conservation law satisfied by $j_f$ reads
\begin{equation}
\partial_t j_f + \div \left( \int_{\R^3} f v \otimes v \,dv \right) = \rho_f u -j_f,
\end{equation}
so that $\langle j_f \rangle$ satisfies
$$
\frac{\dd}{\dd t}  \langle j_f \rangle = \langle  \rho_f u - j_f \rangle.
$$
On the other hand, from~\eqref{eq:ns}, $\langle u \rangle$ satisfies
$$
\frac{\dd}{\dd t}  \langle u \rangle  = \langle j_f - \rho_f u \rangle,
$$
from which we deduce
$\frac{\dd}{\dd t} \langle u+j_f  \rangle =0$, and consequently~\eqref{consuj}.

\bigskip

In the general case, for the fluid equation we can directly use $\varphi=1$ as an admissible test function to recover a.e.
\begin{align*}
\langle u(t)\rangle - \langle u_0\rangle = \int_0^t \langle j_f-\rho_f u\rangle(s)\,\dd s.
  \end{align*}
For the kinetic equation we use an approximation argument relying on DiPerna-Lions theory \cite{DPL} for linear transport equations : we consider a sequence of nonnegative distribution functions $(f_n)_n$ solving the Vlasov equation with regularized vector fields $(u_n)_n$  and regularized and truncated initial conditions $(f_{0,n})_n$, and such that for all $n \geq 1$ and all $t \geq 0$,
$$
\int_{\R^3 \times \R^3} f_n |v|^2 \, \dd v \,  \dd x \lesssim 1.
$$
By the DiPerna-Lions theory, $f$ is the (strong) limit of $(f_n)_n$ in $\Ld^\infty(0,T; \Ld^p(\T^3 \times \R^3))$ for all finite values of $p$ ; interpolating with the previous bound we infer that $(j_{f_n})_n\rightarrow j_f$ strongly in $\Ld^\infty(0,T;\Ld^1(\T^3\times\R^3))$ and $(\rho_{f_n} u_n)_n \rightarrow \rho_f u$ at least in $\Ld^1(0,T;\Ld^1(\T^3))$. This is sufficient to pass to the limit in the following identity (which is justified at the regularized level)
\begin{align*}
\langle j_{f_n}(t) \rangle -\langle j_{f_0}\rangle  = \int_0^t \langle \rho_{f_n}u_n-j_{f_n}\rangle(s) \,\dd s,
\end{align*}
and finally, \eqref{consuj} follows for almost every $t$. $\qedhere$
\end{proof}

A straightforward consequence of \eqref{consuj}  in Lemma~\ref{conse} is the following formula:
\begin{lem}
\label{lem:moy}
For almost all $t \geq 0$:
\begin{equation}
\label{eqmoy}
\frac{1}{4} |\langle j_f \rangle(t)-\langle u \rangle(t)|^2 = \left| \langle j_f \rangle(t)-\frac{\langle u_0 + j_{f_0} \rangle}{2}\right|^2 =  \left| \langle u\rangle(t)-\frac{\langle u_0 + j_{f_0} \rangle}{2}\right|^2.
\end{equation}
\end{lem}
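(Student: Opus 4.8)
The plan is to reduce everything to the single conservation law \eqref{consuj} via elementary algebra. Fix a time $t\geq 0$ for which \eqref{consuj} holds (this is almost every $t$), and write for brevity $a:=\langle j_f\rangle(t)$, $b:=\langle u\rangle(t)$ and $s:=\langle u_0+j_{f_0}\rangle$, all of which are vectors in $\R^3$. By \eqref{consuj} we have $a+b=s$, hence $\tfrac{s}{2}=\tfrac{a+b}{2}$.

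First I would compute the two ``modulated'' quantities: $a-\tfrac{s}{2}=a-\tfrac{a+b}{2}=\tfrac{a-b}{2}$ and $b-\tfrac{s}{2}=b-\tfrac{a+b}{2}=-\tfrac{a-b}{2}$. Taking squared Euclidean norms gives $\bigl|a-\tfrac{s}{2}\bigr|^2=\tfrac14|a-b|^2=\bigl|b-\tfrac{s}{2}\bigr|^2$. Since the left-hand side $\tfrac14|\langle j_f\rangle(t)-\langle u\rangle(t)|^2$ of \eqref{eqmoy} is exactly $\tfrac14|a-b|^2$, this is precisely the claimed chain of equalities, and the proof is complete.

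There is no real obstacle here: the only subtlety is the ``almost all $t$'' qualifier, which is simply inherited from Lemma~\ref{conse}, so no further argument is needed beyond invoking it.
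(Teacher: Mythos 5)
Your proof is correct and matches the paper's (implicit) argument: the paper states Lemma~\ref{lem:moy} as a straightforward consequence of the conservation law \eqref{consuj}, and your computation $a-\tfrac{s}{2}=\tfrac{a-b}{2}=-(b-\tfrac{s}{2})$ is exactly the intended elementary algebra. Nothing is missing.
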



\begin{rem}\label{rem:dp}
We shall use in this paper several times the DiPerna-Lions theory \cite{DPL}, in the same fashion as in the proof of Lemma~\ref{conse}. Thanks to the property of strong stability of renormalized solutions, this allows to systematically   argue as if both $f$ and $u$ are smooth when looking to establish estimates for the kinetic phase. The  argument, as already outlined in the proof of Lemma~\ref{conse}, is the following:
 \begin{itemize}
 \item consider an approximating sequence $(u_n)_n$ for $u$ and $(f_n)_n$ the associated solution to the Vlasov equation, with a regularized initial condition; 
  \item prove the desired estimate for the solution $f_n$ (without explicitly using the higher regularity of $f_n$ or $u_n$);
  \item pass to the limit using the strong stability property of renormalized solutions (and Fatou's lemma).
 \end{itemize}
In the following, for brevity, we will never write down this argument explicitly but will repeatedly refer to the current remark.
\end{rem}




%
%

\subsection{The role of the modulated energy : proof of Lemma~\ref{lem:W1}}
\label{subsec:lemw1}
\begin{proof} 
By the Monge-Kantorovich duality for the $\W_1$ distance (see Proposition~\ref{MK} in the Appendix), we have 
\begin{align*}
\W_1 \left(f(t), \rho_f(t) \otimes \delta_{\langle j_f\rangle} \right) &= \sup_{\| \nabla_{x,v} \phi\|_\infty\leq 1} \left\{\int_{\T^3 } \left ( \int_{\R^3}  f(t,x,v)  \phi(x,v) \, \dd v -  \rho(t,x) \phi(x,\langle j_f \rangle) \right)  \dd x\right\}\\
&= \sup_{\| \nabla_{x,v} \phi\|_\infty\leq 1} \left\{\int_{\T^3 \times \R^3}   f(t,x,v) ( \phi(x,v) -   \phi(x,\langle j_f \rangle) ) \dd v\, \dd x\right\}\\
&\leq   \int_{\T^3 \times \R^3} f|v-\langle j_f \rangle| \, \dd v\, \dd x. 
\end{align*}
We thus infer, using the Cauchy-Schwarz inequality, the normalization \eqref{consL1} and the definition of the modulated energy $\mathscr{E}(t)$
 \begin{align*}
 \W_1\left(f(t), \rho_f(t) \otimes \delta_{\langle j_f\rangle} \right)
 \leq \left(\int_{\T^3 \times \R^3} f|v-\langle j_f \rangle|^2 \, \dd v \, \dd x\right)^{1/2} \left(\int_{\T^3 \times \R^3} f \, \dd v \, \dd x\right)^{1/2} \leq \sqrt{2}\,\mathscr{E}(t)^{1/2}.
 \end{align*}
Likewise, 
\begin{align*}
\W_1 \Big(\rho_f\otimes \delta_{\langle j_f \rangle}, \rho_f \otimes \delta_{\frac{\langle u_0 + j_{f_0}  \rangle}{2}} \Big) &= \sup_{\| \nabla_{x,v} \phi\|_\infty\leq 1} \int_{\T^3}   \rho_f(t,x) \Big(\phi(x,\langle j_f \rangle) - \phi\left(x,\frac{\langle u_0 + j_{f_0}  \rangle}{2}\right) \Big)\dd x \\
&\leq \left|\langle j_f \rangle - \frac{\langle u_0 + j_{f_0}  \rangle}{2}  \right| \int_{\T^3 }   \rho_f(t,x) \dd x.
\end{align*}
We therefore deduce, using the normalization~\eqref{consL1} and the identity~\eqref{eqmoy}
\begin{align*}
\W_1 \Big(\rho_f\otimes \delta_{\langle j_f \rangle}, \rho_f \otimes \delta_{\frac{\langle u_0 + j_{f_0}  \rangle}{2}} \Big) \leq \frac12 \left|\langle j_f \rangle - \langle u \rangle  \right| &\leq  \mathscr{E}(t)^{1/2}, 
\end{align*}
so that by triangular inequality we have established
$$
\W_1 \left(f(t), \rho_f(t) \otimes \delta_{\frac{\langle u_0 + j_{f_0}  \rangle}{2}}\right) \lesssim  \mathscr{E}(t)^{1/2}.
$$
On the other hand, using again~\eqref{eqmoy}, we can also estimate
\begin{align*}
\left\| u(t) - \frac{\langle u_0 + j_{f_0}  \rangle}{2}\right\|_{\Ld^2(\T^3)} &\leq \left\| u(t) - \langle u(t) \rangle \right\|_{\Ld^2(\T^3)} + \left\| \langle u(t) \rangle - \frac{\langle u_0 + j_{f_0}  \rangle}{2}\right\|_{\Ld^2(\T^3)} \\
&\leq \left\| u(t) - \langle u(t) \rangle \right\|_{\Ld^2(\T^3)} + \frac{1}{4} |\langle j_f \rangle-\langle u \rangle|^2,
\end{align*}
and the result follows. $\qedhere$
\end{proof} 



\subsection{Dissipation of the modulated energy}\label{subsec:dissmod}
As already said in the introduction, Choi and Kwon noticed\footnote{As a matter of fact, they consider the more general Vlasov-\emph{inhomogeneous} Navier-Stokes system but we recover the system~\eqref{eq:vlasov}--\eqref{eq:ns2} as soon we stick to the case of constant fluid density.} in \cite{CK} that the modulated energy (see Definition~\ref{def:moden}) satisfies the following formal identity
\begin{align}
\label{eq:choikwon}\frac{\dd}{\dd t} \mathscr{E}(t) + \D(t) = 0.
\end{align}
At the level of weak solutions, we are only able to obtain the inequality version of~\eqref{eq:choikwon}, as stated in the next lemma.
\begin{lem}\label{lem:choikwon}
For any weak solution $(u,f)$ in the sense of Definition~\ref{def:sol}, for almost all $t\geq 0$, 
$$\mathscr{E}(t)-\E(t)=  -\frac14 |\langle u_0 +  j_{f_0}\rangle|^2 .$$ 
In particular, we have the following modulated energy/dissipation inequality for almost all $0\leq s \leq t <+\infty$ (including $s=0$),
\begin{equation}
\label{eq:choikwonweak}
\mathscr{E}(t) + \int_s^t \D(\sigma) \, \dd \sigma \leq  \mathscr{E}(s).
\end{equation}
\end{lem}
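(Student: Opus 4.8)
The plan is to prove the algebraic identity $\mathscr{E}(t) - \E(t) = -\tfrac14|\langle u_0 + j_{f_0}\rangle|^2$ first, and then deduce the modulated energy/dissipation inequality \eqref{eq:choikwonweak} by combining this identity with the energy inequality \eqref{ineq:nrj} from Definition~\ref{def:sol}.

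First I would expand the difference $\mathscr{E}(t) - \E(t)$ directly from the definitions. Writing $\mathscr{E}(t)$ with the expansions $|v - \langle j_f\rangle|^2 = |v|^2 - 2 v\cdot \langle j_f\rangle + |\langle j_f\rangle|^2$ and $|u - \langle u\rangle|^2 = |u|^2 - 2 u\cdot\langle u\rangle + |\langle u\rangle|^2$, and integrating term by term, the $|v|^2$ and $|u|^2$ contributions reproduce exactly $\E(t)$. The cross terms collapse using the normalization \eqref{consL1} (so that $\int f \, \dd v\, \dd x = 1$ makes $\int f\, \langle j_f\rangle^2 = |\langle j_f\rangle|^2$ and $\int f\, v\cdot\langle j_f\rangle\,\dd v\,\dd x = |\langle j_f\rangle|^2$) and $\int_{\T^3} u \cdot \langle u\rangle\, \dd x = |\langle u\rangle|^2$. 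This leaves $\mathscr{E}(t) - \E(t) = -\tfrac12|\langle j_f\rangle|^2 - \tfrac12|\langle u\rangle|^2 + \tfrac14|\langle j_f\rangle - \langle u\rangle|^2$. Expanding the last square gives $-\tfrac14|\langle j_f\rangle|^2 - \tfrac14|\langle u\rangle|^2 - \tfrac12 \langle j_f\rangle\cdot\langle u\rangle = -\tfrac14|\langle j_f\rangle + \langle u\rangle|^2$, and by the conservation law \eqref{consuj} this equals $-\tfrac14|\langle u_0 + j_{f_0}\rangle|^2$, a quantity independent of $t$. This is the claimed identity.

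For the inequality \eqref{eq:choikwonweak}, I would simply subtract the identity evaluated at time $t$ from the same identity evaluated at time $s$: since both equal the constant $-\tfrac14|\langle u_0 + j_{f_0}\rangle|^2$, we get $\mathscr{E}(t) - \E(t) = \mathscr{E}(s) - \E(s)$, i.e. $\mathscr{E}(t) - \mathscr{E}(s) = \E(t) - \E(s)$. Then \eqref{ineq:nrj} gives $\E(t) - \E(s) \leq -\int_s^t \D(\sigma)\,\dd\sigma$, hence $\mathscr{E}(t) + \int_s^t \D(\sigma)\,\dd\sigma \leq \mathscr{E}(s)$, valid for a.e.\ $0 \leq s \leq t$ including $s = 0$, exactly as in Definition~\ref{def:sol}.

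I do not expect any serious obstacle here: the whole argument is an exact-computation-plus-bookkeeping exercise, the only subtle points being to keep careful track of the factors $\tfrac12$ and $\tfrac14$ and to invoke the right conservation laws (\eqref{consL1} for the normalization and \eqref{consuj} for $\langle u + j_f\rangle$) at the right moments. One mild caveat worth stating explicitly is that the energy inequality \eqref{ineq:nrj} is only assumed for almost every $t \geq s$ (including $s = 0$), so the conclusion \eqref{eq:choikwonweak} inherits the same "almost every" qualifier; since the identity $\mathscr{E} - \E = \text{const}$ holds for a.e.\ $t$, there is no loss. If one wanted the pointwise (in time) identity \eqref{eq:choikwon} one would need the full energy equality, which is not available at the level of weak solutions — this is precisely why only the inequality is stated.
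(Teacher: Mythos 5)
Your proof is correct and follows essentially the same route as the paper: expand the squares in $\mathscr{E}(t)$, use the normalization \eqref{consL1} and the identity $-\tfrac12|\langle j_f\rangle|^2-\tfrac12|\langle u\rangle|^2+\tfrac14|\langle j_f\rangle-\langle u\rangle|^2=-\tfrac14|\langle j_f\rangle+\langle u\rangle|^2$, invoke the conservation law \eqref{consuj}, and then conclude with the energy inequality \eqref{ineq:nrj}. The bookkeeping of the constants and the "almost every $t$" caveat are handled exactly as in the paper's argument.
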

\begin{proof}
Let us first write 
\begin{align*}
\mathscr{E}(t)  =   {\E}(t) +  \frac{1}{2}  \left( \int_{\T^3 \times \R^3} f \, \dd v \, \dd x\right) \langle j_f \rangle^2 - \langle j_f \rangle^2  - \frac12 \langle u \rangle^2  + \frac{1}{4} |\langle j_f \rangle-\langle u \rangle|^2,
\end{align*}
that we can simplify in the following way thanks to \eqref{consL1}
\begin{align*}
\mathscr{E}(t)  &=   {\E}(t)  -\frac{1}{2} \langle j_f \rangle^2  - \frac12 \langle u \rangle^2  + \frac{1}{4} |\langle j_f \rangle-\langle u \rangle|^2 \\
&={\E}(t) -\frac14 |\langle j_f\rangle + \langle u \rangle|^2,
\end{align*}
so that $\mathscr{E}(t)-\E(t)$ does not depend on $t$ thanks to \eqref{consuj}.  Estimate \eqref{eq:choikwonweak} follows then from the energy estimate \eqref{ineq:nrj}. $\qedhere$
\end{proof}

\subsection{Conditional long time behavior}\label{subsec:condlong}

\begin{defi}
Let $c_P$ be the Poincar\'e constant, that is the best constant such that the  Poincar\'e-Wirtinger inequality holds: 
\begin{equation}
\label{eq:PW}
\|g-\langle g\rangle \|_{\Ld^2(\T^3)} \leq c_P \|\nabla g\|_{\Ld^2(\T^3)}, \quad \forall g\in \H^1(\T^3).
\end{equation} 
\end{defi}

The following result relating the dissipation and the modulated energy
is a variant of  \cite[Theorem 1.2]{CK}.

\begin{lem}
\label{decay+}
There exists a continuous nonincreasing function $\psi:\R_{+}\rightarrow\R_{+}$ such that the following holds, for any weak solution of the VNS system (in the sense of Definition~\ref{def:sol}) for which $\rho_f\in\Ll^\infty(\R_+;\Ld^\infty(\T^3))$. Fix $T>0$ and define
\begin{equation}
\label{eq:lambda}\lambda := \psi\left(\sup_{[0,T]}\| \rho_f(t) \|_{\Ld^\infty(\T^3)}\right).
\end{equation}
Then 
\begin{align}
\label{ineq:lowbo}\forall t\in[0,T],\quad \D(t) \geq \lambda \mathscr{E}(t),
\end{align}
and we have the exponential estimate 
\begin{align}
\label{ineq:condec}\forall t\in[0,T],\quad \mathscr{E}(t) \lesssim e^{-\lambda t}\mathscr{E}(0),
\end{align}
where $\lesssim$ depends only on $\lambda$.


\end{lem}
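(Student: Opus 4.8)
The strategy is to establish the differential inequality $\D(t) \geq \lambda \mathscr{E}(t)$ pointwise in time, and then to feed it into the modulated energy/dissipation inequality~\eqref{eq:choikwonweak} to deduce exponential decay by a Grönwall-type argument. The key point is thus the \emph{coercivity} estimate~\eqref{ineq:lowbo}: I want to bound each of the three pieces of $\mathscr{E}(t)$ by a constant (depending only on $\sup_{[0,T]}\|\rho_f\|_{\Ld^\infty}$) times $\D(t)$. The dissipation has two terms: the friction term $\int_{\T^3\times\R^3} f|u-v|^2\,\dd v\,\dd x$ and the viscous term $\int_{\T^3}|\nabla u|^2\,\dd x$.

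First I would handle $\frac12\|u-\langle u\rangle\|_{\Ld^2(\T^3)}^2$: this is immediate from the Poincar\'e--Wirtinger inequality~\eqref{eq:PW}, giving $\|u-\langle u\rangle\|_{\Ld^2}^2 \leq c_P^2\|\nabla u\|_{\Ld^2}^2 \leq c_P^2\,\D(t)$. Next, for the kinetic modulated term $\frac12\int f|v-\langle j_f\rangle|^2$, I would insert $u(t,x)$ as an intermediate point and split: $|v-\langle j_f\rangle|^2 \lesssim |v-u(t,x)|^2 + |u(t,x)-\langle u\rangle|^2 + |\langle u\rangle - \langle j_f\rangle|^2$. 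Integrating against $f\,\dd v\,\dd x$ and using $\int_{\T^3\times\R^3} f\,\dd v\,\dd x = 1$ (the normalization~\eqref{consL1}), the first term is exactly (part of) $\D(t)$, the second is $\int_{\T^3}\rho_f |u-\langle u\rangle|^2\,\dd x \leq \|\rho_f\|_{\Ld^\infty}\|u-\langle u\rangle\|_{\Ld^2}^2 \lesssim \|\rho_f\|_{\Ld^\infty}\,\D(t)$ by the previous step, and the third term is $\frac14|\langle j_f\rangle-\langle u\rangle|^2$, which is precisely the last piece of $\mathscr{E}(t)$ (up to a constant). So it remains to bound $|\langle j_f\rangle - \langle u\rangle|^2$ by $\D(t)$. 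Here I would write $\langle j_f\rangle - \langle u\rangle = \int_{\T^3\times\R^3} f(v-u)\,\dd v\,\dd x + \int_{\T^3}(\rho_f - 1)(u - \langle u\rangle)\,\dd x$ (using $\int f\,\dd v\,\dd x = 1$ so that $\langle u\rangle = \int f u\,\dd v\,\dd x$), and estimate the first integral by Cauchy--Schwarz and $\int f\,\dd v\,\dd x=1$ as $\lesssim (\int f|u-v|^2)^{1/2} \lesssim \D(t)^{1/2}$, and the second, again by Cauchy--Schwarz and $\|\rho_f\|_{\Ld^\infty}$, by $\lesssim (1+\|\rho_f\|_{\Ld^\infty})\|u-\langle u\rangle\|_{\Ld^2} \lesssim (1+\|\rho_f\|_{\Ld^\infty})\D(t)^{1/2}$. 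Squaring gives $|\langle j_f\rangle-\langle u\rangle|^2 \lesssim (1+\|\rho_f\|_{\Ld^\infty})^2\,\D(t)$. Collecting all the pieces yields~\eqref{ineq:lowbo} with $\lambda = \psi(\sup_{[0,T]}\|\rho_f\|_{\Ld^\infty})$ for an explicit continuous nonincreasing $\psi$.

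For the exponential decay~\eqref{ineq:condec}, I would combine~\eqref{ineq:lowbo} with~\eqref{eq:choikwonweak}: since $\mathscr{E}(t) + \int_s^t \D(\sigma)\,\dd\sigma \leq \mathscr{E}(s)$ for a.e.\ $0\leq s\leq t$, and $\D(\sigma)\geq \lambda\mathscr{E}(\sigma)$ on $[0,T]$, the function $t\mapsto \mathscr{E}(t)$ satisfies (a.e.) an integral inequality of the form $\mathscr{E}(t) \leq \mathscr{E}(s) - \lambda\int_s^t\mathscr{E}(\sigma)\,\dd\sigma$. Because $\mathscr{E}$ is nonincreasing (taking $\D\geq0$ alone in~\eqref{eq:choikwonweak}), one can make this rigorous without differentiability: for instance, iterate on dyadic-type subintervals, or apply the integral form of Grönwall's lemma to the almost-everywhere-defined monotone representative, to obtain $\mathscr{E}(t)\leq C\,e^{-\lambda t}\mathscr{E}(0)$ on $[0,T]$ with $C$ depending only on $\lambda$ (a constant $C$ is needed, rather than $C=1$, precisely because the inequality holds only almost everywhere and only the integrated version is available). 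The main obstacle here is not the decay step, which is routine once coercivity is in hand, but rather keeping track of the constants in the coercivity estimate~\eqref{ineq:lowbo} so that the dependence on $\sup_{[0,T]}\|\rho_f\|_{\Ld^\infty}$ is captured by a single continuous nonincreasing function $\psi$, and making sure each splitting genuinely lands inside one of the two terms of $\D(t)$ rather than producing an uncontrolled remainder.
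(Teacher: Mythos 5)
Your proposal is correct, but the coercivity step \eqref{ineq:lowbo} is organized quite differently from the paper's argument. The paper does not bound each piece of $\mathscr{E}(t)$ separately by $\D(t)$; instead it sets $\widetilde{\mathscr{E}}:=\mathscr{E}-\frac12\|u-\langle u\rangle\|_{\Ld^2}^2$, expands $|v-u|^2$ around $\langle u\rangle$, applies Young's inequality with a parameter $\alpha$ close to $1$ to get
$\int f|v-u|^2 \geq (1-\alpha)\widetilde{\mathscr{E}} - (\alpha^{-1}-1)\|\rho_f\|_{\infty,T}\|u-\langle u\rangle\|_{\Ld^2}^2$
(using the exact cancellation $\int f(v-\langle j_f\rangle)\cdot(\langle j_f\rangle-\langle u\rangle)=0$ so that $\int f|v-\langle u\rangle|^2 \geq \widetilde{\mathscr{E}}$ with constant one), and then absorbs the negative term into the Poincar\'e part of the dissipation by choosing $\alpha$ so that $(\alpha^{-1}-1)\|\rho_f\|_{\infty,T}=c_P/2$. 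Your route instead controls the mean-discrepancy term directly, via the identity $\langle j_f\rangle-\langle u\rangle=\int f(v-u)\,\dd v\,\dd x+\int(\rho_f-1)(u-\langle u\rangle)\,\dd x$ (which is valid, using $\int\rho_f=1$ and $|\T^3|=1$) together with Cauchy--Schwarz, giving $|\langle j_f\rangle-\langle u\rangle|^2\lesssim(1+\|\rho_f\|_{\infty,T})^2\D(t)$, and then splits the kinetic modulated term through $u$ and $\langle u\rangle$. Both arguments are complete and yield a rate of the form $\lambda=\psi(\|\rho_f\|_{\infty,T})$ with $\psi$ continuous and nonincreasing; your decomposition is arguably more transparent, while the paper's absorption argument gives a slightly better dependence on $M:=\|\rho_f\|_{\infty,T}$ (roughly $\lambda\sim M^{-1}$ versus your $\lambda\sim M^{-2}$), which is immaterial for the statement. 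The Gr\"onwall step is the same in both: combine \eqref{ineq:lowbo} with \eqref{eq:choikwonweak} and apply the integral form of Gr\"onwall to the nonincreasing a.e.\ representative of $\mathscr{E}$ (the paper's Lemma~\ref{lem:gronexp}), which is exactly where the multiplicative constant in \eqref{ineq:condec} comes from, as you correctly note.
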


\begin{proof}
First we note that \eqref{ineq:lowbo} $\Rightarrow$ \eqref{ineq:condec}. Indeed, combining with estimate \eqref{eq:choikwonweak} of Lemma~\ref{lem:choikwon}, we get for almost all $0\leq s\leq t\leq T$,
\begin{align*}
\mathscr{E}(t) + \lambda \int_s^t \mathscr{E}(\sigma) \,\dd \sigma \leq \mathscr{E}(s),
\end{align*}
so that from Lemma~\ref{lem:gronexp} of the Appendix 
we get $\mathscr{E}(t) \lesssim \mathscr{E}(0) e^{-\lambda t}$, where $\lesssim$ depends only on $\lambda$. We therefore focus on \eqref{ineq:lowbo} and try to find $\lambda>0$ of the form \eqref{eq:lambda}.

\vspace{2mm}

Define 
\begin{align*}
\widetilde{\mathscr{E}}(t) := \mathscr{E}(t) - \frac12 \|u(t)-\langle u(t)\rangle \|_{\Ld^2(\T^3)}^2.
\end{align*}
The Poincaré-Wirtinger inequality gives us a constant $c_P>0$ such that 
\begin{align*}
\D(t) \geq \frac12 \int_{\T^3\times\R^3} f(t)|v-u(t)|^2\,\dd v\,\dd x  + \frac12 c_P \|u(t)-\langle u(t)\rangle\|_{\Ld^2(\T^3)}^2.
\end{align*}
Therefore to get \eqref{ineq:lowbo} for some $\lambda>0$, it is sufficient to prove that for some $\gamma,\beta>0$ we have
\begin{align}
\label{ineq:gambet}\int_{\T^3\times\R^3} f(t)|v-u(t)|^2\,\dd v\,\dd x \geq \gamma\, \widetilde{\mathscr{E}}(t) - \beta \|u(t)-\langle u(t)\rangle \|_{\Ld^2(\T^3)}^2,
\end{align}
with $\beta$ small enough (namely $\beta<c_P$) : in that case we have $\D(t)\geq \lambda \mathscr{E}(t)$ with $\lambda := \min(\gamma,c_P-\beta)$.

\vspace{2mm}

For the sake of clarity, we omit the time variable for a few lines. We also denote $\|\rho\|_{\infty,T}:=\sup_{[0,T]}\|\rho_f(s)\|_{\Ld^\infty(\T^3)}$. We start with the following identity
\begin{align*}
|v-u|^2 = |v-\langle u \rangle |^2 + 2 (v-\langle u \rangle) \cdot (\langle u \rangle - u) + |\langle u \rangle - u |^2, 
\end{align*} 
from which we infer
\begin{multline}
\label{vmoinsubis}\int_{\T^3 \times \R^3} f |v-u|^2 \, \dd v\, \dd x  = \int_{\T^3\times\R^3} f|v-\langle u\rangle|^2\,\dd v\,\dd x  + \int_{\T^3} \rho_f |\langle u \rangle - u |^2 \dd x \\+ 2 \int_{\T^3 \times \R^3} f (v - \langle u \rangle) \cdot (\langle u \rangle -   u ) \,\dd v \,\dd x.  
\end{multline} 
Now for any $\alpha  \in (0,1)$, Young's inequality entails that 
\begin{align*}
& 2 \int_{\T^3 \times \R^3} f  (\langle u\rangle - v) \cdot (u-\langle u\rangle)  \, \dd v\, \dd x \\
& \qquad \geq -  {\alpha}  \int_{\T^3 \times \R^3} f |v- \langle u\rangle |^2 \, \dd v\, \dd x - \alpha^{-1} \int_{\T^3} \rho_f |u- \langle u\rangle |^2 \, \dd x.
\end{align*} 
Combining with \eqref{vmoinsubis} we have therefore 
\begin{multline}
\label{vmoinsu}\int_{\T^3 \times \R^3} f |v-u|^2 \, \dd v\, \dd x  \geq (1-\alpha) \int_{\T^3\times\R^3} f|v-\langle u\rangle|^2\,\dd v\,\dd x\\  - (\alpha^{-1}-1)\int_{\T^3} \rho_f |\langle u \rangle - u |^2 \dd x.
\end{multline}
On the other hand we have
\begin{align*}
|v-\langle u \rangle |^2  =  |\langle j_f \rangle -  \langle u \rangle |^2 + 2 (v - \langle j_f \rangle) \cdot (\langle j_f \rangle -  \langle u \rangle) + |v - \langle j_f \rangle |^2,
\end{align*}
from which we deduce
\begin{align*}
\int_{\T^3\times\R^3} f|v-\langle u\rangle|^2 \,\dd v\,\dd x =  |\langle j_f\rangle-\langle u\rangle|^2+ \int_{\T^3\times\R^3} f|v-\langle j_f\rangle|^2 \,\dd v\,\dd x,
\end{align*}
where we used the normalization property~\eqref{consL1} and
\begin{equation*}
\int_{\T^3 \times \R^3} f (v - \langle j_f \rangle) \cdot (\langle j_f \rangle -  \langle u \rangle) \,\dd v\, \dd x = 0.
\end{equation*}
In particular, we have 
\begin{align*}
\int_{\T^3\times\R^3} f(t)|v-\langle u(t)\rangle|^2 \,\dd v\,\dd x \geq \widetilde{\mathscr{E}}(t).
\end{align*}
Since $\alpha\in(0,1)$ we deduce from \eqref{vmoinsu}
\begin{align*}
\int_{\T^3 \times \R^3} f |v-u|^2 \, \dd v\, \dd x  &\geq (1-\alpha) \widetilde{\mathscr{E}}(t)  - (\alpha^{-1}-1)\int_{\T^3} \rho_f |\langle u \rangle - u |^2 \dd x \\
&\geq (1-\alpha) \widetilde{\mathscr{E}}(t)  -(\alpha^{-1}-1)\|\rho_f\|_{\infty,T} \int_{\T^3} |\langle u \rangle - u |^2 \dd x,
\end{align*}
which is exactly \eqref{ineq:gambet} with $\gamma := 1-\alpha$ and $\beta = (\alpha^{-1}-1)\|\rho_f\|_{\infty,T}$. Picking $\alpha$ close enough to $1$ (to ensure $\beta<c_P$), $\lambda:=\min(\gamma,c_P-\beta)$ satisfies \eqref{ineq:lowbo}. To check that $\lambda$ can indeed be chosen of the form \eqref{eq:lambda} we have to make more explicit the choice of $\alpha$ by imposing for instance the condition $\beta = c_P/2$ above, that is $\alpha^{-1} = c_P/(2\|\rho\|_{\infty,T}) + 1$ which is a continuous, nonincreasing, nonvanishing function of $\|\rho\|_{\infty,T}$ : $\alpha$ is then continuous and increasing and $\lambda := \min(1-\alpha,c_P/2)$ is of the form \eqref{eq:lambda}. $\qedhere$\end{proof}

Once exponential decay of the modulated energy is ensured, one can prove the existence of an asymptotic profile $\rho^\infty$ for which we have the following convergence statement.

\begin{propo}
\label{prop-rhobar}
For any weak solution $(u,f)$ to the Vlasov-Navier-Stokes system for which $\sup_{t\geq 0} \| \rho_f(t) \|_{\Ld^\infty(\T^3)} <+ \infty$ and $\mathscr{E}(t)\rightarrow_{t \to +\infty} 0$ with exponential decay,  there exists a  profile $\rho^\infty \in \Ld^\infty(\T^3)$  such that
 \begin{align}
 \label{eq-expoprofile}
 \W_1\left(f(t), \rho^\infty\left(x -t \frac{\langle u_0 + j_{f_0} \rangle}{2}\right) \otimes \delta_{\frac{\langle u_0 + j_{f_0} \rangle}{2}}\right) \operatorname*{\longrightarrow}_{t\rightarrow+\infty} 0,
 \end{align}
exponentially fast.
\end{propo}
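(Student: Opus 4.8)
The plan is to prove that, seen in the Galilean frame moving at the limit velocity $\bar u := \tfrac12\langle u_0+j_{f_0}\rangle$, the macroscopic density $\rho_f$ stabilizes to a profile $\rho^\infty$, and then to combine this with Lemma~\ref{lem:W1}. First I would record that the relevant current is exponentially small: integrating the (renormalized) Vlasov equation in $v$ yields, in $\mathscr{D}'$ (and, at the level of weak solutions, through the DiPerna--Lions approximation of Remark~\ref{rem:dp}), the continuity equation $\partial_t\rho_f+\div_x j_f=0$, which after writing $j_f=\bar u\,\rho_f+r$ with $r:=j_f-\bar u\,\rho_f$ becomes
\[
\partial_t\rho_f+\bar u\cdot\nabla_x\rho_f=-\div_x r .
\]
By Cauchy--Schwarz, the normalization~\eqref{consL1} and the identity~\eqref{eqmoy}, one has $\|r(t)\|_{\Ld^1(\T^3)}\le\big(\int_{\T^3\times\R^3}f|v-\langle j_f\rangle|^2\big)^{1/2}+|\langle j_f\rangle-\bar u|\lesssim\mathscr{E}(t)^{1/2}$, which decays exponentially by hypothesis.

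Next I would pass to the moving frame by setting $g(t,x):=\rho_f(t,x+t\bar u)$: each $g(t)$ is a probability density on $\T^3$ (total mass $1$ by~\eqref{consL1}) with $\|g(t)\|_{\Ld^\infty(\T^3)}=\|\rho_f(t)\|_{\Ld^\infty(\T^3)}$ uniformly bounded by assumption, and $g$ solves $\partial_t g=-\div_x h$ with $h(t,x):=r(t,x+t\bar u)$, so that $\|h(t)\|_{\Ld^1(\T^3)}=\|r(t)\|_{\Ld^1(\T^3)}$. Testing against a function $\phi$ on $\T^3$ with $\|\nabla\phi\|_\infty\le1$ and using the Kantorovich--Rubinstein duality (Proposition~\ref{MK}),
\[
\W_1(g(t),g(s))=\sup_{\|\nabla\phi\|_\infty\le1}\Big|\int_s^t\!\!\int_{\T^3}\nabla\phi\cdot h(\tau)\,\dd x\,\dd\tau\Big|\le\int_s^t\|h(\tau)\|_{\Ld^1(\T^3)}\,\dd\tau,
\]
which tends to $0$ as $s,t\to+\infty$. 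Hence $(g(t))_t$ is Cauchy for $\W_1$ among probability measures on $\T^3$; this space being complete, $g(t)$ converges in $\W_1$ to some probability measure $\rho^\infty$, with $\W_1(g(t),\rho^\infty)\le\int_t^{+\infty}\|h(\tau)\|_{\Ld^1(\T^3)}\,\dd\tau\lesssim e^{-\lambda t/2}$. The uniform $\Ld^\infty$ bound together with $\W_1$-convergence forces $g(t)\rightharpoonup\rho^\infty$ weakly-$\ast$ in $\Ld^\infty(\T^3)$, so $\rho^\infty\in\Ld^\infty(\T^3)$ by weak-$\ast$ lower semicontinuity of the norm.

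Finally I would conclude by the triangle inequality for $\W_1$ on $\T^3\times\R^3$:
\[
\W_1\big(f(t),\rho^\infty(\cdot-t\bar u)\otimes\delta_{\bar u}\big)\le\W_1\big(f(t),\rho_f(t)\otimes\delta_{\bar u}\big)+\W_1\big(\rho_f(t)\otimes\delta_{\bar u},\rho^\infty(\cdot-t\bar u)\otimes\delta_{\bar u}\big).
\]
The first term is $\lesssim\mathscr{E}(t)^{1/2}$ by Lemma~\ref{lem:W1}; for the second, lifting an optimal transport plan in $x$ between $\rho_f(t)$ and $\rho^\infty(\cdot-t\bar u)$ by the identity on the common velocity variable bounds it by $\W_1(\rho_f(t),\rho^\infty(\cdot-t\bar u))$ computed on $\T^3$, which by translation invariance of $\W_1$ equals $\W_1(g(t),\rho^\infty)\lesssim e^{-\lambda t/2}$; both terms decay exponentially, which is~\eqref{eq-expoprofile}. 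The hard part is purely a matter of regularity bookkeeping — making the identities $\partial_t\rho_f+\div_x j_f=0$ and $\partial_t g=-\div_x h$ legitimate in $\mathscr{D}'$ for a merely renormalized $f$ (handled through Remark~\ref{rem:dp}) and keeping track of the fact that the Wasserstein distance here lives on the torus in $x$ and on $\R^3$ in $v$; everything else is the elementary Cauchy-in-$\W_1$ argument powered by the exponential integrability of $\tau\mapsto\mathscr{E}(\tau)^{1/2}$.
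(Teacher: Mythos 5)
Your proposal is correct and follows essentially the same route as the paper's proof (Jabin's argument): pass to the frame moving at $\tfrac12\langle u_0+j_{f_0}\rangle$, use the continuity equation and a Cauchy--Schwarz bound of the residual current by $\mathscr{E}(t)^{1/2}$, conclude by a Cauchy criterion for $\W_1$, and combine with Lemma~\ref{lem:W1}. The only differences are presentational (you isolate $r=j_f-\bar u\,\rho_f$ explicitly and spell out the final triangle-inequality/lifting step, which the paper leaves implicit).
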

\begin{proof}
We rely on an argument of Jabin \cite{Jab} used in the context of the large time behavior of the Vlasov-Stokes system.
 The proof heavily relies on the exponential decay of the modulated energy. 
 Recall the conservation of the mass
\begin{align*}
 \partial_t \rho_f  =-  \nabla \cdot j_f.
\end{align*}
For any smooth function $\psi\in\mathscr{C}^\infty(\T^3)$ we have therefore for $0 \leq s\leq t$
\begin{align*}
\int_{\T^3} \psi \rho_f(t)-\int_{\T^3}\psi \rho(s) = \int_s^t \int_{\T^3} \nabla \psi \cdot j_f(\tau)\,\dd \tau.
\end{align*}
Keeping in mind the definition of the Wasserstein distance (see Section~\ref{sec:Wasserstein}), one sees that the large time convergence of $\rho_f$ (which would imply that the Cauchy criterion is verified for this metric) is in a way or another linked with the decay of $j_f(\tau)$ as $\tau\rightarrow +\infty$. In the general case, this property is not expected, as $j_f$ is ``supposed'' to converge to $\rho_f \langle u_0+j_{f_0}\rangle/2$. This justifies to consider the following renormalized density
 \begin{align*}
\overline\rho_f (t,x) := \rho_f \Big(t,x + t   \frac{\langle u_0 + j_{f_0} \rangle}{2}  \Big),
\end{align*}
for which we have, denoting as well $\overline{j}_f := j_f \left(t,x + t   \frac{\langle u_0 + j_{f_0} \rangle}{2}  \right)$,
\begin{align*}
 \partial_t \overline\rho_f  = \nabla \cdot \left(\overline\rho_f \frac{\langle u_0+j_{f_0}\rangle}{2}-\overline{j}_f\right).
\end{align*}
The previous computation implies 
\begin{align*}
   \int_{\T^3} \psi\overline\rho_f (t)   - \int_{\T^3} \psi \overline\rho_f (s)   =  \int_s^t \int_{\T^3} \nabla \psi \cdot \left( \overline j_f - \overline \rho_f \frac{\langle u_0 + j_{f_0} \rangle}{2} \right)(\tau) \,\dd \tau,
\end{align*}
and the integrand is now expected to decay for large time. More precisely if $\|\nabla \psi\|_\infty \leq 1$ we have, by translation invariance of the integration over $\T^3$ 
\begin{multline*}
 \left|  \int_{\T^3} \psi\overline\rho_f (t)   - \int_{\T^3} \psi \overline\rho_f (s)  \right| \leq  \int_s^t \int_{\T^3} \left| \overline j_f - \overline \rho_f \frac{\langle u_0 + j_{f_0} \rangle}{2} \right|(\tau) \,\dd \tau  \\= \int_s^t \int_{\T^3} \left| j_f -  \rho_f \frac{\langle u_0 + j_{f_0} \rangle}{2} \right|(\tau) \,\dd \tau,
\end{multline*}
and we thus deduce by Cauchy-Schwarz inequality 
\begin{align*}
 \left|  \int_{\T^3} \psi\overline\rho_f (t)   - \int_{\T^3} \psi \overline\rho_f (s)  \right| \leq   \int_s^t \left(\int_{\T^3\times\R^3} f\right)^{1/2}\left(\int_{\T^3\times\R^3} f \left| v -  \frac{\langle u_0 + j_{f_0} \rangle}{2} \right|^2\right)^{1/2}(\tau) \,\dd \tau.
\end{align*}
On the one hand, thanks to Lemma~\ref{conse}, the integral of $f$ over $\T^3\times\R^3$ equals $1$. On the other hand, thanks to Lemma~\ref{lem:moy} we have
\begin{align*}
\left| v -  \frac{\langle u_0 + j_{f_0} \rangle}{2} \right|^2 \lesssim  \left| v -  \langle j_f\rangle \right|^2 + \left| \langle j_f\rangle  -  \frac{\langle u_0 + j_{f_0} \rangle}{2} \right|^2 = \left| v -  \langle j_f\rangle \right|^2 + \frac14 \left| \langle j_f\rangle  -  \langle u \rangle \right|^2.
  \end{align*}
All in all, using the the Definition~\ref{def:moden} of the modulated energy we have established for any $\psi\in\mathscr{C}^\infty(\R^3)$ such that $\|\nabla \psi\|_\infty \leq 1$
\begin{align*}
  \left|  \int_{\T^3} \psi\overline\rho_f (t)   - \int_{\T^3} \psi \overline\rho_f (s)  \right| &\lesssim   \int_s^t \left(\int_{\T^3\times\R^3} f \left| v - \langle j_f\rangle \right|^2\right)^{1/2}(\tau) \,\dd \tau + \int_s^t |\langle j_f\rangle - \langle u \rangle|(\tau) \,\dd \tau\\
  &\lesssim \int_s^t \mathscr{E}(\tau)^{1/2} \,\dd \tau.
\end{align*}
This estimate extends to Lipschitz functions $\psi$ satisfying $\|\nabla \psi\|_\infty \leq 1$ by a standard approximation argument and the Monge-Kantorovitch duality formula allows us to write 
    \begin{align}\label{eq:jab}
    \W_1(\overline\rho_f(t),\overline\rho_f(s)) \lesssim \int_s^t \mathscr{E}(\tau)^{1/2}\,\dd \tau.
    \end{align}
    The exponential decay of the modulated energy leads to integrability of $\mathscr{E}^{1/2}$ and therefore the  Cauchy criterion for $\overline \rho_f(t)$ is verified for $t\rightarrow +\infty$ : we recover in this way the convergence of $\rho^\infty_f(t) \rightarrow \rho^\infty$ for some measure $\rho^\infty$ as $t\rightarrow +\infty$. Since $t\mapsto \rho^\infty(t)$ is uniformly bounded in $\Ld^\infty(\R_+;\Ld^\infty(\T^3))$, we must have $\rho^\infty\in\Ld^\infty(\T^3)$. Note that the convergence is indeed exponential, thanks to the exponential decay of $\mathscr{E}^{1/2}$ : this can be seen when letting $t\rightarrow +\infty$ in \eqref{eq:jab}. Now by a change of variable we have 
    \begin{align}\label{eq:jab2}
    \W_1(\overline\rho_f(s),\rho^\infty) = \W_1\Big(\rho_f(s),\rho^\infty\Big(x-s\frac{\langle u_0+j_{f_0}\rangle}{2}\Big)\Big),
    \end{align}
    which concludes the proof.
  \end{proof}

\section{Changes of variables and $\Ld^\infty$ bounds on moments}
\label{sec:changesofvariables}

In this section we aim at establishing tools for obtaining bounds on the moments $\rho_f$ and $j_f$. We first obtain rough unconditional integrability results for $\rho_f$ and $j_f$ thanks to some interpolation estimates. Next, using some adequate change of variables in velocity, 
 we get refined estimates on $\rho_f$ and $j_f$, which can be controlled along the flow in the following way.  Assuming a suitable control on the quantity $\| \nabla u \|_{\Ld^1(0,t; \Ld^\infty(\T^3))}$, it is possible to prove that (cf. Lemma \ref{lem:rhoj})
\begin{align*}
&\| \rho_f\|_{\Ld^\infty(0,t; \Ld^\infty(\T^3))} \lesssim 1
, \nonumber \\
&\| j_f\|_{\Ld^\infty(0,t; \Ld^\infty(\T^3))} \lesssim  \left( \int_0^{t}  \|u(s)-\langle u(s)\rangle\|_{\Ld^\infty(\T^3)} \,\dd s + e^{-t}\left(1+\int_0^t e^s |\langle u(s)\rangle|\dd s\right)\right), 
\end{align*} which can be exploited in long time : the core of the bootstrap argument presented in Section~\ref{sec:E(0)small} is to prove that the control on $\nabla u$ holds as long as $\mathscr{E}(0)$ is small.

Many proofs in this section rely on the representation of the solution to the Vlasov equation using characteristics, which holds at least when $u$ is a smooth vector field.
\begin{defi}
Assume $u$ is smooth (say $\mathscr{C}^1$). We define the characteristic curves $\X(s;t,x,v)$ and $\V(s;t,x,v)$ associated with $u$ as the solution to the system of ODEs
\begin{equation}
\label{eq:carac}
\begin{aligned}
\dot{\X}(s;t,x,v) &= \V(s;t,x,v),\\
\dot{\V}(s;t,x,v) &= u(s,\X(s;t,x,v))-\V(s;t,x,v),
\end{aligned}
\end{equation}
with the initial condition $(\X(t;t,x,v),\V(t;t,x,v))=(x,v)$. 
\end{defi}
By the method of characteristics, for a smooth vector field $u$, we can write the solution $f$ to the Vlasov equation as
\begin{equation}
\label{eq:charcsec3}
f(t,x,v) = e^{3t} f_0(\X(0;t,x,v), \V(0;t,x,v)).
\end{equation}
As explained in Remark~\ref{rem:dp}, we then rely on DiPerna-Lions theory to ensure that the estimates we are able to prove with this representation formula still hold even if $u$ is not smooth enough. For instance, a rough bound on the $\Ld^\infty$ norm of $f$ can  be directly deduced from~\eqref{eq:charcsec3}. 
\begin{lem}
\label{lem:firstLinf}
For almost all $t \geq 0$,
\begin{align}
\|f(t)\|_{\Ld^\infty(\T^3\times\R^3)}\leq \|f_0\|_{\Ld^\infty(\T^3\times\R^3)} e^{3 t}.
\end{align}
\end{lem}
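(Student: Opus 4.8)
The plan is to use the explicit representation formula \eqref{eq:charcsec3} for the solution of the Vlasov equation in terms of the backward characteristics, which is valid when $u$ is smooth, and then to invoke the DiPerna-Lions stability argument of Remark~\ref{rem:dp} to transfer the resulting bound to general weak solutions. So first I would assume $u$ is $\mathscr{C}^1$, so that the characteristic system \eqref{eq:carac} is well posed and \eqref{eq:charcsec3} holds pointwise.

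The key step is then the following elementary observation: the flow map $(x,v) \mapsto (\X(0;t,x,v),\V(0;t,x,v))$ is the value at time $0$ of the flow of the (time-dependent, $\mathscr{C}^0$ in time, locally Lipschitz in $(x,v)$) vector field $(s,x,v) \mapsto (v, u(s,x)-v)$ on $\T^3 \times \R^3$, so it is in particular a bijection of $\T^3 \times \R^3$ onto itself. Hence, for almost every fixed $t$, taking the supremum over $(x,v)$ in \eqref{eq:charcsec3},
\begin{align*}
\|f(t)\|_{\Ld^\infty(\T^3\times\R^3)} = e^{3t} \sup_{(x,v)} f_0(\X(0;t,x,v),\V(0;t,x,v)) = e^{3t} \|f_0\|_{\Ld^\infty(\T^3\times\R^3)},
\end{align*}
where the last equality uses that $(x,v) \mapsto (\X(0;t,x,v),\V(0;t,x,v))$ ranges over all of $\T^3\times\R^3$. (For the inequality direction alone it suffices to note the map is into $\T^3\times\R^3$, which is all that is claimed.)

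Finally, for a general weak solution, one argues as in Remark~\ref{rem:dp}: take an approximating sequence $(u_n)_n$ of smooth vector fields converging to $u$ and regularized initial data $f_{0,n}$, so that $\|f_n(t)\|_{\Ld^\infty} \le \|f_{0,n}\|_{\Ld^\infty} e^{3t} \le \|f_0\|_{\Ld^\infty} e^{3t}$ (choosing the truncation so that $\|f_{0,n}\|_{\Ld^\infty} \le \|f_0\|_{\Ld^\infty}$), and then pass to the limit using the strong stability of renormalized solutions together with, say, weak-$\ast$ lower semicontinuity of the $\Ld^\infty$ norm (or Fatou). I do not expect a genuine obstacle here; the only mild subtlety is the justification that the straightening flow is measure-theoretically well behaved enough to take the $\Ld^\infty$ supremum, which is handled by the smoothness of $u_n$ in the approximation, and the fact that \eqref{eq:charcsec3} is stated for smooth $u$ already in the excerpt.
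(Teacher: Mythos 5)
Your proof is correct and is essentially the paper's own argument: the authors simply state that the bound "can be directly deduced from" the characteristics representation \eqref{eq:charcsec3} combined with the DiPerna--Lions approximation scheme of Remark~\ref{rem:dp}, which is exactly what you carry out (with the harmless extra observation that the flow is onto, giving equality rather than just the claimed inequality).
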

In the remaining paragraphs of this section we will systematically use the approximation procedure described in Remark~\ref{rem:dp}, without refering to it explicitely. This is in particular the case for each of the proofs which rely on the characteristic curves.

\subsection{Rough local bounds on moments}

We recall the notations $M_\alpha$ and $m_\alpha$ introduced in Definition~\ref{def:moments}.

\begin{lem}\label{lem:propmo}
Consider $\alpha \geq 1$ such that $u\in\Ll^1(\R_+;\Ld^{\alpha+3}\cap \W^{1,1}(\T^3))$ and $M_\alpha f_0<\infty$. Then $M_\alpha f(t)<\infty$ and for all $t>0$ and 
\begin{align}
\label{ineq:Malpha}M_\alpha f(t) \lesssim_{\alpha}\left(M_\alpha f_0+ e^{\frac{3 t}{\alpha +3}}\int_0^t \|u(s)\|_{\Ld^{\alpha +3}(\T^3) }\,\dd s\right)^{\alpha +3}.
\end{align} 
\end{lem}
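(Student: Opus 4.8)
The strategy is to derive and solve a differential inequality for $t\mapsto M_\alpha f(t)$. Following Remark~\ref{rem:dp}, I would work as if $f$ and $u$ were smooth, replacing $u$ by a mollified sequence $(u_n)_n$ and $f$ by the corresponding solutions $(f_n)_n$ of the Vlasov equation with truncated initial data (for which all the integrals below are finite), and recovering the general case by the stability of renormalized solutions together with Fatou's lemma; the $\W^{1,1}$ assumption on $u$ is precisely what makes this DiPerna--Lions approximation licit. Testing the Vlasov equation~\eqref{eq:vlasov} against $|v|^\alpha$ and integrating over $\T^3\times\R^3$, the transport term drops by periodicity in $x$ (for each fixed $v$, $\int_{\T^3} v\cdot\nabla_x f\,\dd x=0$), while an integration by parts in $v$ in the friction term, using $\nabla_v|v|^\alpha=\alpha|v|^{\alpha-2}v$ (bounded by $\alpha|v|^{\alpha-1}$, which is legitimate for $\alpha\geq 1$ and makes the boundary terms at infinity harmless for the regularized data), yields
\[
\frac{\dd}{\dd t} M_\alpha f(t) + \alpha\, M_\alpha f(t) = \alpha \int_{\T^3\times\R^3} f(t)\, u(t)\cdot |v|^{\alpha-2}v \,\dd v\,\dd x.
\]

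The heart of the matter is to estimate the right-hand side. Bounding the integrand pointwise by $|u(t,x)|\,f(t,x,v)\,|v|^{\alpha-1}$, it is controlled by $\alpha\int_{\T^3}|u(t,x)|\,m_{\alpha-1}f(t,x)\,\dd x$. I would then invoke the standard velocity-moment interpolation: splitting $\int_{\R^3} f|v|^{\alpha-1}\,\dd v$ over $\{|v|\leq R\}$ and $\{|v|>R\}$ gives $m_{\alpha-1}f(x)\lesssim \|f\|_{\Ld^\infty}R^{\alpha+2}+R^{-1}m_\alpha f(x)$, and optimizing in $R$ produces
\[
m_{\alpha-1}f(x)\lesssim \|f\|_{\Ld^\infty(\T^3\times\R^3)}^{\frac{1}{\alpha+3}}\,\big(m_\alpha f(x)\big)^{\frac{\alpha+2}{\alpha+3}}.
\]
Applying Hölder's inequality in $x$ with the conjugate exponents $\alpha+3$ and $\frac{\alpha+3}{\alpha+2}$ then gives
\[
\int_{\T^3}|u|\,m_{\alpha-1}f\,\dd x \lesssim \|u\|_{\Ld^{\alpha+3}(\T^3)}\,\|f\|_{\Ld^\infty}^{\frac{1}{\alpha+3}}\,\big(M_\alpha f\big)^{\frac{\alpha+2}{\alpha+3}},
\]
the key being that $\int_{\T^3}(m_\alpha f)^{\frac{\alpha+2}{\alpha+3}\cdot\frac{\alpha+3}{\alpha+2}}\,\dd x = M_\alpha f$; this matching of Hölder exponents is exactly what dictates the $\Ld^{\alpha+3}$ integrability required of $u$.

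It remains to close the inequality. Inserting $\|f(t)\|_{\Ld^\infty}\leq \|f_0\|_{\Ld^\infty}e^{3t}$ from Lemma~\ref{lem:firstLinf} and discarding the favorable term $-\alpha M_\alpha f$, one reaches
\[
\frac{\dd}{\dd t} M_\alpha f(t) \lesssim_\alpha \|u(t)\|_{\Ld^{\alpha+3}(\T^3)}\,e^{\frac{3t}{\alpha+3}}\,\big(M_\alpha f(t)\big)^{\frac{\alpha+2}{\alpha+3}},
\]
the implied constant also carrying the fixed factor $\|f_0\|_{\Ld^\infty}^{1/(\alpha+3)}$. This is a Bernoulli-type inequality, and the substitution $z(t):=(M_\alpha f(t))^{1/(\alpha+3)}$ linearizes it into $z'(t)\lesssim_\alpha \|u(t)\|_{\Ld^{\alpha+3}(\T^3)}\,e^{3t/(\alpha+3)}$; integrating on $[0,t]$, bounding $e^{3s/(\alpha+3)}\leq e^{3t/(\alpha+3)}$ under the integral, and raising to the power $\alpha+3$ yields~\eqref{ineq:Malpha}. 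The only genuinely delicate point is the rigorous justification at the level of weak (renormalized) solutions --- a priori $M_\alpha f(t)$ might be infinite --- which is handled exactly as sketched above: run the argument on the regularized approximants, where the moment identity and the $\Ld^\infty$ bound hold honestly, get~\eqref{ineq:Malpha} uniformly in $n$, and pass to the limit by Fatou. Everything else is elementary, the interpolation lemma and the Bernoulli substitution being the two computational ingredients.
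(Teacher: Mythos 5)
Your proposal is correct and follows essentially the same route as the paper: the weighted moment identity, the interpolation $\|m_{\alpha-1}f\|_{\Ld^{(\alpha+3)/(\alpha+2)}}\lesssim (M_\alpha f)^{(\alpha+2)/(\alpha+3)}\|f\|_{\Ld^\infty}^{1/(\alpha+3)}$ combined with Hölder against $\|u\|_{\Ld^{\alpha+3}}$, the $\Ld^\infty$ bound $\|f(t)\|_\infty\leq e^{3t}\|f_0\|_\infty$, and the substitution $M_\alpha f^{1/(\alpha+3)}$ to linearize. The only (harmless) deviation is that you discard the damping term $-\alpha M_\alpha f$ before integrating, whereas the paper keeps it via the integrating factor $e^{\alpha t/(\alpha+3)}$; both yield exactly \eqref{ineq:Malpha}.
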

\begin{proof}
Multiplying the Vlasov equation by $|v|^\alpha$ and integrating over $\T^3\times\R^3$, we get 
\begin{align}
\label{eq:malph}
\frac{\dd}{\dd t}M_\alpha f(t) + \alpha  M_\alpha f(t)= \alpha \int_{\T^3} u(t,x)\cdot m_{\alpha-1}(t,x)\,\dd x.
\end{align}
Recall that, for $0\leq \ell\leq k$, the following interpolation estimate 
\begin{align}
  \label{ineq:interpo}\|m_\ell g\|_{\Ld^{\frac{k+3}{\ell+3}}(\T^3)} \lesssim (M_k g)^{\frac{\ell+3}{k+3}} \|g\|_{\Ld^\infty(\T^3)}^{\frac{k-\ell}{k+3}},
\end{align} 
holds for any non-negative $g \in L^{\infty}(\T^3\times\R^3)$. In particular for $(\ell,k)=(\alpha-1,\alpha)$ we get 
\begin{align*}
\|m_{\alpha-1} g\|_{\Ld^{\frac{\alpha+3}{\alpha+2}}(\T^3)} \lesssim (M_\alpha g)^{\frac{\alpha+2}{\alpha+3}} \|g\|_{\Ld^\infty(\T^3)}^{\frac{1}{\alpha+3}}.
\end{align*}
We can control $\|g \|_{\Ld^\infty(\T^3)}$ by Lemma~\ref{lem:firstLinf}, so that using Hölder's inequality in~\eqref{eq:malph}, we infer 
\begin{align*}
\frac{\dd}{\dd t}M_\alpha f(t)^{\frac{1}{\alpha+3}} + \frac{\alpha}{\alpha+3}M_\alpha f(t)^{\frac{1}{\alpha+3}}\lesssim e^{\frac{3t}{\alpha + 3}}\|u(t)\|_{\Ld^{\alpha+3}(\T^3)},
\end{align*}
from which we get
\begin{align*}
\frac{\dd}{\dd t}\left\{e^{\frac{\alpha t}{\alpha +3}}M_\alpha f(t)^{\frac{1}{\alpha+3}}\right\} \lesssim e^t \|u(t)\|_{\Ld^{\alpha +3}(\T^3)},
\end{align*} 
from which \eqref{ineq:Malpha} follows.
\end{proof}

\begin{lem}
\label{lem:rhojeasy}
Assuming $M_3 f_0 <+\infty$, we have the following
\begin{itemize}
\item[(i)] $M_3 f \in \Ld^{\infty}_{\textnormal{loc}}(\R_+)$;
\item[(ii)] $\rho_f \in \Ld^\infty_{\textnormal{loc}}(\R_+;\Ld^2(\T^3))$ ;
\item[(iii)] $ j_f \in \Ld^\infty_{\textnormal{loc}}(\R_+;\Ld^{3/2}(\T^3))$. 
\end{itemize}

\end{lem}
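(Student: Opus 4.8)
The plan is to obtain all three items as a direct consequence of Lemma~\ref{lem:propmo} (applied with $\alpha=3$), the pointwise bound of Lemma~\ref{lem:firstLinf}, and the interpolation estimate~\eqref{ineq:interpo}. First I would check that the hypotheses of Lemma~\ref{lem:propmo} hold for $\alpha=3$, for which $\alpha+3=6$. Since $(u,f)$ is a weak solution in the sense of Definition~\ref{def:sol}, we have $u\in\Ll^2(\R_+;\H^1_\div(\T^3))$; in dimension $3$ the Sobolev embedding $\H^1(\T^3)\hookrightarrow\Ld^6(\T^3)$ holds, and since $\T^3$ has finite measure one also has $\H^1(\T^3)\hookrightarrow\W^{1,1}(\T^3)$, so that $u\in\Ll^2(\R_+;\Ld^6\cap\W^{1,1}(\T^3))\subset\Ll^1(\R_+;\Ld^6\cap\W^{1,1}(\T^3))$. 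Together with $M_3 f_0<+\infty$, Lemma~\ref{lem:propmo} gives $M_3 f(t)<\infty$ for all $t>0$ with
\[
M_3 f(t) \lesssim \left( M_3 f_0 + e^{t/2}\int_0^t \|u(s)\|_{\Ld^6(\T^3)}\,\dd s \right)^{6},
\]
and since $\int_0^t \|u(s)\|_{\Ld^6(\T^3)}\,\dd s \leq t^{1/2}\big(\int_0^t \|u(s)\|_{\Ld^6(\T^3)}^2\,\dd s\big)^{1/2}<+\infty$ for every finite $t$, this yields item (i), namely $M_3 f\in\Ld^\infty_{\textnormal{loc}}(\R_+)$.

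For (ii) and (iii) I would combine item (i), the bound $\|f(t)\|_{\Ld^\infty(\T^3\times\R^3)}\leq\|f_0\|_{\Ld^\infty(\T^3\times\R^3)}e^{3t}$ from Lemma~\ref{lem:firstLinf}, and~\eqref{ineq:interpo}. For (ii), taking $(\ell,k)=(0,3)$ in~\eqref{ineq:interpo} (so that the exponent of the Lebesgue space is $\tfrac{k+3}{\ell+3}=2$) gives
\[
\|\rho_f(t)\|_{\Ld^2(\T^3)} = \|m_0 f(t)\|_{\Ld^2(\T^3)} \lesssim \big(M_3 f(t)\big)^{1/2}\,\|f(t)\|_{\Ld^\infty(\T^3\times\R^3)}^{1/2},
\]
and the right-hand side is bounded on every bounded time interval, hence $\rho_f\in\Ld^\infty_{\textnormal{loc}}(\R_+;\Ld^2(\T^3))$. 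For (iii), I would first note the pointwise inequality $|j_f(t,x)|\leq m_1 f(t,x)$, and then apply~\eqref{ineq:interpo} with $(\ell,k)=(1,3)$ (so that $\tfrac{k+3}{\ell+3}=\tfrac32$):
\[
\|j_f(t)\|_{\Ld^{3/2}(\T^3)} \leq \|m_1 f(t)\|_{\Ld^{3/2}(\T^3)} \lesssim \big(M_3 f(t)\big)^{2/3}\,\|f(t)\|_{\Ld^\infty(\T^3\times\R^3)}^{1/3},
\]
which is again locally bounded in time by (i) and Lemma~\ref{lem:firstLinf}, giving $j_f\in\Ld^\infty_{\textnormal{loc}}(\R_+;\Ld^{3/2}(\T^3))$.

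There is no real obstacle in this proof: the statement is essentially a bookkeeping corollary of the moment propagation in Lemma~\ref{lem:propmo} and the interpolation inequality~\eqref{ineq:interpo}. The only mildly delicate points are verifying the integrability hypothesis on $u$ needed to invoke Lemma~\ref{lem:propmo} (which relies on the Leray regularity built into Definition~\ref{def:sol} and the Sobolev embedding in dimension $3$) and remembering to dominate $|j_f|$ by $m_1 f$ before interpolating. As elsewhere, the characteristics-based estimates underlying Lemmas~\ref{lem:propmo} and~\ref{lem:firstLinf} are legitimate for weak solutions by the DiPerna--Lions approximation argument recalled in Remark~\ref{rem:dp}.
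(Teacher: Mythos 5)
Your proof is correct and follows essentially the same route as the paper: Lemma~\ref{lem:propmo} with $\alpha=3$ for item (i), then the interpolation estimate~\eqref{ineq:interpo} with $(\ell,k)=(0,3)$ and $(1,3)$ combined with Lemma~\ref{lem:firstLinf} for (ii) and (iii). The only cosmetic difference is that the paper bounds $\int_0^t\|u(s)\|_{\Ld^6(\T^3)}\,\dd s$ quantitatively by $\sqrt{t}\,\E(0)^{1/2}$ via Poincar\'e--Wirtinger and the energy estimate, whereas you simply observe finiteness by Cauchy--Schwarz in time, which is all that local boundedness requires.
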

\begin{proof}

By Lemma~\ref{lem:propmo}, we have
\begin{align*}
M_3f(t) \lesssim \left( M_3 f_0 +  e^{\frac{t}{2}} \int_0^t  \|u(s)\|_{\Ld^6(\T^3)}\,\dd s\right)^6.
\end{align*}
But, using the Sobolev embedding $\H^1(\T^3)\hookrightarrow\Ld^6(\T^3)$ and the Poincaré-Wirtinger inequality and the energy estimate \eqref{ineq:nrj}, we infer 
\begin{align*}
\int_0^t  \|u(s)\|_{\Ld^6(\T^3)}\,\dd s &\leq \int_0^t \|u(s)-\langle u(s)\rangle \|_{\Ld^6(\T^3)}\,\dd s + \sqrt{t} \E(0)^{1/2} \\
&\lesssim \sqrt{t}  \left(\int_0^t \|\nabla u(s)\|_{\Ld^2(\T^3)}^2\,\dd s\right)^{1/2} + \sqrt{t}  \E(0)^{1/2} \\
&\lesssim \sqrt{t}  \E(0)^{1/2}.
\end{align*}
This concludes the proof of $(i)$. By the interpolation estimate \eqref{ineq:interpo} for $(\ell,k)=(0,3)$ and $(\ell,k)=(1,3)$ we have 
\begin{align*}
\|\rho_f(t)\|_{\Ld^2(\T^3)} &= \|m_0 f(t)\|_{\Ld^2(\T^3)} \lesssim M_3 f(t)^{1/2} \|f(t)\|_{\Ld^\infty}^{1/2},\\
  \|j_f(t)\|_{\Ld^{3/2}(\T^3)} &\leq \|m_1 f(t)\|_{\Ld^{3/2}(\T^3)} \lesssim M_3 f(t)^{2/3} \|f(t)\|_{\Ld^\infty}^{1/3}.
\end{align*}
We therefore obtain $(ii)$ and $(iii)$ thanks to $(i)$ and Lemma~\ref{lem:firstLinf}.
\end{proof}

\subsection{The straightening change of variables}\label{subsec:straight}

We  discuss in this section the change of variables in velocity that will allow us, as explained at the beginning of this section, to prove long time estimates. The idea is to come down to the ``free'' case (that is to say to the characteristics associated with the vector field $(x,v) \to (v,-v)$ here), by using an appropriate diffeomorphism in velocity. In doing this, a smallness condition bearing on $\| \nabla u \|_{\Ld^1(0,t;\Ld^\infty(\T^3))}$ will naturally appear in our calculations. 

This change of variables is close in spirit to that employed in \cite{BD} by Bardos and Degond in the study of small data solutions to the Vlasov-Poisson system on $\R^3 \times \R^3$. 
We note however that the stabilization mechanism for Vlasov-Poisson on $\R^3 \times \R^3$ is based on the dispersion properties of the free transport operator, which is significantly different from that used in our work. 

We also mention that similar ideas were recently used in the context of the inertialess limit of the Vlasov-Stokes system in \cite{Hof}.

\begin{lem}
\label{charac}
Fix $\delta>0$ such that $\delta e^\delta<1/9$. Then, for any $t\in\R_+$ satisfying
\begin{align}
\int_0^t \|\nabla u(s)\|_{\Ld^\infty(\T^3)}\,\dd s \leq\delta,
\label{ineq:nabla5}
\end{align}
and any $x \in \R^3$, the map
\begin{equation*}
\Gamma_{t,x} : v \mapsto \V(0;t,x,v),
\end{equation*}
is a  $\mathscr{C}^1$-diffeomorphism from $\R^3$ to itself satisfying furthermore
\begin{equation}
\forall v \in \R^3,\quad|\det \D_v \Gamma_{t,x}(v) | \geq \frac{e^{3t}}{2}.
\label{eq:straight diffeo}
\end{equation}
\end{lem}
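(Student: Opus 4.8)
The plan is to reduce the problem to a quantitative inverse function theorem / global inversion statement by controlling the Jacobian matrix $\D_v \Gamma_{t,x}(v) = \D_v \V(0;t,x,v)$ uniformly from below, and simultaneously showing that $\Gamma_{t,x}$ is proper (so that a local diffeomorphism with nonvanishing Jacobian which is also a proper map between copies of $\R^3$ is a global $\mathscr{C}^1$-diffeomorphism, by Hadamard's theorem). First I would differentiate the characteristic system \eqref{eq:carac} with respect to the initial velocity variable $v$. Writing $J(s) := \D_v \X(s;t,x,v)$ and $K(s) := \D_v \V(s;t,x,v)$, one gets the linear (matrix) ODE system
\begin{align*}
\dot J(s) &= K(s), \\
\dot K(s) &= \nabla u(s,\X(s;t,x,v))\, J(s) - K(s),
\end{align*}
with terminal conditions $J(t) = 0$ and $K(t) = \I_3$ (since at time $s=t$ the characteristics equal $(x,v)$, which depends on $v$ only through the $v$-slot). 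We integrate backwards in time from $s=t$ down to $s=0$.

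\textbf{Key steps.}
The natural substitution is to renormalize: set $\widetilde K(s) := e^{s} K(s)$ (the factor $e^s$ compensates the friction term $-K$). Then $\dot{\widetilde K}(s) = e^s\,\nabla u(s,\X(s))\,J(s)$, and integrating from $s$ up to $t$ with $\widetilde K(t) = e^t \I_3$ gives
\begin{align*}
\widetilde K(s) = e^t \I_3 - \int_s^t e^\sigma \,\nabla u(\sigma,\X(\sigma))\, J(\sigma)\,\dd\sigma,
\end{align*}
while $J(s) = -\int_s^t K(\sigma)\,\dd\sigma = -\int_s^t e^{-\sigma}\widetilde K(\sigma)\,\dd\sigma$. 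Substituting, one obtains a closed fixed-point relation for $\widetilde K$ on $[0,t]$; a Gronwall / contraction estimate using \eqref{ineq:nabla5} and the bound $\delta e^\delta < 1/9$ then shows that $\sup_{s\in[0,t]} \| e^{-s}\widetilde K(s) - \I_3 \|$ (equivalently $\|K(s) - e^{-s}\I_3\|$, suitably weighted) is small — small enough that $\widetilde K(0) = K(0) = \D_v \V(0;t,x,v)$ satisfies $\| K(0) - \I_3 \| \le 1/3$, say (the arithmetic is arranged precisely so the $1/9$ threshold yields a deviation small enough to guarantee, via the standard determinant estimate $|\det M| \ge (1 - \|M - \I_3\|)^3$ for $\|M-\I_3\| < 1$, that $|\det \D_v \V(0;t,x,v)| \ge (2/3)^3 \ge 1/2$). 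After accounting for the overall $e^{3t}$ scaling coming from the Liouville-type volume factor — or, more directly, by observing that the map $v \mapsto \V(0;t,x,v)$ behaves to leading order like $v \mapsto e^{t} v$ in the free case, whose Jacobian determinant is exactly $e^{3t}$ — one arrives at \eqref{eq:straight diffeo}. This gives that $\Gamma_{t,x}$ is a local $\mathscr{C}^1$-diffeomorphism everywhere. For the global statement one checks that $\Gamma_{t,x}$ is proper: from the (backward) characteristic equation and Gronwall one shows $|\V(0;t,x,v)| \to \infty$ as $|v| \to \infty$ (again the free-flow comparison $\V(0;t,x,v) \approx e^t v + (\text{bounded drift from } u)$ does the job, since $u \in L^1_t L^\infty_x$ on $[0,t]$ by \eqref{ineq:nabla5}). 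Hadamard's global inversion theorem then upgrades the local diffeomorphism to a global one.

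\textbf{Main obstacle.}
The technical heart — and the step most likely to require care — is the Gronwall/fixed-point bookkeeping that tracks the coupled pair $(J(s), K(s))$ backwards in time while keeping the exponential weights straight, so that the smallness hypothesis $\delta e^\delta < 1/9$ translates into exactly the stated lower bound $e^{3t}/2$ rather than some weaker constant. One must be careful that the "free" part contributes the growing factor $e^{t}$ per coordinate (hence $e^{3t}$ in the determinant) and that the perturbative part, controlled by $\int_0^t \|\nabla u(s)\|_{L^\infty}\,\dd s \le \delta$, only ever multiplies that free part, so the error stays a genuinely small \emph{relative} perturbation uniformly in $x$ and $v$ and in $t$. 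Everything else — smoothness of $\Gamma_{t,x}$, nonvanishing Jacobian, properness — follows routinely once this quantitative estimate is in place, and the passage from smooth $u$ to general $u$ is handled by the DiPerna–Lions approximation of Remark~\ref{rem:dp}.
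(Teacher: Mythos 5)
Your proposal is correct and follows essentially the same route as the paper: differentiate the characteristics in $v$, renormalize by the free-flow factor, run a Gronwall estimate so that $\delta e^\delta<1/9$ makes $e^{-t}\D_v\V(0;t,x,v)$ a perturbation of the identity of size at most $1/9$, and conclude with a perturbation-of-identity determinant argument (the paper's Lemma~\ref{lem:diff} plays the role of your Hadamard-plus-properness step). The only slip is the line claiming $\|K(0)-\I_3\|\le 1/3$ --- the quantity that is actually small is $\|e^{-t}K(0)-\I_3\|$, as your own discussion of the $e^{3t}$ scaling makes clear.
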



\begin{proof}
The proof is directly inspired from the arguments outlined in \cite[Proposition 1 and Corollary 1]{BD}.

\noindent $(i)$ Consider a generic vector-valued flow $\Y^s_{t,z}:=\Y(s;t,z)$ associated with a smooth vector field $w(t,z)$ defined on $\R_+\times X$ and assume that $\|\textnormal{D}_z w(t)\|_{\Ld^\infty(X)}\leq 1+\psi(t)$, for some function $\psi\in \Ll^1(\R_+)$. We have $\partial_s \Y_{t,z}^s = w(s,\Y_{t,z}^s)$ which after differentiation with respect to $z$ (introducing $\Theta_{t,z}^s:=\textnormal{D}_z \Y_{t,z}^s$) leads to 
\begin{align*}
\partial_s \Theta_{t,z}^s = \textnormal{D}_z w(s,\Y_{t,z}^s) \cdot \Theta_{t,z}^s,
\end{align*}
from which we get by Gronwall's inequality for $s\leq t$ 
\begin{multline}
\label{eq:estimTHETA}
\|\Theta_{t,z}^s\|_{\Ld^\infty(X)} \leq \|\Theta_{t,z}^t\|_{\Ld^\infty(X)} \exp\left(\int_{s}^{t} \|\textnormal{D}_z w(\sigma)\|_{\Ld^\infty(X)} \dd \sigma\right) \\
\leq e^{t-s} \exp\left(\int_s^t |\psi(\sigma)|\,\dd \sigma\right),
\end{multline}
where we used $\Theta_{t,z}^t = \Id$. 

\vspace{2mm} 

Now, let us get back to our system. Introducing the state variable $z:=(x,v)$ which belongs to $X=\T^3\times\R^3$, the vector field $w(t,z) := (v,u(t,x)-v)$ satisfies the assumption for the above abstract result, since $\|\textnormal{D}_z w(t)\|_{\Ld^\infty(X)} \leq 1+\|\nabla u\|_{\Ld^\infty(\T^3)}$. If we denote by $(\X(s;t,z),\V(s;t,z))$ the characteristics associated with $u$, integrating the equation defining $s\mapsto \V(s;t,z)$ we have 
\begin{align}
\label{eq:Vs}\V(0;t,z) = e^{t} v - \int_0^t e^{s} u(s,\X(s;t,z))\,\dd s,
\end{align}
which leads to
\begin{align*}
\textnormal{D}_v \V(0;t,z) - e^{t} \Id = -\int_0^t  e^{s} \nabla u(s,\X(s;t,z))\D_v \X(s,t;z) \,\dd s.
\end{align*}
We thus infer from~\eqref{eq:estimTHETA} with $\psi = \|\nabla u\|_{\Ld^\infty(\T^3)}$ that 
$$
\|\D_v \X(s;t,z)\|_{\Ld^\infty(\T^3 \times \R^3)} \leq e^{t-s} \exp\left(\int_s^t  \|\nabla u(\tau) \|_{\Ld^\infty(\T^3)} \,\dd \tau\right),
$$
and thus that
\begin{align*}
\|e^{-t}\textnormal{D}_v \V(0;t,z)-\Id\|_{\Ld^\infty(\T^3\times\R^3)} &\leq \exp\left(\int_0^t \|\nabla u(s)\|_{\Ld^\infty(\T^3)} \dd s\right)\int_0^t \|\nabla u(s)\|_{\Ld^\infty(\T^3)} \dd s.
\end{align*}
In particular, if \eqref{ineq:nabla5} holds with $\delta>0$ such that $\delta e^\delta \leq \frac19$, then Lemma~\ref{lem:diff} applies and we can conclude.
\end{proof}

Thanks to the change of variables of Lemma~\ref{charac}, we deduce the following control on moments. 

\begin{lem}
\label{lem:rhoj} If assumption~\eqref{ineq:nabla5} of Lemma~\ref{charac} is satisfied, we have for almost all $t\geq 0$,
\begin{align}
\label{cont-rho1} &\| \rho_f(t)\|_{\Ld^\infty(\T^3)} \leq  2  I_q N_q(f_0) 
, \\
  \label{cont-j1}&\| j_f(t)\|_{\Ld^\infty(\T^3)} \leq 2  I_q e^{-t}  \left( \int_0^{t} e^s \|u(s)\|_{\Ld^\infty(\T^3)} \,\dd s +1\right)N_q(f_0),
\end{align} where $N_q(f_0)$ is given by \eqref{eq:Nf0definition} and 
\begin{align*}
I_q:=\int_{\R^3} \frac{1+|v|}{1+|v|^q}\,\dd v.
\end{align*}
\end{lem}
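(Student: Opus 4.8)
The plan is to use the characteristics representation \eqref{eq:charcsec3} together with the straightening change of variables $\Gamma_{t,x}$ supplied by Lemma~\ref{charac}. Starting from
\[
\rho_f(t,x) = e^{3t}\int_{\R^3} f_0\big(\X(0;t,x,v),\V(0;t,x,v)\big)\,\dd v,
\]
I would perform the substitution $w = \Gamma_{t,x}(v) = \V(0;t,x,v)$, which is a $\mathscr{C}^1$-diffeomorphism of $\R^3$ under assumption~\eqref{ineq:nabla5}. By the lower bound~\eqref{eq:straight diffeo} on the Jacobian, $\dd v = |\det \D_v\Gamma_{t,x}(v)|^{-1}\,\dd w \leq 2 e^{-3t}\,\dd w$, so the prefactor $e^{3t}$ is exactly absorbed and
\[
\rho_f(t,x) \leq 2\int_{\R^3} f_0\big(\X(0;t,x,\Gamma_{t,x}^{-1}(w)),w\big)\,\dd w.
\]
Here I must also track what happens to the spatial argument: once $v$ is replaced by $\Gamma_{t,x}^{-1}(w)$, the first slot $\X(0;t,x,\Gamma_{t,x}^{-1}(w))$ is just \emph{some} point of $\T^3$, and since $f_0$ is bounded uniformly in $x$ by its pointwise decay, I can simply estimate $f_0(\,\cdot\,,w)\leq N_q(f_0)/(1+|w|^q)$ using the Definition~\ref{def:decay} of $N_q(f_0)$. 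This yields $\rho_f(t,x)\leq 2 N_q(f_0)\int_{\R^3}(1+|w|^q)^{-1}\dd w$, and since $I_q = \int_{\R^3}(1+|v|)/(1+|v|^q)\,\dd v \geq \int_{\R^3}(1+|v|^q)^{-1}\,\dd v$, estimate~\eqref{cont-rho1} follows. (Strictly, $(1+|v|)/(1+|v|^q)\geq 1/(1+|v|^q)$ pointwise, so $I_q$ dominates the needed integral.)

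For $j_f$ the argument is the same but one carries an extra factor of $|v|$. From
\[
j_f(t,x) = e^{3t}\int_{\R^3} \V(t;t,x,v)\, f_0\big(\X(0;t,x,v),\V(0;t,x,v)\big)\,\dd v
\]
— wait, more carefully, $j_f(t,x)=\int v f(t,x,v)\,\dd v = e^{3t}\int v\, f_0(\X(0;t,x,v),\V(0;t,x,v))\,\dd v$, and I want to re-express the weight $v$ in terms of $w=\V(0;t,x,v)$. Integrating the velocity ODE backward as in \eqref{eq:Vs} gives $\V(0;t,x,v) = e^{t}v - \int_0^t e^{s}u(s,\X(s;t,x,v))\,\dd s$, hence
\[
v = e^{-t}\Big(w + \int_0^t e^{s} u(s,\X(s;t,x,v))\,\dd s\Big),
\qquad
|v| \leq e^{-t}\Big(|w| + \int_0^t e^{s}\|u(s)\|_{\Ld^\infty(\T^3)}\,\dd s\Big).
\]
Applying the same change of variables and Jacobian bound as above, and the pointwise decay bound for $f_0$,
\[
|j_f(t,x)| \leq 2 e^{-t}\Big(1 + \int_0^t e^{s}\|u(s)\|_{\Ld^\infty(\T^3)}\,\dd s\Big) N_q(f_0)\int_{\R^3}\frac{1+|w|}{1+|w|^q}\,\dd w,
\]
where I have bounded $|w| \leq 1+|w|$ to match the definition of $I_q$ and merged the constant term with $\int e^s\|u\|\,\dd s$. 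This is precisely~\eqref{cont-j1}.

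The main obstacle, and the point requiring the most care, is the bookkeeping of the spatial argument $\X(0;t,x,\cdot)$ under the change of variables: the substitution is performed in $v$ for fixed $(t,x)$, and one must be sure that after composing with $\Gamma_{t,x}^{-1}$ the integrand is still controlled by $N_q(f_0)/(1+|w|^q)$ — which works precisely because the pointwise decay assumption is uniform in the space variable, so the value of $\X$ in the first slot is irrelevant for the bound. A secondary point is that, as noted in Remark~\ref{rem:dp}, the identity \eqref{eq:charcsec3} and the change of variables are only rigorously available for smooth $u$; one runs the whole computation at the regularized level and passes to the limit with Fatou's lemma, the constants being uniform, so the bounds \eqref{cont-rho1}--\eqref{cont-j1} hold for almost every $t$ for the weak solution. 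Finally one should check the elementary integral comparison $\int_{\R^3}(1+|v|^q)^{-1}\dd v \leq I_q < \infty$, which holds for $q>3$ (in particular for $q>4$ as assumed in Theorem~\ref{thm}).
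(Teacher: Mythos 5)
Your proof is correct and follows essentially the same route as the paper's: the straightening substitution $w=\Gamma_{t,x}(v)$, the Jacobian lower bound of Lemma~\ref{charac} giving $\dd v\leq 2e^{-3t}\,\dd w$, the uniform-in-$x$ pointwise decay of $f_0$, and, for $j_f$, the identity \eqref{eq:Vs} to trade $|v|$ for $e^{-t}\bigl(|w|+\int_0^t e^s\|u(s)\|_{\Ld^\infty(\T^3)}\,\dd s\bigr)$. The only cosmetic remark is that finiteness of $I_q$ itself requires $q>4$ (not merely $q>3$), which is exactly the standing hypothesis, so nothing is lost.
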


\begin{proof}
Let $(\X(s,t;x,v),\V(s,t;x,v))$ be the characteristics \eqref{eq:carac} associated with $u$. We start again from the representation formula
\begin{align*}
\rho_f(t,x) = e^{3t} \int_{\R^3} f_0 (\X(0;t,x,v), \V(0;t,x,v)) \, \dd v.
\end{align*}
By Lemma~\ref{charac}, the mapping $v\mapsto\Gamma_{t,x}(v)=\V(0;t,x,v)$ defines an admissible change of variable of which we deduce
\begin{align*}
\rho_f(t,x)  = e^{3t} \int_{\R^3} f_0 (\X(0;t,x,\Gamma_{t,x}(w)),w) \left|\D_v(\Gamma_{t,x})(\Gamma_{t,x}(w))\right| \, \dd w,
\end{align*} which implies (the control of the jacobian is given by Lemma~\ref{charac})
\begin{align}
\label{ineq:rhof}\| \rho_f(t) \|_{\Ld^\infty(\T^3)} \leq  2 N_q(f_0) I_q.
\end{align}
For $j_f$ we  proceed similarly and write the representation formula (valid for the same reasons) 
\begin{align*}
j_f(t,x)  = e^{3t} \int_{\R^3} \Gamma_{t,x}(w)   f_0 (\X(0;t,x,\Gamma_{t,x}(w)),w) \left|\D_v(\Gamma_{t,x})(\Gamma_{t,x}w)) \right| \, \dd w.
\end{align*} 
By definition of $\Gamma_{t,x}(w)$, we have  the  identity
\begin{align}
\label{eq:Vs2}w = e^{t} \Gamma_{t,x}(w) -  \int_0^t e^{s} u(s,\X(s;t,x,\Gamma_{t,x}(w)) )\,\dd s,
\end{align}
from which we deduce
\begin{align*}
|\Gamma_{t,x}(w)| \leq   e^{-t} \left[ |w| +    \int_0^t e^{s} \|u(s) \|_{\Ld^\infty(\T^3)} \,\dd s\right],
\end{align*} 
hence the claimed result. \end{proof}

In the next lemma, we study how the pointwise decay condition of Definition~\ref{def:decay} can be locally propagated.

\begin{lem}\label{delay}
Let $t_0>0$.
If $f_0$ satisfies \eqref{f0} and $u\in\Ll^1(\R_+;\H^1\cap\Ld^\infty(\T^3))$, then $f_{t_0}:=f(t_0)$ satisfies also \eqref{f0} and 
\begin{align*}
N_q(f_{t_0}) \lesssim (1+\|u\|_{\Ld^1(0,t_0;\Ld^\infty(\T^3))}^q) N_q(f_0).  
\end{align*}
\end{lem}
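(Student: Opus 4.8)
The plan is to propagate the pointwise decay estimate \eqref{f0} along the characteristics, exactly as in the proof of Lemma~\ref{lem:rhoj}, but now keeping track of the weight $|v|^q$ rather than integrating against it. By the representation formula \eqref{eq:charcsec3} applied with initial time $t_0$ (and using Remark~\ref{rem:dp} to justify working with smooth $u$ and $f$), we have $f_{t_0}(x,v)=f(t_0,x,v)=e^{3t_0}f_0(\X(0;t_0,x,v),\V(0;t_0,x,v))$, so that, using $f_0(y,w)\leq N_q(f_0)(1+|w|^q)^{-1}$,
\begin{align*}
(1+|v|^q)f_{t_0}(x,v)\leq e^{3t_0}N_q(f_0)\,\frac{1+|v|^q}{1+|\V(0;t_0,x,v)|^q}.
\end{align*}
It therefore suffices to bound $|v|$ in terms of $|\V(0;t_0,x,v)|$, i.e. to show that $|v|\lesssim (1+\|u\|_{\Ld^1(0,t_0;\Ld^\infty)})(1+|\V(0;t_0,x,v)|)$ up to constants depending on $t_0$.

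First I would integrate the $\V$-equation of \eqref{eq:carac} backwards from $t_0$ to $0$, exactly as in \eqref{eq:Vs}: $\V(0;t_0,x,v)=e^{t_0}v-\int_0^{t_0}e^s u(s,\X(s;t_0,x,v))\,\dd s$, which gives $e^{t_0}|v|\leq |\V(0;t_0,x,v)|+\int_0^{t_0}e^s\|u(s)\|_{\Ld^\infty(\T^3)}\,\dd s\leq |\V(0;t_0,x,v)|+e^{t_0}\|u\|_{\Ld^1(0,t_0;\Ld^\infty(\T^3))}$. Hence $|v|\leq e^{-t_0}|\V(0;t_0,x,v)|+\|u\|_{\Ld^1(0,t_0;\Ld^\infty(\T^3))}$, and consequently
\begin{align*}
1+|v|^q\lesssim_{q} 1+e^{-qt_0}|\V(0;t_0,x,v)|^q+\|u\|_{\Ld^1(0,t_0;\Ld^\infty(\T^3))}^q\lesssim_{q}(1+\|u\|_{\Ld^1(0,t_0;\Ld^\infty(\T^3))}^q)(1+|\V(0;t_0,x,v)|^q),
\end{align*}
using that $e^{-qt_0}\leq 1$ and $1\leq 1+|\V(0;t_0,x,v)|^q$. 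Plugging this into the previous display yields $(1+|v|^q)f_{t_0}(x,v)\lesssim e^{3t_0}(1+\|u\|_{\Ld^1(0,t_0;\Ld^\infty(\T^3))}^q)N_q(f_0)$, uniformly in $x$ and $v$; taking the supremum gives the claimed bound on $N_q(f_{t_0})$ (the $t_0$-dependent factor $e^{3t_0}$ is harmless, $t_0$ being fixed, and can be absorbed in the implicit constant or left explicit).

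The only genuinely technical point is the justification of the characteristic representation and of the backward integration when $u$ is merely a weak (Leray) solution rather than $\mathscr{C}^1$; but this is handled precisely by the DiPerna--Lions approximation argument recalled in Remark~\ref{rem:dp}: one proves the estimate for a regularized sequence $(u_n,f_n)$ with $f_{0,n}\to f_0$ suitably (so that $N_q(f_{0,n})\leq N_q(f_0)$, e.g. by truncation that does not increase the weighted sup-norm) and passes to the limit using strong stability of renormalized solutions and Fatou's lemma. The hypothesis $u\in\Ll^1(\R_+;\H^1\cap\Ld^\infty(\T^3))$ guarantees that $\|u\|_{\Ld^1(0,t_0;\Ld^\infty(\T^3))}$ is finite, so everything above makes sense; the $\H^1$ part of the hypothesis is what is needed (via the Cauchy--Lipschitz/DiPerna--Lions machinery) to have well-defined flows at the approximate level. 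No real obstacle is expected here — the proof is a short variant of Lemma~\ref{lem:rhoj}.
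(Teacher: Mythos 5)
Your proof is correct and follows exactly the paper's argument: the characteristic representation $f(t_0)=e^{3t_0}f_0(\X(0;t_0,\cdot),\V(0;t_0,\cdot))$, the integrated $\V$-equation giving $|v|\leq |\V(0;t_0,x,v)|+\int_0^{t_0}\|u(s)\|_{\Ld^\infty(\T^3)}\,\dd s$, and the resulting comparison of the weights $1+|v|^q$ and $1+|\V(0;t_0,x,v)|^q$. The paper's proof is essentially your second and third paragraphs verbatim (with the DiPerna--Lions justification left implicit via Remark~\ref{rem:dp}), so there is nothing to add.
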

\begin{proof}
We write 
\begin{align*}
f({t_0},x,v) = e^{3t_0} f_0(\X(0;t_0,x,v),\V(0;t_0,x,v)).
\end{align*}
Thanks to the differential equation satisfied by $s\mapsto\V(s;t,x,v)$ we have
\begin{align}
\label{eq:V}\V(0;{t_0},x,v) &= e^{t_0} v- \int_0^{t_0} e^s u(s,\X(0;s,x,v)) \, \dd s\\
\nonumber&= e^{t_0}\left(v-\int_0^{t_0} e^{s-{t_0}} \langle u(s)\rangle \,\dd s\right) -\int_0^{t_0} e^s\Big(u(s,\X(0;s,x,v))-\langle u(s)\rangle \,\dd s\Big).
\end{align}
We deduce
\begin{align*}
|v|\leq |V(0;t_0,x,v)|+ \int_0^{t_0} \|u(s)\|_{\Ld^\infty(\T^3)}\,\dd s,
\end{align*}
and therefore 
\begin{align*}
(1+|v|^q)f({t_0},x,v) \lesssim e^{3t_0} (1+\|u\|_{\Ld^1(0,t_0;\Ld^\infty(\T^3))}^q) N_q(f_0).
\end{align*}
\end{proof}
This allows to obtain another version of Lemma~\ref{lem:rhoj} with a control like~\eqref{ineq:nabla5} starting only from some time $t_0>0$.
\begin{lem}
\label{lem:delay}
Let $t_0>0$.
With the same assumptions and notations as in Lemma~\ref{charac}, except that 
we replace~\eqref{ineq:nabla5} by
\begin{align}\label{ineq:nabla5-t0}
\int_{t_0}^t \|\nabla u(s)\|_{\Ld^\infty(\T^3)}\,\dd s \leq\delta,
\end{align}
we have for all $t \geq t_0$
\begin{align}
\label{cont-rho} &\| \rho_f(t)\|_{\Ld^\infty(\T^3)} \lesssim   N_q(f_0) (1+\|u\|_{\Ld^1(0,t_0;\Ld^\infty(\T^3))}^q)
, \\
\label{cont-j}&\| j_f(t)\|_{\Ld^\infty(\T^3)} \lesssim   e^{-t}  \left( \int_0^{t} e^s \|u(s)\|_{\Ld^\infty(\T^3)} \,\dd s +1\right)N_q(f_0) (1+\|u\|_{\Ld^1(0,t_0;\Ld^\infty(\T^3))}^q).
\end{align} 
\end{lem}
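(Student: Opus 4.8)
The strategy is to combine Lemma~\ref{delay} and Lemma~\ref{lem:rhoj} via a time-shift argument. First I would observe that the whole point of Lemma~\ref{lem:rhoj} was that, under a smallness condition on $\int_0^t\|\nabla u(s)\|_{\Ld^\infty(\T^3)}\,\dd s$, the straightening change of variables $\Gamma_{t,x}$ is a diffeomorphism with a quantitative lower bound on its Jacobian. Here the smallness condition~\eqref{ineq:nabla5-t0} only holds on the interval $[t_0,t]$, so I would \emph{restart} the system at time $t_0$: by the semigroup property of the characteristic flow, for $s\in[t_0,t]$ one has $\V(t_0;t,x,v)$ playing the role formerly played by $\V(0;t,x,v)$, and the relevant change of variables becomes $v\mapsto \V(t_0;t,x,v)$. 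Rerunning the proof of Lemma~\ref{charac} verbatim but with all integrals $\int_0^{\cdot}$ replaced by $\int_{t_0}^{\cdot}$, assumption~\eqref{ineq:nabla5-t0} gives that this map is a $\mathscr{C}^1$-diffeomorphism of $\R^3$ with $|\det\D_v\cdot|\geq e^{3(t-t_0)}/2$.

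Next, using the representation formula for $f$ but started from time $t_0$, namely $f(t,x,v)=e^{3(t-t_0)}f(t_0,\X(t_0;t,x,v),\V(t_0;t,x,v))$, I would repeat the computations of Lemma~\ref{lem:rhoj} with $f_0$ replaced by $f_{t_0}:=f(t_0)$ and $t$ replaced by $t-t_0$. This yields
$$
\|\rho_f(t)\|_{\Ld^\infty(\T^3)}\leq 2 I_q N_q(f_{t_0}),
$$
and, from the analogue of~\eqref{eq:Vs2} on $[t_0,t]$, a bound on $|\V(t_0;t,x,v)|$ that produces
$$
\|j_f(t)\|_{\Ld^\infty(\T^3)}\leq 2 I_q\, e^{-(t-t_0)}\left(\int_{t_0}^t e^{s-t_0}\|u(s)\|_{\Ld^\infty(\T^3)}\,\dd s+1\right) N_q(f_{t_0}).
$$
Then I would invoke Lemma~\ref{delay} to replace $N_q(f_{t_0})$ by $\lesssim (1+\|u\|_{\Ld^1(0,t_0;\Ld^\infty(\T^3))}^q)N_q(f_0)$, absorbing the fixed constant $t_0$ (hence $e^{3t_0}$ and $e^{t_0}$) into the implicit constants, which is legitimate since $t_0$ is fixed. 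Finally, bounding $\int_{t_0}^t e^{s-t_0}\|u(s)\|_{\Ld^\infty}\,\dd s\leq e^{-t_0}\int_0^t e^s\|u(s)\|_{\Ld^\infty}\,\dd s$ and $e^{-(t-t_0)}\leq e^{t_0}e^{-t}$ converts these into the stated form~\eqref{cont-rho}--\eqref{cont-j}.

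The only mild subtlety — and the step I would be most careful with — is the bookkeeping of the time shift in the exponential factors: one must check that after the substitutions $0\rightsquigarrow t_0$, $t\rightsquigarrow t-t_0$ the prefactor $e^{-(t-t_0)}$ together with $N_q(f_{t_0})$ genuinely reproduces $e^{-t}$ times a $t_0$-dependent constant, and that the integrability/regularity hypotheses needed for Lemma~\ref{charac} and Lemma~\ref{delay} ($u\in\Ll^1(\R_+;\H^1\cap\Ld^\infty(\T^3))$) are exactly the standing assumptions. As always, by Remark~\ref{rem:dp} all the characteristic computations are performed on the approximating smooth sequence and then passed to the limit, so no extra regularity of $u$ or $f$ is actually used.
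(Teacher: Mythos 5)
Your proposal is correct and follows exactly the paper's own argument: the paper's proof is precisely to rerun Lemma~\ref{charac} and Lemma~\ref{lem:rhoj} with the initial time $0$ replaced by $t_0$ (hence $f_0$ by $f(t_0)$) and then to invoke Lemma~\ref{delay} to control $N_q(f(t_0))$. Your additional bookkeeping of the shifted exponential factors and of the Jacobian bound $e^{3(t-t_0)}/2$ is accurate and simply makes explicit what the paper leaves implicit.
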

\begin{proof}
We can reproduce Lemma~\ref{charac} and Lemma~\ref{lem:rhoj} replacing the initial time $t=0$ by $t=t_0$ and thus $f_0$ by $f(t_0)$.
Using Lemma~\ref{delay}, we obtain the claimed estimates.
\end{proof}

\section{Regularity estimates for solutions of the Vlasov-Navier-Stokes system}
\label{sec:LinfH1}

This section is devoted to the following two tasks: 
\begin{itemize}
\item obtaining a precise short time control for the $\Ld^\infty$ norm of $\rho_f$ and $j_f$ (relying on local estimates and Lemma~\ref{delay});
\item obtaining 
$\Ld^\infty_t \H^1_x \cap \Ld^2_t \H^{2}_x$ estimates for $u$, on time intervals \emph{away} from zero, as developed in Proposition~\ref{coro:ns3D}.
\end{itemize}
 Such estimates will be crucial to prove Theorem~\ref{realthm}, combined with the higher order estimates proved in Section~\ref{sec:higherorder}. 

We shall also introduce in this section the notion of \emph{strong existence times} (see Definition \ref{def:tadm}). Loosely speaking, this corresponds to times $t$ for which the solution $u$ of the Navier-Stokes equation is \emph{strong} on the interval of time $[0,t]$, which means in this context that it enjoys $\H^{1/2}(\T^3)$ regularity. A smallness criterion bearing both on $u$ and on the Brinkman force $j_f - \rho_f u$ (see~\eqref{ineq:smallnessvns}) will be used.
\begin{nota}\label{nota:lesssim}
From now on, $A\lesssim_{0}B$ will mean
\begin{equation*}
A \leq \ffi\left(\|u_0\|_{\H^{1/2}(\T^3)}+M_\alpha f_0+N_q(f_0)+\E(0) + 1 \right)B,
\end{equation*}
where $\ffi:\R_+\rightarrow\R_+$ is onto, continuous and nondecreasing, and $q>4$ and $\alpha>3$ are the exponents given in the statements of Theorem~\ref{thm} and Theorem~\ref{realthm}. Note that $\lesssim_{0}$ may depend on the integration exponents appearing in the inequality, but this will always be harmless. 
\end{nota}

\begin{nota}\label{nota:FS}
We will use the following notations:
\begin{align*}
F:=j_f-\rho_f u, \qquad S:=F-(u\cdot \nabla)u.
\end{align*}
\end{nota}

\subsection{Local estimates}
In this paragraph we establish local estimates on both the fluid and the particle densities. Namely, we prove $u\in\Ld^1_{\textnormal{loc}}(\R_+;\Ld^\infty(\T^3))$ and deduce from this estimate that $\rho_f,j_f \in\Ld^\infty_{\textnormal{loc}}(\R_+;\Ld^\infty(\T^3))$ and then $F\in\Ll^2(\R_+;\Ld^2(\T^3))$.
\begin{propo}\label{propo:estvns3D}
We have $u\in\Ll^1(\R_+;\Ld^\infty(\T^3))$ and $\rho_f,j_f \in\Ll^\infty(\R_+;\Ld^\infty(\T^3))$. Moreover there exists a continuous nondecreasing function $\eta:\R_+\rightarrow\R_+$ such that 
\begin{align}
\label{eq:ulinf3}
  \|u\|_{\Ld^1(0,t;\Ld^\infty(\T^3))}&\lesssim_0 \eta(t),\\
  \label{ineq:01}   \|\rho_f\|_{\Ld^\infty(0,t;\Ld^\infty(\T^3))}+ \|j_f\|_{\Ld^\infty(0,t;\Ld^\infty(\T^3))}  &\lesssim_0 \eta(t).
\end{align}
\end{propo}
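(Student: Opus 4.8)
The plan is to bootstrap from the crude integrability already available (Lemma~\ref{lem:rhojeasy}) towards $\Ld^\infty$ bounds, using the standard maximal regularity / Sobolev machinery for the forced Navier--Stokes equations, and then to close the loop via the moment estimate of Lemma~\ref{lem:rhoj} (or rather its $t_0$-version, Lemma~\ref{lem:delay}). First I would record the input: by Lemma~\ref{lem:rhojeasy} we have $\rho_f\in\Ld^\infty_{\mathrm{loc}}(\R_+;\Ld^2(\T^3))$ and $j_f\in\Ld^\infty_{\mathrm{loc}}(\R_+;\Ld^{3/2}(\T^3))$, and $u$ is a Leray solution, so $u\in\Ld^\infty_{\mathrm{loc}}(\R_+;\Ld^2)\cap\Ld^2_{\mathrm{loc}}(\R_+;\H^1)$, hence $u\in\Ld^{10/3}_{\mathrm{loc}}(\R_+;\Ld^{10/3})$ by the usual interpolation between $\Ld^\infty_t\Ld^2_x$ and $\Ld^2_t\Ld^6_x$. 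This gives the Brinkman force $F=j_f-\rho_f u$ a first (weak) integrability in some $\Ld^{p}_{\mathrm{loc}}(\R_+;\Ld^{q}(\T^3))$ with $p,q>1$, via H\"older applied to $\rho_f u$ and the bound on $j_f$.

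Next I would feed this into maximal $\Ld^p$-regularity for the (non-stationary) Stokes system, treating the convective term $(u\cdot\nabla)u$ as part of the source as in Notation~\ref{nota:FS}: since $(u\cdot\nabla)u$ lies in $\Ld^{5/3}_{\mathrm{loc}}(\R_+;\Ld^{5/4})$-type spaces by the Ladyzhenskaya--Prodi--Serrin arithmetic applied to $u\in\Ld^{10/3}_{t,x}$, one obtains $\D^2 u\in\Ld^p_{\mathrm{loc}}\Ld^q_x$ for some $p,q>1$, hence by Gagliardo--Nirenberg--Sobolev a gain of integrability on $\nabla u$ and on $u$ itself. Iterating this scheme a finite number of times — each pass improving the exponents — eventually lands $u$ in $\Ld^1_{\mathrm{loc}}(\R_+;\W^{1,\infty}(\T^3))$, in particular $u\in\Ld^1_{\mathrm{loc}}(\R_+;\Ld^\infty(\T^3))$, which is \eqref{eq:ulinf3}; the dependence of all constants on the data is tracked through $\ffi$ as in Notation~\ref{nota:lesssim}, giving the $\lesssim_0\eta(t)$ form. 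Once $\nabla u\in\Ld^1_{\mathrm{loc}}(\R_+;\Ld^\infty)$ is known, for any fixed $t$ one can split $[0,t]$ into finitely many subintervals on each of which $\int\|\nabla u\|_{\Ld^\infty}\le\delta$, apply Lemma~\ref{lem:delay} (with $t_0$ the left endpoint of the relevant subinterval and $f_0$ replaced by $f$ at that time, controlled through Lemma~\ref{delay}), and conclude the uniform-in-space bounds \eqref{ineq:01} on $\rho_f$ and $j_f$, with the constant again expressed via $\eta$ and $N_q(f_0)$, $\|u\|_{\Ld^1(0,t;\Ld^\infty)}$.

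The main obstacle is the bootstrap itself: one must choose the chain of exponents $(p_k,q_k)$ carefully so that each application of maximal parabolic regularity is licit (the forcing must genuinely sit in $\Ld^{p_k}_t\Ld^{q_k}_x$ with $q_k>1$, and one cannot afford to hit the borderline $q=1$ or $q=\infty$ prematurely), and so that after finitely many steps the Sobolev embedding $\W^{2,q}\hookrightarrow\W^{1,\infty}$ (valid for $q>3$) is reached — this is where the dimension $3$ makes the arithmetic tighter than in $2$D and explains why the crude $\Ld^2$–$\Ld^{3/2}$ starting point for $(\rho_f,j_f)$ is only just enough. A secondary technical point is that these are estimates for Leray solutions which need not be strong at small times, so the maximal-regularity step should be run on the Stokes equation with the nonlinearity in the source (legitimate because $u$ already has enough regularity at each stage), and all manipulations on the kinetic side go through the DiPerna--Lions approximation of Remark~\ref{rem:dp}.
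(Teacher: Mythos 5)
There are genuine gaps, and they are precisely at the points where the paper's hypotheses $u_0\in\H^{1/2}(\T^3)$ and $M_\alpha f_0<+\infty$ ($\alpha>3$) must be used. First, your maximal-regularity bootstrap never deals with the homogeneous part of the evolution: maximal parabolic regularity with zero initial data controls only $w$ in the splitting $u=e^{t\Delta}u_0+w$, and the free part $e^{t\Delta}u_0$ must be estimated separately near $t=0$. This is where the paper invokes $u_0\in\H^{1/2}$ and \cite[Lemma 3.3]{GIM} to get $e^{t\Delta}u_0\in\Ld^2(\R_+;\Ld^\infty(\T^3))$; without it your claim that the iteration lands $u$ in $\Ld^1_{\mathrm{loc}}(\R_+;\Ld^\infty)$ down to $t=0$ does not follow. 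Worse, your stronger claim $\nabla u\in\Ld^1_{\mathrm{loc}}(\R_+;\W^{1,\infty})$ including a neighbourhood of $t=0$ is false under the stated hypotheses: $\|\nabla e^{t\Delta}u_0\|_{\Ld^\infty}$ behaves like $t^{-1-}$ for $u_0\in\H^{1/2}$ and is not integrable at the origin (the paper itself notes, after Proposition~\ref{propo-infini}, that $u\in\mathscr{C}^0([\ep,+\infty);\W^{1,\infty})$ only for $\ep>0$ and that integrating $\|\nabla u\|_{\Ld^\infty}$ down to $t=0$ requires extra regularity on $u_0$). Consequently your plan of cutting $[0,t]$ into subintervals with $\int\|\nabla u\|_{\Ld^\infty}\le\delta$ and applying Lemma~\ref{lem:delay} cannot be carried out on the first subinterval. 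The paper's route to \eqref{ineq:01} is different and avoids gradients altogether: once $u\in\Ld^1(0,t;\Ld^\infty)$ is known, Lemma~\ref{delay} propagates the pointwise decay, $N_q(f(t))\lesssim(1+\|u\|_{\Ld^1(0,t;\Ld^\infty)}^q)N_q(f_0)$, and since $q>4$ one concludes directly $\|\rho_f(t)\|_{\Ld^\infty}+\|j_f(t)\|_{\Ld^\infty}\lesssim N_q(f(t))$.

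Second, the assertion that ``each pass improves the exponents'' fails for the $j_f$ contribution to the Brinkman force. Improving the regularity of $u$ does nothing, by itself, to the integrability of $j_f$, which from Lemma~\ref{lem:rhojeasy} sits in $\Ld^\infty_{\mathrm{loc}}(\Ld^{3/2})$; with a source only in $\Ld^{3/2}(\T^3)$ you get $\D^2 w\in\Ld^{3/2}$, hence $w\in\Ld^p$ for all $p<\infty$ but \emph{not} $\Ld^\infty$ (the embedding $\W^{2,3/2}(\T^3)\hookrightarrow\Ld^\infty$ is borderline false), so the iteration stalls one step short of \eqref{eq:ulinf3}. The paper breaks this deadlock by first deducing $u\in\Ll^1(\R_+;\Ld^{\alpha+3})$ from the intermediate $\Ld^2_t\Ld^p_x$ bounds, then propagating the higher moment $M_\alpha f$ via Lemma~\ref{lem:propmo}, which through the interpolation inequality \eqref{ineq:interpo} upgrades $j_f$ to $\Ld^{(\alpha+3)/4}(\T^3)$ with exponent strictly above $3/2$ (this is exactly where $\alpha>3$ is needed); only then does $\W^{2,r}\hookrightarrow\Ld^\infty$ apply. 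Your proposal never invokes the hypothesis $M_\alpha f_0<+\infty$, so this step is missing. The overall architecture (crude bounds, maximal regularity, Sobolev, then moment control) is the right one, but these two ingredients — the $\Ld^2_t\Ld^\infty_x$ bound on the free evolution of $u_0\in\H^{1/2}$, and the propagation of $M_\alpha f$ to upgrade $j_f$ past $\Ld^{3/2}$ — are indispensable and absent.
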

\begin{proof}
  In the proof we denote by $\eta$ a generic continuous function (as in the statement of the proposition), which may vary from line to line.

  \medskip
  
  Since $M_2 f_0<+\infty$ (see the Definition~\ref{def:adm} of admissible initial data), we have also $M_{3}f_0\lesssim M_2 f_0+M_\alpha f_0 <+\infty$. 
We infer from the proof of Lemma~\ref{lem:rhojeasy} that 
\begin{align*}
\|\rho_f (t)\|_{\Ld^2(\T^3)} + 
\|j_f (t)\|_{\Ld^{3/2}(\T^3)} \lesssim_0 \eta(t).
\end{align*}
In particular, recalling the notation $S=j_f -\rho_f u-(u\cdot \nabla)u$, we infer, using Hölder's inequality and the Sobolev embedding $\H^{1}(\T^3)\hookrightarrow\Ld^6(\T^3)$ and the energy estimate \eqref{ineq:nrj}, 
\begin{align*}
\int_0^t \|S(s)\|_{\Ld^{3/2}(\T^3)}^2\,\dd s\,\lesssim_0 \eta(t).
\end{align*}
Now, if $\mathbb{P}$ stands for the Leray projector (that is the projection on divergence free vector fields), let $w$ be the unique solution of 
\begin{align*}
\partial_t w - \Delta w &= \mathbb{P} S,\\
\div\,w &= 0,\\
w(0) &=0,
\end{align*}
so that $u-w =e^{t \Delta}u_0$. Since $u_0 \in \H^{\frac{1}{2}}(\T^3)$, we infer from \cite[Lemma 3.3]{GIM} that
$u-w\in \Ld^2(\R_+;\Ld^\infty(\T^3))$ with the estimate
\begin{align*}
\int_0^\infty \|(u-w)(s)\|_{\Ld^\infty(\T^3)}^2 \,\dd s \lesssim  \|u_0\|_{\H^{\frac{1}{2}}(\T^3)}^2.
\end{align*}
Thanks to the $\Ll^2(\R_+;\Ld^{3/2}(\T^3))$ estimate on $S$ that we obtained above, we infer from the continuity of $\mathbb{P}$ on $\Ld^{3/2}(\T^3)$ and the maximal regularity of the heat operator on the torus (see Corollary~\ref{coro:heat}) that $\Delta w\in\Ll^2(\R_+;\Ld^{3/2}(\T^3))$. Therefore, from a standard elliptic estimate, we deduce $\D^2 w\in\Ll^2(\R_+;\Ld^{3/2}(\T^3))$  and thus $w\in\Ll^2(\R_+;\Ld^p(\T^3))$ for all $p<\infty$, by Sobolev's embedding. We have even more precisely (keeping track of the different constants)
\begin{align*}
  \int_0^t \|w(s)\|_{\Ld^{p}(\T^3)}^2\,\dd s \lesssim_0 \eta(t).
\end{align*}
Up to now we have thus established (for any $p<\infty$) that $u\in\Ll^2(\R_+;\Ld^p(\T^3))$ with
 \begin{align}
\label{eq:up}\int_0^t \|u(s)\|_{\Ld^{p}(\T^3)}^2\,\dd s \lesssim_0 \eta(t).
\end{align}
In particular, we get $u\in\Ll^1(\R_+;\Ld^{\alpha+3}(\T^3))$. Using estimate \eqref{ineq:Malpha} of Lemma~\ref{lem:propmo} we first have 
\begin{align*}
M_\alpha f(t) \lesssim \left(M_\alpha f_0 + e^{\frac{3t}{\alpha+3}}\int_0^t \|u(s)\|_{\alpha+3}\,\dd s\right)^{\alpha +3} \lesssim_0 \eta(t).
\end{align*}
We use the interpolation estimate \eqref{ineq:interpo} with $k=\alpha$ and $\ell\in\{0,1\}$ to obtain this time 
\begin{align*}
\|\rho_f(t)\|_{\Ld^{\frac{\alpha+3}{3}}(\T^3)} + \|j_f(t)\|_{\Ld^{\frac{\alpha+3}{4}}(\T^3)} \lesssim_0 \eta(t),
\end{align*}
where the integration exponents are strictly larger than $3/2$. Using \eqref{eq:up} we can estimate $(u\cdot \nabla)u$ in some $\Ll^\gamma(\R_+;\Ld^r(\T^3))$ for $\gamma>1$ and $r>3/2$ leading to the following estimate on the source $S$ :
\begin{align*}
\int_0^t \|S(s)\|_{\Ld^r(\T^3)}^\gamma \,\dd s \lesssim_0 \eta(t).
\end{align*} 
Since $r>3/2$, using like before the maximal regularity of the heat operator we eventually infer by the Sobolev embedding $\W^{2,r}(\T^3)\hookrightarrow\Ld^\infty(\T^3)$
\begin{align*}
\int_0^t \|w(s)\|_{\Ld^\infty(\T^3)}^\gamma\,\dd s\lesssim_0 \eta(t).
\end{align*}
All in all, we have obtained that $u=(u-w)+w\in\Ll^1(\R_+;\Ld^\infty(\T^3))$. Finally using Lemma~\ref{delay} and the straightforward bound 
$$
\|\rho_f(t)\|_{\Ld^\infty(\T^3)} +\|j_f(t)\|_{\Ld^\infty(\T^3)} \lesssim N_q(f(t)),
$$
 we infer that both $\rho_f$, $j_f$ belong to $\Ll^\infty(\R_+;\Ld^\infty(\T^3))$ with the estimate
\begin{align*}
\|\rho_f(t)\|_{\Ld^\infty(\T^3)} +\|j_f(t)\|_{\Ld^\infty(\T^3)} \lesssim_0 \eta(t).
\end{align*}
\end{proof}
\begin{lem}\label{lem:FL2}
Recalling Notation~\ref{nota:FS}, we have $F\in\Ll^2(\R_+;\Ld^2(\T^3))$ and moreover
\begin{align*}
\int_0^t \|F(s)\|_{\Ld^2(\T^3)}^2\,\dd s \leq \min(\E(0),\mathscr{E}(0))\sup_{s\in[0,t]}\|\rho_f(s)\|_{\Ld^{\infty}(\T^3)}.
\end{align*}
\end{lem}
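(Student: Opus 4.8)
The plan is to write $F$ pointwise as a velocity average against $f$ and then apply Cauchy--Schwarz in $v$ so that a copy of $\rho_f$ comes out and the remaining factor is precisely (part of) the dissipation $\D$.

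First I would observe that, since $\rho_f(t,x)=\int_{\R^3}f(t,x,v)\,\dd v$ and $j_f(t,x)=\int_{\R^3}v f(t,x,v)\,\dd v$, one has the identity
\begin{equation*}
F(t,x)=j_f(t,x)-\rho_f(t,x)u(t,x)=\int_{\R^3}\bigl(v-u(t,x)\bigr)f(t,x,v)\,\dd v.
\end{equation*}
Applying the Cauchy--Schwarz inequality in the $v$ variable (writing $f=f^{1/2}\cdot f^{1/2}$), this gives
\begin{equation*}
|F(t,x)|^2\leq\Bigl(\int_{\R^3}f(t,x,v)\,\dd v\Bigr)\Bigl(\int_{\R^3}|v-u(t,x)|^2 f(t,x,v)\,\dd v\Bigr)=\rho_f(t,x)\int_{\R^3}|u(t,x)-v|^2 f(t,x,v)\,\dd v.
\end{equation*}
Integrating in $x$ over $\T^3$ and pulling out the $\Ld^\infty$ norm of $\rho_f(t)$, the surviving integral is dominated by the first term of the dissipation \eqref{eq:dissip}, hence
\begin{equation*}
\|F(t)\|_{\Ld^2(\T^3)}^2\leq\|\rho_f(t)\|_{\Ld^\infty(\T^3)}\int_{\T^3\times\R^3}f(t,x,v)|u(t,x)-v|^2\,\dd v\,\dd x\leq\|\rho_f(t)\|_{\Ld^\infty(\T^3)}\,\D(t).
\end{equation*}

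Then I would integrate this inequality in time on $[0,t]$, bound $\|\rho_f(s)\|_{\Ld^\infty(\T^3)}$ by its supremum over $[0,t]$ (which is finite by Proposition~\ref{propo:estvns3D}), and invoke the two energy inequalities at our disposal: the usual energy estimate \eqref{ineq:nrj} with $s=0$, which yields $\int_0^t\D(\sigma)\,\dd\sigma\leq\E(0)$, and the modulated energy/dissipation inequality \eqref{eq:choikwonweak} of Lemma~\ref{lem:choikwon} with $s=0$, which yields $\int_0^t\D(\sigma)\,\dd\sigma\leq\mathscr{E}(0)$. Taking the better of the two bounds gives exactly the claimed estimate, and in particular $F\in\Ll^2(\R_+;\Ld^2(\T^3))$.

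There is essentially no serious obstacle here; the only point requiring a word of care is that the pointwise manipulations above (in particular the Cauchy--Schwarz step and the use of Fubini) are carried out on the DiPerna--Lions regularized solutions and then passed to the limit, in the spirit of Remark~\ref{rem:dp} — the required integrability of $\rho_f$, $j_f$ and $u$ being supplied by Proposition~\ref{propo:estvns3D} so that $F(t)$ indeed lies in $\Ld^2(\T^3)$ for a.e.\ $t$ before one even takes the bound. No new ideas beyond Cauchy--Schwarz and the two energy identities are needed.
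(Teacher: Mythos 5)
Your proof is correct and follows exactly the paper's argument: the same Cauchy--Schwarz step in $v$ producing $\|\rho_f(s)\|_{\Ld^\infty(\T^3)}\,\D(s)$, followed by the energy estimate \eqref{ineq:nrj} and the modulated energy inequality \eqref{eq:choikwonweak}. Nothing further is needed.
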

\begin{proof}
By Cauchy-Schwarz's inequality, we have a.e.,
\begin{align*}
|F| = \left|\int_{\R^3} f(v-u)\,\dd v\right| \leq \rho_f^{1/2} \left(\int_{\R^3} f|v-u|^2\,\dd v\right)^{1/2},
\end{align*}
from which we infer for almost all $s\geq 0$, 
\begin{align*}
\|F(s)\|_{\Ld^2(\T^3)}^2\leq \|\rho_f(s)\|_{\Ld^{\infty}(\T^3)}\D(s),
\end{align*}
where $\D$ is the dissipation introduced in \eqref{eq:dissip}. The estimate follows thus from the energy \eqref{ineq:nrj} and modulated energy \eqref{eq:choikwonweak} estimates.
\end{proof}





\subsection{Parabolic regularization for the fluid}

We state here a consequence of the parabolic regularization result of Proposition~\ref{coro:ns3D} of the appendix. This roughly establishes the instantaneous gain of two derivatives for the Navier-Stokes equation, if the right-hand side is square-integrable. However, such an estimate can only be obtained if a suitable smallness condition is satisfied.

\begin{propo}\label{propo:estvns3Dbis}
Assume that for some $T>0$ there holds
\begin{align}
\label{ineq:smallnessvns}\|u_0\|_{\H^{1/2}(\T^3)}^2  + \C_\star \int_0^T \|F(s)\|_{\H^{-1/2}(\T^3)}^2\,\dd s < \frac{1}{\C_\star^2},
\end{align}
where $\C_\star$ is the universal constant given by Proposition~\ref{coro:ns3D}. Then one has for  all $1/2\leq t \leq T$ the  estimate 
\begin{align}\label{ineq:estvns3D}
\|\nabla u(t)\|_{\Ld^2(\T^3)}^2 + \int_{1/2}^t \|\Delta u(s)\|_{\Ld^2(\T^3)}^2\,\dd s \lesssim  \E(0) \Big(1+\sup_{[0,t]} \|\rho_f(s)\|_{\Ld^\infty(\T^3)}\Big),
\end{align}
where $\lesssim$ depends only on $\C_\star$.
\end{propo}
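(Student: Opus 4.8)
The plan is to deduce Proposition~\ref{propo:estvns3Dbis} from the abstract parabolic regularization statement (Proposition~\ref{coro:ns3D}) applied to the Navier--Stokes equation with source $F = j_f - \rho_f u$, viewed as a perturbation of the Stokes problem. First I would record that, under the smallness hypothesis \eqref{ineq:smallnessvns}, Proposition~\ref{coro:ns3D} provides a \emph{strong} solution on $[0,T]$: in particular $u \in \Ll^\infty(0,T;\dot\H^{1/2}(\T^3)) \cap \Ll^2(0,T;\dot\H^{3/2}(\T^3))$, together with a quantitative bound on these norms in terms of $\|u_0\|_{\H^{1/2}}$ and $\int_0^T \|F\|_{\H^{-1/2}}^2$, hence (via the smallness condition) in terms of $\C_\star$ alone up to the additive $\|u_0\|_{\H^{1/2}}^2$ piece. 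This is the step where the universal constant $\C_\star$ and the smallness threshold $1/\C_\star^2$ enter, and it is essentially quoted rather than reproved.

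Next I would upgrade this $\H^{1/2}$-level information to the $\H^1$-level estimate \eqref{ineq:estvns3D} by performing the standard $\dot H^1$ energy estimate for Navier--Stokes on a time interval bounded away from zero. Testing \eqref{eq:ns} with $-\Delta u$ and using $\div u = 0$ gives, after integration by parts,
\begin{align*}
\frac12 \frac{\dd}{\dd t}\|\nabla u\|_{\Ld^2}^2 + \|\Delta u\|_{\Ld^2}^2 = \int_{\T^3} (u\cdot\nabla)u \cdot \Delta u\,\dd x - \int_{\T^3} F\cdot\Delta u\,\dd x.
\end{align*}
The nonlinear term is controlled by the classical inequality $\|(u\cdot\nabla)u\|_{\Ld^2} \lesssim \|u\|_{\dot\H^{1/2}}\|\Delta u\|_{\Ld^2}^{?}$-type bounds, or more robustly by $\|(u\cdot\nabla)u\|_{\Ld^{3/2}} \lesssim \|u\|_{\Ld^6}\|\nabla u\|_{\Ld^2}$ combined with Sobolev interpolation; the point is that the $\dot\H^{1/2}\cap\dot\H^{3/2}$ control from the previous step, together with the smallness, lets us absorb the nonlinear contribution into $\frac12\|\Delta u\|_{\Ld^2}^2$ on the left. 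The forcing term is handled by Cauchy--Schwarz and Young: $|\int F\cdot\Delta u| \le \frac14\|\Delta u\|_{\Ld^2}^2 + \|F\|_{\Ld^2}^2$, and then Lemma~\ref{lem:FL2} converts $\int_0^t \|F(s)\|_{\Ld^2}^2\,\dd s$ into $\min(\E(0),\mathscr E(0))\sup_{[0,t]}\|\rho_f\|_{\Ld^\infty}$, which accounts exactly for the factor $\E(0)(1+\sup_{[0,t]}\|\rho_f\|_{\Ld^\infty})$ on the right-hand side of \eqref{ineq:estvns3D}.

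Finally I would handle the issue that this differential inequality needs an initial datum at a positive time: since $u \in \Ll^2(0,T;\dot\H^{3/2})$ from the strong-solution step and $\dot\H^{3/2}(\T^3) \hookrightarrow \dot\H^1(\T^3)$, there is a time $t_0 \in (0,1/2)$ (in fact a set of such times of positive measure) with $\|\nabla u(t_0)\|_{\Ld^2}^2 \lesssim \int_0^{1/2}\|u(s)\|_{\dot\H^{3/2}}^2\,\dd s \lesssim_{\C_\star} \E(0)(1+\sup\|\rho_f\|_{\Ld^\infty})$; Grönwall's lemma applied from $t_0$ (or directly from $1/2$, starting the dissipation integral there) then propagates the bound to all $t \in [1/2,T]$ and yields \eqref{ineq:estvns3D}. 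I expect the main obstacle to be bookkeeping the absorption of the nonlinear term: one must verify that the smallness encoded in \eqref{ineq:smallnessvns} is genuinely strong enough — via the $\dot\H^{1/2}$ norm controlling $\|(u\cdot\nabla)u\|$ in the right space by a small multiple of $\|\Delta u\|_{\Ld^2}$ — so that no hidden Grönwall exponential blows up the constant; everything else is routine parabolic energy estimation. All constants beyond $\C_\star$ are absorbed into $\E(0)$ and $\sup_{[0,t]}\|\rho_f\|_{\Ld^\infty}$ as in the statement.
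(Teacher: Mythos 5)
Your architecture is the same as the paper's, but you are re-deriving a statement the paper has already packaged into its black box. Under \eqref{ineq:smallnessvns}, Proposition~\ref{coro:ns3D} does not only provide the strong solution with its $\Ld^\infty_t\H^{1/2}_x\cap\Ld^2_t\H^{3/2}_x$ control: its second assertion \eqref{ineq:capital} \emph{is} the $\H^1$-level parabolic estimate
$\|\nabla u(t)\|_{\Ld^2(\T^3)}^2+\int_{1/2}^t\|\Delta u(s)\|_{\Ld^2(\T^3)}^2\,\dd s\lesssim \A(t)+\int_0^t\|F(s)\|_{\Ld^2(\T^3)}^2\,\dd s$,
so the paper's proof is two lines: $\A(t)\leq\E(0)$ by the energy inequality \eqref{ineq:nrj}, and Lemma~\ref{lem:FL2} bounds the $F$-integral by $\E(0)\sup_{[0,t]}\|\rho_f(s)\|_{\Ld^\infty(\T^3)}$. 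Your inline re-derivation of that $\dot\H^1$ estimate is essentially the content of Propositions~\ref{propo:nsreg} and~\ref{propo:esns3D} in the appendix, and it can be made to work, but two steps need repair.

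First, the convection term cannot be absorbed directly into $\tfrac12\|\Delta u\|_{\Ld^2(\T^3)}^2$ using only the $\dot\H^{1/2}$ smallness: the natural bound $|\int(u\cdot\nabla)u\cdot\Delta u|\leq\|\Delta u\|_{\Ld^2(\T^3)}\|u\|_{\Ld^6(\T^3)}\|\nabla u\|_{\Ld^3(\T^3)}$ does not yield $\|(u\cdot\nabla)u\|_{\Ld^2}\leq\varepsilon\|\Delta u\|_{\Ld^2}$ (interpolating $\|u\|_{\Ld^6}$ through $\dot\H^{1/2}$ reintroduces a power of $\|\Delta u\|_{\Ld^2}$ strictly larger than one). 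The mechanism that actually closes the estimate — and which you mention only as a fallback — is Gr\"onwall: after Young's inequality one is left with a term $\|u\|_{\H^1(\T^3)}^2\|\nabla u(t)\|_{\Ld^3(\T^3)}^2$, and the resulting exponential factor $\exp\big(\C\int_0^T\|\nabla u(s)\|_{\Ld^3(\T^3)}^2\,\dd s\big)$ is bounded by a universal constant because $\|\nabla u\|_{\Ld^3(\T^3)}\lesssim\|\nabla u\|_{\dot\H^{1/2}(\T^3)}$ and \eqref{eq-nrj12} gives $\int_0^T\|\nabla u(s)\|_{\dot\H^{1/2}(\T^3)}^2\,\dd s\leq 1/\C_\star^2$; this is exactly Proposition~\ref{propo:esns3D}. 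Second, your selection of $t_0\in(0,1/2)$: the bound $\|\nabla u(t_0)\|_{\Ld^2(\T^3)}^2\lesssim\int_0^{1/2}\|u(s)\|_{\dot\H^{3/2}(\T^3)}^2\,\dd s$ produces a constant of order $1/\C_\star^2$, which is \emph{not} dominated by $\E(0)\big(1+\sup_{[0,t]}\|\rho_f\|_{\Ld^\infty(\T^3)}\big)$ when $\E(0)$ is small. You should instead pick $t_0$ from the Leray energy inequality, which gives $\int_0^{1/2}\|\nabla u(s)\|_{\Ld^2(\T^3)}^2\,\dd s\leq\E(0)$ and hence a time $t_0$ with $\|\nabla u(t_0)\|_{\Ld^2(\T^3)}^2\leq 2\E(0)$ (this is precisely why $\A(t)$ in \eqref{def:A} contains the $\Ld^2_t\dot\H^1_x$ norm; the paper sidesteps the mean-value selection altogether by inserting a time weight $\gamma(t)$ vanishing at $t=0$). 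With these two repairs your argument coincides with the appendix proof of \eqref{ineq:capital}, and the conclusion then follows from Lemma~\ref{lem:FL2} as you state.
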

\begin{proof}
If \eqref{ineq:smallnessvns} is indeed satisfied, we can directly use the well-posedness framework given by Proposition~\ref{coro:ns3D}. Thanks to Lemma~\ref{lem:FL2} we have also \eqref{ineq:capital} which here reduces to \eqref{ineq:estvns3D} because the decay of the energy \eqref{ineq:nrj} ensures $\A(t)\leq \E(0)$.
\end{proof}

%

\subsection{Strong existence times}

Thanks to Proposition~\ref{propo:estvns3D}, we know that $\rho_f$ and $j_f$ both belong to $\Ll^\infty(\R_+;\Ld^\infty(\T^3))$. We can therefore focus on the boundedness over $[1,+\infty)$. For this purpose, the following notations will be convenient.

\begin{defi}
\label{def:M}
We set for $t\geq 1$ 
\begin{align}
\M_{\rho_f}(t) &:= \sup_{[1,t]} \|\rho_f(s)\|_{\Ld^\infty(\T^3)},  \quad \M_{j_f}(t)  := \sup_{[1,t]} \|j_f(s)\|_{\Ld^\infty(\T^3)}, \\
 \M_{\rho_f, j_f} (t) &:=  \M_{\rho_f}(t) + \M_{j_f}(t).
\end{align}
\end{defi}
In order to use the regularization offered by Proposition~\ref{propo:estvns3Dbis}, we need to ensure that the smallness condition \eqref{ineq:smallnessvns} remains satisfied. For this reason, we introduce the following definition.

\begin{defi}[Strong existence times]\label{def:tadm}
A real number $T\geq 0$ will be said to be a \emph{strong existence time} whenever \eqref{ineq:smallnessvns} holds.

\end{defi}
The following lemma asserts that within our set of assumptions, we have a lower bound for strong existence times.
\begin{lem}\label{lem:strong1}
The smallness condition \eqref{ineq:Emodsmall3D} of Theorem~\ref{thm} suffices to ensure that $T=1$ is a strong existence time in the sense of Definition \ref{def:tadm}. 
\end{lem}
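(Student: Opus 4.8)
The plan is to unfold Definition~\ref{def:tadm}: proving that $T=1$ is a strong existence time amounts to checking that \eqref{ineq:smallnessvns} holds for $T=1$, i.e.
\[
\|u_0\|_{\dot\H^{1/2}(\T^3)}^2 + \C_\star \int_0^1 \|F(s)\|_{\dot\H^{-1/2}(\T^3)}^2\,\dd s < \frac{1}{\C_\star^2}.
\]
The integral is finite by Lemma~\ref{lem:FL2} (which gives $F\in\Ll^2(\R_+;\Ld^2(\T^3))$), so the only content is a quantitative estimate. Writing $X := N_q(f_0)+M_\alpha f_0+\E(0)+\|u_0\|_{\H^{1/2}(\T^3)}+1$, the smallness assumption \eqref{ineq:Emodsmall3D} says precisely that $\frac{1}{\C_\star^2}-\|u_0\|_{\dot\H^{1/2}(\T^3)}^2 > \ffi(X)\,\mathscr{E}(0)\geq 0$; hence it suffices to establish the \emph{unconditional} bound (i.e.\ one that does not itself presuppose $T=1$ to be a strong existence time, so as to avoid circularity)
\[
\C_\star \int_0^1 \|F(s)\|_{\dot\H^{-1/2}(\T^3)}^2\,\dd s \;\le\; \ffi(X)\,\mathscr{E}(0).
\]

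First I would dominate the negative Sobolev norm by the $\Ld^2$ norm: on $\T^3$ the Fourier multiplier $|k|^{-1/2}$ is bounded by $1$ on nonzero modes, so $\|g\|_{\dot\H^{-1/2}(\T^3)}\le\|g\|_{\Ld^2(\T^3)}$ (and likewise for the inhomogeneous $\H^{-1/2}$, should that be the convention in Proposition~\ref{coro:ns3D}). Therefore $\int_0^1\|F\|_{\dot\H^{-1/2}}^2\le\int_0^1\|F\|_{\Ld^2}^2$, and Lemma~\ref{lem:FL2} gives $\int_0^1\|F(s)\|_{\Ld^2(\T^3)}^2\,\dd s\le \mathscr{E}(0)\,\sup_{s\in[0,1]}\|\rho_f(s)\|_{\Ld^\infty(\T^3)}$. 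The short-time bound \eqref{ineq:01} of Proposition~\ref{propo:estvns3D} then controls $\sup_{[0,1]}\|\rho_f\|_{\Ld^\infty(\T^3)}\lesssim_0\eta(1)$; unwinding Notation~\ref{nota:lesssim}, there is an onto, continuous, nondecreasing $\ffi_0$ with $\sup_{[0,1]}\|\rho_f\|_{\Ld^\infty(\T^3)}\le\eta(1)\,\ffi_0(X)$. Combining the three steps,
\[
\C_\star \int_0^1 \|F(s)\|_{\dot\H^{-1/2}(\T^3)}^2\,\dd s \;\le\; \C_\star\,\eta(1)\,\ffi_0(X)\,\mathscr{E}(0).
\]

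To conclude I would invoke the freedom in the choice of $\ffi$: in the statement of Theorem~\ref{thm}, $\ffi$ is merely required to be onto, continuous and nondecreasing, and every constraint placed on it during the proof (this lemma being one of them) has the form ``$\ffi$ dominates such-and-such explicit continuous nondecreasing function''. Thus we may assume $\ffi\ge \C_\star\,\eta(1)\,\ffi_0$ pointwise, and then the previous display together with \eqref{ineq:Emodsmall3D} yields $\C_\star\int_0^1\|F\|_{\dot\H^{-1/2}}^2\le\ffi(X)\,\mathscr{E}(0)<\frac{1}{\C_\star^2}-\|u_0\|_{\dot\H^{1/2}(\T^3)}^2$, which is exactly \eqref{ineq:smallnessvns} at $T=1$.

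There is no genuine analytic difficulty here: the only thing requiring care is the bookkeeping. One must check that the two ingredients used — Lemma~\ref{lem:FL2} and the local bounds of Proposition~\ref{propo:estvns3D} — are indeed established unconditionally (they are), so the reasoning is not circular, and that the $\mathscr{E}(0)$-smallness imposed in \eqref{ineq:Emodsmall3D} is calibrated through $\ffi$ so as to absorb exactly the constant $\C_\star\,\eta(1)\,\ffi_0(X)$ produced by the short-time theory. This step has no decay content; its role is simply to verify the hypothesis of Proposition~\ref{propo:estvns3Dbis} at the first time level, thereby initializing the bootstrap carried out in Section~\ref{sec:E(0)small}.
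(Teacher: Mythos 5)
Your proof is correct and follows essentially the same route as the paper: bound $\|F\|_{\H^{-1/2}}$ by $\|F\|_{\Ld^2}$, apply Lemma~\ref{lem:FL2} together with the local bound~\eqref{ineq:01} to get $\int_0^1\|F\|_{\H^{-1/2}}^2\,\dd s\lesssim_0\mathscr{E}(0)$, and absorb the resulting constant into the function $\ffi$ of Notation~\ref{nota:lesssim} so that \eqref{ineq:Emodsmall3D} yields \eqref{ineq:smallnessvns} at $T=1$. Your version merely makes explicit the bookkeeping (non-circularity of the ingredients and the calibration of $\ffi$) that the paper leaves implicit.
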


\begin{proof}
Using Lemma~\ref{lem:FL2} and estimate and~\eqref{ineq:01}, we straightforwardly  have
\begin{align*}
\int_0^1 \|F(s)\|_{\H^{-1/2}(\T^3)}^2 \, \dd s &\leq \int_0^1 \|F(s)\|_{\Ld^2(\T^3)}^2 \, \dd s  \\
&\leq \min(\E(0),\mathscr{E}(0))\sup_{s\in[0,1]}\|\rho_f(s)\|_{\Ld^\infty(\T^3)} \\
&\lesssim_0 \mathscr{E}(0),
\end{align*}
and recalling the meaning of $\lesssim_0$ (see Notation~\ref{nota:lesssim}), one sees that the smallness condition \eqref{ineq:Emodsmall3D} is indeed sufficient.
\end{proof}

\section{Estimates on the convection and the Brinkman force} 
\label{sec:higherorder}

Our ultimate bootstrap argument requires high order estimates bearing on $u$, for which, as in the proof of Proposition~\ref{propo:estvns3D}, we will see the Navier-Stokes equation as
\begin{align*}
\partial_t u -\Delta u = \mathbb{P}F - \mathbb{P}(u\cdot\nabla)u,
\end{align*}
where $\mathbb{P}$ is the Leray projector. We use the maximal regularity of the heat operator on the previous identity to get estimates on $\D^2 u$ in terms of the Brinkman force $F$ and the the convection term $(u\cdot \nabla)u$. In this short section we explain in Proposition~\ref{propo:regmaxex} the maximal regularity argument and give $\Ld^p_t\Ld^q_x$ estimates for the source terms in Lemma~\ref{lem:convinterp} and Lemma~\ref{lem:sourcelpr}. As explained in Corollary~\ref{coro:ulip}, these estimates are already sufficient to justify the $\Ld^1_t\W^{1,\infty}_x$ regularity needed to express the condition \eqref{ineq:nabla5-t0} (and a quantitative version with the required smallness will be provided afterwards).

\begin{propo}\label{propo:regmaxex}
Fix $a,b,r\in (1,\infty)$ and $\lambda > 0$.  For any $t\geq 1$, and any exponent $1\leq q \leq a,b$ there holds (with a possible infinite right-hand side)
  \begin{multline}\label{ineq:regmaxex}
    \int_1^t e^{-\lambda s} \|\D^2 u(s)\|_{\Ld^r(\T^3)}^q \,\dd s 
    \\\lesssim_0 \Phi(\lambda)\Big(1 +\|(u\cdot\nabla)u\|_{\Ld^a(1/2,t; \Ld^{r}(\T^3))}^q + \|F\|_{\Ld^b(1/2,t; \Ld^{r}(\T^3))}^q\Big),
  \end{multline}
  where $\Phi:\R_+\rightarrow\R_+$ is nonincreasing.
\end{propo}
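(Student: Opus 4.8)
The plan is to write the Navier--Stokes equation in the form $\partial_t u - \Delta u = \mathbb{P}F - \mathbb{P}(u\cdot\nabla)u$, and to apply maximal $\Ld^p_t\Ld^q_x$ regularity for the heat semigroup on $\T^3$ (Corollary~\ref{coro:heat} of the appendix) to recover two full spatial derivatives of $u$. First I would split $u = e^{t\Delta}u_0 + w$, where $w$ solves the same equation with zero initial data and the right-hand side $\mathbb{P}S = \mathbb{P}F - \mathbb{P}(u\cdot\nabla)u$; equivalently, since we only want estimates on $[1,t]$, I would restart the equation at time $1/2$, writing $u(s) = e^{(s-1/2)\Delta}u(1/2) + \widetilde{w}(s)$ for $s \geq 1/2$. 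This is the reason the source norms in~\eqref{ineq:regmaxex} are taken over the interval $(1/2,t)$: the ``head'' term $e^{(s-1/2)\Delta}u(1/2)$ is smoothed instantly by the heat flow, and by Proposition~\ref{propo:estvns3D} (or rather the $\H^1$ bound obtained there) $u(1/2)$ lies in a space good enough that $\|\D^2 e^{(s-1/2)\Delta}u(1/2)\|_{\Ld^r}$ is integrable against $e^{-\lambda s}$ on $[1,\infty)$, with a constant that is $\lesssim_0 1$ times a nonincreasing function of $\lambda$ (this is where the factor $\Phi(\lambda)$ and the ``$1+$'' on the right enter). The continuity of the Leray projector $\mathbb{P}$ on $\Ld^r(\T^3)$ for $r\in(1,\infty)$ lets us replace $\mathbb{P}S$ by $S$ throughout.

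For the main term $\widetilde{w}$, maximal regularity of the heat operator gives $\|\D^2\widetilde{w}\|_{\Ld^q(1/2,t;\Ld^r)} \lesssim \|S\|_{\Ld^q(1/2,t;\Ld^r)}$ for any $q \in (1,\infty)$; splitting $S = F - (u\cdot\nabla)u$ and using $\|\cdot\|_{\Ld^q(1/2,t)} \leq \|\cdot\|_{\Ld^a(1/2,t)}$ (resp. $\Ld^b$) on the bounded-in-the-sense-that-matters... more precisely, inserting the weight: since $e^{-\lambda s}\leq 1$, one has $\int_1^t e^{-\lambda s}\|\D^2\widetilde w(s)\|_{\Ld^r}^q\,\dd s \leq \|\D^2\widetilde w\|_{\Ld^q(1/2,t;\Ld^r)}^q$, and then Hölder in time upgrades the $\Ld^q$ source norms to $\Ld^a$, $\Ld^b$ (here one uses $q \leq a,b$, at the cost of a harmless power of $t$ — but since the constants are allowed to depend on an extra $1$ and on $\Phi(\lambda)$ only, one must instead keep the $\Ld^q_t$ norm and note $\|g\|_{\Ld^q(1/2,t)} \lesssim \|g\|_{\Ld^a(1/2,t)}$ only if the interval is bounded; the cleaner route is to carry the weight $e^{-\lambda s}$ inside and use that $e^{-\lambda s/q'}\in\Ld^{q'}(1/2,\infty)$ with norm a decreasing function of $\lambda$, giving directly $\int_1^t e^{-\lambda s}\|\D^2\widetilde w\|_{\Ld^r}^q\,\dd s \lesssim \Phi(\lambda)(\|F\|_{\Ld^b(1/2,t;\Ld^r)}^q + \|(u\cdot\nabla)u\|_{\Ld^a(1/2,t;\Ld^r)}^q)$ by a weighted maximal-regularity / Hölder argument). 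I would present the weighted version, which is the one that makes the exponent mismatch $q\leq a,b$ and the function $\Phi(\lambda)$ appear naturally.

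The step I expect to be the main obstacle is making the treatment of the initial-data term $e^{(s-1/2)\Delta}u(1/2)$ fully quantitative with a constant of type $\lesssim_0$: one needs an $\H^1(\T^3)$ (or better) bound on $u(1/2)$ that is controlled by $\ffi(\ldots)$, which is exactly what Proposition~\ref{propo:estvns3D} and the parabolic regularization Proposition~\ref{propo:estvns3Dbis} provide on intervals away from zero — but one must check that $T=1/2$, or at least some time in $(0,1)$, is a strong existence time under~\eqref{ineq:Emodsmall3D}, which follows by the same computation as in Lemma~\ref{lem:strong1}. The other mildly delicate point is bookkeeping the dependence on $\lambda$: the heat semigroup smoothing of $u(1/2)$ contributes $\int_1^\infty e^{-\lambda s}\|\D^2 e^{(s-1/2)\Delta}u(1/2)\|_{\Ld^r}^q\,\dd s$, which after the standard bound $\|\D^2 e^{\sigma\Delta}g\|_{\Ld^r}\lesssim \sigma^{-1+3/(2r')\cdots}\|g\|_{\ldots}$ is finite and decreasing in $\lambda$ — this defines (part of) $\Phi$ — while the maximal-regularity piece contributes the $\Ld^{q'}(1/2,\infty)$ norm of $s\mapsto e^{-\lambda s}$ raised to the appropriate power, again decreasing in $\lambda$. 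Assembling these, absorbing all $u_0,f_0$-dependence into $\lesssim_0$, and taking $\Phi$ to be the maximum (hence nonincreasing) of the finitely many decreasing functions of $\lambda$ produced along the way yields~\eqref{ineq:regmaxex}.
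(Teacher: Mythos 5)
Your proposal is correct and follows essentially the same route as the paper: restart the equation at $s=1/2$, split $u$ into the free heat evolution of $u(1/2)$ plus the maximal-regularity pieces driven by $\mathbb{P}F$ and $\mathbb{P}(u\cdot\nabla)u$ separately, and use H\"older with the weight $e^{-\lambda s}$ to reconcile the exponents $q\leq a,b$ and produce the nonincreasing factor $\Phi(\lambda)$. The only point where you overcomplicate is the initial-data term: since the integral starts at $s=1$, the heat flow has acted for time $\geq 1/2$, so the bound $\|u(1/2)\|_{\Ld^2(\T^3)}\lesssim_0 1$ coming from the energy estimate \eqref{ineq:nrj} already suffices (the paper does this via Fourier series), and no $\H^1$ bound on $u(1/2)$ or strong-existence-time argument is needed here.
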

\begin{proof}
Similarly to what we have done in the proof of Proposition~\ref{propo:estvns3D}, we introduce $w_1$ and $w_2$ as the unique divergence-free solutions on $[1/2,+\infty)$ of 
\begin{align*} 
\partial_t w_1 -  \Delta w_1 &= \mathbb{P}(u\cdot \nabla)u,\\
\partial_t w_2 -  \Delta w_2 &= \mathbb{P} F,
 \end{align*}
with initial conditions  $w_1(1/2)=w_2(1/2)=0$ so that, denoting $u_h:=u-(w_1+w_2)$, we have $u_h(t+1/2)=e^{t  \Delta}u(1/2)$. Now, thanks to the maximal regularity of the heat operator (see Corollary~\ref{coro:heat}) and the continuity of $\mathbb{P}$ on $\Ld^r(\T^3)$, we infer for $t\geq 1/2$
\begin{align}
\label{ineq:maxdemi1}\left(\int_{1/2}^t \|\D^2 w_1(s)\|_{\Ld^r(\T^3)}^{a}\,\dd s\right)^{1/a} &\lesssim  \left(\int_{1/2}^t \|(u\cdot \nabla u)(s)\|_{\Ld^r(\T^3)}^{a}\,\dd s\right)^{1/a}, \\
\label{ineq:maxdemi2}\left(\int_{1/2}^t \|\D^2 w_2(s)\|_{\Ld^r(\T^3)}^{b}\,\dd s\right)^{1/b} &\lesssim  \left(\int_{1/2}^t \|F(s)\|_{\Ld^r(\T^3)}^{b}\,\dd s\right)^{1/b}.
\end{align}
On the other hand, since $u_h(t+1/2)=e^{t\Delta}u(1/2)$, where we write
\begin{align*}
u(1/2,x)=:\sum_{k\in\mathbb{Z}^{3}} c_k e^{2i\pi k\cdot x}\in\Ld^2(\T^3),
\end{align*}
we have for $t\geq 1/2$
\begin{align*}
u_h(t,x) = \sum_{k\in\mathbb{Z}^{3}} c_k e^{-(2\pi |k|)^2(t-1/2)}e^{2i\pi k\cdot x},
\end{align*}
and in particular for $t\geq 1$ and $\ell\geq 1$
\begin{align*}
\|u_h(t)\|_{\dot{\H}^\ell(\T^3)}^2 &=\sum_{k\in\mathbb{Z}^3} |c_k|^2|k|^{2\ell} e^{-(2\pi|k|)^2(t-1/2)}\\
&\lesssim  \sum_{k\in\mathbb{Z}^3} |c_k|^2 e^{-|k|^2(t-1/2)}\\
&\lesssim   \|u(1/2)\|_{\Ld^2(\T^3)}^2 e^{-(t-1/2)},
\end{align*}
so that for any $\ell\geq 1$ we obtain
\begin{align}\label{ineq:heat}
\int_1^{+\infty} \|u_h(s)\|_{\dot{\H}^\ell(\T^3)}^q \, \dd s \lesssim    \|u(1/2)\|_{\Ld^2(\T^3)}^q \int_1^{+\infty} e^{- q(s-1/2)/2}\,\dd s \lesssim    \|u(1/2)\|_{\Ld^2(\T^3)}^q.
\end{align}
By the energy estimate \eqref{ineq:nrj}, we have $\|u(1/2)\|_{\Ld^2(\T^3)} \lesssim_{0}1$, so using \eqref{ineq:heat} for $\ell$ large enough, we infer
\begin{align}
\label{ineq:maxdemi3}\left(\int_{1}^t \|\D^2 u_h(s)\|_{\Ld^r(\T^3)}^{q}\,\dd s\right)^{1/q} &\lesssim_0 1.
  \end{align}
Using the decomposition $u=w_1+w_2+u_h$ and combining \eqref{ineq:maxdemi1}, \eqref{ineq:maxdemi2} and \eqref{ineq:maxdemi3}, we infer by Hölder's inequality the estimate \eqref{ineq:regmaxex}.
  \end{proof}

\begin{lem}\label{lem:convinterp}
There exists $a\in(2,4)$ and $r_a>2$ such that the following interpolation estimate holds for $t\geq 1$:
\begin{align}
  \label{ineq:convinterp}\| (u\cdot \nabla)u\|_{\Ld^a(1/2,t; \Ld^{r_a}(\T^3))}
  \lesssim_0 1+\M_{\rho_f,j_f}(t).
\end{align}
\end{lem}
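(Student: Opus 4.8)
The goal is to bound the convection term $(u\cdot\nabla)u$ in a space $\Ld^a_t\Ld^{r_a}_x$ with $a\in(2,4)$ and $r_a>2$, by quantities controlled by $1+\M_{\rho_f,j_f}(t)$. The natural strategy is H\"older splitting: write $\|(u\cdot\nabla)u\|_{\Ld^{r_a}(\T^3)}\leq \|u\|_{\Ld^{p}(\T^3)}\|\nabla u\|_{\Ld^{p'}(\T^3)}$ for suitable exponents, and then use time-integrability of each factor. First I would recall that we already have, from the energy estimate~\eqref{ineq:nrj} and the Poincar\'e--Wirtinger inequality together with~\eqref{eq:up} in the proof of Proposition~\ref{propo:estvns3D}, that $u\in\Ll^2(\R_+;\Ld^p(\T^3))$ for every finite $p$, with $\int_1^t\|u(s)\|_{\Ld^p(\T^3)}^2\,\dd s\lesssim_0\eta(t)$; but for a bootstrap-friendly statement I instead want bounds that are \emph{uniform in $t$} up to the constant $1+\M_{\rho_f,j_f}(t)$. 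So the key input is Proposition~\ref{propo:estvns3Dbis}: on any strong existence time interval we have $\|\nabla u(t)\|_{\Ld^2(\T^3)}^2+\int_{1/2}^t\|\Delta u(s)\|_{\Ld^2(\T^3)}^2\,\dd s\lesssim \E(0)(1+\sup_{[0,t]}\|\rho_f(s)\|_{\Ld^\infty(\T^3)})\lesssim_0 1+\M_{\rho_f}(t)$ (handling $[0,1]$ by Proposition~\ref{propo:estvns3D}). In particular $\nabla u\in\Ll^\infty_t\Ld^2_x$ and $\D^2 u\in\Ll^2_t\Ld^2_x$ with these bounds.

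\textbf{Interpolation.} From $\nabla u\in\Ll^\infty(1,t;\Ld^2)$ and $\D^2 u\in\Ll^2(1/2,t;\Ld^2)$ I would use Gagliardo--Nirenberg--Sobolev to gain integrability in $x$ at a controlled cost in $t$. Concretely, $\|\nabla u(s)\|_{\Ld^{q}(\T^3)}\lesssim \|\D^2 u(s)\|_{\Ld^2(\T^3)}^{\theta}\|\nabla u(s)\|_{\Ld^2(\T^3)}^{1-\theta}$ for $\frac1q=\frac12-\frac{\theta}{3}$; integrating in time this gives $\nabla u\in\Ll^{2/\theta}(1,t;\Ld^{q}(\T^3))$ with norm $\lesssim_0 1+\M_{\rho_f}(t)$. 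Similarly, Sobolev embedding plus Poincar\'e gives $u-\langle u\rangle\in\Ld^\infty(\T^3)\cap\Ld^p$ for all $p$ via $\D^2 u\in\Ld^2$; actually the cleanest is to bound $\|u(s)-\langle u(s)\rangle\|_{\Ld^\infty(\T^3)}\lesssim\|\D^2 u(s)\|_{\Ld^2(\T^3)}^{\theta'}\|u(s)-\langle u(s)\rangle\|_{\Ld^2(\T^3)}^{1-\theta'}$ for a suitable $\theta'\in(3/4,1)$, and then use $\|u(s)-\langle u(s)\rangle\|_{\Ld^2}\lesssim\|\nabla u(s)\|_{\Ld^2}\lesssim_0(1+\M_{\rho_f}(t))^{1/2}$ and $\|\langle u(s)\rangle\|$ controlled by the conservation law~\eqref{consuj} and $\E(0)$. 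Combining, $u\in\Ll^{2/\theta'}_t\Ld^\infty_x$. Then $(u\cdot\nabla)u$ lies in $\Ld^a(1/2,t;\Ld^{r_a})$ with $\frac1a=\frac{\theta'}{2}+\frac{\theta}{2}$ and $\frac{1}{r_a}=\frac1q$ (or $\Ld^\infty$ times $\Ld^q$), and by choosing $\theta,\theta'$ slightly above $3/4$ one arranges $r_a>2$ and, after checking, $a\in(2,4)$; the exponents just below $1$ for $\theta,\theta'$ make $a$ just above $8/... $ — one picks them so that $2<a<4$ holds with room to spare.

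\textbf{Assembling.} I would carry this out in the order: (1) invoke Proposition~\ref{propo:estvns3Dbis} (plus Proposition~\ref{propo:estvns3D} on $[1/2,1]$) to get $\nabla u\in\Ll^\infty_t\Ld^2_x$ and $\D^2 u\in\Ll^2_t\Ld^2_x$ with bound $\lesssim_0 1+\M_{\rho_f}(t)$; (2) apply GNS to upgrade $\nabla u$ and $u-\langle u\rangle$ to higher $\Ld^{q}_x$ with explicit time exponents; (3) control $\langle u\rangle$ via~\eqref{consuj} and the energy bound, so $u$ itself has the same upgraded integrability; (4) H\"older in space to write $\|(u\cdot\nabla)u\|_{\Ld^{r_a}_x}\lesssim\|u\|_{\Ld^{p}_x}\|\nabla u\|_{\Ld^{q}_x}$ and H\"older in time to land in $\Ld^a_t$; (5) bookkeep exponents to confirm $a\in(2,4)$, $r_a>2$, and that the resulting constant is $\lesssim_0 1+\M_{\rho_f,j_f}(t)$ (the $\M_{j_f}$ term is harmless slack, included to match the statement). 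The main obstacle is the exponent arithmetic: one must choose the GNS interpolation parameters so that \emph{simultaneously} $r_a>2$ (needed so that the maximal-regularity Proposition~\ref{propo:regmaxex} later applies with $r=r_a$ and so that $\W^{2,r_a}\hookrightarrow\Ld^\infty$) and the time exponent $a$ falls strictly between $2$ and $4$; since the $\Ld^2_t\Ld^2_x$ control on $\D^2 u$ is the only integrable-in-time high-regularity bound available, the budget is tight and the admissible window for $(a,r_a)$ is narrow, so some care (and a short lemma on the available range, as in the subdivision of $\|u\|$ into mean and fluctuation) is required.
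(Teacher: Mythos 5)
Your overall strategy --- feed the $\Ld^\infty_t\H^1_x\cap\Ld^2_t\H^2_x$ control of Proposition~\ref{propo:estvns3Dbis} into a Gagliardo--Nirenberg/H\"older splitting of $(u\cdot\nabla)u$ --- is the paper's, but two points in your execution do not go through as written. First, the claim that $u\in\Ll^{2/\theta'}(1/2,t;\Ld^\infty(\T^3))$ with a bound uniform in $t$ is false: only the fluctuation $u-\langle u\rangle$ enjoys such a bound. The mean $\langle u\rangle$ is merely bounded in time (it converges to $\langle u_0+j_{f_0}\rangle/2$, which is nonzero in general), so $\|\langle u\rangle\|_{\Ld^{2/\theta'}(1/2,t)}$ grows like $t^{\theta'/2}$ and would destroy the $t$-uniformity that the bootstrap of Section~\ref{sec:E(0)small} requires. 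You must either treat the term $\langle u\rangle\cdot\nabla u$ separately (which works, since $\nabla u$ alone can be placed in $\Ld^{2/\theta}_t\Ld^q_x$ with $\theta\in(1/2,1)$, hence time exponent in $(2,4)$ and $q>3$), or do what the paper does: keep the factor $u$ in $\Ld^\infty$ \emph{in time} (and in $\Ld^6(\T^3)$ in space, by Sobolev from $\Ld^\infty_t\H^1_x$), so the mean never needs to be integrated in time, and interpolate only $\nabla u$ between $\Ld^2(1/2,t;\Ld^6)$ and $\Ld^\infty(1/2,t;\Ld^2)$.

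Second, the exponent arithmetic you defer as ``the main obstacle'' is precisely where your suggested parameters fail: for the $\Ld^\infty_x$ target GNS forces $\theta'=3/4$ exactly (not $\theta'\in(3/4,1)$), and with $\frac1a=\frac{\theta'}{2}+\frac{\theta}{2}$ the choice of both $\theta,\theta'$ near $3/4$ (or near $1$) yields $a\approx 4/3$ (or $a\approx 1$), which is \emph{not} in $(2,4)$. To get $a>2$ you need $\theta+\theta'<1$, i.e.\ $\theta<1/4$, which then gives $r_a=q\in(2,12/5)$ --- consistent, but the opposite of ``exponents just below $1$''. The paper's splitting makes the bookkeeping a clean one-parameter perturbation: $\theta=2/a$ and $\frac{1}{r_a}=\frac16\left(1+\frac2a\right)+\frac12\left(1-\frac2a\right)$, so $(a,r_a)\to(2,3)$ as $a\downarrow 2$ and any $a$ slightly above $2$ works. (A side remark: $r_a>2$ is not needed for $\W^{2,r_a}(\T^3)\hookrightarrow\Ld^\infty(\T^3)$, which only requires $r>3/2$; the constraints $r>2$ and later $r>3$ come from the iteration in Lemma~\ref{lem:almost}.)
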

\begin{proof}
The proof boils down to the  interpolation inequality
  \begin{align*}
  \| (u\cdot \nabla)u\|_{\Ld^a(1/2,t; \Ld^{r_a}(\T^3))}
  \leq \|u\|_{\Ld^\infty(1/2,t; \Ld^6(\T^3))}\|\nabla u\|_{\Ld^2(1/2,t; \Ld^6(\T^3))}^{\frac{2}{a}}\|\nabla u\|_{\Ld^\infty(1/2,t; \Ld^2(\T^3))}^{1-\frac{2}{a}}.
\end{align*}
Indeed, if the later is satisfied, since $t$ is a strong existence time, we have thanks to the regularization estimate \eqref{ineq:estvns3D} and the energy estimate \eqref{ineq:nrj}, together with the Sobolev embedding $\H^1(\T^3)\hookrightarrow\Ld^6(\T^3)$,
\begin{align*}
\|u\|_{\Ld^\infty(1/2,t; \Ld^6(\T^3))}+\|\nabla u\|_{\Ld^2(1/2,t; \Ld^6(\T^3))}+\|\nabla u\|_{\Ld^\infty(1/2,t; \Ld^2(\T^3))} \lesssim_0 1+\M_{\rho_f,j_f}(t).
  \end{align*}
To justify the interpolation above, notice that for any $a>2$, we have by Hölder inequality and interpolation $[(2,6),(\infty,2)]_\theta$,
\begin{align*}
  \|u\cdot\nabla u\|_{\Ld^a(1/2,t; \Ld^{r_a}(\T^3))} \leq \|u\|_{\Ld^\infty(1/2,t; \Ld^6(\T^3))} \|\nabla u\|_{\Ld^2(1/2,t; \Ld^6(\T^3))}^\theta \|\nabla u\|_{\Ld^\infty(1/2,t; \Ld^2(\T^3))}^{1-\theta},
\end{align*}
with the following equality 
\begin{align*}
\left(\frac1a,\frac{1}{r_a}\right) = \left(0,\frac{1}{6}\right)+\theta\left(\frac12,\frac{1}{6}\right)+(1-\theta)\left(0,\frac12\right).
\end{align*}
We deduce $\theta=2/a$. 
From the previous identity we also deduce the value of $r_a$, because $\frac{1}{r_a}=\frac{1}{6}(1+\frac{2}{a})+\frac12(1-\frac{2}{a})$. In the limit case $a=2$ we get $r_a=3$, so that taking $|a-2|$ small enough we have indeed $r_a>2$ and $a\in(2,4)$.
\end{proof}

\begin{lem}\label{lem:sourcelpr}
For any finite $b>4$, the following estimate holds for some $r_b>3$ and all strong existence times $t\geq 1$ :
\begin{align}
\label{ineq:sourcelpr}\|F\|_{\Ld^b(1/2,t; \Ld^{r_b}(\T^3))}\lesssim_{0} 1+\M_{\rho_f,j_f}(t)^{\frac32-\frac2b}.
\end{align}
\end{lem}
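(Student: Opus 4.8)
The plan is to obtain \eqref{ineq:sourcelpr} by interpolating two controls of $F=j_f-\rho_f u$: a pointwise-in-time bound in $\Ld^6(\T^3)$ (which will only cost a power of $\M_{\rho_f,j_f}$), against the square-integrability of $F$ in $\Ld^2(1/2,t;\Ld^2(\T^3))$ coming from the dissipation via Lemma~\ref{lem:FL2}. Throughout, $t\geq 1$ is a strong existence time, so Proposition~\ref{propo:estvns3Dbis} applies on $[1/2,t]$.

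First I would record the needed bounds on $u$ and on $F$ in $\Ld^\infty_t\Ld^6_x$. Using \eqref{ineq:estvns3D}, the short-time bound of Proposition~\ref{propo:estvns3D} for $\sup_{[0,1]}\|\rho_f\|_{\Ld^\infty(\T^3)}$, and $\E(0)\lesssim_0 1$, one gets $\sup_{[0,t]}\|\rho_f\|_{\Ld^\infty(\T^3)}\lesssim_0 1+\M_{\rho_f}(t)$ and $\|\nabla u(s)\|_{\Ld^2(\T^3)}^2\lesssim_0 1+\M_{\rho_f}(t)$ for every $s\in[1/2,t]$; then Poincaré--Wirtinger, $\H^1(\T^3)\hookrightarrow\Ld^6(\T^3)$, and $|\langle u(s)\rangle|\leq\|u(s)\|_{\Ld^2(\T^3)}\lesssim_0 1$ yield $\|u(s)\|_{\Ld^6(\T^3)}\lesssim_0 (1+\M_{\rho_f}(t))^{1/2}$. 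Consequently, for $s\in[1,t]$ we bound $\|j_f(s)\|_{\Ld^6(\T^3)}\leq\M_{j_f}(t)$ and $\|\rho_f u(s)\|_{\Ld^6(\T^3)}\leq\|\rho_f(s)\|_{\Ld^\infty(\T^3)}\|u(s)\|_{\Ld^6(\T^3)}\lesssim_0(1+\M_{\rho_f}(t))^{3/2}$, while for $s\in[1/2,1]$ both $\rho_f$ and $j_f$ are bounded in $\Ld^\infty(\T^3)$ by Proposition~\ref{propo:estvns3D} and $\|u(s)\|_{\Ld^6(\T^3)}\lesssim_0 1$, so that altogether $\|F\|_{\Ld^\infty(1/2,t;\Ld^6(\T^3))}\lesssim_0 (1+\M_{\rho_f,j_f}(t))^{3/2}$. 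On the other hand, Lemma~\ref{lem:FL2} combined with the bound on $\sup_{[0,t]}\|\rho_f\|_{\Ld^\infty(\T^3)}$ gives $\int_{1/2}^t\|F(s)\|_{\Ld^2(\T^3)}^2\,\dd s\lesssim_0 1+\M_{\rho_f}(t)$.

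It then remains to interpolate. For $b>4$ I would set $\lambda:=2/b\in(0,1/2)$ and define $r_b\in(3,6)$ by $\frac{1}{r_b}=\frac{\lambda}{2}+\frac{1-\lambda}{6}=\frac16+\frac{2}{3b}$; the inequality $r_b>3$ is exactly equivalent to $b>4$. The elementary Lebesgue interpolation $\|g\|_{\Ld^{r_b}(\T^3)}\leq\|g\|_{\Ld^2(\T^3)}^{\lambda}\|g\|_{\Ld^6(\T^3)}^{1-\lambda}$, applied to $g=F(s)$ and then raised to the power $b$ and integrated in $s$ (using $\lambda b=2$), produces
\[
\|F\|_{\Ld^b(1/2,t;\Ld^{r_b}(\T^3))}\;\leq\;\|F\|_{\Ld^\infty(1/2,t;\Ld^6(\T^3))}^{1-\lambda}\Big(\int_{1/2}^t\|F(s)\|_{\Ld^2(\T^3)}^2\,\dd s\Big)^{\lambda/2}.
\]
Plugging in the two bounds above gives the exponent $\frac32(1-\tfrac2b)+\tfrac1b=\frac32-\tfrac2b$ on $1+\M_{\rho_f,j_f}(t)$, and since $\frac32-\frac2b>0$ one concludes with $(1+x)^{3/2-2/b}\lesssim 1+x^{3/2-2/b}$.

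I do not expect a genuine obstacle here: the only care required is the bookkeeping of the Lebesgue/time exponents, so that $r_b$ lands strictly above $3$ and the power of $\M_{\rho_f,j_f}(t)$ comes out exactly $\tfrac32-\tfrac2b$, together with the (routine but necessary) separate treatment of the slab $[1/2,1]$, on which $\M_{\rho_f,j_f}$ carries no information and one falls back on the uniform-in-$[0,1]$ bounds of Proposition~\ref{propo:estvns3D}.
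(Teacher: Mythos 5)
Your argument is correct and follows essentially the same route as the paper: the $\Ld^2(1/2,t;\Ld^2)$ control of $F$ from Lemma~\ref{lem:FL2}, the $\Ld^\infty(1/2,t;\Ld^6)$ bound of order $(1+\M_{\rho_f,j_f}(t))^{3/2}$ via the Sobolev embedding and \eqref{ineq:estvns3D}, and the $[(2,2);(\infty,6)]_\theta$ interpolation with $\theta=2/b$ giving $1/r_b=\tfrac16+\tfrac{2}{3b}$ and the exponent $\tfrac32-\tfrac2b$. The only (harmless) cosmetic differences are that you bound $j_f$ and $\rho_f u$ separately instead of using the triangle inequality on $F$ directly, and that you make explicit the treatment of the slab $[1/2,1]$ which the paper absorbs into the $\lesssim_0$ notation.
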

\begin{proof}
Thanks to Lemma~\ref{lem:FL2} and \eqref{ineq:01} we have 
\begin{align}
\label{ineq:sourcebis}\|F\|_{\Ld^2(1/2,t; \Ld^{2}(\T^3))} \lesssim 1+\M_{\rho_f}(t)^{1/2} \leq 1+ \M_{\rho_f, j_f} (t)^{1/2}.
\end{align}
By interpolation $[(2,2);(\infty,6)]_\theta$, we have
\begin{align}
\label{ineq:interpF}\|F\|_{\Ld^b(1/2,t; \Ld^{r_b}(\T^3))} \leq \|F\|_{\Ld^2(1/2,t; \Ld^{2}(\T^3))}^\theta \|F\|_{\Ld^\infty(1/2,t; \Ld^{6}(\T^3))}^{1-\theta},
\end{align}
where $\theta$ and $r_b$ are defined by the equality $(\frac1b,\frac{1}{r_b})=\theta(\frac12,\frac12)+(1-\theta)(0,\frac16)$ from which we get $\theta=2/b$ and $\frac{1}{r_b}=\frac{2}{3b}+\frac16$ ; we notice that $b>4$ implies $r_b>3$. 

By the triangle inequality, we get
\begin{align*}
\|F\|_{\Ld^\infty(1/2,t; \Ld^{6}(\T^3))}&=\|j_f-\rho_f u\|_{\Ld^\infty(1/2,t; \Ld^{6}(\T^3))} \\
& \lesssim_0 (1+\M_{\rho_f, j_f} (t))(1+\|u\|_{\Ld^\infty(1/2,t; \Ld^{6}(\T^3))}).
\end{align*}
Using the Sobolev embedding $\H^1(\T^3)\hookrightarrow\Ld^6(\T^3)$ together with \eqref{ineq:estvns3D} and the energy estimate \eqref{ineq:nrj} we have $\|u\|_{\Ld^\infty(1/2,t;\Ld^6(\T^3))}\lesssim_0 \M_{\rho_f,j_f}(t)^{1/2}$ which implies 
\begin{align*}
\|F\|_{\Ld^\infty(1/2,t; \Ld^{6}(\T^3))} \lesssim_0 1+ \M_{\rho_f, j_f}(t)^{3/2}.
\end{align*}
Combining the previous estimate with \eqref{ineq:sourcebis} in \eqref{ineq:interpF} we therefore get 
\begin{align*}
\| F \|_{\Ld^b(1/2,t; \Ld^{r_b}(\T^3))} \lesssim 1+\M_{\rho_f, j_f}(t)^{3/2-\theta},
\end{align*}
which is exactly \eqref{ineq:sourcelpr} because $b=2/\theta$.
\end{proof}
\begin{coro}\label{coro:ulip}
For any strong existence time $t\geq 1$, one has $\nabla u \in\Ld^1(1,t;\Ld^\infty(\T^3))$.
\end{coro}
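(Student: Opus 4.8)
The plan is to deduce this from the $L^p_t\,\D^2_x$ estimate of Proposition~\ref{propo:regmaxex} by choosing the exponents $a$, $b$, $r$ so that the resulting space for $\D^2 u$ embeds into an $L^1_t\,W^{1,\infty}_x$-type space on $[1,t]$, and so that the right-hand side of \eqref{ineq:regmaxex} is finite. Since $t$ is assumed to be a strong existence time, Lemma~\ref{lem:convinterp} controls $(u\cdot\nabla)u$ in $\Ld^a(1/2,t;\Ld^{r_a}(\T^3))$ for some $a\in(2,4)$, $r_a>2$, and Lemma~\ref{lem:sourcelpr} controls $F$ in $\Ld^b(1/2,t;\Ld^{r_b}(\T^3))$ for any finite $b>4$, with $r_b>3$; both bounds are finite because $\M_{\rho_f,j_f}(t)<+\infty$ by Proposition~\ref{propo:estvns3D}. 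So the remaining point is purely a matter of Sobolev bookkeeping.

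First I would fix a common integrability exponent in space: take $r:=\min(r_a,r_b)$ (still $>2$, and we may even keep $r>3$ since $r_b>3$ and we can afford to lower $r_a$ if need be — in fact taking $r>3$ is the clean choice). By the torus being bounded, $\Ld^{r_a}\hookrightarrow\Ld^r$ and $\Ld^{r_b}\hookrightarrow\Ld^r$, so $(u\cdot\nabla)u$ and $F$ are both in $\Ld^{\min(a,b)}(1/2,t;\Ld^r(\T^3))$, hence (Hölder in time on the bounded interval) in $\Ld^q(1/2,t;\Ld^r(\T^3))$ for $q:=\min(a,b)=a>2$. Apply Proposition~\ref{propo:regmaxex} with this $r$, with $a=b=q$ in its statement and, say, $\lambda=1$; since $q\ge 1$ and $q\le a,b$ in the sense of the proposition, we obtain
\begin{equation*}
\int_1^t e^{-s}\|\D^2 u(s)\|_{\Ld^r(\T^3)}^q\,\dd s<+\infty,
\end{equation*}
and since the interval $[1,t]$ is bounded the weight $e^{-s}$ is harmless, so $\D^2 u\in\Ld^q(1,t;\Ld^r(\T^3))$ with $q>2$ and $r>3$.

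Next I would upgrade this to $\nabla u\in\Ld^1(1,t;\Ld^\infty(\T^3))$. With $r>3$ we have the Sobolev embedding $\W^{1,r}(\T^3)\hookrightarrow\Ld^\infty(\T^3)$, hence $\W^{2,r}(\T^3)\hookrightarrow\W^{1,\infty}(\T^3)$; combined with the Poincaré–Wirtinger control of the lower-order terms of $u$ (the $\Ld^\infty_t\H^1_x$ bound of Proposition~\ref{propo:estvns3Dbis}, valid because $t$ is a strong existence time), this gives $\|\nabla u(s)\|_{\Ld^\infty(\T^3)}\lesssim_0 1+\|\D^2 u(s)\|_{\Ld^r(\T^3)}$ for a.e.\ $s\in[1,t]$. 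Integrating in $s$ over the bounded interval $[1,t]$ and using Hölder's inequality in time together with $\D^2 u\in\Ld^q(1,t;\Ld^r(\T^3))$, $q>2>1$, yields $\nabla u\in\Ld^1(1,t;\Ld^\infty(\T^3))$, which is the claim. None of the steps is a genuine obstacle here — the statement is essentially a qualitative corollary packaging together the preceding estimates — the only mild subtlety is to make sure the spatial exponent coming out of Lemmas~\ref{lem:convinterp} and~\ref{lem:sourcelpr} can be taken strictly above $3$ so that $\W^{2,r}\hookrightarrow\W^{1,\infty}$; since $r_b>3$ this is automatic after intersecting with the (weaker) constraint $r_a>2$ and, if necessary, lowering $a$ slightly as in the proof of Lemma~\ref{lem:convinterp}. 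The quantitative refinement with the required smallness is deliberately deferred to Section~\ref{sec:E(0)small}.
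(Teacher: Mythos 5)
Your overall strategy (feed Lemmas~\ref{lem:convinterp} and~\ref{lem:sourcelpr} into Proposition~\ref{propo:regmaxex}, then Sobolev-embed) is the right one, but there is a genuine gap at the pivotal exponent count: you assert that $r:=\min(r_a,r_b)$ can be ``kept'' strictly above $3$ because $r_b>3$ and ``we can afford to lower $r_a$ if need be''. This is backwards. From the interpolation in Lemma~\ref{lem:convinterp} one has $\frac{1}{r_a}=\frac16\left(1+\frac2a\right)+\frac12\left(1-\frac2a\right)=\frac23-\frac{2}{3a}$, so $r_a\in(2,3)$ for every $a\in(2,4)$, with $r_a\to 3$ only in the excluded limit $a\to 2$. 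Lowering $a$ pushes $r_a$ \emph{towards} $3$ from below but never past it; and this is not an artefact of that lemma: with only the available bounds $u\in\Ld^\infty_t\Ld^6_x$, $\nabla u\in\Ld^2_t\Ld^6_x\cap\Ld^\infty_t\Ld^2_x$, the convection term $(u\cdot\nabla)u$ cannot be placed in $\Ld^a_t\Ld^r_x$ with $r>3$ and $a>1$. Hence $r=\min(r_a,r_b)=r_a<3$, and your concluding step $\W^{2,r}(\T^3)\hookrightarrow\W^{1,\infty}(\T^3)$ (or the GNS interpolation to $p=\infty$ in \eqref{eq:idp}, which also forces $r>3$) is not available after a single application of Proposition~\ref{propo:regmaxex}.

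The missing idea is an iteration, which is exactly what the paper carries out quantitatively in Lemma~\ref{lem:almost} and compresses into the one-line proof of this corollary: a first application of Proposition~\ref{propo:regmaxex} with $r=r_a\in(2,3)$ gives $\D^2u\in\Ld^a(1,t;\Ld^{r_a}(\T^3))$, hence by Sobolev embedding $\nabla u\in\Ld^a(1,t;\Ld^{p}(\T^3))$ with $\frac1p=\frac1{r_a}-\frac13<\frac16$, i.e.\ $p>6$; this upgrades the convection term to $(u\cdot\nabla)u\in\Ld^a(1/2,t;\Ld^{\widetilde r_a}(\T^3))$ with $\frac{1}{\widetilde r_a}=\frac16+\frac1p<\frac13$, i.e.\ $\widetilde r_a>3$; a second application of Proposition~\ref{propo:regmaxex} with $r=\min(\widetilde r_a,r_b)>3$ then yields $\D^2u\in\Ld^1(1,t;\Ld^{r}(\T^3))$ for some $r>3$, and only at that point does your final Morrey-type step go through. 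With that two-step bootstrap inserted, the rest of your argument (finiteness of the right-hand sides via $\M_{\rho_f,j_f}(t)\lesssim_0 1$, harmlessness of the weight $e^{-\lambda s}$ on a bounded interval, H\"older in time) is correct.
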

\begin{proof}
Thanks to Proposition~\ref{propo:estvns3D}, the right-hand sides of estimates~\eqref{ineq:convinterp} and~\ref{ineq:sourcebis} are finite. 
By  Lemmas~\ref{lem:convinterp} and~\ref{lem:sourcelpr}, we can therefore take $r>3/2$ in \eqref{ineq:regmaxex}  and thus, by Sobolev's embedding and H\"older's inequality, we finally obtain the claimed regularity.
  \end{proof}

\section{Exponential decay of the modulated energy} 
\label{sec:E(0)small}
In this section, we finish the proof of Theorem~\ref{realthm} by setting up  a bootstrap procedure. Define
\begin{align}
\label{def:tstar2}t^\star := \sup\left\{\text{strong existence times }t\text{ such that  }\, \int_1^t \|\nabla u(s)\|_{\Ld^\infty(\T^3)}\,\dd s < \delta\right\}. 
\end{align}
where $\delta$ is given in Lemma~\ref{charac}. Thanks to the change of variables of Section~\ref{sec:changesofvariables}, we have that $\M_{\rho_f, j_f}(t)\lesssim_{0} 1$
on for $t<t^\star$ (see Proposition~\ref{prop:boundM}). The main goal will be to prove that $t^\star=+\infty$.
In order to do so, we shall combine the higher order estimates of Section~\ref{sec:higherorder} with the exponential decay estimates provided by Lemma~\ref{decay+}. 
\begin{propo}
\label{prop:boundM}
We have $t^\star >1$. Moreover, for any $t<t^\star$, one has $\M_{\rho_f, j_f}(t)\lesssim_{0} 1$.
\end{propo}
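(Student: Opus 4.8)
The plan is to combine the smallness criterion defining strong existence times (Lemma~\ref{lem:strong1}) with the change of variables of Section~\ref{sec:changesofvariables}, closing a short elementary bootstrap in two stages. First I would establish that $t^\star > 1$: by Lemma~\ref{lem:strong1}, the assumption~\eqref{ineq:Emodsmall3D} ensures that $T=1$ is a strong existence time, and by continuity of $t\mapsto \int_1^t \|\nabla u(s)\|_{\Ld^\infty(\T^3)}\,\dd s$ (which is legitimate since by Corollary~\ref{coro:ulip} this quantity is finite and the integrand is in $\Ld^1_{\mathrm{loc}}$ on $[1,\infty)$ along strong existence times), this integral is strictly below $\delta$ for $t$ slightly larger than $1$; moreover the set of strong existence times is an interval containing a neighbourhood of $1$ because the left-hand side of~\eqref{ineq:smallnessvns} depends continuously on $T$ and is strictly below $\frac{1}{\C_\star^2}$ at $T=1$. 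Hence $t^\star>1$.

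Next, fix $t<t^\star$. By the very definition of $t^\star$, we have $\int_1^t \|\nabla u(s)\|_{\Ld^\infty(\T^3)}\,\dd s < \delta$, so the hypothesis~\eqref{ineq:nabla5-t0} of Lemma~\ref{lem:delay} is satisfied with $t_0=1$. Applying Lemma~\ref{lem:delay} therefore gives, for all $s\in[1,t]$,
\begin{align*}
\|\rho_f(s)\|_{\Ld^\infty(\T^3)} &\lesssim N_q(f_0)\big(1+\|u\|_{\Ld^1(0,1;\Ld^\infty(\T^3))}^q\big),\\
\|j_f(s)\|_{\Ld^\infty(\T^3)} &\lesssim e^{-s}\Big(1+\int_0^s e^\sigma \|u(\sigma)\|_{\Ld^\infty(\T^3)}\,\dd\sigma\Big) N_q(f_0)\big(1+\|u\|_{\Ld^1(0,1;\Ld^\infty(\T^3))}^q\big).
\end{align*}
The term $\|u\|_{\Ld^1(0,1;\Ld^\infty(\T^3))}$ is controlled by $\lesssim_0 \eta(1)$ via Proposition~\ref{propo:estvns3D}, and for the $j_f$ bound one further splits $\int_0^s e^\sigma\|u(\sigma)\|_{\Ld^\infty}\,\dd\sigma$ into $\int_0^1$, bounded by Proposition~\ref{propo:estvns3D} again, and $\int_1^s e^\sigma \|u(\sigma)\|_{\Ld^\infty}\,\dd\sigma$; on $[1,s]$ one writes $\|u\|_{\Ld^\infty}\leq \|u-\langle u\rangle\|_{\Ld^\infty} + |\langle u\rangle|$ and bounds $\|u-\langle u\rangle\|_{\Ld^\infty}$ by interpolation (Gagliardo–Nirenberg, $\|u-\langle u\rangle\|_{\Ld^\infty}\lesssim \|\D^2 u\|_{\Ld^2}^{\alpha}\|u-\langle u\rangle\|_{\Ld^2}^{1-\alpha}$) together with the $\Ld^2_t\H^2_x$ bound from Proposition~\ref{propo:estvns3Dbis} and with $|\langle u\rangle|\lesssim \E(0)^{1/2}$ from the energy estimate and Lemma~\ref{lem:moy}; the decaying factor $e^{-s}$ absorbs the exponential growth. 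Taking the supremum over $s\in[1,t]$ yields $\M_{\rho_f,j_f}(t)\lesssim_0 1$ with a constant independent of $t<t^\star$.

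The main subtlety — though not a deep obstacle — is circularity: the bounds from Lemma~\ref{lem:delay} require the control~\eqref{ineq:nabla5-t0}, which holds by definition of $t^\star$; but to even know that $t\mapsto \int_1^t\|\nabla u\|_{\Ld^\infty}$ is finite (so that $t^\star$ is well-defined and not vacuously $1$) one needs Corollary~\ref{coro:ulip}, which in turn only uses Proposition~\ref{propo:estvns3D} and the strong existence time property, not the $\Ld^\infty$ bounds on the moments being proved here. So the logical order is: first Proposition~\ref{propo:estvns3D} and Corollary~\ref{coro:ulip} make $t^\star$ meaningful and give $t^\star>1$; then, on $[1,t^\star)$, the smallness of $\int_1^t\|\nabla u\|_{\Ld^\infty}$ feeds Lemma~\ref{lem:delay} to produce the uniform moment bound. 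No estimate here requires $\mathscr{E}(0)$ to be small beyond what Lemma~\ref{lem:strong1} already uses, so the constant in $\M_{\rho_f,j_f}(t)\lesssim_0 1$ is genuinely uniform in $t<t^\star$.
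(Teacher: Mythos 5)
Your proposal is correct and follows essentially the same route as the paper: $t^\star>1$ via Lemma~\ref{lem:strong1} and Corollary~\ref{coro:ulip}, then Lemma~\ref{lem:delay} with $t_0=1$ combined with the $\Ld^2_t\H^2_x$ control of Proposition~\ref{propo:estvns3Dbis} to bound $\M_{\rho_f,j_f}$. The only (harmless) difference is in estimating $e^{-t}\int e^s\|u(s)\|_{\Ld^\infty}\,\dd s$: you use Gagliardo--Nirenberg interpolation of $\|u-\langle u\rangle\|_{\Ld^\infty}$ between $\D^2 u$ and $u-\langle u\rangle$ plus the bound on the mean, whereas the paper simply invokes the Sobolev embedding $\H^2(\T^3)\hookrightarrow\Ld^\infty(\T^3)$ and the Cauchy--Schwarz inequality.
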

\begin{proof}
By a view of the proof of Lemma~\ref{lem:strong1} (reducing $\mathscr{E}(0)$ if necessary), we remark that for $\eps>0$ small enough, $t=1+\ep$ is a strong existence time, and Corollary~\ref{coro:ulip} ensures that for $t$ close enough to $1$, the inequality $\int_1^t \|\nabla u(s)\|_{\Ld^\infty(\T^3)}\,\dd s<\delta$ is satisfied, ensuring $t^\star>1$.

\vspace{2mm}

For $t\in[1,t^\star)$ we can invoke Lemma~\ref{lem:delay} with $t_0=1$ and \eqref{eq:ulinf3}, to obtain that $\M_{\rho_f}(t)\lesssim_{0} 1$ and 
\begin{align*}
\|j_f(t)\|_{\Ld^\infty(\T^3)} \lesssim_0 e^{-t} \int_1^t e^s \|u(s)\|_{\Ld^\infty(\T^3)}\,\ \dd s.
\end{align*}
Thanks to Sobolev's embedding $\H^2(\T^3)\hookrightarrow\Ld^\infty(\T^3)$ we infer 
\begin{align*}
\int_1^t e^s \|u(s)\|_{\Ld^\infty(\T^3)}\,\dd s \lesssim \int_1^t e^s \|u(s)\|_{\Ld^2(\T^3)}\,\dd s + \int_1^t e^s \|\D^2 u(s)\|_{\Ld^2(\T^3)}\,\dd s,
\end{align*}
and therefore (using Cauchy-Schwarz's inequality)
\begin{multline*}
\int_1^t e^s \|u(s)\|_{\Ld^\infty(\T^3)}\,\dd s 
\lesssim (e^t-1)\sup_{[1,t]} \|u(s)\|_{\Ld^2(\T^3)} \\+ \left(\int_1^t e^{2s}\,\dd s\right)^{1/2}\left(\int_1^t \|\D^2 u(s)\|_{2}^2\,\dd s\right)^{1/2}.
\end{multline*}
Thanks to \eqref{ineq:estvns3D} and the energy estimate \eqref{ineq:nrj} we eventually infer 
\begin{align*}
e^{-t} \int_1^t e^s \|u(s)\|_{\Ld^\infty(\T^3)}\,\dd s \lesssim_{0} 1+ \M_{\rho_f}(t),
\end{align*}
and we have already proved that $\M_{\rho_f}(t) \lesssim_{0} 1$. We deduce that  $\M_{j_f}(t)\lesssim_{0} 1$ and this concludes the proof.
\end{proof}

%
%

We now combine Proposition~\ref{propo:regmaxex} with Lemma~\ref{decay+}.

\begin{lem}\label{lem:decayfinal}
  Assume that $t_\star<\infty$. For any $\alpha\in[1/2,1)$, $c\in[1,\infty)$ and any finite $a,b\geq \max(1,c\alpha)$, the following estimate holds (with a possible infinite right-hand side)
      \begin{multline}
\label{ineq:regmaxexbis}    \left(\int_1^{t^\star} \|\nabla u(s)\|_{\Ld^p(\T^3)}^c \,\dd s\right)^{1/c}  \\\lesssim_0 \mathscr{E}(0)^{\frac{1-\alpha}{2}}\Big(1 +\|(u\cdot\nabla)u\|_{\Ld^a(1/2,t^\star; \Ld^{r}(\T^3))}^{\alpha } + \|F\|_{\Ld^b(1/2,t^\star; \Ld^{r}(\T^3))}^{\alpha}\Big)
  \end{multline}
  for $p\in[1,\infty]$ and $r\in(1,\infty)$ satisfying
  \begin{align}
\label{eq:idp}\frac1p=\frac13+\alpha\left(\frac{1}{r}-\frac23\right)+\frac{1-\alpha}{2}.
\end{align}
  \end{lem}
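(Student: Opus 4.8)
The plan is to combine the higher-order maximal regularity estimate of Proposition~\ref{propo:regmaxex} with a Gagliardo--Nirenberg--Sobolev interpolation and the exponential decay of the modulated energy provided by Lemma~\ref{decay+}, via Proposition~\ref{prop:boundM} which guarantees $\M_{\rho_f,j_f}(t)\lesssim_0 1$ on $[1,t^\star)$. The starting point is the interpolation inequality
$$
\|\nabla u(s)\|_{\Ld^p(\T^3)} \lesssim \|\D^2 u(s)\|_{\Ld^r(\T^3)}^{\alpha}\, \|u(s)-\langle u(s)\rangle\|_{\Ld^2(\T^3)}^{1-\alpha},
$$
valid precisely when the exponents satisfy the scaling relation~\eqref{eq:idp} (this is a Gagliardo--Nirenberg inequality on $\T^3$ applied to $u-\langle u\rangle$, using that the average of $\nabla u$ vanishes so the homogeneous version applies; the relation \eqref{eq:idp} encodes that $\nabla$ is one derivative while $\D^2$ is two, with weight $\alpha$ on the $\dot{W}^{2,r}$ side, weight $1-\alpha$ on the $\Ld^2$ side, and the dimensional count in $\R^3$).

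Next I would raise this to the power $c$ and integrate in time on $[1,t^\star]$. Using H\"older's inequality in time with the pair of exponents $\tfrac{a}{c\alpha}$ (hence the assumption $a\ge c\alpha$) and its conjugate on the two factors $\|\D^2 u(s)\|_{\Ld^r}^{c\alpha}$ and $\|u(s)-\langle u(s)\rangle\|_{\Ld^2}^{c(1-\alpha)}$, one separates
$$
\int_1^{t^\star}\|\nabla u(s)\|_{\Ld^p}^c\,\dd s \lesssim \Big(\int_1^{t^\star}\|\D^2 u(s)\|_{\Ld^r}^{a}\,\dd s\Big)^{c\alpha/a}\;\sup_{s\in[1,t^\star]}\Big(e^{\lambda s}\|u(s)-\langle u(s)\rangle\|_{\Ld^2}^2\Big)^{c(1-\alpha)/2}\Big(\int_1^{t^\star}e^{-\mu s}\,\dd s\Big)^{\cdots},
$$
where the exponential weight $e^{-\lambda s}$ is inserted and compensated: I would weight the $\D^2 u$ factor by $e^{-\lambda s}$ so that Proposition~\ref{propo:regmaxex} (applied with the decay rate $\lambda$ from Lemma~\ref{decay+}, legitimate since $\M_{\rho_f,j_f}\lesssim_0 1$ makes $\lambda$ an admissible constant) yields the bound involving $\Phi(\lambda)$, $(u\cdot\nabla)u$ and $F$; and correspondingly weight the $\Ld^2$ factor by $e^{\lambda s}$, which is exactly what the exponential decay $\|u(s)-\langle u(s)\rangle\|_{\Ld^2}^2 \le 2\mathscr{E}(s) \lesssim e^{-\lambda s}\mathscr{E}(0)$ controls, producing the $\mathscr{E}(0)^{(1-\alpha)/2}$ prefactor. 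The leftover purely exponential integral $\int_1^{\infty} e^{-(\text{positive})s}\,\dd s$ is finite and absorbed into $\lesssim_0$. Taking $c$-th roots gives~\eqref{ineq:regmaxexbis}, with the $\alpha$ powers on the source norms coming from the exponent $c\alpha/a$ after the root, matched to the form stated (the $\lesssim_0$ absorbs $\Phi(\lambda)$, the Poincar\'e constant, and the implicit GNS constant).

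The main obstacle is bookkeeping the exponents so that everything is consistent: one must check that the H\"older split in time is legitimate (this forces $a,b\ge c\alpha$ and $\ge 1$), that the scaling identity~\eqref{eq:idp} is exactly the condition for the GNS inequality to hold for the chosen $p,r,\alpha$ (including the boundary cases $\alpha=1/2$ and $p=\infty$, where one should note $p=\infty$ is reached only when $r>3$ so that $\D^2 u \in \Ld^r \hookrightarrow \dot{W}^{1,\infty}$ is still meaningful through the interpolation), and that the decay rate $\lambda$ appearing in the weight is the \emph{same} $\lambda$ governing both Lemma~\ref{decay+} and the $\Phi(\lambda)$ in Proposition~\ref{propo:regmaxex}. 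Since $\lambda$ depends only on $\sup_{[0,t^\star]}\|\rho_f\|_{\Ld^\infty}$, which is $\lesssim_0 1$ by Proposition~\ref{prop:boundM}, $\lambda$ is bounded below by a constant of the allowed type, so $\Phi(\lambda)$ and the exponential-integral constants are all absorbed into $\lesssim_0$; this is the point that makes the estimate uniform and closes the argument.
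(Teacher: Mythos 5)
Your argument is correct and follows essentially the same route as the paper: the Gagliardo--Nirenberg interpolation $\|\nabla u\|_{\Ld^p}\lesssim \|\D^2 u\|_{\Ld^r}^{\alpha}\|u-\langle u\rangle\|_{\Ld^2}^{1-\alpha}$ under \eqref{eq:idp}, the exponential decay of $\mathscr{E}$ from Lemma~\ref{decay+} (legitimate since $\M_{\rho_f}(t^\star)\lesssim_0 1$ by Proposition~\ref{prop:boundM}, which also makes $\lambda$ and hence $\Phi(\lambda)$ uniform), and Proposition~\ref{propo:regmaxex} for the exponentially weighted integral of $\|\D^2 u\|_{\Ld^r}$. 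The only difference is cosmetic: the paper skips your extra H\"older split in time and applies Proposition~\ref{propo:regmaxex} directly with $q=c\alpha\leq \min(a,b)$ (the H\"older argument being already contained in that proposition's proof), which also avoids your implicit need for $a\leq b$ when invoking it with $q=a$.
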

  \begin{proof}
  Owing to Lemma~\ref{decay+}, if $t^\star<+\infty$, there is, on $[0,t^\star]$, an exponential decay of the modulated energy with decay rate $\lambda^\star$. The Gagliardo-Nirenberg-Sobolev estimate of Theorem~\ref{lem:gns} for $(j,m,q)=(1,2,2)$ allows us to write for any $\alpha\in[1/2,1)$ and $s\geq 1$
\begin{align*}
\|\nabla u(s)\|_{\Ld^p(\T^3)} \lesssim \|\D^2 u(s)\|_{\Ld^r(\T^3)}^\alpha \|u(s)-\langle u(s)\rangle \|_{\Ld^2(\T^3)}^{1-\alpha},
\end{align*}
for $p,r$ satisfying \eqref{eq:idp}. By definition of the modulated energy and using its exponential decay on $[1,t^\star]$, we have therefore
\begin{align*}
\|\nabla u(s)\|_{\Ld^p(\T^3)} \lesssim \mathscr{E}(0)^{\frac{1-\alpha}{2}}  e^{-\lambda s}\|\D^2 u(s)\|_{\Ld^r(\T^3)}^\alpha,
\end{align*}
for $\lambda = \lambda^\star (1-\alpha)$.
We apply Proposition~\ref{propo:regmaxex} to infer that for any exponent $c$ such that $c\alpha\leq a,b$
  \begin{multline*}
    \int_1^{t^\star} \|\nabla u(s)\|_{\Ld^p(\T^3)}^c \,\dd s 
    \\
    \lesssim_0 \Phi(\lambda^\star)\mathscr{E}(0)^{c\frac{1-\alpha}{2}}\Big(1 +\|(u\cdot\nabla)u\|_{\Ld^a(1/2,t^\star; \Ld^{r}(\T^3))}^{c\alpha } + \|F\|_{\Ld^b(1/2,t^\star; \Ld^{r}(\T^3))}^{c\alpha}\Big),
  \end{multline*}
  where $\Phi$ is nonincreasing. But by Lemma~\ref{decay+}, $\lambda^\star$ itself is a nonincreasing function of $\M_{\rho_f}(t^\star)\lesssim_0 1$, which yields \eqref{ineq:regmaxexbis}.

    \end{proof}

    \begin{lem}\label{lem:almost}
      There exists $\gamma>0$ such that, if $t^\star<+\infty$, then the following estimate holds
      \begin{align}\label{ineq:thatsit}
\int_1^{t^\star} \|\nabla u(s)\|_{\Ld^\infty(\T^3)}\,\dd s \lesssim_0 \mathscr{E}(0)^\gamma.
        \end{align}
      \end{lem}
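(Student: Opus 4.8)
The goal is to control $\int_1^{t^\star}\|\nabla u(s)\|_{\Ld^\infty(\T^3)}\,\dd s$ by a positive power of $\mathscr{E}(0)$, assuming $t^\star<+\infty$. The natural route is to apply Lemma~\ref{lem:decayfinal} with $p=\infty$ and with $c=1$, so that the left-hand side is exactly the quantity we want to bound. For this I need to pick an interpolation exponent $\alpha\in[1/2,1)$ and an integrability index $r\in(1,\infty)$ such that \eqref{eq:idp} holds with $p=\infty$, i.e. $0=\frac13+\alpha\left(\frac1r-\frac23\right)+\frac{1-\alpha}{2}$; since $r$ can be taken close to $3/2$ (as allowed by Lemmas~\ref{lem:convinterp} and~\ref{lem:sourcelpr}, where we have $r_a>2$ and $r_b>3$, so in particular $r>3/2$ is available), and since for $r$ near $3/2$ the coefficient $\frac1r-\frac23$ is near $0^+$, one checks that the equation forces $\alpha$ close to $1$ — hence $\alpha\in[1/2,1)$ can indeed be met, with $\gamma:=\tfrac{1-\alpha}{2}>0$ the exponent that will appear. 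Here one must be slightly careful: $r$ must be simultaneously compatible with the value $r_a$ coming from Lemma~\ref{lem:convinterp} and with $r_b$ from Lemma~\ref{lem:sourcelpr}; since in those lemmas the exponents $a,b,r_a,r_b$ are not fixed but can be chosen in a small range, the strategy is to first fix $r>3/2$ (close to $3/2$), then solve \eqref{eq:idp} for $\alpha<1$, then use Lemma~\ref{lem:convinterp} with the appropriate $a\in(2,4)$ and Lemma~\ref{lem:sourcelpr} with an appropriate finite $b>4$, both with that same $r$ (adjusting the free parameters there so the spatial exponent matches).

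**Combining the estimates.** Once $\alpha$, $r$, $a$, $b$ are chosen with $c=1\le a,b$ (and $c\alpha=\alpha<1\le a,b$, so the hypotheses of Lemma~\ref{lem:decayfinal} hold), estimate \eqref{ineq:regmaxexbis} gives
\begin{align*}
\int_1^{t^\star} \|\nabla u(s)\|_{\Ld^\infty(\T^3)}\,\dd s \lesssim_0 \mathscr{E}(0)^{\frac{1-\alpha}{2}}\Big(1+\|(u\cdot\nabla)u\|_{\Ld^a(1/2,t^\star;\Ld^{r}(\T^3))}^{\alpha}+\|F\|_{\Ld^b(1/2,t^\star;\Ld^{r}(\T^3))}^{\alpha}\Big).
\end{align*}
Now I plug in Lemma~\ref{lem:convinterp}, which bounds the convection term by $1+\M_{\rho_f,j_f}(t^\star)$, and Lemma~\ref{lem:sourcelpr}, which bounds $\|F\|_{\Ld^b(1/2,t^\star;\Ld^{r_b})}$ by $1+\M_{\rho_f,j_f}(t^\star)^{3/2-2/b}$. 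By Proposition~\ref{prop:boundM}, for every $t<t^\star$ we have $\M_{\rho_f,j_f}(t)\lesssim_0 1$, and letting $t\uparrow t^\star$ (the quantities are monotone in $t$) gives $\M_{\rho_f,j_f}(t^\star)\lesssim_0 1$. Consequently both source-term factors are $\lesssim_0 1$, and we are left with
\begin{align*}
\int_1^{t^\star}\|\nabla u(s)\|_{\Ld^\infty(\T^3)}\,\dd s \lesssim_0 \mathscr{E}(0)^{\frac{1-\alpha}{2}},
\end{align*}
which is \eqref{ineq:thatsit} with $\gamma=\tfrac{1-\alpha}{2}>0$.

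**The main obstacle.** The delicate point is purely bookkeeping in the exponents: one has to verify that the interpolation exponent $r$ required to reach $p=\infty$ via \eqref{eq:idp} with $\alpha<1$ is compatible with the ranges of spatial integrability exponents ($r_a>2$ near $r_a=3$, and $r_b>3$) that Lemmas~\ref{lem:convinterp} and~\ref{lem:sourcelpr} actually deliver — i.e. that $r$ can be taken $>3/2$ and that the parameters $a,b$ in those lemmas can be tuned so that their spatial exponent equals this common $r$. This is where the ``$r>3/2$'' slack built into the previous section is consumed. Once that matching is arranged (taking $\alpha$ close enough to $1$, hence $\gamma$ small), everything else is a direct concatenation of \eqref{ineq:regmaxexbis} with Lemmas~\ref{lem:convinterp}, \ref{lem:sourcelpr} and Proposition~\ref{prop:boundM}; the hidden constants all enter through $\lesssim_0$ and the nonincreasing function $\Phi(\lambda^\star)$, which is harmless since $\lambda^\star$ itself is $\lesssim_0 1$-controlled from below by Lemma~\ref{decay+}.
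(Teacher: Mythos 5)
Your plan — apply Lemma~\ref{lem:decayfinal} once with $p=\infty$, $c=1$ — fails on the exponent arithmetic, and the failure is exactly the obstruction that forces the paper into a two-stage bootstrap. Solving \eqref{eq:idp} with $p=\infty$ gives $0=\frac56+\alpha\left(\frac1r-\frac76\right)$, i.e. $\alpha=5\left(7-\frac6r\right)^{-1}$, which is $<1$ if and only if $r>3$. Your claim that ``$r$ close to $3/2$'' works is backwards: at $r=3/2$ the formula gives $\alpha=5/3>1$, and more generally for $\frac1r-\frac23$ near $0$ the equation $0=\frac13+\frac{1-\alpha}{2}$ has no admissible solution at all. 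So you need $r>3$ simultaneously for the convection term and for $F$. Lemma~\ref{lem:sourcelpr} does deliver $r_b>3$, but Lemma~\ref{lem:convinterp} only delivers $r_a>2$ — in fact $\frac{1}{r_a}=\frac23-\frac{2}{3a}$ gives $r_a<3$ for every $a>2$ — so $r=\min(r_a,r_b)<3$ and $p=\infty$ is unreachable in one shot. The paper states this explicitly (``$p=\infty$ is not yet reachable at this stage'').

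The missing idea is the intermediate step: first apply Lemma~\ref{lem:decayfinal} with $c=a$ and $\alpha$ close to $1$, which (since $r>2$) yields $p>6$ and a bound $\|\nabla u\|_{\Ld^a(1,t^\star;\Ld^p)}\lesssim_0\mathscr{E}(0)^{(1-\alpha)/2}$; then H\"older's inequality upgrades the convection estimate to $\|(u\cdot\nabla)u\|_{\Ld^a(1/2,t^\star;\Ld^{\widetilde r_a})}\lesssim_0\mathscr{E}(0)^{(1-\alpha)/2}$ with $\frac{1}{\widetilde r_a}=\frac16+\frac1p<\frac13$, i.e. $\widetilde r_a>3$. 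Only then is $\widetilde r=\min(r_b,\widetilde r_a)>3$ available, and a second application of Lemma~\ref{lem:decayfinal} with $c=1$ and $\widetilde\alpha=5(7-6/\widetilde r)^{-1}<1$ reaches $p=\infty$ and gives \eqref{ineq:thatsit}. The rest of your argument (using Proposition~\ref{prop:boundM} to absorb $\M_{\rho_f,j_f}$ and the monotonicity of $\Phi(\lambda^\star)$) is fine, but without the intermediate upgrade the proof does not close.
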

      \begin{proof}
  We start by combining Lemma~\ref{lem:decayfinal} with Lemma~\ref{lem:convinterp} and Lemma~\ref{lem:sourcelpr}. Since (by Proposition~\ref{prop:boundM}) $\M_{\rho_f,j_f}\lesssim_0 1$ on $[1,t^\star]$, these results give us for some $b>4>a>2$ and $r=\min(r_a,r_b)>2$, the following estimate
  \begin{align}\label{ineq:almost}
\left(\int_1^{t^\star} \|\nabla u(s)\|_{\Ld^p(\T^3)}^c \,\dd s\right)^{1/c} \lesssim_0 \mathscr{E}(0)^{\frac{1-\alpha}{2}},
  \end{align}
  which holds for any $\alpha\in[1/2,1)$ and $p$ defined by \eqref{eq:idp}, provided that that $\alpha c\leq\min(a,b)$.

  \bigskip

  It is important to note that $p=\infty$ is not yet reachable at this stage, due to the constraint $\alpha\in[1/2,1]$. However, we can first use Lemma~\ref{lem:decayfinal}  with $c=a<b$ in \eqref{ineq:almost}. In that case, going back to \eqref{eq:idp}, we see that the limit case $\alpha=1$ leads to the equality
\begin{align*}
\frac1p = \frac1r-\frac13,
\end{align*}
which, since $r>2$, implies $\frac1p<\frac16$, that is $p>6$. Taking $\alpha\in[1/2,1)$ close enough to $1$, we therefore infer the existence of $p>6$ such that, 
\begin{align*}
\|\nabla u\|_{\Ld^a(1,t^\star; \Ld^p(\T^3))} \lesssim_{0} \mathscr{E}(0)^{(1-\alpha)/2}.
\end{align*}
Since $p>6$, we infer from Hölder's inequality, for some $\widetilde{r}_a>3$, that 
\begin{align*}
 \Big(\int_{1/2}^{t^\star} \|(u\cdot\nabla) u(s)\|_{\Ld^{\tilde{r}_a}(\T^3)}^a \,\dd s \Big)^{1/a} 
&\leq \|u\|_{\Ld^\infty(1/2,t^\star; \Ld^6(\T^3))}\|\nabla u\|_{\Ld^a(1/2,t^\star; \Ld^p(\T^3))} \\
                                    &\lesssim_{0} \mathscr{E}(0)^{(1-\alpha)/2}\|u\|_{\Ld^\infty(1/2,t^\star; \Ld^6(\T^3))},\\
  &\lesssim_0 \mathscr{E}(0)^{(1-\alpha)/2},
\end{align*}
The point is that this last inequality can now replace Lemma~\ref{lem:convinterp} that we used earlied : we can perform the same analysis as before with the advantage that, now $\widetilde{r}_a>3$. This yields that $\widetilde{r}:=\min(r_b,\widetilde{r}_a)>3$ and hence taking
\begin{align*}
\widetilde{\alpha}=5\left(7-\frac{6}{\widetilde{r}}\right)^{-1} <1,
\end{align*}
we can check that $\widetilde{\alpha}\in[1/2,1)$ and satisfies
\begin{align*}
0=\frac13+\widetilde{\alpha}\left(\frac{1}{\widetilde{r}}-\frac23\right)+\frac{1-\widetilde{\alpha}}{2}.
\end{align*}
So we invoke Lemma~\ref{lem:decayfinal} another time with $r=\widetilde{r}>3$, $c=1$ and $\widetilde{\alpha}$ as above to infer 
\begin{align*}
\int_1^{t^\star} \|\nabla u(s)\|_{\Ld^\infty(\T^3)}\,\dd s\lesssim_{0}\mathscr{E}(0)^{(1-\tilde{\alpha})/2}\left(1+\mathscr{E}(0)^{(1-\alpha)/2}\right),
\end{align*}
which is an estimate of the form \eqref{ineq:thatsit}.
        \end{proof}
        
        We are finally in position to conclude the proof of Theorem \ref{realthm}.

\begin{proof}

 Applying Proposition~\ref{prop:boundM}, the question thus reduces to ensure $t^\star=+\infty$. Assuming $t^\star<+\infty$, we will reach a contradiction by proving  (for a small enough $\mathscr{E}(0)$) the existence of $t>t^\star$  which  is still a strong existence time
 and for which the inequality \eqref{def:tstar2} is satisfied.
  \medskip

The first task is to exhibit strong existence times larger than $t^\star$.  
Thanks to Proposition~\ref{prop:boundM} and Proposition~\ref{propo:estvns3D}, recalling the meaning of the symbol $\lesssim_{0}$ (see Notation~\ref{nota:lesssim}), we have the existence of nondecreasing function $\ffi$ such for any $t\in[1,t^\star]$, 
\begin{multline}
\label{ineq:Madmi}\sup_{s\in[0,1]}\|\rho_f(s)\|_{\Ld^\infty(\T^3)} + \M_{\rho_f, j_f}(t) 
\\
\leq \ffi\left(\|u_0\|_{\H^{1/2}(\T^3)}+M_\alpha f_0+\E(0)+N_q(f_0) + 1 \right).
\end{multline}
Recall that  by assumption, we have $\|u_0\|_{\H^{1/2}(\T^3)}^2< \frac{1}{\C_\star^2}$. 
Using Lemma~\ref{lem:FL2}, we thus infer that for all strong existence times $t\leq t^\star$
\begin{align*}
\|u_0\|_{\H^{1/2}(\T^3)}^2 & + \C_\star\int_0^t \|F(s)\|_{\H^{-1/2}(\T^3)}^2\,\dd s \\
& \leq \|u_0\|_{\H^{1/2}(\T^3)}^2 + \C_\star\int_0^t \|F(s)\|_{\Ld^2(\T^3)}^2\,\dd s \\
&\leq \|u_0\|_{\H^{1/2}(\T^3)}^2+ \mathscr{E}(0)\C_\star \left(\M_{\rho_f, j_f}(t)+\sup_{s\in[0,1]}\|\rho(s)\|_{\Ld^\infty(\T^3)} \right),
\end{align*}
where we used the embedding $\Ld^2(\T^3)\hookrightarrow \H^{-\frac{1}{2}}(\T^3) $, with constant $1$. Combining this with~\eqref{ineq:Madmi}, we get for some nondecreasing function still denoted $\ffi$ 
\begin{multline*}
\|u_0\|_{\H^{1/2}(\T^3)}^2  + \C_\star \int_0^t \|F(s)\|_{\H^{-1/2}(\T^3)}^2\,\dd s\\
\leq \|u_0\|_{\H^{1/2}(\T^3)}^2 + \mathscr{E}(0) \ffi\left(\|u_0\|_{\H^{1/2}(\T^3)}+ M_\alpha f_0+\E(0)+N_q(f_0)+ 1 \right),
\end{multline*}
Therefore, choosing $\mathscr{E}(0)$ small enough so that 
\begin{multline*}
\ffi \left(N_q(f_0)+M_\alpha f_0+ \E(0)+\|u_0\|_{\H^{1/2}(\T^3)}+1  \right) \mathscr{E}(0)\\
 <\min\left(1,\frac{1}{\C_\star^2}-\|u_0\|_{\H^{1/2}(\T^3)}^2\right),
\end{multline*}
we deduce that
\begin{align*}
\|u_0\|_{\H^{1/2}(\T^3)}^2 +\C_\star\int_0^{t^\star} \|F(s)\|_{\H^{-1/2}(\T^3)}^2\,\dd s < \frac{1}{\C_\star^2},
\end{align*}
hence proving  by continuity the existence of strong existence times larger than $t^\star$.

To check that \eqref{def:tstar2} is satisfied after $t^\star$ we use Lemma~\ref{lem:almost} to infer the existence of an universal onto nondecreasing continuous function $\ffi:\R_+\rightarrow\R_+$ such that
  \begin{align*}
\int_1^{t^\star} \|\nabla u(s)\|_{\Ld^\infty(\T^3)}\,\dd s \leq \ffi\left(\|u_0\|_{\H^{1/2}(\T^3)}+M_\alpha f_0+N_q(f_0)+\E(0) + 1\right)\mathscr{E}(0)^\gamma,
\end{align*}
and we  observe that a smallness condition as \eqref{ineq:Emodsmall3D} ensures 
$$
\int_1^{t^\star} \|\nabla u(s)\|_{\Ld^\infty(\T^3)}\,\dd s < \delta.
$$
Therefore we can find a strong existence time $t> t^\star$ such that 
$$
\int_1^{t} \|\nabla u(s)\|_{\Ld^\infty(\T^3)}\,\dd s < \delta.
$$
This is a contradiction with the definition of $t^\star$ and finally concludes the proof.

\end{proof}

 \section{Further description of the asymptotic state }
\label{sec-asymp}

Once the exponential decay of the modulated energy is established, Proposition~\ref{prop-rhobar} leads to the existence of a profile $\rho^\infty\in\Ld^\infty(\T^3)$ which allows to describe the asymptotic behavior of $f$ in the space variable. The content of Proposition~\ref{prop-rhobar} is quite implicit as the profile is obtained by an abstract argument. It is in fact possible to describe $\rho^\infty$ in a finer way (but still, \emph{via} implicit equations) : this is the purpose of Proposition~\ref{propo-infini} that we aim at proving in this last section.

\medskip

Before doing so, it is interesting to compare the statement of Proposition~\ref{propo-infini} with the explicit asymptotic behavior of solutions to the linearized equation when $\langle u_0+j_{f_0}\rangle = 0$, that is the Vlasov equation with friction
\begin{align*}
 \partial_t f + v\cdot \nabla_x f +\div_v (-vf) =0,
\end{align*}
 for which we recall we  have 
\begin{align*}
 \W_1\left( f(t,x,v) , \widetilde\rho_0\otimes \delta_{0} \right) \conv{t}{\infty} 0,
\end{align*}
 with 
\begin{align*}
 \widetilde\rho_0(x) := \int_{\R^3} f_0\left(x-v, v\right) \, \dd v.
\end{align*}
From~\ref{eq-Xinfini} ,  we therefore see that the deviation from the linearized behavior is small, as
\begin{align*}
\Y_{\infty,x,v}^ 0 -(x-v)=-\int_0^{+\infty}  u(\tau,\Y^\tau_{\infty,x,v})\,\dd \tau,
  \end{align*}
is small in $\Ld^{\infty}(\T^3\times\R^3)$, as it is controlled by the initial modulated energy $\mathscr{E}(0)$ and 
$|\det \mathscr{A}\left(\infty,x,v\right)| -1$
 is also small in $\Ld^{\infty}(\T^3\times\R^3)$, as we will see in the upcoming proof.

\medskip

We will detail the proof of Proposition~\ref{propo-infini} only in the particular case $\langle u_0+j_{f_0}\rangle= 0$ for which the computations are a bit less tedious. The general case is a straightforward generalization  (see Remark~\ref{rem:Zt}).

\begin{proof}[Proof of Proposition~\ref{propo-infini} in the case $\langle u_0+j_{f_0}\rangle= 0$]
  
Recall the map $\Gamma_{t,x}:v\mapsto \V(0;t,x,v)$ that we already used in Lemma~\ref{charac} of Section~\ref{subsec:straight} : this very lemma ensures that, for $\delta$ small enough ($\delta e^\delta < 1/9$ is sufficient), if \eqref{ineq:assnab} is satisfied, $\Gamma_{t,x}$ is a $\mathscr{C}^1(\R^3)$-diffeomorphism. In order to capture the asymptotic profile of $\rho_f(t)$ we look at its action on a continuous function $\psi$ : 
\begin{align*}
\int_{\T^3} \rho_f(t,x)\,\psi(x)\, \dd x.
\end{align*}
Since $\rho_f$ does not solve a transport equation we cannot link it to the initial density $\rho_f(0)$, however we can write
\begin{align*}
\int_{\T^3} \rho_f(t,x)\,\psi(x)\, \dd x &= \int_{\T^3\times\R^3} f(t,x,v)\,\psi(x)\,\dd v\,\dd x\\
&= \int_{\T^3\times\R^3} e^{3t} f_0(\X(0;t,x,v),\V(0;t,x,v))\,\psi(x)\,\dd v\,\dd x\\
                                         &= e^{3t}
                                           \int_{\T^3\times\R^3} f_0(\Y(0;t,x,v),v)\,\psi(x)|\det \D_v \Gamma_{t,x}|^{-1}\,\dd v\,\dd x,
\end{align*}
where $\Y(0;t,x,v):=\X(0;t,x,\Gamma_{t,x}^{-1}(v))$. Recall that
\begin{align*}
\Gamma_{t,x}(v) = e^t v - \int_0^t e^{\tau} u(\tau,\X(\tau;t,x,v)\, \dd \tau,
\end{align*}
hence (with the notation  $\Y(\tau;t,x,v):=\X(\tau;t,x,\Gamma_{t,x}^{-1}(v)) $)
 \begin{align}\label{eq:gam}
\Gamma^{-1}_{t,x}(v) = e^{-t} v + \int_0^t e^{\tau-t} u(\tau,\Y(\tau;t,x,v))\, \dd \tau,
 \end{align}
 from which we infer
  \begin{align*}
e^t \D_v \Gamma^{-1}_{t,x}(v) = \I_3 +  \int_0^t e^{\tau} \nabla u(\tau,\Y(\tau;t,x,v))\,\D_v\Y(\tau;t,x,v) \dd \tau.
 \end{align*}
 All in all, introducing the variable $z:=(x,v)$ and denoting $\Y^s_{t,z}:=\Y(s;t,z)$,  we have established 
\begin{align}\label{eq:rhot}
\int_{\T^3} \rho_f(t,x)\,\psi(x)\, \dd x =  \int_{\T^3\times\R^3} f_0(\Y^0_{t,z},v)\psi(x)\left|\det \mathscr{A}(t,z)\right|\,\dd z,
\end{align}
where
\begin{align}\label{eq:At}
\mathscr{A}(t,z):=\I_3+\int_0^t e^\tau \nabla u(\tau,\Y^\tau_{t,z})\,\D_v\Y^\tau_{t,z}\,\dd\tau.
  \end{align}
In order to understand the behavior of $\rho_f(t)$ as $t\rightarrow+\infty$ it is therefore natural to follow the curves $t\mapsto \Y^s_{t,z}$ as $t\rightarrow +\infty$, and this is the purpose of the following
\begin{lem}\label{lem:Zt}
For $\delta>0$ small enough, the following holds.
For all $0\leq s\leq t$ and $z:=(x,v)\in\T^3\times\R^3$ we have
  \begin{align}
\label{ineq:dx}    |\D_x \Y^s_{t,z}|& \leq 2,\\
\label{ineq:dv}    |e^s \D_v \Y^s_{t,z}|& \leq 4.
    \end{align}
Furthermore, the family of maps $(s,z)\mapsto \Y^s_{t,z}$ converges in $\mathscr{C}^0(\R_+;\mathscr{C}^1(\T^3\times\R^3))$, as $t\rightarrow +\infty$, to a map $(s,z)\mapsto \Y^s_{\infty,z}$ that satisfies
$$\Y^s_{\infty,z} = x-e^{-s}v -\int_0^{+\infty}   \Big[\mathbf{1}_{[0,s]}(\tau)e^{\tau-s}+\mathbf{1}_{\tau\geq s} \Big]u(\tau, \Y^\tau_{\infty,z}) \, \dd \tau.
$$
\end{lem}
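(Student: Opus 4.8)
To prove Lemma~\ref{lem:Zt}, the plan is to reduce the whole statement to a single (non-causal) integral equation for $s\mapsto\Y^s_{t,z}$ and then run Grönwall-type estimates in which the smallness of $\int_0^{+\infty}\|\nabla u(\tau)\|_{\Ld^\infty(\T^3)}\,\dd\tau\le\delta$ is the key input. First I would fix $z=(x,v)$, set $w:=\Gamma_{t,x}^{-1}(v)$ — so that $\V(0;t,x,w)=v$ by definition of $\Gamma_{t,x}$ — and write $\widetilde{\V}(\sigma):=\V(\sigma;t,x,w)$, so that $\dot{\Y}^\sigma_{t,z}=\widetilde{\V}(\sigma)$ and $\frac{\dd}{\dd\sigma}\big(e^\sigma\widetilde{\V}(\sigma)\big)=e^\sigma u(\sigma,\Y^\sigma_{t,z})$. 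Integrating this last identity between $0$ and $s$, then integrating $\dot{\Y}^\sigma_{t,z}=\widetilde{\V}(\sigma)$ between $s$ and $t$ (using $\Y^t_{t,z}=x$), and applying Fubini's theorem to the double integral that appears, I obtain
\begin{equation*}
\Y^s_{t,z}=x-(e^{-s}-e^{-t})v-\int_0^s \big(e^{\tau-s}-e^{\tau-t}\big)u(\tau,\Y^\tau_{t,z})\,\dd\tau-\int_s^t \big(1-e^{\tau-t}\big)u(\tau,\Y^\tau_{t,z})\,\dd\tau ,
\end{equation*}
which is the workhorse of the proof; letting $t\to+\infty$ formally in it already produces the claimed identity for $\Y^s_{\infty,z}$.

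Next I would differentiate this integral equation in $x$ and in $v$. All scalar kernels involved are bounded by $1$ in absolute value: $0\le e^{\tau-s}-e^{\tau-t}\le1$ and $0\le1-e^{\tau-t}\le1$ for the $\D_x$-equation, and, after multiplying the $\D_v$-equation by $e^s$, the kernels become $1-e^{s-t}$ on $[0,s]$ and $e^{s-\tau}-e^{s-t}$ on $[s,t]$, again both in $[0,1]$. Because the right-hand side involves $\Y^\tau_{t,z}$ for $\tau$ running over the whole interval $[0,t]$ (and not merely $[0,s]$), this is not a forward-in-$s$ differential inequality, so rather than a plain Grönwall lemma I would set $A:=\sup_{0\le\sigma\le t}|\D_x\Y^\sigma_{t,z}|$ and $B:=\sup_{0\le\sigma\le t}|e^\sigma\D_v\Y^\sigma_{t,z}|$; the differentiated equations then give $A\le1+\delta A$ and $B\le1+\delta B$, hence $A\le2$ and $B\le4$ for $\delta$ small, uniformly in $t\ge s\ge0$. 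This is \eqref{ineq:dx}--\eqref{ineq:dv}.

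For the limit $t\to+\infty$ I would prove that $t\mapsto\big((s,z)\mapsto\Y^s_{t,z}\big)$ is Cauchy. For $t<t'$ one subtracts the two integral equations: the explicit terms differ by $(e^{-t}-e^{-t'})v$, while the integral terms split into ``tail'' contributions — the genuine tail $\int_t^{t'}(1-e^{\tau-t'})u(\tau,\Y^\tau_{t',z})\,\dd\tau$ and the pieces proportional to $e^{\tau-t}-e^{\tau-t'}$ or $e^{-t}-e^{-t'}$, all dominated by $\int_t^{+\infty}\|u(\tau)\|_{\Ld^\infty(\T^3)}\,\dd\tau$ (with an extra $|v|$ only on the explicit term) — plus a self-improving term $\int_0^t\|\nabla u(\tau)\|_{\Ld^\infty(\T^3)}\,|\Y^\tau_{t',z}-\Y^\tau_{t,z}|\,\dd\tau$. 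Rerunning the supremum-plus-smallness argument of the previous step (and its differentiated version, where the $|v|$ weight disappears since $\D_v$ of the $v$-term is just $(e^{-t}-e^{-t'})\I_3$) controls $\sup_{0\le\sigma\le t}\big(|\Y^\sigma_{t',z}-\Y^\sigma_{t,z}|+|\D_z(\Y^\sigma_{t',z}-\Y^\sigma_{t,z})|\big)$ by $\int_t^{+\infty}\big(\|u(\tau)\|_{\Ld^\infty(\T^3)}+\|\nabla u(\tau)\|_{\Ld^\infty(\T^3)}\big)\,\dd\tau$, locally uniformly in $v$ for the function itself. Since $\int_0^{+\infty}\|\nabla u\|_{\Ld^\infty(\T^3)}\le\delta$ by hypothesis, and since $\int_0^{+\infty}\|u(\tau)\|_{\Ld^\infty(\T^3)}\,\dd\tau<+\infty$ — which follows from the exponential decay of $\mathscr{E}$, a Gagliardo--Nirenberg interpolation bounding $\|u-\langle u\rangle\|_{\Ld^\infty}$ by $\|\D^2 u\|_{\Ld^2}^\alpha\|u-\langle u\rangle\|_{\Ld^2}^{1-\alpha}$ (as in Section~\ref{sec:E(0)small}), and $\langle u(\tau)\rangle\to0$ exponentially (Lemma~\ref{lem:moy}, in the case $\langle u_0+j_{f_0}\rangle=0$) — these tails tend to $0$ as $t\to+\infty$. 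This gives the convergence of $(s,z)\mapsto\Y^s_{t,z}$ in $\mathscr{C}^0(\R_+;\mathscr{C}^1(\T^3\times\R^3))$ to a limit $\Y^s_{\infty,z}$.

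Finally, passing to the limit $t\to+\infty$ inside the integral equation is legitimate by dominated convergence ($e^{-t}v\to0$, $e^{\tau-t}\to0$, the uniform bounds of the second step and the integrability of $\|u\|_{\Ld^\infty}$ supplying a dominating function, and $u(\tau,\Y^\tau_{t,z})\to u(\tau,\Y^\tau_{\infty,z})$ by continuity of $u$); it yields $\Y^s_{\infty,z}=x-e^{-s}v-\int_0^s e^{\tau-s}u(\tau,\Y^\tau_{\infty,z})\,\dd\tau-\int_s^{+\infty}u(\tau,\Y^\tau_{\infty,z})\,\dd\tau$, which is exactly the announced formula after merging the two integrals into $\int_0^{+\infty}\big[\mathbf{1}_{[0,s]}(\tau)e^{\tau-s}+\mathbf{1}_{\tau\ge s}\big]u(\tau,\Y^\tau_{\infty,z})\,\dd\tau$. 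The main obstacle is the ``delay-in-both-directions'' structure of the integral equation: one cannot iterate a forward Grönwall inequality, so every estimate must be closed through a global supremum in $s$, and it is precisely there that the smallness of $\int\|\nabla u\|_{\Ld^\infty}$ enters in an essential way; a secondary, more technical point is the integrability at $+\infty$ of $\|u(\tau)\|_{\Ld^\infty(\T^3)}$, which is not assumed in the lemma but is provided by the exponential decay of the modulated energy established earlier.
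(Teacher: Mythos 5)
Your proposal is correct and follows essentially the same route as the paper: the same non-causal integral equation for $\Y^s_{t,z}$ (the paper's \eqref{eq:Z}), the same global-supremum-plus-smallness argument in place of a forward Gr\"onwall inequality to get \eqref{ineq:dx}--\eqref{ineq:dv}, the same Cauchy-criterion scheme for $t\to+\infty$, and dominated convergence to pass to the limit in the equation. The only (harmless) deviation is that you obtain the integrability of $\tau\mapsto\|u(\tau)\|_{\Ld^\infty(\T^3)}$ via Gagliardo--Nirenberg and the estimates of Section~\ref{sec:E(0)small}, whereas the paper gets it more directly from $\|u\|_{\Ld^\infty}\lesssim|\langle u\rangle|+\|\nabla u\|_{\Ld^\infty}$, the exponential decay of $\langle u\rangle$ and the assumption \eqref{ineq:assnab}.
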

\begin{proof}
We start by recalling 
\begin{align*}
\X(s;t,x,v) = x + (1- e^{t-s}) v + \int_{s}^t \left( e^{\tau-s}-1\right) u(\tau, \X(\tau;t,x,v)) \, \dd \tau,
\end{align*}
from which, together with \eqref{eq:gam}, we deduce the following formula for $s\leq t$
 \begin{align}\label{eq:Z}
\Y^s_{t,z} =   x + (e^{-t}- e^{-s}) v + \int_0^{+\infty}   \Big[ e^{\tau-t}\mathbf{1}_{\tau\leq t}-e^{\tau-s}\mathbf{1}_{\tau \leq s}-\mathbf{1}_{s\leq \tau\leq t} \Big]u(\tau, \Y^\tau_{t,z}) \, \dd \tau.
 \end{align}
From the previous expression we infer for $s\leq t$,
 \begin{align*}
|\D_x \Y^s_{t,z}| \leq 1 + 2\int_0^{+\infty} \mathbf{1}_{\tau\leq t}| \nabla u(\tau,\Y^\tau_{t,z}) \D_x \Y^\tau_{t,z}|\,\dd \tau.
 \end{align*}
In particular, this implies
\begin{align*}
\sup_{0 \leq \tau\leq t}|\D_x \Y^\tau_{t,z}|\leq  1 + 2 \sup_{0\leq \tau\leq t}|\D_x \Y^\tau_{t,z}|  \int_0^{+\infty} \|\nabla u(\tau)\|_{\Ld^\infty(\T^3)} \,\dd \tau,
\end{align*}
which together with the assumption \eqref{ineq:assnab} implies for $s\leq t$
\begin{align*}
|\D_x \Y^s_{t,z}| \leq \sup_{0\leq \tau \leq t} |\D_x \Y^\tau_{t,z}| \leq \frac{1}{1-2\delta}, 
\end{align*}
that implies \eqref{ineq:dx} for $\delta \leq 1/4$. Similarly and returning to \eqref{eq:Z} we have for $s\leq t$ 
\begin{align*}
|e^s \D_v \Y^s_{t,z}| = 2 + 2 \int_0^t e^\tau \|\nabla u(\tau)\|_{\Ld^\infty(\T^3)} |\D_v \Y^\tau_{t,z}|\,\dd \tau,
\end{align*}
and we can proceed in the same way to obtain \eqref{ineq:dv}. To establish the existence of $(s,z)\mapsto \Y^s_{\infty,z}$, we shall prove that $(s,z)\mapsto \Y^s_{t,z}$ satisfies Cauchy's criterion as $t\rightarrow +\infty$, with respect to the local uniform metric. Since $\langle u_0+j_{f_0}\rangle = 0$, we have by Lemma~\ref{lem:moy} and by definition of the modulated energy that $t\mapsto \langle u(t)\rangle$ is integrable over $\R_+$ (due to its exponential decay). In particular we infer the integrability over $\R_+$ of $t\mapsto \|u(t)\|_{\Ld^\infty(\T^3)} \leq |\langle u(t) \rangle|+\|\nabla u(t)\|_{\Ld^\infty(\T^3)}$, thanks to the assumption \eqref{ineq:assnab}. In particular, by dominated convergence we infer that 
 \begin{align}
\label{eq:Zt}\Y^s_{t,z}=  x - e^{-s} v - \int_0^{+\infty}   \Big[e^{\tau-s}\mathbf{1}_{\tau\leq s}+\mathbf{1}_{s\leq \tau \leq t} \Big]u(\tau, \Y^\tau_{t,z}) \, \dd \tau + \text{o}(1),
\end{align}
where the notation $\text{o}(1)$ refers  a term going to $0$ in $\Ll^\infty(\R_+ \times \T^3 \times \R^3)$ in the limit $t\rightarrow +\infty$. 
\begin{rem}\label{rem:Zt}
In the general case $\langle u_0+j_{f_0}\rangle \neq 0$, $t\mapsto \langle u(t)\rangle$ is not integrable, as it converges to $\langle u_0 +j_{f_0}\rangle/2$. One needs to replace $u(\tau,\Y^\tau_{s,t})$ by $u(\tau,\Y^\tau_{s,t})-\langle u_0 +j_{f_0}\rangle/2$ in the integrand of \eqref{eq:Zt} and, by doing so, adds a diverging drift term to the equation. In a similar fashion as the proof of Proposition~\ref{prop-rhobar}, this can be counterbalanced by considering the renormalized characteristics $\Y(\tau;t,x+\langle u_0+j_{f_0}\rangle/2,v)$ instead of $\Y(\tau,t,x,v)$. The equations for these shifted trajectories are a bit different, but the convergence properties are proved in the same way, resulting in the implicit equation \eqref{eq-Xinfini}.
  \end{rem}
In particular, taking the difference of this identity \eqref{eq:Zt} at times $t_1<t_2$
 \begin{align*}
   |\Y^s_{t_2,z}- \Y^s_{t_1,z}| &\leq 2 \int_0^{+\infty} \mathbf{1}_{\tau\leq  t_2} |u(\tau,\Y^\tau_{t_2,z})-u(\tau,\Y^\tau_{t_1,z})| \,\dd \tau  + \int_{t_1}^{t_2} |u(\tau,\Y^\tau_{t_1,z}))| \,\dd \tau + \text{o}(1) \\
   &\leq 2 \int_0^{+\infty} \mathbf{1}_{\tau\leq t_2}\|\nabla u(\tau)\|_{\Ld^\infty(\T^3)} |\Y^\tau_{t_2,z}- \Y^\tau_{t_1,z}|\,\dd \tau  + \int_{t_1}^{t_2} \|u(\tau)\|_{\Ld^\infty(\T^3)} \,\dd \tau + \text{o}(1),
\end{align*}
where $\text{o}(1)$ refers here to the asymptotic $t_1 \wedge t_2 \rightarrow +\infty$, with the same uniformity as before. Using once more the integrability of $t\mapsto \|u(t)\|_{\Ld^\infty(\T^3)}$, for any compact $K\subset\R_+\times\T^3\times\R^3$, if 
\begin{align*}
  \ffi(t_1,t_2):=\sup_{(\tau,z)\in K} |\Y^\tau_{t_2,z}-\Y^\tau_{t_1,z}|,
\end{align*}
 we have established
\begin{align*}
\ffi(t_1,t_2) \leq 2 \ffi(t_1,t_2)\int_0^{+\infty} \|\nabla u(\tau)\|_{\Ld^\infty(\T^3)}\,\dd \tau + \text{o}(1),
\end{align*}
with a similar (uniform) asymptotic term $\text{o}(1)$. From assumption \eqref{ineq:assnab}, this proves
\begin{align*}
\sup_{(s,z)\in K} |\Y^s_{t_2,z}- \Y^s_{t_1,z}| = \text{o}(1),
\end{align*}
which yields  Cauchy's criterion. We deduce the existence of $(s,z)\mapsto \Y^s_{\infty,z}$, as the (local uniform) limit of $(s,z)\mapsto \Y^s_{t,z}$ as $t\rightarrow +\infty$. By dominated convergence and continuity of $u$ for positive times, $\Y^s_{\infty,z}$ must satisfy the equation
\begin{align}
\label{eq:Zinf}\Y^s_{\infty,z} = x-e^{-s}v -\int_0^{+\infty}   \Big[\mathbf{1}_{[0,s]}(\tau)e^{\tau-s}+\mathbf{1}_{\tau\geq s} \Big]u(\tau, \Y^\tau_{\infty,z}) \, \dd \tau.
\end{align}
For now $\Y^\tau_{\infty,z}$ is merely continuous (as a uniform limit) in all its variables. But it turns out that the derivatives $(s;t,z) \mapsto \D_z \Y^s_{t,z}$ enjoys the same Cauchy criterion as $\Y^s_{t,z}$. Indeed, going back to \eqref{eq:Z}, we infer, using integrability of $\tau\mapsto \|\nabla u(\tau)\|_{\Ld^\infty(\T^3)}$ over $\R_+$ and dominated convergence 
 \begin{align*}
   \D_z \Y^s_{t,z}= - \int_0^{+\infty}   \Big[e^{\tau-s}\mathbf{1}_{\tau\leq s}+\mathbf{1}_{s\leq \tau \leq t} \Big] \nabla u(\tau, \Y^\tau_{t,z}) \D_z \Y^\tau_{t,z} \, \dd \tau + \text{o}(1)+r_{s,z},
\end{align*}
where $\text{o}(1)$ refers to the asymptotic $t\rightarrow +\infty$ and is locally uniform in $s,z$, while $r_{s,z}$ is some irrelevant function which does not depend on $t$. For any $t_1<t_2$ we thus have
\begin{multline*}
|\D_z \Y^s_{t_2,z}-\D_z \Y^s_{t_1,z}| \leq 2 \int_0^{+\infty} \mathbf{1}_{\tau\leq  t_2} |\nabla u(\tau,\Y^{\tau}_{t_2,z})\D_z \Y^\tau_{t_2,z} - \nabla u(\tau,\Y^{\tau}_{t_1,z})\D_z \Y^\tau_{t_1,z}|\,\dd \tau \\ + \int_{t_1}^{t_2} |\nabla u(\tau,\Y^\tau_{t_1,z})\D_z \Y^\tau_{t_1,z}|\,\dd \tau +\text{o}(1),
\end{multline*}
where $\text{o}(1)$ refers to $t_1 \wedge t_2 \rightarrow +\infty$ and is locallly uniform in $s,z$. Owing to the integrability of $\tau\mapsto \|\nabla u(\tau)\|_{\Ld^\infty(\T^3)}$ over $\R_+$ and the uniform bound on $(s,t,z)\mapsto \mathbf{1}_{s\leq t} \D_z \Y^s_{t,z}$ due to estimates \eqref{ineq:dx} -- \eqref{ineq:dv}, we infer 
\begin{multline*}
  |\D_z \Y^s_{t_2,z}-\D_z \Y^s_{t_1,z}| \leq 2\int_0^{+\infty} \mathbf{1}_{\tau\leq  t_2} |\nabla u(\tau,\Y^{\tau}_{t_2,z})\big[\D_z \Y^\tau_{t_2,z} -\D_z \Y^\tau_{t_1,z}\big]|\,\dd \tau\\
  +\int_0^{+\infty} \mathbf{1}_{\tau\leq  t_2} |\big[\nabla u(\tau,\Y^{\tau}_{t_2,z}) - \nabla u(\tau,\Y^{\tau}_{t_1,z})\big] \D_z \Y^\tau_{t_1,z}|\,\dd \tau    +\text{o}(1).
\end{multline*}
Since $(\Y^s_{t,z})_t \rightarrow \Y^s_{\infty,z}$ pointwisely, the continuity of $\nabla u$ for positive times, its belonging to $\Ld^1(\R_+;\Ld^\infty(\T^3))$ and the  aforementioned uniform boundedness of $(s,t,z)\mapsto \mathbf{1}_{s\leq t} \D_z \Y^s_{t,z}$ entail, by dominated convergence,
\begin{multline*}
  |\D_z \Y^s_{t_2,z}-\D_z \Y^s_{t_1,z}| \leq 2 \int_0^{+\infty} \mathbf{1}_{\tau\leq  t_2} \|\nabla u(\tau)\|_{\Ld^\infty(\T^3)} |\D_z \Y^\tau_{t_2,z} -\D_z \Y^\tau_{t_1,z}|\,\dd \tau +\text{o}(1),
\end{multline*}
and we can then proceed as we have done for $\Y^s_{t,z}$ to establish the local uniform Cauchy criterion.
\end{proof}
If $f_0$ was assumed to be continuous in the space variable, we would now able to pass to the limit into formula \eqref{eq:rhot} ; indeed we would have then by dominated convergence, using the bounds that we have established on $(s,t,z)\mapsto  e^{s} \D_v \Y^s_{t,z}$ and the integrability of $v\mapsto \sup_{\T^3} f_0(\cdot,v)$,
\begin{align*}
\int_{\T^3} \rho_f(t,x)\psi(x)\,\dd x \operatorname*{\longrightarrow}_{t\rightarrow+\infty} \int_{\T^3\times\R^3} f_0(\Y^0_{\infty,z},v)\psi(x) |\det \mathscr{A}(\infty,z)|\,\dd z,
\end{align*}
with 
$$\mathscr{A}(\infty,z)=\I_3+\int_0^{+\infty} e^\tau \nabla u(\tau,\Y^\tau_{\infty,z})\,\D_v\Y^\tau_{\infty,z}\,\dd\tau.
$$ 
Notice that here the convergence $z\mapsto \mathscr{A}(t,z)$ towards $z\mapsto \mathscr{A}(\infty,z)$ is also locally uniform in $z$. However, we are not in position to replace $f_0$ by a regularized version : to do so we would need a uniqueness result for the whole coupling, and such a result is only known in dimension 2 (see \cite{HM3}). It turns out that the above convergence holds, but to establish it we have to use another change of variable. More precisely, in  \eqref{eq:rhot} we consider the change of variable $x\mapsto \Lambda_{t,v}(x):=\Y^0_{t,x,v}$. It is an admissible one thanks Lemma~\ref{lem:diff} and the estimate
\begin{align}\label{ineq:dxid}
  \|\D_x \Y^0_{t,z}-\I_3\|_\infty \leq \frac19,
\end{align}
 which itself is a consequence of \eqref{eq:Z}, \eqref{ineq:dx} and assumption \eqref{ineq:assnab}, if $\delta$ is small enough. We have therefore 
\begin{multline}
  \int_{\T^3} \rho_f(t,x)\,\psi(x)\, \dd x\\
  \label{eq:rhotbis}= \int_{\T^3\times\R^3} f_0(x,v)\,\psi(\Lambda_{t,v}^{-1}(x))|\det \mathscr{A}(t,\Lambda_{t,v}^{-1}(x),v)\det \D_x \Lambda_{t,v}^{-1}(x)|\,\dd z.
\end{multline}
The long-time behavior of $\Lambda_{t,v}^{-1}(x)$ is given by
\begin{lem}\label{lem:lam}
For all $v\in\R^3$ the map $\Lambda_{\infty,v}:x\mapsto \Y^0_{\infty,x,v}$ is a $\mathscr{C}^1$-diffeomorphism from $\T^3$ onto itself and we have $\Lambda_{t,v}^{-1}(x)\rightarrow_t \Lambda_{\infty,v}^{-1}(x)$ in $\mathscr{C}^1(\T^3\times\R^3)$, as $t\rightarrow+\infty$ and also $|\det \D_x\Lambda_{t,v}^{-1}(x)|\leq 2$ for all $x,v,t$.
\end{lem}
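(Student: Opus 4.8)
The plan is to feed the two facts already available---the quantitative smallness \eqref{ineq:dxid} of $\D_x\Y^0_{t,z}-\I_3$ and the $\mathscr{C}^1$-convergence $\Y^s_{t,z}\to\Y^s_{\infty,z}$ of Lemma~\ref{lem:Zt}---into the global diffeomorphism criterion of Lemma~\ref{lem:diff}, both at finite time and in the limit. First I would note that $\Lambda_{t,v}$ genuinely descends to a map $\T^3\to\T^3$: by $\mathbb Z^3$-periodicity of $u$ the characteristics obey $\X(\tau;t,x+k,v)=\X(\tau;t,x,v)+k$ and $\V(\tau;t,x+k,v)=\V(\tau;t,x,v)$ for $k\in\mathbb Z^3$, hence $\Gamma_{t,x+k}=\Gamma_{t,x}$ and $\Y^s_{t,x+k,v}=\Y^s_{t,x,v}+k$, so that $\Lambda_{t,v}(x+k)=\Lambda_{t,v}(x)+k$ (and the same equivariance will persist in the limit $t\to\infty$). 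From \eqref{ineq:dxid}, namely $\|\D_x\Y^0_{t,z}-\I_3\|_{\Ld^\infty(\T^3\times\R^3)}\le 1/9$ uniformly in $t\ge 0$ and $v\in\R^3$, Lemma~\ref{lem:diff} yields that $\Lambda_{t,v}$ is a $\mathscr{C}^1$-diffeomorphism of $\T^3$ with $|\det\D_x\Lambda_{t,v}|\ge (8/9)^3>1/2$, whence $|\det\D_x\Lambda_{t,v}^{-1}|\le (9/8)^3<2$ for all $x,v,t$.

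Next I would pass to the limit. By Lemma~\ref{lem:Zt} the map $z\mapsto\Y^0_{t,z}$ converges in $\mathscr{C}^1(\T^3\times\R^3)$ to $z\mapsto\Y^0_{\infty,z}$, so $\Lambda_{t,v}\to\Lambda_{\infty,v}$ in $\mathscr{C}^1(\T^3)$ uniformly in $v$, the equivariance $\Lambda_{\infty,v}(x+k)=\Lambda_{\infty,v}(x)+k$ is inherited, and so is the bound $\|\D_x\Y^0_{\infty,z}-\I_3\|_{\Ld^\infty}\le 1/9$. Applying Lemma~\ref{lem:diff} once more shows that $\Lambda_{\infty,v}$ is a $\mathscr{C}^1$-diffeomorphism of $\T^3$, with $v$-independent bi-Lipschitz constants.

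It then remains to upgrade $\Lambda_{t,v}\to\Lambda_{\infty,v}$ into $\mathscr{C}^1$-convergence of the inverses on $\T^3\times\R^3$. For the uniform part, writing $x_t:=\Lambda_{t,v}^{-1}(x)$ and using that $\Lambda_{\infty,v}^{-1}$ is Lipschitz with a $v$-independent constant,
\[
|\Lambda_{t,v}^{-1}(x)-\Lambda_{\infty,v}^{-1}(x)|\lesssim|\Lambda_{\infty,v}(x_t)-\Lambda_{t,v}(x_t)|\le\|\Lambda_{\infty,v}-\Lambda_{t,v}\|_{\Ld^\infty(\T^3)},
\]
which tends to $0$ uniformly in $x$ and $v$. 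For the differentials, differentiating the identity $\Y^0_{t,(\Lambda_{t,v}^{-1}(x),v)}=x$ in $x$ and in $v$ yields
\begin{align*}
\D_x\Lambda_{t,v}^{-1}(x)&=\big(\D_x\Y^0_{t,z}\big)^{-1},\\
\D_v\Lambda_{t,v}^{-1}(x)&=-\big(\D_x\Y^0_{t,z}\big)^{-1}\,\D_v\Y^0_{t,z},
\end{align*}
with the right-hand sides evaluated at $z=(\Lambda_{t,v}^{-1}(x),v)$, and the analogous identities hold for the limit. Since $\D_z\Y^0_{t,z}\to\D_z\Y^0_{\infty,z}$ uniformly on $\T^3\times\R^3$ (Lemma~\ref{lem:Zt}), $\Lambda_{t,v}^{-1}\to\Lambda_{\infty,v}^{-1}$ uniformly, the factor $\D_v\Y^0_{t,z}$ is uniformly bounded by \eqref{ineq:dv} at $s=0$, and matrix inversion is Lipschitz on $\{A:\|A-\I_3\|\le 1/9\}$, these right-hand sides converge uniformly; hence $\Lambda_{t,v}^{-1}\to\Lambda_{\infty,v}^{-1}$ in $\mathscr{C}^1(\T^3\times\R^3)$.

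The main obstacle is not any isolated computation but keeping every estimate and convergence uniform in the noncompact variable $v\in\R^3$, and producing a \emph{global} diffeomorphism of $\T^3$ rather than mere local invertibility --- which is precisely why the quantitative smallness \eqref{ineq:dxid} together with the abstract global criterion Lemma~\ref{lem:diff} is used, both at finite time and in the limit; the rest is the chain rule and the continuity of matrix inversion near the identity.
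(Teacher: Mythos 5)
Your proposal is correct and follows essentially the same route as the paper: feed the uniform bound \eqref{ineq:dxid} (inherited by $\Lambda_{\infty,v}$ via the convergence of Lemma~\ref{lem:Zt}) into Lemma~\ref{lem:diff} to get the diffeomorphism property and the determinant bounds, deduce uniform convergence of the inverses from the Lipschitz control on $\Lambda_{\infty,v}^{-1}$, and then upgrade to $\mathscr{C}^1$ via the inverse-function formula and continuity of matrix inversion near the identity. Your explicit treatment of the $v$-derivative of $\Lambda_{t,v}^{-1}$ and of the $\mathbb{Z}^3$-equivariance is slightly more detailed than the paper's (which leaves both implicit); only note that the convergence supplied by Lemma~\ref{lem:Zt} is locally uniform in $v$, so the $\mathscr{C}^1(\T^3\times\R^3)$ convergence should be read in the local-uniform sense, as in the paper.
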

\begin{proof}
First, we infer from \eqref{ineq:dxid} the same estimate (by uniform convergence) for $\Lambda_{\infty,v}$, which is therefore also (thanks to Lemma~\ref{lem:diff}) a $\mathscr{C}^1$-diffeomorphism. The same lemma gives also $\det \Lambda_{t,v} \geq 1/2$ for all $t\in[1,\infty]$. Again thanks to Lemma~\ref{lem:diff}, we infer also uniformly in $t,x,v$,  $|\D_x \Lambda_{\infty,v}^{-1}(x)|\leq 9/8$ and $\det \D_x \Lambda_{t,v}(x)\geq 1/2$.  For the convergence, we write 
  \begin{align*}
    |\Lambda_{t,v}^{-1}(x)-\Lambda_{\infty,v}^{-1}(x)| &= |\Lambda_{\infty,v}^{-1} \circ \Lambda_{\infty,v} \circ \Lambda_{t,v}^{-1}(x) -\Lambda_{\infty,v}^{-1}(x)| \\
                                                       &\leq \frac98|\Lambda_{\infty,v}\circ \Lambda_{t,v}^{-1}(x) - x|\\
                                                       &= \frac98|\Lambda_{\infty,v}\circ \Lambda_{t,v}^{-1}(x) - \Lambda_{t,v}\circ \Lambda_{t,v}^{-1}(x)|,
  \end{align*}
  that goes to $0$ locally uniformly in $x,v$ thanks to Lemma~\ref{lem:Zt}. Since the inversion map is $\mathscr{C}^1$ on $\text{GL}_3(\R)$, using the previous lower bound on the determinants, we infer from the equatity $\D_x \Lambda_{t,v}^{-1} = (\D_x\Lambda_{t,v})^{-1}\circ \Lambda_{t,v}^{-1}$ and the previous convergence the announced convergence in $\mathscr{C}^1(\T^3\times\R^3)$.
\end{proof}
Since $\mathscr{A}(t,z)$ is uniformly bounded and continuous and converges (locally uniformly) towards $\mathscr{A}(\infty,z)$, we infer from Lemma~\ref{lem:lam} and the dominated convergence theorem (using $f_0\in\Ld^1(\T^3\times\R^3)$),
\begin{multline*}
  \int_{\T^3} \rho_f(t,x)\,\psi(x)\, \dd x\\
 \operatorname*{\longrightarrow}_{t\rightarrow+\infty} \int_{\T^3\times\R^3} f_0(x,v)\,\psi(\Lambda_{\infty,v}^{-1}(x))|\det \mathscr{A}(\infty,\Lambda_{\infty,v}^{-1}(x),v)\det \D_x \Lambda_{\infty,v}^{-1}(x)|\,\dd z,
\end{multline*}
and using back the change of variable $x\mapsfrom \Lambda_{\infty,v}(x)$ (which is admissible thanks to Lemma~\ref{lem:lam}) we have eventually proved
\begin{align*}
(\rho_f(t))_t \operatorname*{\rightharpoonup}_{t\rightarrow+\infty} \rho^\infty,
\end{align*}
where
\begin{align*}
  \rho^\infty(x) := \int_{\R^3} f_0(\Y^0_{\infty,x,v},v) |\det \mathscr{A}(\infty,x,v)|\,\dd v,
\end{align*}
which concludes the proof.
\end{proof}

\section{Appendix}
\label{sec:appendix}
\subsection{Wasserstein distance} 
\label{sec:Wasserstein}
To simplify the presentation, $X$ here will denote either $\T^3$ or $\T^3\times\R^3$.
\begin{defi}
For $m>0$ we denote by $\mathcal{M}_{1,m}(X)$ the set of all measures $\mu$ such that 
\begin{align*}
\int_X |z| \,\dd \mu(z) < +\infty,\qquad \mu(X)=m.
\end{align*}
\end{defi}
\begin{defi}
  Fix $m>0$ and consider $\mu$ and $\nu$ in $\mathcal{M}_{1,m}(X)$. The \emph{Wassertein distance} between $\mu$ and $\nu$ is
\begin{align*}
\W_{1} (\mu, \nu):= \inf_{\gamma \in \Gamma (\mu, \nu)} \int_{X^2} |z-z'| \, \dd \gamma (z,z'),
\end{align*}
where $\Gamma (\mu, \nu)$ denotes the collection of all measures on $X\times X$ with first and second marginal respectively equal to $\mu$ and $\nu$.
\label{defi:Wasserstein}
\end{defi}

\begin{propo}[$\W_1$ metrizes the weak-$\star$ convergence]
Fix $m>0$. Given $(\mu_n)_n\in\mathcal{M}_{1,m}(X)^\N$ and $\mu\in\mathcal{M}_{1,m}(X)$, the two following facts are equivalent
\begin{itemize}
\item[(i)] For all $f\in\mathscr{C}^0_b(X)$, 
\begin{align*}
\int_X (f(z)+|z|)\,\dd\mu_n(z) \conv{n}{+\infty} \int_X (f(z)+|z|)\,\dd\mu(z).
\end{align*}
\item[(ii)] $(\W_1(\mu_n,\mu))_n \rightarrow_n 0$.
\end{itemize}
\end{propo}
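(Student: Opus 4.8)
The final statement is the classical fact that the Wasserstein-1 distance metrizes the weak-$\star$ convergence on $\mathcal{M}_{1,m}(X)$; since this is a standard result, the natural plan is to give a streamlined self-contained proof rather than cite it, reducing everything to the Monge--Kantorovich duality (Proposition~\ref{MK} in the Appendix) and a tightness/truncation argument to handle the non-compactness of $X$.

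\textbf{From (ii) to (i).} Suppose $\W_1(\mu_n,\mu)\to 0$. For any $f\in\mathscr{C}^0_b(X)$ that is moreover Lipschitz, the map $z\mapsto f(z)+|z|$ is Lipschitz (with some constant $L$), so by Monge--Kantorovich duality $|\int_X (f+|\cdot|)\,\dd\mu_n - \int_X(f+|\cdot|)\,\dd\mu|\le L\,\W_1(\mu_n,\mu)\to 0$; note one must take care that $\mu_n,\mu$ have the same total mass $m$, which is exactly why $\mathcal{M}_{1,m}(X)$ is the right space and why adding the unbounded weight $|z|$ causes no divergence. To pass from Lipschitz $f$ to general $f\in\mathscr{C}^0_b(X)$ one first notes that $\W_1$ convergence forces $\sup_n\int_X|z|\,\dd\mu_n<+\infty$ (take $f\equiv 0$ in the step just proved) and in fact gives uniform integrability of $|z|$ against $(\mu_n)$, hence tightness of $(\mu_n)$; then one approximates $f$ uniformly on a large compact set by a Lipschitz function and controls the tail using tightness and the uniform bound on $\int|z|\,\dd\mu_n$. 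I would also record that (i) is equivalent to: $\mu_n\rightharpoonup\mu$ weakly-$\star$ in the sense of bounded continuous test functions, \emph{together with} $\int|z|\,\dd\mu_n\to\int|z|\,\dd\mu$.

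\textbf{From (i) to (ii).} Assume (i) holds. The hypothesis immediately gives $\mu_n\rightharpoonup\mu$ against $\mathscr{C}^0_b(X)$ and $\int_X|z|\,\dd\mu_n\to\int_X|z|\,\dd\mu<+\infty$; the latter plus weak-$\star$ convergence yields uniform integrability of the function $z\mapsto|z|$ with respect to the family $(\mu_n)$ (this is the standard fact that weak convergence plus convergence of the first moments against a convex weight upgrades to uniform integrability). Now fix $\varepsilon>0$. Using duality, $\W_1(\mu_n,\mu)=\sup\{\int_X \phi\,\dd(\mu_n-\mu):\ \mathrm{Lip}(\phi)\le 1\}$; by translating $\phi$ we may assume $\phi(z_0)=0$ for a fixed base point $z_0$, so that $|\phi(z)|\le |z-z_0|$. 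Split the integral over $\{|z|\le R\}$ and its complement. On the ball of radius $R$ the family of $1$-Lipschitz functions vanishing at $z_0$ is equi-Lipschitz and uniformly bounded, hence relatively compact in $\mathscr{C}^0$ of that ball, so weak-$\star$ convergence of $\mu_n$ makes the ball-part $\sup_\phi |\int_{\{|z|\le R\}}\phi\,\dd(\mu_n-\mu)|\to 0$ (this is the key compactness point; it is cleanest to argue by contradiction, extracting a subsequence and a limiting $1$-Lipschitz function). The tail part is bounded by $\int_{\{|z|>R\}}|z-z_0|\,\dd\mu_n + \int_{\{|z|>R\}}|z-z_0|\,\dd\mu$, which is $\le\varepsilon$ for $R$ large, uniformly in $n$, by the uniform integrability just established (and integrability of $|z|$ against $\mu$). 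Letting $n\to\infty$ then $\varepsilon\to 0$ gives $\W_1(\mu_n,\mu)\to 0$.

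\textbf{Main obstacle.} The only non-formal point is the non-compactness of $X=\T^3\times\R^3$: both the uniform-integrability upgrade (needed in both directions to control the $|z|$-tails) and the equicontinuity/compactness argument for the dual test functions on balls must be handled carefully. I would lean on the condition $\mu_n(X)=m$ fixed, so that constants and translations of test functions behave well, and on Monge--Kantorovich duality (already available as Proposition~\ref{MK}) to avoid any direct manipulation of transport plans; the rest is the standard Portmanteau-type bookkeeping. Since this is a textbook fact, a reasonable alternative for the paper would be simply to cite a reference (e.g. Villani's book) and sketch only the duality inequality $|\int\phi\,\dd(\mu_n-\mu)|\le\mathrm{Lip}(\phi)\,\W_1(\mu_n,\mu)$ that is actually invoked elsewhere in the text.
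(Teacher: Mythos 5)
The paper offers no proof of this proposition: it is stated in the appendix as one of the ``well-known basic facts about the Wasserstein distance'' that are merely recalled, so there is nothing of the authors' to compare your argument against. In effect the authors take exactly the route you mention at the end of your proposal --- treat it as a textbook fact --- and the only ingredient they actually use elsewhere is the duality inequality $|\int\phi\,\dd(\mu_n-\mu)|\leq \mathrm{Lip}(\phi)\,\W_1(\mu_n,\mu)$.

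Your proof itself is the standard one and is essentially correct: both directions reduce to Monge--Kantorovich duality plus the observation that (i) is equivalent to weak-$\star$ convergence against $\mathscr{C}^0_b$ together with convergence of the first moments, which upgrades to uniform integrability of $|z|$ and hence tightness. One technical wrinkle to tidy up in the direction (i) $\Rightarrow$ (ii): after splitting the integral over $\{|z|\leq R\}$ and its complement, the quantity $\int_{\{|z|\leq R\}}\phi\,\dd(\mu_n-\mu)$ involves the discontinuous function $\mathbf{1}_{\{|z|\leq R\}}\phi$, to which weak-$\star$ convergence does not directly apply; you should either choose $R$ with $\mu(\{|z|=R\})=0$ and invoke a Portmanteau-type argument, or --- more cleanly --- truncate the test function instead of the domain, replacing $\phi$ by $\max(\min(\phi,R),-R)$, which remains $1$-Lipschitz, is bounded by $R$, and differs from $\phi$ by at most $(|z-z_0|-R)_+$, so that the error is absorbed into the tail estimate you already control by uniform integrability. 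With that adjustment the compactness step (Arzel\`a--Ascoli on large balls combined with tightness, via a contradiction/subsequence argument) goes through as you describe.
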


\begin{propo}[Monge-Kantorovitch duality]
\label{MK}
Fix $m>0$ and consider $\mu$ and $\nu$ in $\mathcal{M}_{1,m}(X)$. Then
\begin{align*}
\W_1(\mu,\nu) = \sup \left\{\int_X \phi(z) \dd\mu(z) - \int_X \phi(z) \dd\nu(z)\,:\, \phi\in\textnormal{Lip}(X), \|\nabla \phi\|_\infty \leq 1\right\}.
\end{align*}
\end{propo}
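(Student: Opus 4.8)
The plan is to prove the two inequalities separately: the bound in which the right-hand side is controlled by $\W_1(\mu,\nu)$ is elementary, whereas the reverse bound is the substance of the classical Kantorovich--Rubinstein duality.

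\emph{The bound ``right-hand side $\leq\W_1(\mu,\nu)$''.} Fix $\phi\in\textnormal{Lip}(X)$ with $\|\nabla\phi\|_\infty\leq 1$, so that $|\phi(z)-\phi(z')|\leq|z-z'|$ for all $z,z'\in X$, and fix an arbitrary $\gamma\in\Gamma(\mu,\nu)$. All integrals below are finite because $\mu,\nu\in\mathcal{M}_{1,m}(X)$, and by definition of the marginals of $\gamma$,
\begin{align*}
\int_X\phi\,\dd\mu-\int_X\phi\,\dd\nu=\int_{X^2}\big(\phi(z)-\phi(z')\big)\,\dd\gamma(z,z')\leq\int_{X^2}|z-z'|\,\dd\gamma(z,z').
\end{align*}
Taking the infimum over $\gamma\in\Gamma(\mu,\nu)$ and then the supremum over admissible $\phi$ yields this bound.

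\emph{The bound ``$\W_1(\mu,\nu)\leq$ right-hand side''.} This is the substantial part. I would start from the general Kantorovich duality for the nonnegative, lower semicontinuous cost $c(z,z'):=|z-z'|$ --- a classical statement, proved by a Fenchel--Rockafellar/minimax argument, which I would simply quote from the standard references on optimal transport --- in the form
\begin{multline*}
\W_1(\mu,\nu)=\sup\Big\{\int_X\varphi\,\dd\mu+\int_X\psi\,\dd\nu\ :\\
 \varphi\in\Ld^1(\dd\mu),\ \psi\in\Ld^1(\dd\nu),\ \varphi(z)+\psi(z')\leq|z-z'|\ \text{for all }z,z'\in X\Big\}.
\end{multline*}
It then remains to reduce the admissible pairs $(\varphi,\psi)$ to antisymmetric $1$-Lipschitz ones. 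Given such a pair, replace $\psi$ by the $c$-transform $\varphi^c(z'):=\inf_{z\in X}\big(|z-z'|-\varphi(z)\big)\geq\psi(z')$, and then $\varphi$ by $\varphi^{cc}$: admissibility is preserved and the dual functional does not decrease, while $\varphi^c$ and $\varphi^{cc}$, being infima of $1$-Lipschitz functions, are $1$-Lipschitz. The computation specific to a metric cost is that any $1$-Lipschitz $h$ satisfies $h^c(z')=\inf_z\big(|z-z'|-h(z)\big)=-h(z')$ --- the bracket is $\geq-h(z')$ by the Lipschitz bound and equals $-h(z')$ at $z=z'$ --- so that, applying this to $h=\varphi^c$, one gets $\varphi^{cc}=-\varphi^c$. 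Writing $\phi:=\varphi^{cc}$ (a $1$-Lipschitz function), we conclude
\begin{align*}
\int_X\varphi\,\dd\mu+\int_X\psi\,\dd\nu\ \leq\ \int_X\varphi^{cc}\,\dd\mu+\int_X\varphi^c\,\dd\nu\ =\ \int_X\phi\,\dd\mu-\int_X\phi\,\dd\nu,
\end{align*}
and taking the supremum finishes the proof. (One also has, as a companion fact, existence of an optimal plan, from weak-$\star$ compactness of $\Gamma(\mu,\nu)$ and lower semicontinuity of $\gamma\mapsto\int_{X^2}|z-z'|\,\dd\gamma$, but this is not needed for the identity.)

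\emph{Main obstacle.} The genuinely nontrivial input is the general Kantorovich duality with potentials satisfying $\varphi\oplus\psi\leq c$; I would import it rather than reprove it. The remaining steps are soft: the passage to $1$-Lipschitz potentials is the short $c$-transform computation above, which uses crucially that the cost is a metric, and the only point needing mild care is integrability on the non-compact factor $\R^3$ of $X$ --- precisely what the finite-first-moment normalization built into $\mathcal{M}_{1,m}(X)$ provides.
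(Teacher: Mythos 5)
The paper states this proposition without proof, as a classical fact recalled in the appendix, so there is no in-paper argument to compare against. Your proof is the standard Kantorovich--Rubinstein argument (easy inequality by integrating a $1$-Lipschitz potential against an arbitrary coupling; hard inequality by importing the general Kantorovich duality and reducing to antisymmetric $1$-Lipschitz pairs via the $c$-transform identity $h^c=-h$ for a metric cost), and it is correct. The only point worth making explicit is that the duality you quote is usually stated for probability measures, whereas here $\mu,\nu\in\mathcal{M}_{1,m}(X)$ have common mass $m$; rescaling by $1/m$ handles this, and the finite first moment built into $\mathcal{M}_{1,m}(X)$ gives the integrability of $1$-Lipschitz potentials on the non-compact factor $\R^3$, as you note.
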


\subsection{Exponential decay}

\begin{lem}\label{lem:gronexp}
  Consider $u:\R_{+}\rightarrow\R_{+}$ a non-increasing integrable function satisfying for some $\lambda >0$ and almost all $t\geq 0$ 
  \begin{align*}
\lambda \int_t^\infty u(s)\,\dd s \leq u(t).
  \end{align*}
  Then for $t\geq 0$ there holds
  \begin{align*}
u(t) \lesssim_{u(0),\lambda} e^{-\lambda t}.
    \end{align*}
  \end{lem}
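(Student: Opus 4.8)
The statement is a standard Gronwall-type decay lemma: a non-increasing integrable $u:\R_+\to\R_+$ satisfying $\lambda\int_t^\infty u(s)\,\dd s\le u(t)$ for a.e.\ $t$ must decay exponentially. The plan is to introduce the tail $U(t):=\int_t^\infty u(s)\,\dd s$, which is well-defined and finite by integrability, absolutely continuous with $U'(t)=-u(t)$ for a.e.\ $t$, and non-increasing. The hypothesis rewrites as $\lambda U(t)\le -U'(t)$, i.e.\ $U'(t)+\lambda U(t)\le 0$ for a.e.\ $t$. Multiplying by the integrating factor $e^{\lambda t}$ gives $\frac{\dd}{\dd t}\big(e^{\lambda t}U(t)\big)\le 0$ a.e., and since $e^{\lambda t}U(t)$ is absolutely continuous, it is non-increasing; hence $e^{\lambda t}U(t)\le U(0)$, that is $U(t)\le U(0)e^{-\lambda t}$ for all $t\ge 0$.

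\textbf{From the tail back to $u$.} It remains to transfer the decay of $U$ to $u$ itself, and here I would use monotonicity of $u$. For any $t\ge 0$ and any $h>0$, since $u$ is non-increasing,
\begin{align*}
h\,u(t+h)\le \int_t^{t+h} u(s)\,\dd s\le U(t)\le U(0)e^{-\lambda t}.
\end{align*}
Choosing, say, $h=1$ yields $u(t+1)\le U(0)e^{-\lambda t}=U(0)e^{\lambda}e^{-\lambda(t+1)}$ for all $t\ge 0$, i.e.\ $u(s)\le U(0)e^{\lambda}e^{-\lambda s}$ for $s\ge 1$. For $s\in[0,1]$ we simply bound $u(s)\le u(0)\le u(0)e^{\lambda}e^{-\lambda s}$. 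Finally $U(0)=\int_0^\infty u\le \lambda^{-1}u(0)$ by the hypothesis at $t=0$ (or one may just carry $U(0)$ as a constant depending on $u$), so combining the two ranges gives $u(t)\le C(u(0),\lambda)e^{-\lambda t}$ for all $t\ge 0$, which is the claim $u(t)\lesssim_{u(0),\lambda}e^{-\lambda t}$.

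\textbf{Main obstacle.} There is no serious obstacle; the only point requiring a little care is the regularity bookkeeping — justifying that $U$ is absolutely continuous so that the differential inequality $\frac{\dd}{\dd t}(e^{\lambda t}U)\le 0$ a.e.\ actually integrates to monotonicity of $e^{\lambda t}U$, rather than merely holding pointwise a.e. This is immediate from the fundamental theorem of calculus for absolutely continuous functions (the product of the absolutely continuous $U$ with the smooth $e^{\lambda t}$ on the bounded interval $[0,T]$ is absolutely continuous), so the argument goes through verbatim. One should also note the hypothesis $\lambda\int_t^\infty u(s)\,\dd s\le u(t)$ is used twice: once (for a.e.\ $t$) to get the differential inequality, and once at $t=0$ to bound $U(0)$ in terms of $u(0)$.
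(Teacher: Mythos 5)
Your proof is correct and follows essentially the same route as the paper: introduce the tail $v(t)=\int_t^\infty u$, derive $v'+\lambda v\le 0$ and hence $v(t)\le v(0)e^{-\lambda t}$ by Gronwall, then transfer the decay to $u$ via monotonicity ($u(t)\le\int_{t-1}^t u\le v(t-1)$) together with $v(0)\le u(0)/\lambda$. The extra care you take with absolute continuity of the tail is a welcome but minor refinement of the same argument.
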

  \begin{proof}
    If $v(t)$ denotes the integral in the estimate, $v\in\W^{1,\infty}(\R_+)$ satisfies $v'\leq -\lambda v$, so the standard version of the Gronwall Lemma implies $v(t)\leq v(0)e^{-\lambda t}$. Since $u\leq u(0)$ w.l.o.g. we can assume $t\geq 1$ and since $u$ is non-increasing, we have
    \begin{align*}
 u(t) \leq \int_{t-1}^t u(s)\,\dd s \leq v(t-1) \leq v(0)e^{-\lambda t} \leq \frac{1}{\lambda}u(0) e^{-\lambda t}.\qquad\qedhere
    \end{align*}
    \end{proof}

\subsection{Perturbation of the identity map}


We use in this work the following version of the inverse function theorem.

\begin{lem}\label{lem:diff}
For $\Omega=\T^3$ or $\Omega=\R^3$, if $\phi:\Omega\rightarrow\Omega$ is $\mathscr{C}^1$ and satisfies $\|\nabla \phi\|_\infty < 1$, then $f:=\textnormal{Id}+\phi$ is a $\mathscr{C}^1$-diffeomorphism of $\Omega$ onto itself satisfying $\|\nabla f\|_\infty \leq (1-\|\nabla \phi\|_\infty)^{-1}$. If furthermore $\|\nabla \phi\|_\infty \leq 1/9$, then $\det \nabla f \geq 1/2$.
\end{lem}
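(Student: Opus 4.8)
The plan is to prove Lemma~\ref{lem:diff}, a quantitative inverse function theorem for perturbations of the identity, separately for $\Omega = \R^3$ (where one can argue globally) and for $\Omega = \T^3$ (where one must be slightly more careful because $f$ must descend to a map of the quotient). For the $\R^3$ case, the core is the classical Hadamard--Cacciopoli global inversion theorem: a $\mathscr{C}^1$ map $f : \R^3 \to \R^3$ is a diffeomorphism onto $\R^3$ provided it is a local diffeomorphism everywhere and it is proper. So the first step is to check local invertibility: since $\nabla f = \I_3 + \nabla\phi$ with $\|\nabla\phi\|_\infty < 1$, the Neumann series $\sum_{k\ge 0}(-\nabla\phi)^k$ converges and gives $(\nabla f)^{-1}$ with $\|(\nabla f)^{-1}\|_\infty \le (1-\|\nabla\phi\|_\infty)^{-1}$; in particular $\nabla f$ is everywhere invertible, so $f$ is a local $\mathscr{C}^1$-diffeomorphism by the (local) inverse function theorem, and the stated bound $\|\nabla f\|_\infty \le (1-\|\nabla\phi\|_\infty)^{-1}$ is immediate (indeed one even has $\|\nabla f\|_\infty \le 1 + \|\nabla\phi\|_\infty$, but either bound works). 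The second step is properness/injectivity: for $x,y \in \R^3$, writing $f(x) - f(y) = (x-y) + (\phi(x)-\phi(y))$ and estimating $|\phi(x)-\phi(y)| \le \|\nabla\phi\|_\infty |x-y|$ by the mean value inequality, one gets $|f(x)-f(y)| \ge (1-\|\nabla\phi\|_\infty)|x-y|$, which shows both that $f$ is injective and that $|f(x)| \to \infty$ as $|x|\to\infty$, hence proper; combined with the fact that a local homeomorphism with connected target that is proper is onto, we conclude $f$ is a global $\mathscr{C}^1$-diffeomorphism.

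For the torus case $\Omega = \T^3$, I would lift $\phi$ to a $\Z^3$-periodic $\mathscr{C}^1$ map $\tilde\phi : \R^3 \to \R^3$ with $\|\nabla\tilde\phi\|_\infty = \|\nabla\phi\|_\infty < 1$, set $\tilde f = \I + \tilde\phi$, and apply the $\R^3$ result to get that $\tilde f$ is a $\mathscr{C}^1$-diffeomorphism of $\R^3$. Then observe $\tilde f(x + k) = \tilde f(x) + k$ for all $k \in \Z^3$ (since $\tilde\phi$ is periodic), which means $\tilde f$ commutes with the $\Z^3$-action and therefore descends to a $\mathscr{C}^1$ map $f : \T^3 \to \T^3$; the equivariance also forces $\tilde f^{-1}(y+k) = \tilde f^{-1}(y) + k$, so $\tilde f^{-1}$ descends to a $\mathscr{C}^1$ inverse of $f$ on $\T^3$. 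The bound on $\|\nabla f\|_\infty$ on $\T^3$ is inherited from the one on $\R^3$ since the derivative is a local quantity.

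The last step is the determinant lower bound under the stronger hypothesis $\|\nabla\phi\|_\infty \le 1/9$. Here I would write $\det \nabla f = \det(\I_3 + \nabla\phi)$ and expand: $\det(\I_3 + A) = 1 + \mathrm{tr}(A) + e_2(A) + \det(A)$ where $e_2(A)$ is the sum of the principal $2\times 2$ minors. Using the entrywise bound $|A_{ij}| \le \|\nabla\phi\|_\infty =: \eps \le 1/9$ one estimates $|\mathrm{tr}(A)| \le 3\eps$, $|e_2(A)| \le 3\eps^2$ (three minors, each a sum of two products of two entries, so each bounded by $2\eps^2$ — giving $6\eps^2$, still fine), and $|\det A| \le 6\eps^3$; hence $\det\nabla f \ge 1 - 3\eps - 6\eps^2 - 6\eps^3$, and plugging $\eps = 1/9$ gives a value comfortably above $1/2$ (roughly $1 - 1/3 - \text{small} \approx 0.59$). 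I do not expect any genuine obstacle: the only mild care needed is in the torus case, making sure the lift and descent are done cleanly (periodicity of $\nabla\phi$ is what makes everything work), and in the determinant case, being a touch generous with the combinatorial constants so that $1/9$ safely suffices. If one prefers to avoid Hadamard--Cacciopoli, an alternative for surjectivity on $\R^3$ is a Banach fixed-point argument: for fixed $y$, the map $x \mapsto y - \phi(x)$ is a contraction with constant $\|\nabla\phi\|_\infty < 1$, so it has a unique fixed point, which is exactly the unique preimage $f^{-1}(y)$; this simultaneously yields bijectivity, and smoothness of $f^{-1}$ then follows from the local inverse function theorem.
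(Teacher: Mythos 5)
The paper does not actually prove Lemma~\ref{lem:diff}: it is stated in the appendix as a known quantitative version of the inverse function theorem, with no argument given, so there is no in-paper proof to compare against. Your proof is correct and essentially complete. The three ingredients for the diffeomorphism claim --- invertibility of $\nabla f=\I_3+\nabla\phi$ by Neumann series with $\|(\nabla f)^{-1}\|\le(1-\|\nabla\phi\|_\infty)^{-1}$, the lower bound $|f(x)-f(y)|\ge(1-\|\nabla\phi\|_\infty)|x-y|$ giving injectivity and properness (or, more economically, the contraction $x\mapsto y-\phi(x)$ giving existence and uniqueness of preimages outright), and descent to the quotient via the equivariance $\tilde f(x+k)=\tilde f(x)+k$ --- are all sound, and the expansion $\det(\I_3+A)=1+\mathrm{tr}(A)+e_2(A)+\det A$ with the entrywise bound $|A_{ij}|\le 1/9$ gives $\det\nabla f\ge 1-\tfrac13-\tfrac{6}{81}-\tfrac{6}{729}>0.58>\tfrac12$, as you compute. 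Two small points are worth recording explicitly. First, the estimate the authors actually use downstream (in the proof of Lemma~\ref{lem:lam}, where the constant $9/8=(1-1/9)^{-1}$ bounds $\D_x\Lambda_{\infty,v}^{-1}$) is the bound on the derivative of the \emph{inverse}; your Neumann-series step supplies exactly this through $\nabla(f^{-1})=(\nabla f)^{-1}\circ f^{-1}$, and since the literal bound on $\|\nabla f\|_\infty$ in the statement is, as you observe, trivial, you should state the inverse bound as the real content. Second, in the torus case you should justify in one line why $\phi$ lifts to a genuinely $\mathbb{Z}^3$-periodic map $\R^3\to\R^3$: its induced map on $\pi_1(\T^3)\cong\mathbb{Z}^3$ is given by integers of modulus at most $\|\nabla\phi\|_\infty<1$, hence zero, so $\phi$ is null-homotopic and lifts through the universal cover; in the paper's applications $\phi$ is in any case already presented as a periodic $\R^3$-valued displacement, so this is only a formality.
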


\subsection{Maximal regularity}
The maximal regularity estimate for the heat equation, on the whole space, can be stated in the following way.
\begin{thm}\label{thm:heatwhole}
  For $p,q\in (1,\infty)$, and  $\varphi\in\mathcal{S}(\R\times\R^3)$ such that $\varphi(0,\cdot)=0$, there holds
  \begin{align*}
\|\Delta \varphi\|_{\Ld^p(\R_+;\Ld^q(\R^3))} \lesssim_{p,q} \|\partial_t \varphi-\Delta \varphi\|_{\Ld^p(\R_+;\Ld^q(\R^3))}.
    \end{align*}
  \end{thm}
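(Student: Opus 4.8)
\emph{Proof idea.} Set $f:=\partial_t\varphi-\Delta\varphi$, regarded as a function on $\R_+$ and extended by zero to $\R$. Since $\varphi(0,\cdot)=0$, Duhamel's formula gives $\varphi(t)=\int_0^t e^{(t-s)\Delta}f(s)\,\dd s$ for $t\geq 0$, so that $\Delta\varphi=Tf$, where $T$ is the convolution operator on $\R\times\R^3$ with kernel $K(\sigma,z):=\mathbf{1}_{\{\sigma>0\}}\Delta G_\sigma(z)$, with $G_\sigma$ the heat kernel. The statement is thus equivalent to the $\Ld^p(\R;\Ld^q(\R^3))$-boundedness of $T$; afterwards one simply restricts back to $t\geq0$, using $\|f\|_{\Ld^p(\R_+;\Ld^q)}=\|f\mathbf{1}_{\R_+}\|_{\Ld^p(\R;\Ld^q)}$.

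The plan is first to establish $\Ld^2(\R^4)$-boundedness by Plancherel: the full space-time Fourier symbol of $T$ is $\widehat K(\tau,\xi)=-|\xi|^2/(i\tau+|\xi|^2)$, which has modulus $\leq 1$. Then I would record the parabolic Calder\'on--Zygmund estimates for $K$: from the explicit Gaussian form and the parabolic homogeneity $K(\rho^2\sigma,\rho z)=\rho^{-5}K(\sigma,z)$ one obtains $|K(\sigma,z)|\lesssim(|\sigma|^{1/2}+|z|)^{-5}$ and $|\nabla_z K(\sigma,z)|+|\sigma|^{1/2}|\partial_\sigma K(\sigma,z)|\lesssim(|\sigma|^{1/2}+|z|)^{-6}$, which yield the H\"ormander regularity condition relative to the parabolic quasi-distance $d((\sigma,z),(\sigma',z')):=|\sigma-\sigma'|^{1/2}+|z-z'|$. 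The scalar Calder\'on--Zygmund theorem on the space of homogeneous type $(\R^4,d)$ then gives $T:\Ld^p(\R^4)\to\Ld^p(\R^4)$ for all $p\in(1,\infty)$, which is exactly the diagonal case $q=p$ of the claim.

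For $q\neq p$ I would invoke the vector-valued (Benedek--Calder\'on--Panzone) Calder\'on--Zygmund theorem. Fix $q\in(1,\infty)$ and view $T$ as a time-convolution operator with kernel $\sigma\mapsto\mathbb K(\sigma)\in\mathscr B(\Ld^q(\R^3))$, where $\mathbb K(\sigma)g:=\mathbf{1}_{\{\sigma>0\}}(\Delta G_\sigma)\ast_{\R^3}g$. Young's inequality gives $\|\mathbb K(\sigma-\sigma_0)-\mathbb K(\sigma)\|_{\mathscr B(\Ld^q)}\leq\|\Delta G_{\sigma-\sigma_0}-\Delta G_\sigma\|_{\Ld^1(\R^3)}$; writing this difference as $\int\Delta^2 G_\tau\,\dd\tau$ over the segment joining $\sigma-\sigma_0$ and $\sigma$ (via the heat equation) and using the scaling bound $\|\Delta^2 G_\tau\|_{\Ld^1}\lesssim\tau^{-2}$ yields $\lesssim|\sigma_0|\,|\sigma|^{-2}$ whenever $|\sigma|>2|\sigma_0|$, hence the operator-valued H\"ormander condition $\int_{|\sigma|>2|\sigma_0|}\|\mathbb K(\sigma-\sigma_0)-\mathbb K(\sigma)\|_{\mathscr B(\Ld^q)}\,\dd\sigma\lesssim 1$, uniformly in $\sigma_0$. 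Taking as base estimate the boundedness of $T$ on $\Ld^q(\R;\Ld^q(\R^3))=\Ld^q(\R^4)$ obtained above, Benedek--Calder\'on--Panzone gives $T:\Ld^p(\R;\Ld^q(\R^3))\to\Ld^p(\R;\Ld^q(\R^3))$ for $p\in(1,q]$; the range $p\in[q,\infty)$ then follows by duality, since $T^\ast$ is the convolution operator with the time-reversed kernel $\sigma\mapsto\mathbb K(-\sigma)$ (using that $\Delta G_\tau$ is real and even) and so of the same type. Alternatively, the statement follows in one stroke from Weis' operator-valued Fourier multiplier theorem applied to $\tau\mapsto$ (the Fourier multiplier on $\R^3$ with symbol $-|\xi|^2/(i\tau+|\xi|^2)$), using that $\Ld^q(\R^3)$ is a UMD space and that this family together with its $\tau$-derivative multiplied by $\tau$ is $R$-bounded.

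The main obstacle is precisely the off-diagonal case $q\neq p$: one cannot integrate the kernel directly in the time variable, since $\|\Delta G_\sigma\|_{\Ld^1(\R^3)}\sim\sigma^{-1}$ is not locally integrable at $\sigma=0$, so genuine Calder\'on--Zygmund machinery (or equivalently the UMD/$R$-boundedness multiplier theory) is really needed; the technical care then lies in verifying the operator-valued H\"ormander condition and in the correct interpretation of the singular convolution defining $T$. Everything else is routine Gaussian bookkeeping.
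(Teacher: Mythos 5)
Your proposal is correct, but note that the paper does not actually prove Theorem~\ref{thm:heatwhole}: it is stated as a black box, ``a consequence of \cite{hieb}'', and the paper's own work starts only at Corollary~\ref{coro:max} (the transfer to the torus via the Dirac comb). What you have written is essentially a reconstruction of the standard proof that the cited literature supplies, and all the steps check out. The reduction to the whole-line convolution operator $T$ with kernel $K(\sigma,z)=\mathbf{1}_{\sigma>0}\Delta G_\sigma(z)$ is legitimate (Duhamel plus uniqueness of tempered solutions, using $\varphi(0,\cdot)=0$), the symbol $-|\xi|^2/(i\tau+|\xi|^2)$ gives the $\Ld^2(\R^4)$ bound, the parabolic size and regularity estimates on $K$ are the right ones for the homogeneous dimension $5$ (the apparent jump of $K$ at $\sigma=0$ is harmless since $\Delta G_\sigma(z)$ vanishes to infinite order as $\sigma\to 0^+$ for $z\neq 0$, so $K$ is smooth off the space-time diagonal), and the diagonal case $p=q$ follows from scalar Calder\'on--Zygmund theory on the space of homogeneous type. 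For $p\neq q$ your verification of the operator-valued H\"ormander condition via $\partial_\sigma\Delta G_\sigma=\Delta^2 G_\sigma$ and $\|\Delta^2G_\tau\|_{\Ld^1}\lesssim\tau^{-2}$ is exactly right, as is the observation that this is the genuinely singular point (since $\|\Delta G_\sigma\|_{\Ld^1}\sim\sigma^{-1}$ is not integrable); the Benedek--Calder\'on--Panzone step with base exponent $r=q$ plus duality for $p\geq q$ closes the argument, and the Weis/UMD alternative you mention is the other standard route (and is in fact closer to the spirit of the Hieber--Pr\"uss approach the paper cites). The only caveat is one you already flag yourself: $Tf$ must be understood as the a priori well-defined quantity $\Delta\varphi$ (equivalently, as the $\Ld^2$-bounded Fourier multiplier) rather than as an absolutely convergent integral, which is precisely the setting in which the Calder\'on--Zygmund machinery is designed to operate.
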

This estimate is for instance a consequence of \cite{hieb}. Naturally, one expects an analogous estimate on the torus, but we did not manage to exhibit a precise reference in the literature. For the sake of completeness we give therefore a proof of the following corollary.
  \begin{coro}\label{coro:max}
  For $p,q\in (1,\infty)$ and $\psi\in\mathcal{S}(\R\times\R^3)$ which is $\mathbb{Z}^3$-periodic in the space variable and such that $\psi(0,\cdot)=0$, there holds
  \begin{align*}
\|\Delta \psi\|_{\Ld^p(\R_+;\Ld^q(\T^3))} \lesssim_{p,q} \|\partial_t \psi-\Delta \psi\|_{\Ld^p(\R_+;\Ld^q(\T^3))}.
    \end{align*}
  \end{coro}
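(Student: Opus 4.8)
The plan is to deduce the periodic maximal regularity estimate from the whole-space version of Theorem~\ref{thm:heatwhole} by a standard periodization/unfolding argument. First I would fix $\psi\in\mathcal{S}(\R\times\R^3)$ that is $\mathbb{Z}^3$-periodic in space with $\psi(0,\cdot)=0$, and reduce to showing the estimate on a single fundamental domain $\T^3\cong[0,1)^3$. The naive idea of just cutting $\psi$ off in space fails because multiplication by a spatial cutoff does not commute with $\Delta$ and destroys the heat structure; so instead I would go through the heat semigroup representation. Writing $g:=\partial_t\psi-\Delta\psi$ (which is also $\mathbb{Z}^3$-periodic in space, Schwartz in time, vanishing at $t=0$), we have $\psi = \int_0^t e^{(t-s)\Delta_{\T^3}}g(s)\,\dd s$ where $e^{\tau\Delta_{\T^3}}$ is the heat semigroup on the torus.

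The key observation is that the torus heat kernel is the periodization of the $\R^3$ heat kernel: $K_{\T^3}(\tau,x) = \sum_{k\in\mathbb{Z}^3} K_{\R^3}(\tau,x+k)$. Hence, for a $\mathbb{Z}^3$-periodic function $h$ on $\R^3$, one has $e^{\tau\Delta_{\T^3}}h = e^{\tau\Delta_{\R^3}}h$ in the sense that convolution on the torus against $K_{\T^3}$ agrees with convolution on $\R^3$ against $K_{\R^3}$ when the input is periodic (the periodization of the kernel exactly accounts for the wrapping). Therefore $\psi$, viewed as a periodic function on $\R\times\R^3$, literally solves $\partial_t\psi-\Delta_{\R^3}\psi = g$ on all of $\R^3$ with the same right-hand side. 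Now I would apply the whole-space estimate — but $\psi$ is periodic, hence not in $\mathcal{S}(\R\times\R^3)$ in the space variable, so Theorem~\ref{thm:heatwhole} does not apply directly. To fix this I would localize in space \emph{on the solution side correctly}: pick $\chi\in\mathscr{C}^\infty_c(\R^3)$ with $\chi\equiv 1$ on $[-1,2]^3$ (say), and let $R$ be large; apply the whole-space estimate to a suitably chosen compactly supported function agreeing with $\psi$ on a neighborhood of $[0,1]^3$, controlling the commutator terms $[\Delta,\chi_R]\psi$ and $(\partial_t\chi_R)\psi = 0$. Because $\psi$ and all its derivatives are bounded (periodic and Schwartz in time), the commutator terms are lower order and supported in an annulus; summing over a large box and using periodicity, these error terms can be absorbed — more cleanly, one estimates $\|\Delta\psi\|_{\Ld^p\Ld^q(\T^3)}^q = \int_{[0,1]^3}|\Delta\psi|^q$, bounds it by the whole-space $\Ld^q$ norm of $\Delta(\chi_R\psi)$ plus commutators, and the commutators are controlled by lower-order norms of $\psi$ on a fixed larger box, which by interpolation and periodicity are in turn controlled by $\|g\|_{\Ld^p\Ld^q(\T^3)}$ (using that the zero spatial average and Poincaré-type bounds tie $\psi$ to $g$).

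A cleaner route, which I would actually prefer to write, avoids cutoffs entirely and uses Fourier series directly together with a transference principle: the operator $g\mapsto \Delta\psi$ is the Fourier multiplier $\xi\mapsto -|\xi|^2(\partial_t + |\xi|^2)^{-1}$ acting on functions on $\R_t\times\T^3_x$, i.e.\ it is given on the Fourier side (series in $x$, transform in $t$) by $m(\tau,k) = \frac{-|2\pi k|^2}{i\tau + |2\pi k|^2}$ for $k\in\mathbb{Z}^3$, $\tau\in\R$. This is precisely the restriction to $\mathbb{Z}^3$ of the $\R^3$ multiplier appearing in Theorem~\ref{thm:heatwhole}, which satisfies Mikhlin–Hörmander bounds uniformly. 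By the de Leeuw / Jodeit transference theorem for multipliers on $\Ld^p(\R_+;\Ld^q(\mathbb{G}))$ with $\mathbb{G}=\R^3$ versus $\mathbb{G}=\T^3$ (restriction of a continuous $\R^3$-multiplier to $\mathbb{Z}^3$ preserves $\Ld^q$-boundedness with the same norm), the $\Ld^p_t\Ld^q_x(\R\times\R^3)$ bound of Theorem~\ref{thm:heatwhole} transfers to $\Ld^p_t\Ld^q_x(\R\times\T^3)$, and then restricting to $t>0$ (using $\psi(0,\cdot)=0$ to extend by zero for $t<0$) gives exactly the claim. The main obstacle is the bookkeeping in whichever route is chosen: in the cutoff approach, rigorously absorbing the commutator/error terms using periodicity and the decay of the heat kernel; in the transference approach, invoking (or quickly proving) the vector-valued de Leeuw-type transference in a form valid for the mixed-norm space $\Ld^p(\R;\Ld^q(\cdot))$ — I would state it as a lemma with a reference or a short self-contained proof via Fejér-kernel approximation of the multiplier.
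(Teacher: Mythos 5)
Your proposal is essentially correct but follows a genuinely different route from the paper. The paper's proof is a Dirac-comb \emph{unfolding}: writing $\Sh=\sum_{n\in\mathbb{Z}^3}\delta_n$, it represents the periodic solution as $\psi=\Sh\star\varphi$ with $\varphi(t,\cdot)$ compactly supported in one fundamental cell, observes that $\Delta\psi=\Sh\star\Delta\varphi$ and $\partial_t\psi-\Delta\psi=\Sh\star(\partial_t\varphi-\Delta\varphi)$, and that the disjointness of the supports of the translates makes the $\Ld^q(\T^3)$ norms of the periodic objects \emph{equal} to the $\Ld^q(\R^3)$ norms of the compactly supported representatives; the whole-space Theorem~\ref{thm:heatwhole} is then applied to $\varphi$ directly, with no error terms at all. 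Your first route (cut off the periodic solution over a large box of side $R$) works, but the way you propose to close it is the weak point: there is no reason $\psi$ has zero spatial mean, and controlling $\|\nabla\psi\|_{\Ld^p_t\Ld^q_x}$ by $\|g\|_{\Ld^p_t\Ld^q_x}$ via ``Poincar\'e-type bounds'' is an a priori estimate of essentially the same nature as the one being proved. The clean closure is a volume-counting limit: the main terms (both $\|\Delta(\chi_R\psi)\|_{\Ld^q}$ from below and $\|\chi_R g\|_{\Ld^q}$ from above) scale like $R^{3/q}$ by periodicity, while the commutator $[\Delta,\chi_R]\psi=(\Delta\chi_R)\psi+2\nabla\chi_R\cdot\nabla\psi$ lives on a shell of volume $O(R^2)$ and so contributes only $O(R^{2/q})$ times the (finite, but not a priori small) norms of $\psi$ and $\nabla\psi$; dividing by $R^{3/q}$ and letting $R\to\infty$ kills the errors. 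Your second route (de Leeuw--type transference of the multiplier $-|\xi|^2(i\tau+|\xi|^2)^{-1}$ from $\R^3$ to $\mathbb{Z}^3$) is also viable and arguably the most conceptual, but it imports a mixed-norm/vector-valued transference theorem for $\Ld^p(\R;\Ld^q)$ that the paper's elementary comb trick avoids entirely; if you go that way you do need to state and justify that lemma, as you acknowledge. In short: all three arguments reduce the torus estimate to Theorem~\ref{thm:heatwhole}; the paper's buys an exact norm identity with zero bookkeeping, yours buy either a limiting argument or an external transference principle.
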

  \begin{proof}
    Let's use the Dirac comb $\Sh:=\sum_{n\in\mathbb{Z}^3} \delta_n$ as a getway between functions defined on $\R^3$ and $\mathbb{Z}^3$-periodic functions (identified as functions defined on the torus $\T^3$). In the sequel $C$ denotes the open unit cube $(0,1)^3$.
    \begin{lem}
      For any $g\in\mathcal{S}(\R^3)$ which is $\mathbb{Z}^3$-periodic there exists $h\in\mathscr{D}(C)$ such that $g = \Sh\star h$. Furthermore, for any such function $h$, and for any $\in[1,\infty]$ there holds $\|g\|_{\Ld^q(\R^3)} = \|h\|_{\Ld^q(\T^3)}$.
    \end{lem}
    \begin{proof}
Fix a non-zero $\theta\in\mathscr{D}(C)$, then $h:=g\theta/(\Sh \star \theta)$  is a well-defined element of $\mathscr{D}(C)$ satisfying $g=\Sh \star h = \sum_{n\in\mathbb{Z}^3} \tau_n h$. Since $h\in\mathscr{D}(C)$, the functions $\tau_n h$ have disjoint supports which justifies the equality of the $\Ld^q$-norms.
      \end{proof}
      Obviously the previous lemma holds also when adding a time variable. In particular we have the existence of $\varphi\in\mathcal{S}(\R\times\R^3)$, such that $\varphi(0,\cdot)=0$, $\ffi(t,\cdot)\in\mathscr{D}(C)$ for all $t$ and $\psi = \Sh\star \varphi$ (here the convolution is to be understood in the space variable only). From this representation formula we also deduce $\Delta \psi = \Sh\star \Delta \varphi$ and $\partial_t \psi -\Delta \psi=\Sh\star(\partial_t \varphi -\Delta \varphi)$, where the spatial support of $\Delta \varphi$ and $\partial_t \varphi -\Delta \varphi$ are still included in $C$ : the previous lemma applies therefore to write
      \begin{align*}
        \|\Delta \psi\|_{\Ld^p(\R_+;\Ld^q(\T^3))} &= \|\Delta \varphi\|_{\Ld^p(\R_+;\Ld^q(\R^3))} \\
        &\lesssim_{p,q} \|\partial_t \varphi-\Delta \varphi\|_{\Ld^p(\R_+;\Ld^q(\R^3))} = \|\partial_t \psi-\Delta \psi\|_{\Ld^p(\R_+;\Ld^q(\T^3))},
      \end{align*}
      where the inequality is obtained from \eqref{thm:heatwhole}.
        \end{proof}
In the current article we will use the following consequence of Corollary~\ref{coro:max}, which is obtained by a standard approximation argument.
  \begin{coro}\label{coro:heat}
For $p,q\in(1,\infty)$ and $T>0$ if $S\in\Ld^p(0,T;\Ld^q(\T^3))$, the unique tempered solution $u$ of 
    \begin{equation}
\label{heat with viscosity}
\partial_t u  - \Delta u = S, \qquad u|_{t=0} = 0, 
\end{equation} satisfies
\label{parab}
\begin{equation}
\| \Delta u \|_{\Ld^p (0,T; \Ld^q(\T^3))} \lesssim_{p,q}  \| S \|_{\Ld^p (0,T; \Ld^q(\T^3))} .
\end{equation}
\end{coro}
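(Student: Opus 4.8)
The final statement is Corollary~\ref{coro:heat}, which extends the maximal regularity estimate of Corollary~\ref{coro:max} from Schwartz data to general $L^p_tL^q_x$ source terms via approximation. Here is my plan.

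\textbf{The approach.} The plan is to deduce Corollary~\ref{coro:heat} from Corollary~\ref{coro:max} by a density argument, exploiting the fact that smooth space-time functions that are $\mathbb{Z}^3$-periodic and vanish at $t=0$ are dense in $\Ld^p(0,T;\Ld^q(\T^3))$. The only subtlety is that Corollary~\ref{coro:max} is stated for functions $\psi$ on $\R\times\R^3$, i.e. for solutions on the whole half-line $\R_+$, whereas here $S$ is only defined on a finite interval $(0,T)$; one handles this by extending $S$ by zero for $t>T$ (which does not increase the norm) and restricting the resulting estimate back to $(0,T)$, since the solution operator is causal.

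\textbf{Key steps.} First I would reduce to the case of smooth data: pick a sequence $S_n\in\mathcal{S}(\R\times\R^3)$, each $\mathbb{Z}^3$-periodic in space and supported in $t\in(0,+\infty)$ (so in particular $S_n$ restricted to $t=0$ vanishes, and more to the point the associated solution vanishes at $t=0$), with $S_n\to \widetilde{S}$ in $\Ld^p(\R_+;\Ld^q(\T^3))$, where $\widetilde{S}$ is the extension of $S$ by $0$ past time $T$. Second, let $u_n$ be the tempered solution of $\partial_t u_n-\Delta u_n = S_n$, $u_n|_{t=0}=0$, given explicitly by the Duhamel formula $u_n(t)=\int_0^t e^{(t-s)\Delta}S_n(s)\,\dd s$; since $S_n$ is Schwartz and periodic, $u_n$ is smooth, periodic, decays, and $\psi := u_n$ satisfies the hypotheses of Corollary~\ref{coro:max}, so $\|\Delta u_n\|_{\Ld^p(\R_+;\Ld^q(\T^3))}\lesssim_{p,q}\|S_n\|_{\Ld^p(\R_+;\Ld^q(\T^3))}$. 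Third, apply this estimate to differences: $\|\Delta(u_n-u_m)\|_{\Ld^p(\R_+;\Ld^q(\T^3))}\lesssim_{p,q}\|S_n-S_m\|_{\Ld^p(\R_+;\Ld^q(\T^3))}\to 0$, so $(\Delta u_n)_n$ is Cauchy in $\Ld^p(\R_+;\Ld^q(\T^3))$; combined with the convergence $u_n\to u$ in the sense of tempered distributions (which follows from continuous dependence of the Duhamel integral on the source in, say, $\Ld^p(0,T;\Ld^q(\T^3))$ and standard heat-semigroup bounds), this identifies the limit of $\Delta u_n$ as $\Delta u$ and passes the inequality to the limit: $\|\Delta u\|_{\Ld^p(\R_+;\Ld^q(\T^3))}\lesssim_{p,q}\|\widetilde S\|_{\Ld^p(\R_+;\Ld^q(\T^3))}=\|S\|_{\Ld^p(0,T;\Ld^q(\T^3))}$. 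Finally, restricting the left-hand side to the time interval $(0,T)$ gives the claimed bound.

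\textbf{Main obstacle.} The routine calculations are genuinely routine, so the only point requiring a little care is the construction of the approximating sequence $S_n$ inside $\mathcal{S}(\R\times\R^3)$ with the periodicity and vanishing constraints simultaneously, together with a clean justification that the solution operator $S\mapsto u$ is continuous from $\Ld^p(0,T;\Ld^q(\T^3))$ into the space of tempered distributions (so that the limiting $u$ is the right object and $\Delta$ commutes with the limit). Once those two bookkeeping items are in place, everything reduces to applying Corollary~\ref{coro:max} to the smooth approximants and passing to the limit.
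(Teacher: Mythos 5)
Your proposal is correct and is precisely the ``standard approximation argument'' that the paper invokes without detail for Corollary~\ref{coro:heat}: approximate the (zero-extended) source by smooth periodic functions vanishing near $t=0$, apply Corollary~\ref{coro:max} to the Duhamel solutions, and pass to the limit using the Cauchy property of $(\Delta u_n)_n$ together with continuity of the solution map into distributions. The only bookkeeping item worth adding is that $u_n$ itself need not decay in time because its spatial mean $\langle u_n\rangle(t)=\int_0^t\langle S_n\rangle(s)\,\dd s$ can tend to a nonzero constant; this is harmless since one may apply Corollary~\ref{coro:max} to $u_n-\langle u_n\rangle$ (with source $S_n-\langle S_n\rangle$), which leaves $\Delta u_n$ unchanged.
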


\subsection{Interpolation}

The following classical interpolation estimate can be for instance found in \cite[Thm 1.5.2]{chemilani}.
\begin{thm}[Gagliardo-Nirenberg-Sobolev]\label{lem:gns}
Consider $1\leq p,q,r\leq \infty$ and $m\in\N$. Assume that $j\in\N$ and $\alpha\in \R$ satisfy
\begin{align*}
\frac{1}{p} &= \frac{j}{3}+ \left(\frac{1}{r}-\frac{m}{3}\right)\alpha + \frac{1-\alpha}{q},\\
\frac{j}{m} &\leq \alpha \leq 1,
\end{align*}
with the exception $\alpha<1$ if $m-j-3/r\in\N$. Then, the following holds. For any  $g\in\Ld^q(\T^3)$, if $\D^m g\in\Ld^r(\T^3)$, then $\D^j g\in\Ld^p(\T^3)$ and we have the following estimate for $g$ 
\begin{align*}
\|\D^j g\|_{\Ld^p(\T^3)} \lesssim \| \D^m g\|_{\Ld^r(\T^3)}^\alpha \|g\|_{\Ld^q(\T^3)}^{1-\alpha} + \|g\|_{\Ld^q(\T^3)},
\end{align*}
where the constant behind $\lesssim$ does not depend on $g$. If $\langle \D^j g \rangle =0$, then the term $\|g\|_{\Ld^q(\T^3)}$ in the right-hand side can be dispensed with.
\end{thm}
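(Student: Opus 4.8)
Since this is the classical Gagliardo--Nirenberg--Sobolev inequality, the plan is to reduce it to a homogeneous estimate for mean-free functions and to prove that estimate by a Littlewood--Paley analysis on $\T^3$. First I would split $g=\langle g\rangle+\tilde g$, with $\tilde g:=g-\langle g\rangle$, whose Fourier series is supported in $\{|k|\geq 1\}$. We may assume $m\geq 1$ (the case $m=0$ forces $j=0$ and reduces to Hölder interpolation of $\Ld^p$ between $\Ld^q$ and $\Ld^r$), so that $\D^m g=\D^m\tilde g$, while $\|\tilde g\|_{\Ld^q(\T^3)}\lesssim\|g\|_{\Ld^q(\T^3)}$ and $|\langle g\rangle|\lesssim\|g\|_{\Ld^1(\T^3)}\lesssim\|g\|_{\Ld^q(\T^3)}$ by Jensen's inequality on the bounded domain $\T^3$. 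The whole statement then follows from the homogeneous bound
\[
\|\D^j\tilde g\|_{\Ld^p(\T^3)}\lesssim\|\D^m\tilde g\|_{\Ld^r(\T^3)}^{\alpha}\,\|\tilde g\|_{\Ld^q(\T^3)}^{1-\alpha}
\]
for mean-free $\tilde g$: for $j\geq1$ one has $\D^j g=\D^j\tilde g$ and $\langle\D^j g\rangle=0$ automatically, while for $j=0$ the triangle inequality adds only the harmless term $\|\langle g\rangle\|_{\Ld^p(\T^3)}\lesssim\|g\|_{\Ld^q(\T^3)}$, which accounts for the extra summand in the general statement.

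To prove the homogeneous bound I would introduce the dyadic blocks $(\Delta_\ell)_{\ell\geq0}$ localizing $k$ to the annuli $\{2^\ell\leq|k|<2^{\ell+1}\}$, so that $\tilde g=\sum_{\ell\geq0}\Delta_\ell\tilde g$, and rely on the Bernstein inequalities on $\T^3$ (each used in the direction permitted by the exponents, complemented on the bounded torus by the trivial nesting $\Ld^b\hookrightarrow\Ld^a$ for $a\leq b$): these bound a single block $\|\D^j\Delta_\ell h\|_{\Ld^p}$ either by $2^{\ell(j+3/q-3/p)}\|\Delta_\ell h\|_{\Ld^q}$, or, via the reverse estimate on an annulus, by $2^{\ell(j-m+3/r-3/p)}\|\D^m\Delta_\ell h\|_{\Ld^r}$, and they hold on every $\Ld^a(\T^3)$ since they only use the $\Ld^1$ bound of the rescaled kernels rather than the $\Ld^p$-boundedness of the square function. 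Summing with the triangle inequality and splitting the sum at a threshold $N$ (low blocks estimated by $\|\tilde g\|_{\Ld^q}$, high blocks by $\|\D^m\tilde g\|_{\Ld^r}$) yields
\[
\|\D^j\tilde g\|_{\Ld^p(\T^3)}\lesssim 2^{Na_1}\|\tilde g\|_{\Ld^q(\T^3)}+2^{Na_2}\|\D^m\tilde g\|_{\Ld^r(\T^3)},
\]
with $a_1:=j+3/q-3/p$ and $a_2:=j-m+3/r-3/p$. A direct computation from the defining relation for $\alpha$ gives $a_1=\alpha(a_1-a_2)$ and $a_2=-(1-\alpha)(a_1-a_2)$, so that $a_1>0$ and $a_2<0$ exactly when $\alpha\in(0,1)$ and $a_1-a_2=m+3/q-3/r>0$; both geometric series then converge, and optimizing over $N$ (balancing the two terms) produces the interpolation exponent $\alpha=a_1/(a_1-a_2)$, hence the claimed estimate.

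I expect the endpoints to be the main obstacle. The case $\alpha=0$ forces $j=0$, $p=q$ and is trivial. The delicate case is $\alpha=1$: there $a_2=0$, the naive summation of the high-frequency blocks diverges, and the inequality degenerates into the critical homogeneous Sobolev embedding $\dot{\W}^{m,r}(\T^3)\hookrightarrow\dot{\W}^{j,p}(\T^3)$ with $1/p=j/3+1/r-m/3$; this holds by the standard Sobolev theorem precisely away from the borderline embedding into $\Ld^\infty$, which occurs exactly when $3/p=3/r-(m-j)=0$, i.e.\ $m-j-3/r=0$. This is the case the hypothesis removes through the clause ``$\alpha<1$ when $m-j-3/r\in\N$''; the lower constraint $\alpha\geq j/m$ is what guarantees the admissibility of the frequency splitting. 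Finally, the stated refinement---dropping $\|g\|_{\Ld^q(\T^3)}$ when $\langle\D^j g\rangle=0$---is immediate from the first step, since that hypothesis means either $j\geq1$ (so $\D^j g=\D^j\tilde g$ and no mean term is produced) or $j=0$ with $\langle g\rangle=0$ (so $g=\tilde g$). As an alternative route one could avoid the dyadic machinery altogether and transfer the known inequality on $\R^3$ to the torus by the periodization/Dirac-comb device already used in the proof of Corollary~\ref{coro:max}, but I would keep the Littlewood--Paley argument as the primary one since it makes the role of the exponent constraints fully transparent.
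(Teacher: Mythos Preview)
The paper does not prove this statement: it is quoted as a classical result with a reference to \cite[Thm~1.5.2]{chemilani}. Your Littlewood--Paley sketch is a standard route and is considerably more than the paper provides; the reduction to mean-free functions, the accounting for the additive $\|g\|_{\Ld^q}$ term, and the identification of the excluded $\alpha=1$ endpoint with the borderline $\Ld^\infty$ Sobolev embedding are all correct.

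There is nonetheless one concrete gap. Your dyadic summation requires $a_1>0$ and $a_2<0$, which---as you yourself note---happens exactly when $\alpha\in(0,1)$ \emph{and} $a_1-a_2=m+3/q-3/r>0$. But the hypotheses of the theorem do not force the latter: for instance $(m,j,q,r)=(1,0,\infty,1)$ is admissible for every $\alpha\in(0,1)$ (with $p=3/(2\alpha)$), yet $a_1-a_2=-2$. In that regime the high-frequency tail $\sum_{\ell>N}2^{\ell a_2}$ diverges, and swapping the roles of the two Bernstein bounds does not rescue the argument because the passage from $\Ld^q$ to $\Ld^p$ only yields the factor $2^{3\ell(1/q-1/p)}$ when $p\geq q$; for $p<q$ the torus nesting gives only the weaker factor $1$. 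The crude triangle-inequality summation therefore does not cover the full range of exponents. The cleanest repair is your own alternative: transfer the known inequality on $\R^3$ to $\T^3$ by the periodization device already used for Corollary~\ref{coro:max}; this is in fact the route most in line with the paper's toolbox and handles all admissible $(p,q,r)$ at once.
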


\subsection{Parabolic regularization for the Navier-Stokes equations with a source term}
\label{sec:proofH1}
%

The main result of this section is Proposition~\ref{coro:ns3D}, which gives higher order energy estimates for the Navier-Stokes system together with a form of regularization along  time. These estimates seem to be folklore but we give here the proof for the sake of completeness.


\begin{propo}\label{coro:ns3D}
There exists a universal constant $\C_\star>0$ such that the following holds. Consider  $u_0\in\Hdiv^{1/2}(\T^3)$, $F\in\Ll^2(\R_+;\H^{-1/2}(\T^3))$ and $T>0$ such that 
\begin{align}
\label{ineq:smallness}\|u_0\|_{\H^{1/2}(\T^3)}^2 + \C_\star \int_0^T \|F(s)\|_{\H^{-1/2}(\T^3)}^2\,\dd s \leq \frac{1}{\C_\star^2}.
\end{align} 
Then, there exists on $[0,T]$ a unique Leray solution of the Navier-Stokes system with source $F$ and with initial data $u_0$. This solution $u$ belongs to $\Ld^\infty([0,T];\H^{1/2}(\T^3))\cap \Ld^2(0,T;\H^{3/2}(\T^3))$ and satisfies for a.e. $0\leq t \leq T$ 
\begin{multline}
\label{eq-nrj12}
\|u(t)\|_{\H^{1/2}(\T^3)}^2 + \int_0^t \|\nabla u(s)\|_{\dot{\H}^{1/2}(\T^3)}^2\,\dd s\\ \leq \|u_0\|_{\H^{1/2}(\T^3)}^2 + \C_\star \int_0^t \|F(s)\|_{\H^{-1/2}(\T^3)}^2\,\dd s.
\end{multline} 
Furthermore, if $F\in\Ld_{\textnormal{loc}}^2(\R_+;\Ld^2(\T^3))$, we have for a.e. $1/2\leq t \leq T$ 
\begin{align}
\label{ineq:capital}\|\nabla u(t)\|_{\Ld^2(\T^3)}^2 + \int_{1/2}^t \|\Delta u(s)\|_{\Ld^2(\T^3)}^2\,\dd s   \lesssim \A(t)+\int_0^t  \|F(s)\|_{\Ld^2(\T^3)}^2 \,\dd s,
\end{align} 
where $\lesssim$ depends only on $\C_\star$, and $\A$ is defined by
\begin{align}
  \label{def:A}\A(t) := \frac{1}{2}\sup_{[0,t]} \|u(s)\|_{\Ld^2(\T^3)}^2 + \int_0^t \|\nabla u(s)\|_{\Ld^2(\T^3)}^2\,\dd s.
                       \end{align}
\end{propo}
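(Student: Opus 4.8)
The plan is to first construct a solution together with the critical identity \eqref{eq-nrj12} by closing an $\dot\H^{1/2}$ energy estimate along a Galerkin scheme (equivalently, one may run a Fujita--Kato contraction on the mild formulation $u(t)=e^{t\Delta}u_0+\int_0^t e^{(t-s)\Delta}\mathbb{P}\bigl(F(s)-(u\cdot\nabla)u(s)\bigr)\,\dd s$ in $\mathscr{C}([0,T];\H^{1/2}(\T^3))\cap\Ld^2(0,T;\H^{3/2}(\T^3))$, using Corollary~\ref{coro:heat} for the linear part and the three-dimensional product law $\dot\H^1\cdot\dot\H^1\hookrightarrow\dot\H^{1/2}$ to estimate the bilinear term $\mathbb{P}\div(u\otimes u)$ in $\Ld^2(0,T;\H^{-1/2}(\T^3))$); then to upgrade uniqueness to the class of all Leray solutions; and finally to derive the smoothing bound \eqref{ineq:capital}. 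Concretely, testing the equation against $(-\Delta)^{1/2}u$ (so that the pressure drops out), and estimating the convective term by writing $(u\cdot\nabla)u=\div(u\otimes u)$ and combining the product law above with the interpolation $\|u\|_{\dot\H^1}^2\lesssim\|u\|_{\dot\H^{1/2}}\|u\|_{\dot\H^{3/2}}$, one is led to
\begin{equation*}
\tfrac12\tfrac{\dd}{\dd t}\|u\|_{\dot\H^{1/2}(\T^3)}^2+\|\nabla u\|_{\dot\H^{1/2}(\T^3)}^2\leq\|F\|_{\H^{-1/2}(\T^3)}\,\|\nabla u\|_{\dot\H^{1/2}(\T^3)}+C\,\|u\|_{\dot\H^{1/2}(\T^3)}\,\|\nabla u\|_{\dot\H^{1/2}(\T^3)}^2
\end{equation*}
with $C$ an absolute constant.

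With this differential inequality in hand, I would choose the universal constant $\C_\star$ large enough that, on the one hand, $1/\C_\star^2$ lies below the threshold $\sim (4C)^{-2}$ at which the factor $C\|u\|_{\dot\H^{1/2}}$ (together with the $\tfrac14$ produced by a Young inequality on the forcing term) can be absorbed into $\|\nabla u\|_{\dot\H^{1/2}}^2$, and on the other hand $\C_\star\geq 2$, so that $\C_\star\int_0^T\|F\|_{\H^{-1/2}}^2$ dominates $2\int_0^T\|F\|_{\H^{-1/2}}^2$. A standard continuity (bootstrap) argument then shows that under \eqref{ineq:smallness} the quantity $\|u(t)\|_{\dot\H^{1/2}(\T^3)}^2$ remains below $1/\C_\star^2$ on $[0,T]$, so that $\tfrac12\tfrac{\dd}{\dd t}\|u\|_{\dot\H^{1/2}}^2+\tfrac12\|\nabla u\|_{\dot\H^{1/2}}^2\leq\|F\|_{\H^{-1/2}}^2$; integrating and adding the elementary $\Ld^2$ Leray energy estimate (which accounts for the low-frequency part of the norm) gives \eqref{eq-nrj12}. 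This places the solution in $\Ld^\infty(0,T;\H^{1/2})\cap\Ld^2(0,T;\H^{3/2})$, hence by interpolation in $\Ld^4(0,T;\H^1(\T^3))\hookrightarrow\Ld^4(0,T;\Ld^6(\T^3))$, a Prodi--Serrin class; the classical weak-strong uniqueness theorem then forces every Leray solution with the same data to coincide with this one.

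Assume now in addition $F\in\Ld^2_{\mathrm{loc}}(\R_+;\Ld^2(\T^3))$. Since $\nabla u\in\Ld^2(0,T;\Ld^2(\T^3))$ with $\int_0^{1/2}\|\nabla u(s)\|_{\Ld^2(\T^3)}^2\,\dd s\leq\A(t)$, Chebyshev's inequality provides some $t_0\in(0,1/2)$ with $\|\nabla u(t_0)\|_{\Ld^2(\T^3)}^2\leq 4\,\A(t)$; and a routine parabolic bootstrap (the forcing being in $\Ld^2_t\Ld^2_x$) gives $u\in\Ld^\infty(t_0,T;\H^1)\cap\Ld^2(t_0,T;\H^2)$, which legitimizes testing the equation against $-\Delta u$ on $[t_0,t]$. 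The decisive estimate is
\begin{equation*}
\Bigl|\int_{\T^3}\bigl((u\cdot\nabla)u\bigr)\cdot\Delta u\,\dd x\Bigr|\lesssim\|u\|_{\dot\H^{1/2}(\T^3)}\,\|\Delta u\|_{\Ld^2(\T^3)}^2,
\end{equation*}
obtained by H\"older's inequality together with the Sobolev embeddings $\dot\H^1\hookrightarrow\Ld^6$ and $\dot\H^{3/2}\hookrightarrow\dot{\W}^{1,3}$ and interpolation between $\dot\H^{1/2}$ and $\dot\H^2$. Since by the previous step $\|u(t)\|_{\dot\H^{1/2}}^2\leq 1/\C_\star^2$ is as small as we wish, this term is absorbed into $\|\Delta u\|_{\Ld^2}^2$; integrating the remaining inequality on $[t_0,t]$ and using $\|\nabla u(t_0)\|_{\Ld^2}^2\lesssim\A(t)$ and $[1/2,t]\subset[t_0,t]$ yields \eqref{ineq:capital}.

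The main obstacle, at both the $\H^{1/2}$ and the $\H^1$ levels, is the convective term: the associated quadratic-in-highest-derivative contribution is \emph{not} lower order and can only be handled by absorption, which forces the $\dot\H^{1/2}$ norm of the solution to stay small --- and this is exactly what the Fujita--Kato smallness \eqref{ineq:smallness} provides and what the bootstrap propagates along time. The remaining ingredients (rigorously justifying the formal energy identities via the Galerkin approximation, dealing with the zero Fourier mode so that the homogeneous estimates match the inhomogeneous statement \eqref{eq-nrj12}, and invoking the standard regularity and weak-strong uniqueness theory for Navier--Stokes with an $\Ld^2$ forcing) are routine.
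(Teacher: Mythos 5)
Your proof is correct, and for the critical part --- the $\dot\H^{1/2}$ energy estimate \eqref{eq-nrj12} and the identification with the unique Leray solution --- it follows essentially the paper's route: the same differential inequality $\frac{\dd}{\dd t}\|u\|_{\H^{1/2}}^2+\|\nabla u\|_{\dot\H^{1/2}}^2\lesssim \|u\|_{\H^{1/2}}\|\nabla u\|_{\dot\H^{1/2}}^2+\|F\|_{\H^{-1/2}}^2$ (the paper obtains the trilinear term as $\|\Lambda u\|_{\Ld^3}\|\nabla u\|_{\Ld^3}\|u\|_{\Ld^3}$ and then uses $\H^{1/2}\hookrightarrow\Ld^3$, which amounts to your product law), the same continuity argument propagating $\|u(t)\|_{\H^{1/2}}\le 1/\C_\star$, and the same weak--strong uniqueness through the class $\Ld^4(0,T;\H^1(\T^3))$. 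Where you genuinely diverge is the proof of the smoothing estimate \eqref{ineq:capital}. You select a good starting time $t_0\in(0,1/2)$ by Chebyshev, test against $-\Delta u$ on $[t_0,t]$, and absorb the convective term via $\bigl|\int_{\T^3}(u\cdot\nabla)u\cdot\Delta u\bigr|\lesssim\|u\|_{\dot\H^{1/2}}\|\Delta u\|_{\Ld^2}^2$ together with the smallness $\|u\|_{\dot\H^{1/2}}\le 1/\C_\star$ (for this one must enlarge the universal constant $\C_\star$ so that $1/\C_\star$ beats the implicit constant, which is legitimate since $\C_\star$ is at your disposal). The paper instead multiplies by $-\gamma(t)\Delta u$ with a weight $\gamma$ vanishing at $t=0$ (an approximation of $2t\mathbf{1}_{0\le t\le1/2}+\mathbf{1}_{t>1/2}$), bounds the convection by $\gamma\|u\|_{\H^1}^2\|\nabla u\|_{\Ld^3}^2$ and closes by Gronwall, the resulting factor $(1+h)e^{h}$ with $h=\C\int_0^T\|\nabla u\|_{\Ld^3}^2$ being controlled through $\int_0^T\|\nabla u\|_{\dot\H^{1/2}}^2\le 1/\C_\star^2$. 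The two devices (time selection versus vanishing weight) address the same obstruction --- the datum is only $\H^{1/2}$, so the $\H^1$ estimate cannot be started at $t=0$ --- and both yield constants depending only on $\C_\star$; the Gronwall route needs only boundedness, not smallness, of $h$ at this stage, whereas your absorption leans once more on the Fujita--Kato smallness. Note finally that the paper makes the formal manipulations rigorous not by Galerkin but by mollifying the convective velocity ($\widetilde u_\chi=u\star\chi$) and passing to the limit by compactness; either regularization is acceptable.
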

\begin{proof}
  The proof proceeds in two different steps. First, if such a Leray solution of the Navier-Stokes exists, because of the interpolation estimate
\begin{align*}
  \|\cdot\|_{\dot{\H}^1(\T^3)}^4\leq \|\cdot\|_{\dot{\H}^{1/2}(\T^3)}^2 \|\cdot\|_{\dot{\H}^{3/2}(\T^3)}^2,
\end{align*}
we have in particular $u\in\Ld^4(0,T;\H^1(\T^3))$. This is a known case of weak-strong uniqueness, see for instance the stability result \cite[Theorem 3.3]{CDGG}. The second step is to prove that such a solution indeed exists. This follows by a simple compactness argument, using Proposition~\ref{propo:nsreg} and Proposition~\ref{propo:esns3D} below, choosing for $\gamma$ an appropriate regularization of $t\mapsto 2t\mathbf{1}_{0\leq t\leq 1/2}+\mathbf{1}_{t>1/2}$.
\end{proof}
In order to prove the existence of a solution as in Proposition~\ref{coro:ns3D}, we rely on the following standard  approximation procedure: we  consider, for $\chi\in\mathscr{C}^\infty(\T^3)$, the  regularized system:
\begin{align}
\label{eq:nsreg}\partial_t u + (\widetilde{u}_\chi\cdot\nabla)u-\Delta u +\nabla p &= F,\\
\label{eq:nsreg2}\div\,u&=0,\\
\label{eq:nsreg3}u(0,\cdot)&=u_0,
\end{align}
where $\widetilde{u}_\chi := u\star \chi$. When $u_0$ and $F$ are smooth, the existence of a unique smooth solution to system \eqref{eq:nsreg} -- \eqref{eq:nsreg3} is standard.
\begin{propo}\label{propo:nsreg}
Consider a nondecreasing function $\gamma\in\mathscr{C}^1_b(\R)$ vanishing at $0$ and such that $\|\gamma\|_{\W^{1,\infty}(\R)}\leq 1$. There exists $\C>0$ and an onto nondecreasing continuous function $\ffi:\R_+\rightarrow\R_+$, such that for any $u_0\in\mathscr{C}^\infty_{\div}(\T^3)$, $F\in\mathscr{C}^\infty(\R_+\times\T^3)$ and any $\chi\in\mathscr{C}^\infty(\T^3)$ such that $\|\chi\|_1 =1$, the unique solution $u$ of \eqref{eq:nsreg} -- \eqref{eq:nsreg3} satisfies for $t\geq 0$, 
\begin{multline}
\label{ineq:nsreg}  \gamma(t) \|\nabla u(t)\|_{\Ld^2(\T^3)}^2 + \int_0^t \gamma(s) \|\Delta u(s)\|_{\Ld^2(\T^3)}^2\,\dd s \\
  \lesssim  \left(\A(t)+\int_0^t \gamma(s) \|F(s)\|_{\Ld^2(\T^3)}^2 \,\dd s\right)\Phi(h(t)),
\end{multline}
where the constant behind $\lesssim$ is universal, $\Phi(z):=(1+z)e^z$,  $\A$ is given by \eqref{def:A} and
\begin{align}
\label{def:h3}h(t) &:= \C\int_0^t \|\nabla u(s)\|_{\Ld^3(\T^3)}^2\,\dd s.
\end{align}
\end{propo}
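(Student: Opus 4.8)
The plan is to prove \eqref{ineq:nsreg} by a weighted $\H^1$ \emph{a priori} estimate. Since $u$ is smooth on the interval under consideration, one may legitimately test the momentum equation \eqref{eq:nsreg} against $-\Delta u$; the role of the time weight $\gamma$, which vanishes at $0$, is precisely to compensate for the absence of any $\H^1$ control on $u_0$. Because $\div\widetilde{u}_\chi=0$, the pressure term drops and the usual integrations by parts on the torus give
\begin{equation*}
\frac{1}{2}\frac{\dd}{\dd t}\|\nabla u\|_{\Ld^2(\T^3)}^2 + \|\Delta u\|_{\Ld^2(\T^3)}^2 = \int_{\T^3}\Delta u\cdot(\widetilde{u}_\chi\cdot\nabla)u\,\dd x - \int_{\T^3}F\cdot\Delta u\,\dd x.
\end{equation*}
For the convection term I would use Hölder ($\|(\widetilde{u}_\chi\cdot\nabla)u\|_{\Ld^2}\le\|\widetilde{u}_\chi\|_{\Ld^6}\|\nabla u\|_{\Ld^3}$), Young's inequality for the convolution ($\|\widetilde{u}_\chi\|_{\Ld^6}\le\|u\|_{\Ld^6}$, since $\|\chi\|_1=1$) and the Sobolev embedding $\H^1(\T^3)\hookrightarrow\Ld^6(\T^3)$, then absorb $\|\Delta u\|_{\Ld^2}$ by Young; treating $F$ the same way, one obtains, for a universal constant $\C$,
\begin{equation*}
\frac{\dd}{\dd t}\|\nabla u\|_{\Ld^2(\T^3)}^2 + \|\Delta u\|_{\Ld^2(\T^3)}^2 \lesssim \|F\|_{\Ld^2(\T^3)}^2 + \C\|\nabla u\|_{\Ld^3(\T^3)}^2\big(\|\nabla u\|_{\Ld^2(\T^3)}^2+\|u\|_{\Ld^2(\T^3)}^2\big),
\end{equation*}
which is where the exponent $\|\nabla u\|_{\Ld^3}^2$ of \eqref{def:h3} enters.

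Next I would multiply by $\gamma(t)$, use $\gamma\in[0,1]$ and $0\le\gamma'\le 1$, and write $\gamma\,\frac{\dd}{\dd t}\|\nabla u\|_{\Ld^2}^2=\frac{\dd}{\dd t}\big(\gamma\|\nabla u\|_{\Ld^2}^2\big)-\gamma'\|\nabla u\|_{\Ld^2}^2$. Setting $y(t):=\gamma(t)\|\nabla u(t)\|_{\Ld^2(\T^3)}^2$ and recalling $h'(t)=\C\|\nabla u(t)\|_{\Ld^3}^2$, the crucial observation is that $\C\gamma\|\nabla u\|_{\Ld^3}^2\|\nabla u\|_{\Ld^2}^2=h'(t)\,y(t)$ is exactly a Grönwall term, while $\C\gamma\|\nabla u\|_{\Ld^3}^2\|u\|_{\Ld^2}^2\le 2\A(t)\,h'(t)\,\gamma(t)$ (using $\|u\|_{\Ld^2}^2\le 2\A(t)$, with $\A$ the functional of \eqref{def:A}) and $\gamma'\|\nabla u\|_{\Ld^2}^2\le\|\nabla u\|_{\Ld^2}^2$. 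Introducing $Y(t):=y(t)+\int_0^t\gamma(s)\|\Delta u(s)\|_{\Ld^2(\T^3)}^2\,\dd s$ (up to the absorption constant), this yields $Y'(t)\le h'(t)\,Y(t)+R(t)$ with $R\ge 0$ and
\begin{equation*}
\int_0^t R(s)\,\dd s\lesssim \int_0^t\|\nabla u(s)\|_{\Ld^2(\T^3)}^2\,\dd s+\int_0^t\gamma(s)\|F(s)\|_{\Ld^2(\T^3)}^2\,\dd s+\A(t)\int_0^t h'(s)\,\dd s,
\end{equation*}
which, by $\int_0^t\|\nabla u\|_{\Ld^2}^2\le\A(t)$ and $\int_0^t h'=h(t)$, is $\lesssim\big(1+h(t)\big)\big(\A(t)+\int_0^t\gamma\|F\|_{\Ld^2(\T^3)}^2\big)$.

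Finally, $\gamma(0)=0$ gives $Y(0)=0$, so Grönwall's lemma in the form $\big(Y e^{-h}\big)'\le R e^{-h}\le R$, integrated from $0$, yields $Y(t)\le e^{h(t)}\int_0^t R(s)\,\dd s\lesssim(1+h(t))e^{h(t)}\big(\A(t)+\int_0^t\gamma\|F\|_{\Ld^2(\T^3)}^2\big)=\Phi(h(t))\big(\A(t)+\int_0^t\gamma\|F\|_{\Ld^2(\T^3)}^2\big)$, which is \eqref{ineq:nsreg}. I do not anticipate a genuine obstacle: this is a classical weighted parabolic energy estimate, and the mollification only enters through the innocuous bound $\|\widetilde{u}_\chi\|_{\Ld^6}\le\|u\|_{\Ld^6}$. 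The points requiring a little care are: fixing $\C$ in \eqref{def:h3} equal to the universal constant generated by the convection estimate, so that the Grönwall exponent is precisely $h(t)$ and the prefactor is precisely $\Phi$; leaving the energy terms $\A(t)$ and $\A(t)\int_0^t h'$ untouched, since no bound on $\A$ is available nor needed at this stage; and combining $\gamma\|\nabla u\|_{\Ld^2}^2$ with the dissipation integral into a single quantity $Y$ \emph{before} invoking Grönwall, so that the $\int_0^t\gamma\|\Delta u\|_{\Ld^2}^2$ term inherits the same bound.
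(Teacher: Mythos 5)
Your proof is correct and follows essentially the same route as the paper's: test against $-\Delta u$, estimate the convection term by H\"older with exponents $(2,6,3)$ together with Young's convolution inequality and the embedding $\H^1(\T^3)\hookrightarrow\Ld^6(\T^3)$, absorb $\|\Delta u\|_{\Ld^2}$ by Young, insert the weight $\gamma$ using $\|\gamma\|_{\W^{1,\infty}}\leq 1$, and close with Gr\"onwall on $\gamma\|\nabla u\|_{\Ld^2}^2$ plus the weighted dissipation, which produces exactly the factor $(1+h(t))e^{h(t)}=\Phi(h(t))$. The only differences are cosmetic (you derive the unweighted inequality first and multiply by $\gamma$ afterwards, whereas the paper tests directly against $-\gamma\Delta u$), apart from the harmless slip that the pressure term vanishes because $\div\,u=0$ (so $\Delta u$ is divergence free), not because $\div\,\widetilde{u}_\chi=0$.
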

\begin{proof}
We multiply the equation by $-\gamma(t) \Delta u$, and use adequate integrations by parts together with Young's and Hölder's inequality, to get
\begin{multline}
\label{ineq:23D}\frac{1}{2}\frac{\dd}{\dd t}\Big\{\gamma(t)\|\nabla u(t)\|_{\Ld^2(\T^3)}^2\Big\} + \frac{\gamma(t)}{2}\|\Delta u(t)\|_{\Ld^2(\T^3)}^2\\ \leq \frac12 \gamma'(t)\|\nabla u(t)\|_{\Ld^2(\T^3)}^2 \\
+   \frac{\gamma(t)}{2} \|F(t)\|_{\Ld^2(\T^3)}^2 + \gamma(t)\|\Delta u(t)\|_{\Ld^2(\T^3)} \|u(t)\|_{\Ld^6(\T^3)}\|\nabla u(t)\|_{\Ld^3(\T^3)}.
\end{multline}
We use then another time Young's inequality and the Sobolev embedding $\H^1(\T^3)\hookrightarrow\Ld^6(\T^3)$  to write 
\begin{multline*}
\frac{\dd}{\dd t}\Big\{\gamma(t)\|\nabla u(t)\|_{\Ld^2(\T^3)}^2\Big\} +  \gamma(t)\|\Delta u(t)\|_{\Ld^2(\T^3)}^2\\ \lesssim \gamma'(t)\|\nabla u(t)\|_{\Ld^2(\T^3)}^2 +   \gamma(t) \|F(t)\|_{\Ld^2(\T^3)}^2 + \gamma(t)\|u(t)\|_{\H^1(\T^3)}^2\|\nabla u(t)\|_{\Ld^3(\T^3)}^2.
\end{multline*}
Using the definition \eqref{def:A} of $\A(t)$ and the fact that $\|\gamma\|_{\W^{1,\infty}(\R)}\leq 1$, we infer, introducing $\ell(t):=\gamma(t)\|\nabla u(t)\|_{\Ld^2(\T^3)}^2$
\begin{multline*}
\ell'(t) + \gamma(t)\|\Delta u(t)\|_{\Ld^2(\T^3)}^2 \\
 \lesssim \|\nabla u(t)\|_{\Ld^2(\T^3)}^2 + \A(t)\|\nabla u(t)\|_{\Ld^3(\T^3)}^2 + \gamma(t) \|F(t)\|_{\Ld^2(\T^3)}^2 + \ell(t)\|\nabla u(t)\|_{\Ld^3(\T^3)}^2,
\end{multline*}
which implies by Gronwall's inequality (since $\ell(0)=0$), using once again the definition of $\A(t)$,
\begin{multline*}
\ell(t) + \int_0^t \gamma(s)\|\Delta u(s)\|_{\Ld^2(\T^3)}^2\,\dd s \\
\lesssim \left(\A(t)(1+ h(t)) + \int_0^t \gamma(s)\|F(s)\|_{\Ld^2(\T^3)}^2\,\dd s\right)\exp(h(t)),
\end{multline*}
where $h$ is given by \eqref{def:h3}, for some univeral constant $\C>0$ ; this last estimate can be recasted into \eqref{ineq:nsreg}.
\end{proof}

Recall the notation $\|\cdot \|_{\dot{\H}^s(\T^3)}$ for the $\Ld^2$ norm associated with the  multiplier $|\xi|^s$. 
\begin{propo}\label{propo:esns3D}
There exists a universal constant $\C_\star$ such that the following holds. For any $u_0\in\mathscr{C}^\infty_{\div}(\T^3)$, $F\in\mathscr{C}^\infty(\R_+\times\T^3)$ and any $\chi \in\mathscr{C}^\infty(\T^3)$ such that $\|\chi\|_1 =1$, if for some $T>0$ one has
\begin{align}
\label{ineq:smallness-proof}\|u_0\|_{\H^{1/2}(\T^3)}^2 +\C_\star\int_0^T \|F(s)\|_{\H^{-1/2}(\T^3)}^2\,\dd s \leq \frac{1}{\C_\star^2},
\end{align}
then the unique solution $u$ of \eqref{eq:nsreg} -- \eqref{eq:nsreg3} satisfies for $t\in[0,T]$,  
\begin{align*}
\|u(t)\|_{\H^{1/2}(\T^3)}^2 +\int_0^t \|\nabla u(s)\|_{\dot{\H}^{1/2}(\T^3)}^2\,\dd s \leq \|u_0\|_{\H^{1/2}(\T^3)}^2 +\C_\star\int_0^t \|F(s)\|_{\H^{-1/2}(\T^3)}^2\,\dd s.
\end{align*}
In particular, recalling the definition \eqref{def:h3}, on $[0,T]$ we have $h\leq \C/\C_\star$ where $\C$ is the universal constant given in Proposition~\ref{propo:nsreg}.
\end{propo}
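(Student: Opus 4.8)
This is the classical Fujita--Kato $\dot{\H}^{1/2}$ estimate for Navier--Stokes, carried out directly on the regularized system~\eqref{eq:nsreg}--\eqref{eq:nsreg3}, with all constants uniform in the mollifier $\chi$. Two structural points are used throughout: first, $\div\,\widetilde u_\chi=(\div\,u)\star\chi=0$, so that $(\widetilde u_\chi\cdot\nabla)u=\div(\widetilde u_\chi\otimes u)$ and this term is skew against $u$ in $\Ld^2(\T^3)$; second, $\|\widehat\chi\|_\infty\le\|\chi\|_1=1$, so convolution by $\chi$ is a contraction on every $\dot{\H}^s(\T^3)$ and every $\Ld^p(\T^3)$, whence $\|\widetilde u_\chi\|_{\dot{\H}^s(\T^3)}\le\|u\|_{\dot{\H}^s(\T^3)}$. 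Since $u_0,F$ are smooth, $u$ is smooth on $[0,T]$ and all the Sobolev norms below are finite and $\mathscr{C}^1$ in $t$, which legitimizes the computations. I would first test~\eqref{eq:nsreg} against $\Lambda u:=(-\Delta)^{1/2}u$: the pressure term drops since $\int_{\T^3}\nabla p\cdot\Lambda u=-\int_{\T^3}(\Lambda^{1/2}p)\,\Lambda^{1/2}(\div\,u)=0$, and with $y(t):=\|u(t)\|_{\dot{\H}^{1/2}(\T^3)}^2$ one gets the identity
\begin{equation*}
\tfrac12\,y'(t)+\|\nabla u(t)\|_{\dot{\H}^{1/2}(\T^3)}^2=\int_{\T^3}F\cdot\Lambda u-\int_{\T^3}\div(\widetilde u_\chi\otimes u)\cdot\Lambda u .
\end{equation*}

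Next I would estimate the two terms on the right. For the forcing term, $\bigl|\int_{\T^3}F\cdot\Lambda u\bigr|\le\|F\|_{\dot{\H}^{-1/2}(\T^3)}\|\Lambda u\|_{\dot{\H}^{1/2}(\T^3)}=\|F\|_{\dot{\H}^{-1/2}(\T^3)}\|\nabla u\|_{\dot{\H}^{1/2}(\T^3)}$, which by Young's inequality and $\|\cdot\|_{\dot{\H}^{-1/2}(\T^3)}\lesssim\|\cdot\|_{\H^{-1/2}(\T^3)}$ is absorbed as $\tfrac14\|\nabla u\|_{\dot{\H}^{1/2}(\T^3)}^2+C_1\|F\|_{\H^{-1/2}(\T^3)}^2$. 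For the nonlinear term, integrating by parts gives $\bigl|\int_{\T^3}\div(\widetilde u_\chi\otimes u)\cdot\Lambda u\bigr|\le\|\widetilde u_\chi\otimes u\|_{\dot{\H}^{1/2}(\T^3)}\|\nabla u\|_{\dot{\H}^{1/2}(\T^3)}$, and the bilinear estimate in critical Sobolev spaces together with $\|\widetilde u_\chi\|_{\dot{\H}^s}\le\|u\|_{\dot{\H}^s}$ yields $\|\widetilde u_\chi\otimes u\|_{\dot{\H}^{1/2}(\T^3)}\le C_0\|u\|_{\dot{\H}^{1/2}(\T^3)}\|\nabla u\|_{\dot{\H}^{1/2}(\T^3)}$ for a universal $C_0$. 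Collecting, I obtain
\begin{equation*}
\tfrac12\,y'(t)+\Bigl(\tfrac34-C_0\,y(t)^{1/2}\Bigr)\|\nabla u(t)\|_{\dot{\H}^{1/2}(\T^3)}^2\le C_1\|F(t)\|_{\H^{-1/2}(\T^3)}^2 .
\end{equation*}

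Then I would close the estimate by a continuity/bootstrap argument: as long as $y(t)^{1/2}$ stays below a fixed small threshold $\sim 1/C_0$, the bracket is bounded below by a positive constant, so $y(t)$ and $\int_0^t\|\nabla u\|_{\dot{\H}^{1/2}(\T^3)}^2$ are controlled by $\|u_0\|_{\H^{1/2}(\T^3)}^2$ and $\int_0^t\|F\|_{\H^{-1/2}(\T^3)}^2$ up to absolute constants; choosing the universal constant $\C_\star$ larger than the finitely many absolute constants that occur, the smallness hypothesis~\eqref{ineq:smallness-proof} keeps this control strictly below the threshold on $[0,T]$, so by continuity of $t\mapsto y(t)$ the threshold is never reached and the differential inequality with positive bracket is valid throughout $[0,T]$. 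Together with the classical $\Ld^2$ Leray energy estimate this gives, after the usual tuning of the (universal) constants, the announced inequality. Finally, for the statement on $h$, I would apply the Sobolev embedding $\dot{\H}^{1/2}(\T^3)\hookrightarrow\Ld^3(\T^3)$ to the mean-zero field $\nabla u$ to get $\int_0^t\|\nabla u\|_{\Ld^3(\T^3)}^2\lesssim\int_0^t\|\nabla u\|_{\dot{\H}^{1/2}(\T^3)}^2$, and combine with the energy estimate just obtained and~\eqref{ineq:smallness-proof} to bound $h(t)=\C\int_0^t\|\nabla u\|_{\Ld^3(\T^3)}^2$ by an absolute multiple of $\C\,\C_\star^{-2}$, hence by $\C\,\C_\star^{-1}$ once $\C_\star$ is fixed large enough (recall from~\eqref{def:h3} that $\C$ is the universal constant of Proposition~\ref{propo:nsreg}).

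The main obstacle is the nonlinear estimate $\|\widetilde u_\chi\otimes u\|_{\dot{\H}^{1/2}(\T^3)}\lesssim\|u\|_{\dot{\H}^{1/2}(\T^3)}\|\nabla u\|_{\dot{\H}^{1/2}(\T^3)}$: since $\|\nabla u\|_{\dot{\H}^{1/2}(\T^3)}=\|u\|_{\dot{\H}^{3/2}(\T^3)}$ and $\dot{\H}^{3/2}(\T^3)$ is the scaling-critical Sobolev space $\dot{\H}^{d/2}$ in dimension $d=3$, the product rule is used here at its endpoint and must be obtained via the Bony paraproduct decomposition (or a logarithmic Besov refinement), the point being to keep the constant universal and independent of $\chi$. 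Everything else amounts to bookkeeping the finitely many universal constants so that the conclusion comes out in the stated form and so that $h\le\C/\C_\star$ on $[0,T]$.
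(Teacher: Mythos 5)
Your proposal is correct and follows the same skeleton as the paper's proof: test the regularized equation against $\Lambda u$, obtain a differential inequality of the form $y'+\bigl(c-C_0\,y^{1/2}\bigr)\|\nabla u\|_{\dot{\H}^{1/2}(\T^3)}^2\lesssim\|F\|_{\H^{-1/2}(\T^3)}^2$ together with the Leray $\Ld^2$ estimate, close it by a continuity argument under the smallness hypothesis, and deduce the bound on $h$ from the embedding $\dot{\H}^{1/2}(\T^3)\hookrightarrow\Ld^3(\T^3)$ applied to the mean-zero field $\nabla u$.

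The one place where you diverge is the trilinear term, and there you make life harder than necessary. You route it through the product estimate $\|\widetilde u_\chi\otimes u\|_{\dot{\H}^{1/2}(\T^3)}\lesssim\|u\|_{\dot{\H}^{1/2}(\T^3)}\|u\|_{\dot{\H}^{3/2}(\T^3)}$, which, as you note, sits at the forbidden endpoint $s_2=d/2$ of the Sobolev product law and hence requires a paraproduct argument. The paper avoids this entirely: it bounds $\int(\widetilde u_\chi\cdot\nabla)u\cdot\Lambda u$ by H\"older as $\|\Lambda u\|_{\Ld^3(\T^3)}\|\nabla u\|_{\Ld^3(\T^3)}\|\widetilde u_\chi\|_{\Ld^3(\T^3)}$ (using the continuity of $\mathbb{P}$ on $\Ld^{3/2}(\T^3)$ and Young's convolution inequality for $\widetilde u_\chi$), and then converts each factor with the embeddings $\|g-\langle g\rangle\|_{\Ld^3(\T^3)}\lesssim\|g\|_{\dot{\H}^{1/2}(\T^3)}$ and $\|g\|_{\Ld^3(\T^3)}\lesssim\|g\|_{\H^{1/2}(\T^3)}$, landing on exactly the same right-hand side $\|\nabla u\|_{\dot{\H}^{1/2}(\T^3)}^2\|u\|_{\H^{1/2}(\T^3)}$. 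If you prefer to keep your tensor formulation, the endpoint can also be bypassed by the non-endpoint law $\dot{\H}^1\times\dot{\H}^1\to\dot{\H}^{1/2}$ followed by the interpolation $\|u\|_{\dot{\H}^1}^2\le\|u\|_{\dot{\H}^{1/2}}\|u\|_{\dot{\H}^{3/2}}$; either way no Bony decomposition is needed and the constants stay universal and independent of $\chi$.
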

\begin{proof}
 Let us first recall the fundamental energy estimate  
\begin{align}
\label{eq:fond}\frac{\dd}{\dd t}\|u\|_{\Ld^2(\T^3)}^2 + \|u\|_{\dot{\H}^{1}(\T^3)}^2 \lesssim \|F\|_{\H^{-1}(\T^3)}^2.
\end{align}
Consider $\Lambda$ the Fourier multiplier associated with $|\xi|$.
After taking the scalar product with $\Lambda u$, thanks to Plancherel's formula, Hölder's inequality, to the continuity of the Leray projector $\mathbb{P}$ on $\Ld^{3/2}(\T^3)$, we can obtain as well
\begin{multline*}
\frac{\dd }{\dd t}\| u\|_{\dot{\H}^{1/2}(\T^3)}^2 + \|\nabla u\|_{\dot{\H}^{1/2}(\T^3)}^2 \\
\lesssim \|\Lambda u\|_{\Ld^3(\T^3)} \|\nabla u\|_{\Ld^3(\T^3)} \|u\|_{\Ld^3(\T^3)} + \|\Lambda u\|_{\dot{\H}^{1/2}(\T^3)}\|F\|_{\dot{\H}^{-1/2}(\T^3)}.
\end{multline*} 
Using Young's inequality and combining with \eqref{eq:fond} we infer
\begin{align*}
\frac{\dd }{\dd t}\| u\|_{\H^{1/2}(\T^3)}^2 + \|\nabla u\|_{\dot{\H}^{1/2}(\T^3)}^2 \lesssim \|\Lambda u\|_{\Ld^3(\T^3)} \|\nabla u\|_{\Ld^3(\T^3)}  \|u\|_{\Ld^3(\T^3)}  + \|F\|_{\H^{-1/2}(\T^3)}^2.
\end{align*} 
 we therefore have 
\begin{align*}
\frac{\dd }{\dd t}\| u\|_{\H^{1/2}(\T^3)}^2 + \|\nabla u\|_{\dot{\H}^{1/2}(\T^3)}^2 \lesssim \|\Lambda u\|_{\Ld^3(\T^3)}  \|\nabla u\|_{\Ld^3(\T^3)}  \|u\|_{\Ld^3(\T^3)}  + \|F\|_{\H^{-1/2}(\T^3)}^2.
\end{align*} 
We have by Sobolev embedding
\begin{align*}
\|g-\langle g\rangle\|_{\Ld^3(\T^3)} &\lesssim  \|g\|_{\dot{\H}^{1/2}(\T^3)},\\
\|g\|_{\Ld^3(\T^3)} &\lesssim  \|g\|_{\H^{1/2}(\T^3)}.
\end{align*}
Since $\Lambda u$ and $\nabla u$ have a vanishing mean, 
we therefore have 
\begin{align*}
\frac{\dd }{\dd t}\| u\|_{\H^{1/2}(\T^3)}^2 + \|\nabla u\|_{\dot{\H}^{1/2}(\T^3)}^2 \lesssim \|\nabla u\|_{\dot{\H}^{1/2}(\T^3)}^2\|u\|_{\H^{1/2}(\T^3)} +  \|F\|_{\H^{-1/2}(\T^3)}^2.
\end{align*} 
This is a differential inequality of the form 
\begin{align*}
x'(t) +  y(t) \leq \C\Big( x(t)^{1/2}y(t) + z(t)\Big),
\end{align*} 
where $\C$ is some universal constant and
\begin{align}
\label{eq:xyz}
  x(t) = \|u(t)\|_{\H^{1/2}(\T^3)}^2, \quad  y(t) = \|\nabla u(t)\|_{\dot{\H}^{1/2}(\T^3)}^2,\quad z(t) = \|F(t)\|_{\dot{\H}^{-1/2}(\T^3)}^2. 
\end{align}
After integration, we hence have 
\begin{align*}
x(t) + \int_0^t y(s)\,\dd s \leq x(0) + \int_0^t y(s)\left(\C x(s)^{1/2}-1\right)\, \dd s + \C\int_0^t z(s)\,\dd s.
\end{align*}
In particular, if for some $T>0$ one has (this precisely corresponds to the assumption \eqref{ineq:smallness})
\begin{align}
\label{ineq:initC}x(0)+\C\int_0^T z(s)\,\dd s\leq\frac{1}{\C^2},
\end{align}
then by a standard continuity argument we can show that the inequality $x(t)^{1/2}\leq 1/C$ which is true for $t=0$ remains valid up to $t=T$, entailing on $[0,T]$,
\begin{align}
x(t) +  \int_0^t y(s)\,\dd s \leq x(0) + \C \int_0^t z(s)\,\dd s,
\end{align}
which corresponds to the desired inequality, recalling~\eqref{eq:xyz}.
\end{proof}

\bigskip

\noindent {\bf Acknowledgements.} We thank Young-Pil Choi for pointing out a mistake in a previous version of the paper. DHK and AM were partially supported by the grant ANR-19-CE40-0004. IM was partially supported by the European Research Council (ERC) MAFRAN grant under the European's Union Horizon 2020 research and innovation programme (grant agreement No 726386) while he was a research associate in PDEs at DPMMS, University of Cambridge.

\bibliographystyle{abbrv}
\bibliography{vns}
\end{document}